\newcommand{\blue}{\color{blue}}
\newcommand{\bL}{\mathbb{L}}
\newcommand{\bD}{\mathbb{D}}
\newcommand{\cF}{\mathcal{F}}
\newcommand{\cH}{\mathcal{H}}
\newcommand{\Herm}{\mathrm{He}}
\newcommand{\fI}{\mathscr{I}}
\newcommand{\fD}{\mathscr{D}}
\DeclareMathOperator*{\argmin}{arg\,min}
\DeclareMathOperator*{\argmax}{arg\,max}
\newcommand{\E}{\mathbb E}
\newcommand{\R}{\mathbb{R}}
\newcommand{\N}{\mathbb{N}}
\newcommand{\C}{\mathbb{C}}
\newcommand{\Z}{\mathbb{Z}}
\renewcommand{\P}{\mathbb{P}}
\newcommand{\LLL}{\mathbb{L}}
\renewcommand{\Re}{\operatorname{Re}}
\renewcommand{\Im}{\operatorname{Im}}
\newcommand{\Var}{\mathop{\mathrm{Var}}\nolimits}
\newcommand{\nint}{\mathop{\mathrm{nint}}\nolimits}
\newcommand{\eps}{\varepsilon}
\newcommand{\eqdistr}{\stackrel{d}{=}}
\newcommand{\toas}{\overset{a.s.}{\underset{n\to\infty}\longrightarrow}}
\newcommand{\toasj}{\overset{a.s.}{\underset{j\to\infty}\longrightarrow}}
\newcommand{\toast}{\overset{a.s.}{\underset{t\to\infty}\longrightarrow}}
\newcommand{\ton}{\overset{}{\underset{n\to\infty}\longrightarrow}}
\newcommand{\bsl}{\backslash}
\newcommand{\ind}{\mathbbm{1}}
\newcommand{\dd}{{\rm d}}
\newcommand{\eee}{{\rm e}}
\newcommand{\stirling}[2]{\genfrac{[}{]}{0pt}{}{#1}{#2}}
\theoremstyle{plain}
\newtheorem{theorem}{Theorem}[section]
\newtheorem{lemma}[theorem]{Lemma}
\newtheorem{proposition}[theorem]{Proposition}
\theoremstyle{definition}
\theoremstyle{remark}
\newtheorem{remark}[theorem]{Remark}
\newcounter{step}[theorem]
\newcommand{\step}{\par\addtocounter{step}{1}
\vspace*{2mm}
\noindent
\textsc{Step \arabic{step}.}
}
\begin{document}

\author{Zakhar Kabluchko}
\address{Zakhar Kabluchko, Institut f\"ur Mathematische Statistik,
Universit\"at M\"unster,
Or\-l\'e\-ans--Ring 10,
48149 M\"unster, Germany}
\email{zakhar.kabluchko@uni-muenster.de}

\author{Alexander Marynych}
\address{Alexander Marynych, Faculty of Cybernetics, Taras Shevchenko National University of Kyiv, 01601 Kyiv, Ukraine}
\email{{marynych@unicyb.kiev.ua}}

\author{Henning Sulzbach}
\address{Henning Sulzbach,
School of Computer Science, McGill University,
3480 University Street
H3A 0E9 Montr\'eal, QC, Canada}
\email{{henning.sulzbach@gmail.com}}

%\title{Speed of convergence for martingales related to random trees}
\title[Edgeworth expansions for profiles of random trees]{General Edgeworth expansions with applications to profiles of random trees: \\ Extended version}
%\title[Edgeworth expansions for profiles of random trees]{Edgeworth expansions for profiles of random trees}
\keywords{Biggins martingale, branching random walk, central limit theorem, Edgeworth expansion, mod-$\phi$-convergence, mode, profile, random analytic function, random tree, width}
%binary search trees, random recursive trees}

\subjclass[2010]{Primary, 60G50; secondary, 60F05, 60J80, 60J85, 60F10, 60F15}
\thanks{}
\begin{abstract}
We prove an asymptotic Edgeworth expansion for the profiles of certain random trees including binary search trees, random recursive trees and plane-oriented random trees, as the size of the tree goes to infinity. All these models can be seen as special cases of the one-split branching random walk for which we also provide an Edgeworth expansion.  These expansions lead to new results on mode, width and occupation numbers of the trees, settling several open problems raised in Devroye and Hwang [\emph{Ann.\ Appl.\ Probab.}\ \text{16}(2): 886--918, 2006],  Fuchs, Hwang and Neininger [\emph{Algorithmica}, \text{46} (3--4): 367--407, 2006], and Drmota and Hwang [\emph{Adv.\ in Appl.\ Probab.},\ \text{37} (2): 321--341, 2005]. The aforementioned results are special cases and corollaries of a general theorem: an Edgeworth expansion for an arbitrary sequence of random or deterministic functions $\LLL_n:\Z\to\R$ which converges in the mod-$\phi$-sense.
%As another application, we prove asymptotic expansions for Stirling numbers of the first kind and, more generally, the Ewens distribution.
Applications to Stirling numbers of the first kind will be given in a separate paper.
\end{abstract}

\maketitle

\section{Introduction}

\subsection{Introduction}\label{subsec:intro}
The aim of this paper is to study asymptotic properties of profiles for certain families of random trees when the size of the tree goes to infinity.
The profile is the function $k\mapsto \mathbb{L}_n(k)$, where $\mathbb{L}_n(k)$ is the number of nodes at depth $k$ in the tree of size $n$. These numbers are also called ``occupation numbers''.
%The profile of a tree is a function which counts the number of nodes at a given distance from the root.
We shall mainly be interested in the following families of random trees:
\begin{enumerate}
\item binary search trees (BSTs) and, more generally, $D$-ary recursive trees;
\item random recursive trees (RRTs);
\item plane-oriented recursive trees (PORTs) and, more generally, $p$-oriented trees.
\end{enumerate}
%Sample realizations of BSTs, RRTs and PORTs are shown on Figure~\ref{bild:trees}.
BSTs, RRTs and PORTs have been much studied in the literature; see \cite{chauvin_drmota_jabbour,chauvin_etal,devroye_branching,pittel2,fuchs_hwang_neininger,devroye_hwang,drmota_hwang}. Mahmoud's book \cite{mahmoud_book} and Drmota's monograph~\cite{drmota_book} contain further pointers to relevant literature.
%BSTs, RRTs and PORTs have been much studied in the literature; see
%\textcolor{red}{\cite{chauvin_drmota_jabbour,devroye_hwang,dobrow_fill,drmota_BST,drmota_hwang,drmota_janson_neininger,fuchs_hwang_neininger,hwang_PORT,jabbour,mahmoud,mahmoud_book,	pittel2,pittel}. Mahmoud's book \cite{mahmoud_book} and Drmota's monograph~\cite{drmota_book} contain further pointers to relevant literature.} %The monograph by Drmota~\cite{drmota_book} contains further pointers to relevant literature.}
%the monograph by Drmota~\cite{drmota_book}.
It is well known that BSTs are intimately connected to the Quicksort algorithm.

Our main result is an asymptotic expansion for the profile which is somewhat similar to the classical Chebyshev--Edgeworth--Cram\'er expansion for sums of independent identically distributed (i.i.d.)\ integer-valued random variables.  As a consequence of our asymptotic expansion, we derive limit theorems for several functionals of the profile such as the mode, the width, and the occupation numbers, thus answering a number of open questions on these quantities.  Many known results on the profiles such as the (local) central limit theorem or limit theorems for occupation numbers on the scale of large deviations can be recovered in a unified way as corollaries of our expansion.

The scope of our method is by no means restricted to random trees. In Section~\ref{sec:general_expansion} we shall state and prove a very general asymptotic expansion (Theorem~\ref{theo:edgeworth_general}) which can be applied to any sequence of random or deterministic functions $\LLL_n:\Z\to\R$, $n\in\N$, provided certain natural conditions are satisfied. Recently, a closely related expansion was derived by \citet{feray_meliot_nikeghbali} in the framework of the mod-$\phi$-convergence. It has been observed by Nikeghbali and collaborators that mod-$\phi$-convergence, see Remark \ref{rem:modphi} for the definition, is a common phenomenon in probability, combinatorics, number theory and statistical mechanics; see~\cite{delbaen_kowalski_nikeghbali,feray_meliot_nikeghbali, jacod_kowalski_nikeghbali_mod_Gauss,kowalski_nikeghbali_mod_Poi, kowalski_nikeghbali_zeta, meliot_nikeghbali_statmech}.  In this work, we show how mod-$\phi$-convergence\footnote{In all examples mentioned in Section~\ref{subsec:intro} it is, in fact, mod-Poisson convergence. } can be applied to the analysis of random trees of logarithmic height. %An interesting feature of this setting (which seems not to be present in the previous works) is that now the profiles are random and, as a consequence, our asymptotic expansions have random terms.

The paper is organized as follows. The general asymptotic expansion is stated in Section~\ref{sec:general_expansion}. In Section~\ref{sec:edgeworth_for_random_trees} we apply this expansion to the profile of the \emph{one-split branching random walk}, a model which contains profiles of all random trees listed above as special cases.  Since these results are quite general and require heavy notation, we motivate and prepare the reader in the next Section~\ref{subsec:BST_special_case} by formulating the results in the special case of binary search trees. In Section~\ref{subsec:random_trees_one_split} we shall explain how to formulate similar results for other random trees including RRTs and PORTs. Proofs are given in Sections~\ref{sec:proof_general_edgeworth} and~\ref{sec:proof_random_trees}.
%This result contains the classical asymptotic expansion, the local limit theorem, and precise large deviations for sums of i.i.d.\ random variables as a special cases.  As another application,  we shall derive an asymptotic expansion for the Stirling numbers of the first kind.

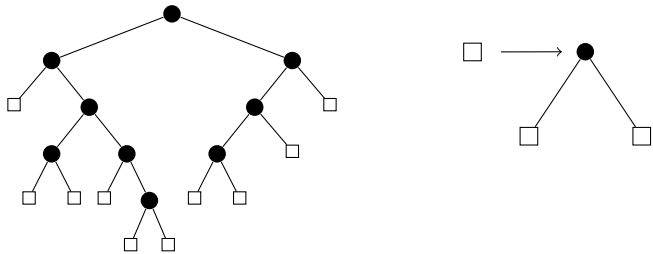
\begin{figure}[htb]
  \centering
  \begin{minipage}[t]{.45\linewidth}
    \centering
\begin{tikzpicture}[level distance=0.5cm,
level 1/.style={sibling distance=3.2cm},
level 2/.style={sibling distance=1cm},
level 3/.style={sibling distance=1cm},
level 4/.style={sibling distance= .6cm},
level 5/.style={sibling distance= .5cm},
level 6/.style={sibling distance= 1cm},
int/.style = {circle, scale = 0.7, draw=none, fill=black, anchor = north, growth parent anchor=south},
ext/.style = {rectangle, scale = 0.7, anchor = north, growth parent anchor=south}]
\tikzstyle{every node}=[circle,draw]

\node (Root) [int] {}
child { node [int] {}
 child { node [ext] {}
  }
 child { node [int] {}
  child { node [int] {}
   child { node [ext] {}
   }
   child { node [ext] {}
   }
  }
  child { node [int] {}
   child { node [ext] {}
   }
   child { node [int] {}
    child { node [ext] {}
    }
    child { node [ext] {}
    }
   }
  }
 }
}
child { node [int]{}
 child { node [int] {}
  child { node [int] {}
   child { node [ext] {}
   }
   child { node [ext] {}
   }
  }
  child { node [ext] {}
  }
 }
 child { node [ext] {}
 }
}
;
\end{tikzpicture}
\end{minipage} \begin{minipage}[t]{.35\linewidth}
    \centering
\raisebox{4em}{\begin{tikzpicture}[level distance=1cm,
level 1/.style={sibling distance=1.5cm},
level 2/.style={sibling distance=1cm},
int/.style = {circle, scale = 0.7, draw=none, fill=black, anchor = north, growth parent anchor=south},
ext/.style = {rectangle, scale = 1.0, draw, anchor = north, growth parent anchor=south}]
\draw (-1.5,0) node [ext] (A) {} ;
 \node(Root) [int] {}
child { node [ext] {}
}
child { node [ext] {}
}
;
\draw [shorten >=1cm,shorten <=1cm,>-] (A) -- (Root);
%\draw (0,1) node [ext] (B) {} ;
\end{tikzpicture} }
\end{minipage}
\caption{Left: Sample realization of the BST.  Right: Construction rule for the BST} \label{bild:BST}
\end{figure}

%\begin{figure}[t]
%%\includegraphics[width=0.99\textwidth]{trees.pdf}
%\includegraphics[width=0.3\textwidth]{BST_sample.pdf}
%\hspace*{0.1\textwidth}
%\includegraphics[width=0.3\textwidth]{rules_BST.pdf}
%\caption
%{\small Left: Sample realization of the BST.  Right: Construction rule for the BST.}
%\label{bild:BST}
%\end{figure}

\subsection{Results for binary search trees}\label{subsec:BST_special_case}
%In the following we shall state our results in the special case of BST.
For our purposes, the following construction of BSTs is most convenient.
There are  nodes of two types: the external ones (denoted by $\Box$) and the internal ones (denoted by $\bullet$); see Figure~\ref{bild:BST} (left). At time $n=0$ start with one external node (the root of the tree). At any step of the construction, pick one of the existing external nodes uniformly at random, declare it to be internal, and connect it to two new external nodes according to the rule shown on Figure~\ref{bild:BST} (right).
%\vspace*{2mm}
%\begin{center}
%\includegraphics[width=0.25\textwidth]{rules_BST_new.pdf}
%\end{center}
%\vspace*{2mm}
After $n$ steps, one obtains a random tree $T_n$ having $n$ internal and $n+1$ external nodes; see Figure~\ref{bild:BST} (left) for a sample realization.

The \textit{depth} of an external node is its distance to the root. Denote by $\bL_n(k)$ the number of external nodes of $T_n$ at depth $k\in \N_0:=\N\cup\{0\}$, and let $\LLL_n(k)=0$ for $k<0$.  The random function $k\mapsto \bL_n(k)$, $k\in\Z$,  is called the (external) \textit{profile} of the tree $T_n$.
Denote by $x_{1,n}, \ldots, x_{n+1,n}$ the depths of the (arbitrarily enumerated) external nodes of $T_n$ so that
$$
\LLL_n(k) = \#\{1\leq i\leq n+1\colon x_{i,n} = k\}, \quad k\in\Z.
$$

The BST profile has been much studied; see  \cite{chauvin_drmota_jabbour, chauvin_etal, devroye_hwang,drmota_hwang,drmota_janson_neininger,fuchs_hwang_neininger}, %, e.g.,\ \cite[Section~6.5]{drmota_book} %and~\cite{drmota_BST} for
and \cite[Section~6.5]{drmota_book} %and~\cite{drmota_BST} for
for a survey.
In the following, we provide a short overview of known results. Let $\beta_-\approx -1.678$ and $\beta_+\approx 0.768$ be the solutions to the equation $2\eee^{\beta} (1-\beta)= 1$. The numbers $2\eee^{\beta_-}\approx 0.373$ and $2\eee^{\beta_+}\approx 4.311$ are called the \textit{fill-up level constant} and the \textit{height constant} of the BST because of the classical results %(see, e.g., %\cite{devroye}
%or~\cite{biggins_grey})
$$
%\frac {\min\{k\in \N_0\colon \LLL_n(k) \neq 0\}} {\log n} \toas 2\eee^{\beta_-},
\frac 1 {\log n}\, {\min_{i=1,\ldots,n+1} x_{i,n}} \toas 2\eee^{\beta_-},
\quad
%\frac {\max\{k\in \N_0\colon \LLL_n(k) \neq 0\}} {\log n}  \toas 2\eee^{\beta_+}.
\frac 1 {\log n}\,  {\max_{i=1,\ldots, n+1} x_{i,n}}  \toas 2\eee^{\beta_+},
$$
going back to Devroye~\cite{devroyeheight}, see also~\cite{biggins_grey}.
Define the following random function
$$
W_n(\beta) = \frac 1 {n^{(2\eee^\beta-1)}} \sum_{i=1}^{n+1} \eee^{\beta x_{i,n}}, \quad \beta\in\C,
$$
which is the normalized moment generating function of the random counting measure $\sum_{k\in\Z}\LLL_n(k)\delta_{k}=\sum_{i=1}^{n+1}\delta_{x_{i,n}}$ where $\delta_x$ denotes a Dirac measure at point $x$. The basic fact underlying all further arguments is that $W_n$ converges as $n\to\infty$ to a random analytic function $W_\infty$ with probability $1$. More precisely, it is known~\cite{chauvin_etal} that there is an open set $\fD\subset \C$ containing the interval $(\beta_-,\beta_+)$ and a random analytic function $W_\infty$ defined on $\fD$ such that
$$
\sup_{\beta\in K} |W_n(\beta) -  W_\infty(\beta)| \toas 0
$$
for every compact set $K\subset \fD$. Note that $W_\infty(0)=1$ since $W_n(0)=(n+1)/n$ for all $n\in\N$.

It is useful to keep in mind the following principle: $k\mapsto \frac 1{n} \LLL_n(k)$ is ``close'' to the probability mass function of the Poisson distribution  with intensity $2\log n$. The moment generating function of the latter distribution is $\beta\mapsto n^{2\eee^{\beta}-2}$, and the general philosophy of mod-$\phi$-convergence~\cite{jacod_kowalski_nikeghbali_mod_Gauss,kowalski_nikeghbali_mod_Poi} suggests to view the limit function $W_\infty$ as a quantification of the ``difference'' between $\frac 1n \LLL_n$ and the Poisson distribution with parameter $2\log n$. Note that in our case, this function is random.

An important role will be played by the derivatives and logarithmic derivatives of $W_\infty$ (the latter are called random cumulants).   In particular, we shall frequently encounter the random variables
$$
\chi_1(0) := (\log W_\infty)' (0) = W_{\infty}'(0)
,\quad
\chi_{2}(0) := (\log W_\infty)'' (0) =
W_\infty''(0) - (W'_\infty(0))^2.
$$
It will be shown in Section~\ref{subsec:cumulants_one_split} that they can also be represented as the a.s.\ limits
\begin{align}
\chi_1(0) &= \lim_{n\to\infty}(\log W_n)'(0)=\lim_{n\to\infty} \left( \frac {1}{n+1} \sum_{i=1}^{n+1} x_{i,n} - 2\log n\right),\label{eq:chi_1_0_BST}\\
\chi_2(0) &= \lim_{n\to\infty}(\log W_n)''(0)\label{eq:chi_2_0_BST}\\
&=\lim_{n\to\infty} \left( \frac {1}{n+1} \sum_{i=1}^{n+1} x_{i,n}^2 - \left(\frac 1 {n+1} \sum_{i=1}^{n+1} x_{i,n}\right)^2 - 2\log n\right). \notag
\end{align}
It is useful to think of $\chi_1(0)$  (whose distribution is the celebrated Quicksort law~\cite{regnier,roesler}, modulo an additive constant) as a parameter describing the random shift of the BST profile with respect to (w.r.t.)\ the location $2\log n$.
The random variable $\chi_2(0)$ describes the random deviation of the empirical variance of the profile from the value $2\log n$ and seems to appear for the first time.

We now proceed to the asymptotic properties of the BST profile $(\LLL_n(k))_{k\in\Z}$, as $n\to\infty$.  The following (local) central limit theorem  was proved by \citet{chauvin_drmota_jabbour}:
\begin{equation}\label{eq:CLT_for_BST}
\sup_{k\in\Z} \left|\frac 1n \LLL_n(k) - \frac {1} {\sqrt{4\pi \log n}} \eee^{-\frac {(k-2\log n)^2}{4\log n}}\right|  = O\left(\frac 1 {\log n}\right)
\quad
\text{a.s.}
\end{equation}
As a special case of our results, we shall derive an asymptotic expansion complementing~\eqref{eq:CLT_for_BST}.
\begin{theorem}\label{theo:BST_CLT_expansion}
Let $(\LLL_n(k))_{k\in\Z}$ be the external profile of a binary search tree with $n+1$ external nodes. For every $r\in\N_0$, we have
$$
(\log n)^{\frac {r+1}{2}} \sup_{k\in\Z} \left| \frac 1n \LLL_n(k) - \frac {\eee^{-\frac {(k-2\log n)^2}{4\log n}}} {\sqrt{4\pi \log n}} \sum_{j=0}^r G_j\left(\frac {k-2\log n}{\sqrt {2\log n}};0\right) \frac 1 {(\log n)^{j/2}} \right| \toas 0,
$$
where $G_j(x;0)$ is a polynomial in $x$ of degree at most $3j$ whose coefficients can be  linearly expressed through the derivatives $W_{\infty}'(0),\ldots, W^{(j)}_\infty(0)$. For example,
$$
G_0(x;0)=1, \quad G_1(x;0)= \frac 1{\sqrt 2} \left( W_\infty'(0) x + \frac {x^3-3x}6\right);
$$
see~\eqref{eq:G_def_one_split}, \eqref{eq:D_j_one_split} below for an explicit general formula.
\end{theorem}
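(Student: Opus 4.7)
I would obtain the expansion by specializing the general Edgeworth theorem (Theorem \ref{theo:edgeworth_general}) to the mod-Poisson setting relevant to the BST. The starting point is the identity
$$
\sum_{k\in\Z}\LLL_n(k)\eee^{\beta k}=n^{2\eee^\beta-1}W_n(\beta),
$$
which, after dividing by $n$, exhibits the moment generating function of $\tfrac1n\LLL_n$ as the Poisson($2\log n$) mgf times the residual factor $W_n(\beta)\toas W_\infty(\beta)$. Thus $\tfrac{1}{n}\LLL_n$ converges mod-Poisson$(2\log n)$ with random limit $W_\infty$ on the open set $\fD$, which is precisely the input demanded by Theorem \ref{theo:edgeworth_general}.

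The proof then proceeds by Fourier inversion on $\Z$: with $\beta=it$ in the identity above,
$$
\frac{1}{n}\LLL_n(k)=\frac{1}{2\pi}\int_{-\pi}^{\pi}\eee^{-ikt}\eee^{(2\log n)(\eee^{it}-1)}W_n(it)\,\dd t,
$$
one splits the integral at $|t|=\delta_n$ for a slowly decaying $\delta_n\to 0$ (e.g.\ $\delta_n=(\log n)^{-1/4}$). Since $|\eee^{(2\log n)(\eee^{it}-1)}|=\eee^{-2(\log n)(1-\cos t)}$ beats any power of $\log n$ on the tail $\delta_n<|t|\le\pi$, once a polynomial-in-$n$ a.s.\ upper bound for $\sup_{|t|\le\pi}|W_n(it)|$ is available, the tail is absorbed into the error $o((\log n)^{-(r+1)/2})$ uniformly in $k$.

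On the central region one Taylor-expands both
$$
(2\log n)(\eee^{it}-1)=2i(\log n)t-(\log n)t^2+\sum_{j\ge 3}\frac{(2\log n)(it)^j}{j!}
$$
and $W_n(it)=1+\sum_{m\ge 1}W_\infty^{(m)}(0)(it)^m/m!+o(1)$. Factoring out $\eee^{2i(\log n)t}\eee^{-(\log n)t^2}$ and rescaling $s=t\sqrt{2\log n}$, each monomial $c_p(is)^p$ in the residual formal power series integrates against $\tfrac{1}{\sqrt{2\pi}}\eee^{-ixs}\eee^{-s^2/2}$ to give $c_p\,\Herm_p(x)\eee^{-x^2/2}$, with $x=(k-2\log n)/\sqrt{2\log n}$. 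Bookkeeping on orders in $\log n$ (each $W_\infty^{(m)}$-contribution is of order $(\log n)^{-m/2}$ and each Poisson cumulant of order $j\ge 3$ is of order $(\log n)^{1-j/2}$) shows that the contribution at level $(\log n)^{-j/2}$ is a polynomial in $x$ of degree at most $3j$, which defines $G_j(x;0)$. At $j=1$ exactly two sources contribute, namely the linear $W_\infty'(0)(it)$ in $W_\infty$ and the cubic Poisson correction $(2\log n)(it)^3/6$; after rescaling they yield $W_\infty'(0)\Herm_1(x)/\sqrt 2$ and $\Herm_3(x)/(6\sqrt 2)=(x^3-3x)/(6\sqrt 2)$, summing to the claimed $G_1(x;0)$.

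The main obstacle is the a.s.\ tail estimate for the Fourier integral: the convergence $W_n\to W_\infty$ is provided only on compact subsets of $\fD$, which need not contain the full imaginary segment $\{it:|t|\le\pi\}$. A uniform almost-sure bound on $|W_n(it)|$ for $|t|$ near $\pi$ therefore has to be established separately, most naturally via $L^p$ martingale estimates exploiting the branching structure of the BST. Producing such bounds (in the broader setting of one-split branching random walks) is exactly what the framework of Section \ref{sec:general_expansion} is designed to encapsulate, after which the above derivation reduces to a routine invocation of Theorem \ref{theo:edgeworth_general}.
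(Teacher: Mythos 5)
Your plan is the paper's own route: Theorem~\ref{theo:BST_CLT_expansion} is obtained by specializing the general expansion (Theorem~\ref{theo:edgeworth_general}, via Theorem~\ref{theo:edgeworth_one_split_BRW} with $\beta=0$) to the one-split BRW with displacement process $\zeta=2\delta_1$, and your central-region computation (Taylor expansion of the Poisson exponent and of $W_n$, rescaling, Hermite polynomials, the degree-$3j$ bookkeeping, and the value of $G_1(x;0)$) is correct and matches Section~\ref{sec:proof_general_edgeworth}. However, two of the hypotheses you invoke are justified incorrectly or not at all, and they are exactly where the work lies.

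First, the tail of the Fourier integral. You assert it is absorbed ``once a polynomial-in-$n$ a.s.\ upper bound for $\sup_{|t|\le\pi}|W_n(it)|$ is available.'' That is false: the trivial bound $|W_n(it)|\le n^{1-2\cos t}\sum_k\LLL_n(k)\le n^{1-2\cos t}(n+1)$ is already polynomial in $n$, and multiplied by the Poisson factor $n^{2\cos t-2}$ it gives an integrand of order $1$, not $o((\log n)^{-(r+1)/2})$. What is actually required is Assumption~A4: the normalized exponential sum $n^{-1}\bigl|\sum_k\LLL_n(k)\eee^{itk}\bigr|=n^{2\cos t-2}|W_n(it)|$ must itself decay like $n^{-\eps}$ uniformly on $a\le|t|\le\pi$. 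This needs genuine cancellation and rests on the aperiodicity of the intensity measure (Assumption~B4, which you never mention; for a displacement law supported on $2\Z$ the estimate would fail at $t=\pi$), combined with the Doob/Biggins $L^\gamma$ martingale estimates of Proposition~\ref{prop_cond_4_cont}. Second, $W_n\toas W_\infty$ on $\fD$ is not ``precisely the input demanded'' by Theorem~\ref{theo:edgeworth_general}: Assumption~A3 requires the superpolynomial speed $\sup_K|W_n-W_\infty|=O((\log n)^{-r})$ for every $r$ on complex compacta of $\fD$; without it one gets only the leading term, not the $(\log n)^{(r+1)/2}$ rate. This is the content of Theorem~\ref{theo:W_n_converges_one_split_BRW}, proved via the continuous-time embedding, the exponential rate in Proposition~\ref{prop:bigg}, and the split-time asymptotics of Lemma~\ref{lem:split_time}. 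With these two ingredients supplied, your derivation coincides with the paper's.
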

The above results deal with the profile around level $2\log n$, where ``most'' external nodes are located. The shape of the profile at levels around $c\log n$, $2\eee^{\beta_-} < c < 2\eee^{\beta_+}$, is described by the following result due to Chauvin et al. ~\cite{chauvin_etal}, compare also~\cite{chauvin_drmota_jabbour,drmota_janson_neininger,jabbour} for weaker formulations and~\cite{fuchs_hwang_neininger} for pointwise convergence theorems: %as well as~\cite[Section~6.5]{drmota_book}:
\begin{equation}\label{eq:LD_BST}
\sup_{k\in \Z \cap (\log n) L} \left|\frac {n k!} {(2\log n)^k} \LLL_n(k)  -  W_\infty\left(\log\left(\frac{k}{2\log n}\right)\right)\right| \toas  0
\end{equation}
for every compact set $L\subset (2\eee^{\beta_-}, 2\eee^{\beta_+})$. We can derive an asymptotic expansion complementing~\eqref{eq:LD_BST}.
\begin{theorem}\label{theo:BST_LD_expansion}
Let $(\LLL_n(k))_{k\in\Z}$ be the external profile of a binary search tree with $n+1$ external nodes. For every $r\in\N_0$ and every compact set $L\subset (2\eee^{\beta_-}, 2\eee^{\beta_+})$ we have
$$
(\log n)^{r+1}
\sup_{k\in \Z \cap (\log n) L} \left| n \left(\frac{k}{2\eee \log n}\right)^k  \LLL_n(k)  - \frac{1}{ \sqrt{2\pi k}} \sum_{j=0}^{r} \frac{F_{2j}(0; \beta_n(k))}{(\log n)^{j}}\right| \toas  0,
$$
where $\beta_n(k)$ is the solution\footnote{Obviously, we have $\beta_n(k)=\log k- \log\log n-\log 2$, however we prefer to define $\beta_n(k)$ implicitly to conform with the definition of $\beta_n(k)$ in general case; see formula \eqref{def:beta_n_k} below.} of $2\eee^{\beta_n(k)} = k/\log n$ and $F_{2j}(0; \beta) := W_\infty(\beta) G_{2j}(0;\beta)$ is a linear combination of $ 1,W_\infty(\beta),\ldots, W_\infty^{(2j)}(\beta)$; see~\eqref{eq:G_def_one_split}, \eqref{eq:D_j_one_split} below for an explicit formula.  For example,
$$
F_0(0; \beta) = W_\infty(\beta),
\quad
F_2(0;\beta) = \frac 14 (W'_\infty(\beta) - W''_\infty(\beta)) - \frac 1 {24}.
$$
\end{theorem}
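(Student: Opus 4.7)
The plan is to deduce the statement from the general Edgeworth expansion (Theorem~\ref{theo:edgeworth_general}) applied to the random profile $\LLL_n$, using the almost-sure mod-Poisson convergence $W_n\to W_\infty$ uniformly on compact subsets of $\fD\supset(\beta_-,\beta_+)$. The starting identity is
$$
n^{2\eee^\beta-1}W_n(\beta)=\frac{1}{n}\sum_{k\in\Z}\LLL_n(k)\eee^{\beta k},
$$
which exhibits $n^{-1}\LLL_n$ as a random point configuration whose moment generating function equals the Poisson$(2\log n)$ moment generating function $n^{2\eee^\beta-2}$ times a factor that converges to the random analytic function $W_\infty(\beta)$. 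This is precisely the mod-Poisson setup for which the abstract expansion applies.

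For each $k$ with $k/\log n\in L$, I would introduce the saddle point $\beta=\beta_n(k)$ determined by $2\eee^{\beta_n(k)}=k/\log n$, so that the Poisson tilted by $\eee^{\beta}$ has mean exactly $k$. Since $L$ is compactly contained in $(2\eee^{\beta_-},2\eee^{\beta_+})$, the saddles stay in a fixed compact subset of $(\beta_-,\beta_+)\subset\fD$, and Cauchy's estimates transfer the uniform convergence $W_n\to W_\infty$ to all derivatives. A saddle-point/Fourier inversion then recovers $\LLL_n(k)$ from $W_n$ through a contour integral over $\beta_n(k)+i[-\pi,\pi]$; substituting $W_n=W_\infty+o(1)$ and performing a Laplace expansion about the saddle produces an asymptotic series in $(\log n)^{-1/2}$ whose coefficients are polynomials $G_j(x;\beta)$ in the centered variable $x=(k-2\eee^\beta\log n)/\sqrt{2\eee^\beta\log n}$, built from the random cumulants $(\log W_\infty)^{(m)}(\beta)$ together with ordinary Hermite polynomials.

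The crucial observation is that $x=0$ at the saddle by construction. The half-integer powers of $\log n$ are carried by the odd-index polynomials $G_{2j+1}(x;\beta)$, which decompose into an odd polynomial in $x$ (vanishing at $0$) plus lower-order terms which vanish because of the saddle relation $(\log W_\infty)'(\beta_n(k)) + 2\eee^{\beta_n(k)}\log n = k$. Only the even-index terms survive, leaving precisely the sum with $F_{2j}(0;\beta)=W_\infty(\beta)G_{2j}(0;\beta)$. Stirling's formula identifies the prefactor: $n(k/(2\eee\log n))^k\sim n\,k!/((2\log n)^k\sqrt{2\pi k})$, so that the normalization in the theorem cancels the Poisson probability and leaves $F_0(0;\beta_n(k))/\sqrt{2\pi k}=W_\infty(\beta_n(k))/\sqrt{2\pi k}$ as the leading term, consistent with~\eqref{eq:LD_BST}. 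The explicit formula for $F_2$ then follows by direct substitution into~\eqref{eq:G_def_one_split}--\eqref{eq:D_j_one_split} at $\beta=\beta_n(k)$.

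The main obstacle is making the saddle-point expansion uniform in $k$: after truncating at order $r$, the error must be shown to be $o((\log n)^{-r-1})$ almost surely, uniformly for $\beta_n(k)$ in a compact subset of $\fD$. This requires uniform control of the inversion integral away from the saddle (a standard quantitative local limit argument, but one that must be sensitive to the moving tilt $\beta$) together with a uniform Taylor remainder for $\log W_n$ at the saddle, which follows from the uniform convergence of $W_n$ and all its derivatives via Cauchy's integral formula. Once these uniform estimates are in place, verifying the cancellation of odd-order terms and the identification of $F_0, F_2$ is a matter of bookkeeping with Hermite polynomials evaluated at zero.
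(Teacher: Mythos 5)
Your proposal follows essentially the same route as the paper: Theorem~\ref{theo:BST_LD_expansion} is the BST specialization of Theorem~\ref{theo:LD_general}, which is obtained from the $\beta$-uniform expansion of Theorem~\ref{theo:edgeworth_general} by evaluating at the deterministic saddle $\beta=\beta_n(k)$ (so that $x_n(k)=0$), and the paper's proof of Theorem~\ref{theo:edgeworth_general} is exactly the tilted Fourier-inversion/Laplace argument you describe, with uniformity in the moving tilt built in from the start. Two points in your write-up need correction, though neither invalidates the plan.

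First, your explanation of why the half-integer powers drop out is off. The odd-index polynomials $G_{2j+1}(x;\beta)$ vanish at $x=0$ purely by parity ($G_j(-x)=(-1)^jG_j(x)$; see Remark~\ref{rem:G_odd_even}); there are no ``lower-order terms'' that need the relation $(\log W_\infty)'(\beta_n(k))+2\eee^{\beta_n(k)}\log n=k$ to cancel. Indeed that relation cannot be used to define the saddle: it would make $\beta_n(k)$ random, whereas the theorem requires the deterministic centering $2\eee^{\beta_n(k)}=k/\log n$. Second, the two inputs you defer to ``standard'' arguments are where the real work for BSTs lies: to obtain an order-$r$ expansion with error $o((\log n)^{-r-1})$ one needs (i) convergence $W_n\to W_\infty$ at superpolynomial speed in $\log n$, uniformly on a complex neighbourhood of the compact $\beta$-range (Assumption A3), and (ii) a.s.\ superpolynomial smallness of the inversion integral away from the saddle, uniformly in $\beta$ (Assumption A4). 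The locally uniform a.s.\ convergence quoted from the literature gives neither; the paper establishes both via the embedding into a continuous-time BRW, Biggins-type $L^\gamma$ martingale estimates, Doob's inequality and Borel--Cantelli, with the aperiodicity of the displacement measure (Assumption B4) entering in (ii). If you supply these two estimates, the rest of your outline --- including the identification of the prefactor via $\eee^{k\beta_n(k)}n^{-\varphi(\beta_n(k))}=n(k/(2\eee\log n))^k$ and $\sigma(\beta_n(k))\sqrt{2\pi\log n}=\sqrt{2\pi k}$ --- goes through as in the paper.
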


\begin{remark}
Theorems~\ref{theo:BST_CLT_expansion} and~\ref{theo:BST_LD_expansion} are special cases (with $\beta=0$ and $\beta=\beta_n(k)$, respectively) of Theorem~\ref{theo:edgeworth_one_split_BRW} (see also Theorem~\ref{theo:LD_one_split}) which deals with general one-split branching random walks and which we shall state and prove below. Similar results can easily be obtained for RRTs and PORTs; see Section~\ref{subsec:random_trees_one_split} for details.
\end{remark}

\begin{remark}
Our techniques yield analogous expansions for the mean profile: Theorems~\ref{theo:BST_CLT_expansion} and~\ref{theo:BST_LD_expansion} remain valid upon replacing $\bL_n(k)$ by $\E [\bL_n(k)]$ and the random polynomials $G_j, F_{2j}$ by their expectations.
\end{remark}

The above expansions can be used to answer a number of open questions on the BST profile. The \emph{mode} $u_n$ and the \emph{width} $M_n$ of a binary search tree are defined by
\begin{equation*}%\label{eq:width_mode_def_BST}
u_n =\argmax_{k\in\N_0} \bL_n(k),
\quad
M_n = \max_{k\in\N_0} \bL_n(k).
\end{equation*}
These quantities were studied by \citet{chauvin_drmota_jabbour}, \citet{drmota_hwang} and \citet{devroye_hwang}. In particular, \citet[Theorem~4.1]{devroye_hwang} proved that $u_n$ is concentrated near $2\log n$ in the sense that for every $B>0$ there is  $C_0 = C_0(B)$  such that
$$
\P[|u_n - 2\log n| > T] \leq C_0 T^{-B},\quad n \in \N, \quad T\geq 1.
$$
We show that, starting from some random almost surely finite time $K$, the mode $u_n$ attains only one of two possible explicitly given values. This result is a special case of Theorem~\ref{theo:mode_one_split}.

\begin{theorem}\label{theo:BST_mode}
There is an a.s.\ finite random variable $K$ such that for $n>K$, the mode $u_n$ of the BST with $n+1$ external nodes is equal to one of the numbers
\begin{equation*}
\lfloor 2\log n + \chi_1(0) - 1/2\rfloor
\text{ or }
\lceil 2\log n + \chi_1(0) - 1/2\rceil,
\end{equation*}
where  $\lfloor\cdot\rfloor$, $\lceil\cdot\rceil$ denote the floor and the ceiling functions, respectively,  and $\chi_1(0)$ is the Quicksort-distributed random variable defined in~\eqref{eq:chi_1_0_BST}.
\end{theorem}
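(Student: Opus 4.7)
The plan is to apply Theorem~\ref{theo:BST_CLT_expansion} with $r=2$, which gives, a.s.\ and uniformly in $k\in\Z$,
\[
\frac{\LLL_n(k)}{n} = \frac{\eee^{-(k-2\log n)^2/(4\log n)}}{\sqrt{4\pi\log n}}\Bigl(1+\frac{G_1(x_k;0)}{\sqrt{\log n}}+\frac{G_2(x_k;0)}{\log n}\Bigr) + o\bigl((\log n)^{-3/2}\bigr),
\]
with $x_k=(k-2\log n)/\sqrt{2\log n}$. The crucial algebraic observation is that, since $G_1(x;0)=\frac{\chi_1(0)-1/2}{\sqrt 2}\,x+\frac{x^3}{6\sqrt 2}$ (this follows from the explicit formula quoted in the theorem together with $W_\infty'(0)=\chi_1(0)$), the linear-in-$x_k$ piece of $G_1(x_k;0)/\sqrt{\log n}$ equals $(\chi_1(0)-1/2)(k-2\log n)/(2\log n)$. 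Merging this with the quadratic Gaussian exponent and completing the square produces a shifted Gaussian factor $\eee^{-(k-\mu_n)^2/(4\log n)}$ centered at $\mu_n:=2\log n+\chi_1(0)-1/2$, multiplied by a $k$-independent prefactor $1+O(1/\log n)$ and with a cubic Taylor remainder of order $O((\log n)^{-3/2})$ valid uniformly for $k$ in any bounded window around $\mu_n$.

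First I would localize $u_n$. Using the expansion at $r=0$, the Gaussian factor is at most $\eee^{-A^2/4}/\sqrt{4\pi\log n}$ whenever $|k-2\log n|\ge A\sqrt{\log n}$, so for $A$ a sufficiently large fixed constant, $\LLL_n(k)/n$ is strictly smaller than $\LLL_n(\lfloor 2\log n\rfloor)/n\sim 1/\sqrt{4\pi\log n}$ for all such $k$ and all $n$ large, a.s.; hence $|u_n-2\log n|\le A\sqrt{\log n}$ eventually. Within this window the completed-square form applies, and for any integer $k$ at distance $\ge 2$ from $\mu_n$ we have $(k-\mu_n)^2-(j-\mu_n)^2\ge 1$, where $j$ denotes whichever of $\lfloor\mu_n\rfloor,\lceil\mu_n\rceil$ is closer to $\mu_n$. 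The induced shifted-Gaussian ratio is therefore at most $\eee^{-1/(4\log n)}\le 1-c/\log n$, producing an absolute shortfall of order $(\log n)^{-3/2}$ which strictly dominates the Edgeworth error $o((\log n)^{-3/2})$. Consequently no such $k$ can be the argmax once $n>K$ for some a.s.\ finite random $K$, so $u_n\in\{\lfloor\mu_n\rfloor,\lceil\mu_n\rceil\}$.

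The main obstacle is the fine-tuned cooperation between Edgeworth accuracy and the Gaussian gap: the error $o((\log n)^{-3/2})$ in the expansion is exactly matched against the inter-integer gap $\Theta((\log n)^{-3/2})$ near the peak, so the order $r=2$ cannot be relaxed. The identification of the shift $\chi_1(0)-1/2$ as the first-moment correction hidden inside $G_1$ is what distinguishes the true mode from the naive candidate $\lfloor 2\log n\rceil$. Which of $\lfloor\mu_n\rfloor$ and $\lceil\mu_n\rceil$ actually attains the maximum depends on finer corrections (through $G_2$ and the fractional part $\{\mu_n\}$), and the theorem deliberately leaves this open since either integer can be the mode depending on the realization.
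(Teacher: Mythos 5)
Your overall strategy — apply the $r=2$ Edgeworth expansion at $\beta=0$, absorb the linear part of $G_1$ into a recentred Gaussian, and compare neighbouring integers — is the same as the paper's, and your identification of the shift $\chi_1(0)-1/2$ via $G_1(x;0)=\tfrac{\chi_1(0)-1/2}{\sqrt2}x+\tfrac{x^3}{6\sqrt2}$ is correct. However, there is a genuine gap in the middle of the argument. You establish the completed-square form, with a $k$-independent prefactor and an error that is $o((\log n)^{-3/2})$, only ``uniformly for $k$ in any bounded window around $\mu_n$'' — and that restriction is essential: for $|k-2\log n|\asymp\sqrt{\log n}$ the second-order term $\bigl((\chi_1(0)-\tfrac12)(k-2\log n)/(2\log n)\bigr)^2$ discarded when exponentiating, the cubic term of $G_1$, and the $k$-dependence of $G_2$ all contribute absolute errors of order $(\log n)^{-3/2}$ or larger, i.e.\ of the same order as (or larger than) the inter-integer gap you are trying to exploit. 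Your localization step, on the other hand, only confines $u_n$ to $|k-2\log n|\le A\sqrt{\log n}$, which is not a bounded window. The regime $C\le |k-\mu_n|\le A\sqrt{\log n}$ is therefore not covered by either half of your argument: there the Gaussian deficit $1-\eee^{-x_k^2/2}$ and the Edgeworth correction $G_1(x_k;0)/\sqrt{\log n}$ change relative size as $|k-2\log n|$ ranges from $O(1)$ to $O(\sqrt{\log n})$, and one must check separately, regime by regime, that the deficit dominates. This is exactly what the paper's Steps 3--5 do, splitting $\{|k-2\log n|\ge w_n^{1/4-\eps}\}$ into three sub-ranges and using the expansion with $r=0$, $r=1$ and $r=2$ respectively; some version of this bookkeeping cannot be avoided.

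Two smaller points. First, ruling out only integers at distance $\ge 2$ from $\mu_n$ leaves the candidates $\lfloor\mu_n\rfloor-1$ and $\lceil\mu_n\rceil+1$ unaddressed; you should instead rule out all integers $k\notin\{\lfloor\mu_n\rfloor,\lceil\mu_n\rceil\}$, for which $(k-\mu_n)^2-(j-\mu_n)^2\ge 1-\tfrac14=\tfrac34$ still holds, so the same estimate applies. Second, the paper's Step 1 performs your completion of the square in the equivalent form of maximizing the quadratic in $a=k-2\log n$ obtained after Taylor-expanding everything to order $1/w_n$, which has the advantage of being stated (and valid) on the window $|a|\le w_n^{1/4-\eps}$, wide enough to meet the outer regimes of Steps 3--5; adopting that window rather than a bounded one would also close your gap, provided you verify the error bounds there.
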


\begin{remark}
Using Proposition~\ref{cor:one_split_mode} we can say more about the behavior of the mode. More precisely, the following statements hold with probability $1$:

\vspace*{1mm}
\noindent
$\bullet$
there are arbitrarily long intervals of consecutive $n$'s for which the mode $u_n$ is unique and $u_n=\lceil 2\log n + \chi_1(0) - 1/2\rceil$;

\vspace*{1mm}
\noindent
$\bullet$
similarly, there are arbitrarily long intervals of consecutive $n$'s for which $u_n$ is unique and $u_n=\lfloor 2\log n + \chi_1(0) - 1/2\rfloor$;

\vspace*{1mm}
\noindent
$\bullet$
the set of $n\in\N$ such that $u_n$ is the integer closest to $2\log n + \chi_1(0) - 1/2$ has asymptotic density one (see~\eqref{def:asymp_freq} for the definition of the asymptotic density).
\end{remark}

%\begin{theorem}\label{theo:BST_mode}
%Let $$u_n^* = 2\log n + \chi_1(0) - 1/2,$$
%and
% $$v_n =
%\begin{cases}
%\lfloor u_n^* \rfloor & \text{if} \: \emph{dist}(\Z, \lfloor u_n^* \rfloor) \leq \emph{dist}(\Z, \lceil u_n^* \rceil),\\
%\lceil u_n^* \rceil,
%& \text{otherwise}.
%\end{cases} $$
%where  $\lfloor\cdot\rfloor$, $\lceil\cdot\rceil$ denote the floor and the ceiling functions, respectively.
%
%Then, denoting by  $u_n$ the mode of the BST with $n+1$ external nodes, we have, almost surely,
%\begin{enumerate}
%\item there is an a.s.\ finite random variable $N$ such that for $n>N$, $u_n$ is equal to %one of the numbers
%%\begin{equation*}
%$\lfloor u_n^*\rfloor$
%or
%$\lceil u_n^* \rceil,$
%\item the set $A = \{n \geq 1: u_n = v_n\}$ has asymptotic density one. In other words $n^{-1} | A \cap \{1, \ldots, n \} | \to 1$ a.s.\ as $n \to \infty$.
%\end{enumerate}
%\end{theorem}

As for the BST's width $M_n$, it is known~\cite{chauvin_drmota_jabbour,devroye_hwang,drmota_hwang} that
$$
\E [M_n] = \frac {n}{\sqrt{4\pi \log n}} \left(1+O\left(\frac 1 {\log n}\right)\right),
\quad
\frac {M_n\sqrt{4\pi\log n}}{n} \toas 1.
$$
Both \citet{devroye_hwang} as well as \citet[Section 5]{drmota_hwang} asked for the limit distribution of $M_n$ (if it exists).
The next result (which holds a.s.\ rather than in distribution) settles this issue, and  is a consequence of Theorem~\ref{theo:width_limit_law_one_split_BRW} and the remark following it.
\begin{theorem}\label{theo:BST_width}
Let $M_n$ be the width of a binary search tree with $n+1$ external nodes. With probability $1$, the set of subsequential limits of the sequence
$$
\tilde M_n := 4\log n \left(1- \frac{\sqrt{4\pi \log n}\, M_n}{n}\right),
\quad n\in\N,
$$
is the interval
$
[ \chi_2(0)-1/ {12},
\chi_2(0)+1/6]
$
with $\chi_2(0)$ as in~\eqref{eq:chi_2_0_BST}.
Furthermore, with $\theta_n= \min_{k\in\Z} |2\log n + \chi_1(0) - 1/2-k|$ we have
\begin{equation}\label{eq:theo:BST_width}
\tilde M_n - \theta_n^2 \toas \chi_2(0) - \frac 1 {12}.
\end{equation}
\end{theorem}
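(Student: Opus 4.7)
The approach combines the uniform Edgeworth expansion of Theorem~\ref{theo:BST_CLT_expansion}, applied with $r=2$, with the mode localization from Theorem~\ref{theo:BST_mode}. Setting $y_k := (k - 2\log n)/\sqrt{2\log n}$ and $\mu_n := 2\log n + \chi_1(0) - 1/2$, Theorem~\ref{theo:BST_CLT_expansion} yields, uniformly in $k\in\Z$ and almost surely,
$$
\frac{\LLL_n(k)}{n}
= \frac{e^{-y_k^{\,2}/2}}{\sqrt{4\pi\log n}}
\left(1 + \frac{G_1(y_k;0)}{\sqrt{\log n}} + \frac{G_2(y_k;0)}{\log n}\right)
+ o\!\left((\log n)^{-3/2}\right),
$$
while Theorem~\ref{theo:BST_mode} implies $u_n \in \{\lfloor\mu_n\rfloor, \lceil\mu_n\rceil\}$ for $n$ large enough, so that $y_{u_n} = O((\log n)^{-1/2})$.

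Writing $u_n = \mu_n + \tau_n$, where $\tau_n$ is one of the two signed distances from $\mu_n$ to the adjacent integers, I have $y_{u_n} = (\chi_1(0) - 1/2 + \tau_n)/\sqrt{2\log n}$. Using the explicit formula $G_1(x;0) = \tfrac{1}{\sqrt 2}(W'_\infty(0)\, x + (x^3-3x)/6)$, in particular $G_1(0;0)=0$ and $G_1'(0;0) = \tfrac{1}{\sqrt 2}(\chi_1(0) - 1/2)$, a short Taylor expansion exhibits the crucial algebraic cancellation
$$
-\frac{y_{u_n}^{\,2}}{2} + \frac{G_1(y_{u_n};0)}{\sqrt{\log n}} = \frac{(\chi_1(0)-1/2)^2 - \tau_n^{\,2}}{4\log n} + O\!\left((\log n)^{-3/2}\right);
$$
the cross terms between the random shift $\chi_1(0)-1/2$ and the integer gap $\tau_n$ annihilate. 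Since this expression is maximized by choosing $|\tau_n|$ as small as possible, the mode is the integer nearest to $\mu_n$, forcing $|\tau_n| = \theta_n$. Substituting back into the expansion,
$$
\frac{M_n \sqrt{4\pi \log n}}{n} = 1 + \frac{(\chi_1(0) - 1/2)^2 - \theta_n^{\,2} + 4\,G_2(0;0)}{4\log n} + o(1/\log n),
$$
so that $\tilde M_n - \theta_n^{\,2} \toas -(\chi_1(0)-1/2)^2 - 4\,G_2(0;0)$.

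To evaluate this constant I would invoke the relation $G_2(0;0) = F_2(0;0)/W_\infty(0) = \tfrac{1}{4}(\chi_1(0) - W''_\infty(0)) - \tfrac{1}{24}$ from Theorem~\ref{theo:BST_LD_expansion}, together with the identity $W''_\infty(0) = \chi_2(0) + \chi_1(0)^2$; a direct simplification collapses everything to $\chi_2(0) - 1/12$, which proves~\eqref{eq:theo:BST_width}. The set of subsequential limits follows from the fact that $\log(n+1) - \log n \to 0$, which forces the fractional parts of $2\log n$, and hence of $\mu_n$, to be dense in $[0,1)$; therefore the subsequential limits of $\theta_n$ fill $[0,1/2]$ and those of $\theta_n^{\,2}$ fill $[0,1/4]$, yielding the claimed interval $[\chi_2(0) - 1/12,\,\chi_2(0) + 1/6]$. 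The main obstacle is precisely the sharp cancellation identity in the second paragraph—without it, the $\chi_1(0)$ fluctuations would not be absorbed into the deterministic constant. One also needs $r=2$ to secure the uniform error $o((\log n)^{-3/2})$, which after multiplication by $4\log n$ becomes $o(1)$, as required.
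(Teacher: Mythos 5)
Your argument is correct and is essentially the paper's own proof (of the general Theorem~\ref{theo:width_limit_law}, Steps 1 and 6 of Section~\ref{subsec:proof_mode_width}) specialized to the BST parameters $\sigma^2(0)=\kappa_3(0)=\kappa_4(0)=2$: the $r=2$ Edgeworth expansion at $\beta=0$, restriction to the two mode candidates, and the completion of the square in which the cross terms between $\chi_1(0)-1/2$ and the integer gap cancel are exactly the paper's computation, and your constant $-(\chi_1(0)-1/2)^2-4G_2(0;0)$ agrees with the paper's $-a_*^2-2\sigma^2(0)C=\chi_2(0)+\kappa_3^2(0)/(6\sigma^4(0))-\kappa_4(0)/(4\sigma^2(0))=\chi_2(0)-1/12$. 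The only (harmless) shortcuts are that you import the mode localization from Theorem~\ref{theo:BST_mode} as a black box instead of re-deriving the exclusion of $|k-2\log n|\geq(\log n)^{1/4-\eps}$, and you evaluate the constant via $G_2(0;0)$ and the identity $W''_\infty(0)=\chi_1^2(0)+\chi_2(0)$ rather than via the general cumulant formula.
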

\begin{remark}
Let us stress that the centering $\theta_n^2$ in~\eqref{eq:theo:BST_width} is random since it involves $\chi_1(0)$. In order to obtain a non-random centering, we have to pass to a subsequence. If $(n_{j})_{j\in\N}\subset \N$ is any sequence with  $n_j \to +\infty$ and  $\{2\log n_j\}\to \alpha\in [0,1]$ as $j\to\infty$ (where $\{\cdot\}$ denotes the fractional part), then $\lim_{j\to\infty} \theta_{n_j} = |\{\alpha + \chi_1(0)\} - 1/2|$ and we obtain
$$
\tilde M_{n_j} \toasj \chi_2(0) - \frac 1 {12} + \left(\{\alpha + \chi_1(0)\} - \frac 12\right)^2.
$$
Since the set of accumulation points of the sequence $(\{2\log n\})_{n\in \N}$ is the interval $[0,1]$,
we obtain for $\tilde M_n$ a family of subsequential limit distributions indexed by $\alpha\in [0,1]$ with values $\alpha=0$ and $\alpha=1$ corresponding to the same limit.
%and homeomorphic (in the weak topology) to the unit circle.
\end{remark}

In the next theorem we describe the asymptotic behavior of the ``occupation numbers'' $\LLL_n(k_n)$, where $(k_n)_{n\in\N}$ is a deterministic sequence with sufficiently regular behavior. These quantities were the main object of study in \citet{fuchs_hwang_neininger}; see also~\cite{chauvin_drmota_jabbour,chauvin_etal} and (for results on lattice branching random walks) \cite{chen,gruebel_kabluchko_BRW,kabluchko_distr_of_levels}.
It is known~\cite{chauvin_etal,fuchs_hwang_neininger} (and not difficult to deduce from~\eqref{eq:LD_BST}) that if $k_n = 2\eee^\beta \log n + \alpha \sqrt {2\eee^\beta \log n} + o(\sqrt{\log n})$ for some $\beta\in (\beta_-,\beta_+)$ and  $\alpha\in\R$, then
\begin{equation}\label{eq:BST_near_beta_log_n1}
\frac{\sqrt {2\eee^\beta \log n}}{n^{2\eee^\beta-1}} \eee^{\beta k_n} \LLL_n(k_n) \toas \frac {W_\infty(\beta)} {\sqrt {2\pi}} \eee^{-\frac 12 \alpha^2}.
\end{equation}
Furthermore, the convergence of moments of order $\kappa$, for any $\kappa \in (1,2) \cup \{2,3,\ldots\}$  with $\E{[W_\infty^\kappa(\beta)]} < \infty$, was proved in~\cite{fuchs_hwang_neininger}.
Another consequence of~\eqref{eq:LD_BST} is that for $k_n = 2\eee^{\beta}\log n + c_n$, where $\beta\in (\beta_-,\beta_+)$, $c_n = o(\log n)$ we have
\begin{equation}\label{eq:BST_near_beta_log_n2}
\frac{\sqrt {2\eee^\beta \log n}}{\eee^{c_n} n^{2\eee^\beta-1}}  \left(\frac{k_n}{2\log n}\right)^{k_n} \LLL_n(k_n) \toas \frac{W_\infty(\beta)}{\sqrt {2\pi}}.
\end{equation}

For $\beta=0$, the limit random variable in~\eqref{eq:BST_near_beta_log_n1}, \eqref{eq:BST_near_beta_log_n2} is degenerate because $W_\infty(0)=1$, and a more refined analysis is needed to obtain a non-degenerate limit law. It turns out that all such results hold also in the almost sure sense.

\begin{theorem}\label{theo:BST_occupation_numbers}
Let $(\LLL_n(k))_{k\in\Z}$ be the  external profile of a  BST with $n+1$ external nodes. Put $\LLL_n^{\circ}(k):=\LLL_n(k)- \E [\LLL_n(k)]$ and let  $(k_n)_{n\in\N}$ be a deterministic  integer sequence.

\vspace*{2mm}
\noindent
\emph{(a)}
If $k_n = 2\log n + \alpha \sqrt {2\log n} + o(\sqrt{\log n})$ for some $\alpha\in\R$, then
$$
\frac{\log n}{n} \LLL_n^{\circ}(k_n) \toas \frac {\chi_1(0) - \E [\chi_1(0)]}{2\sqrt{2\pi}} \alpha \eee^{-\frac 12 \alpha^2}.
$$

\vspace*{2mm}
\noindent
\emph{(b)}
If $k_n = 2\log n + c_n$, where $c_n = o(\log n)$ and $\lim_{n\to\infty} |c_n| =\infty$, then
$$
\frac {(\log n)^{3/2}} {nc_n\eee^{c_n}}  \left(\frac{k_n}{2\log n}\right)^{k_n} \LLL_n^{\circ}(k_n) \toas \frac {\chi_1(0) - \E [\chi_1(0)]}{ 4\sqrt{\pi}}.
$$
In particular, if $c_n = o(\sqrt{\log n})$ and  $\lim_{n\to\infty} |c_n| = \infty$, then
$$
\frac{(\log n)^{3/2}}{nc_n} \LLL_n^{\circ}(k_n) \toas \frac {\chi_1(0) - \E [\chi_1(0)]}{4 \sqrt \pi}.
$$

\vspace*{2mm}
\noindent
\emph{(c)}
If $k_n = 2\log n + c_n$, where $c_n$ is bounded, then
\begin{multline*}
\frac{(\log n)^{3/2}}{n} \LLL_n^{\circ}(k_n)
- \frac {\chi_1(0) - \E [\chi_1(0)]} {4\sqrt{\pi}} \left(c_n + \frac{1}{2}\right)
\toas -\frac {W_\infty''(0) - \E [W_\infty''(0)]} {8\sqrt{\pi}}.
\end{multline*}
\end{theorem}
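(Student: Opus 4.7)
The plan is to derive all three parts by subtracting the Edgeworth expansion of Theorem~\ref{theo:BST_CLT_expansion} (for parts (a) and (c)) or its large-deviations counterpart Theorem~\ref{theo:BST_LD_expansion} (for part (b)) from the corresponding expansion for $\E[\LLL_n(k)]$ provided by the remark following Theorem~\ref{theo:BST_CLT_expansion}. After subtraction the $j = 0$ term cancels because $G_0 = 1$ is deterministic, so that almost surely and uniformly in $k \in \Z$,
\begin{equation*}
\frac{\LLL_n^{\circ}(k)}{n} = \frac{\eee^{-(k-2\log n)^2/(4\log n)}}{\sqrt{4\pi\log n}} \sum_{j=1}^{r}\frac{(G_j - \E[G_j])((k-2\log n)/\sqrt{2\log n};\,0)}{(\log n)^{j/2}} + o\bigl((\log n)^{-(r+1)/2}\bigr).
\end{equation*}
From the explicit form of $G_1$ and $W_\infty'(0) = \chi_1(0)$, the deterministic cubic $(x^3-3x)/6$ cancels on centering, leaving $(G_1 - \E[G_1])(x;0) = \tfrac{1}{\sqrt 2}(\chi_1(0) - \E[\chi_1(0)])\,x$.

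For part (a) I take $r = 1$ and set $x_n = (k_n - 2\log n)/\sqrt{2\log n} \to \alpha$; the exponential envelope tends to $\eee^{-\alpha^2/2}$ and multiplication by $\log n$ delivers the stated limit $\alpha\eee^{-\alpha^2/2}(\chi_1(0) - \E[\chi_1(0)])/(2\sqrt{2\pi})$. For part (c) I take $r = 2$ and let $c_n = k_n - 2\log n$ be bounded. The $j=1$ term contributes $c_n(\chi_1(0) - \E[\chi_1(0)])/(4\sqrt\pi) + o(1)$ after multiplying by $(\log n)^{3/2}/n$, since $\eee^{-c_n^2/(4\log n)}\to 1$. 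The $j=2$ term contributes $(G_2 - \E[G_2])(0;0)/(2\sqrt\pi) + o(1)$; to evaluate this I read off $G_2(0;0) = F_2(0;0) = \tfrac{1}{4}(W_\infty'(0) - W_\infty''(0)) - \tfrac{1}{24}$ from the identity $F_{2j}(0;\beta) = W_\infty(\beta)G_{2j}(0;\beta)$ at $\beta = 0$ together with the displayed formula for $F_2(0;\cdot)$ in Theorem~\ref{theo:BST_LD_expansion}. Centering eliminates the constant $-\tfrac{1}{24}$ and produces $(G_2 - \E[G_2])(0;0) = \tfrac{1}{4}\bigl[(\chi_1(0) - \E[\chi_1(0)]) - (W_\infty''(0) - \E[W_\infty''(0)])\bigr]$; summing the two contributions yields exactly the asserted limit.

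For part (b) the CLT expansion is not sharp enough when $|c_n| \to \infty$, and it is cleaner to subtract the large-deviations expansion of Theorem~\ref{theo:BST_LD_expansion} from its mean counterpart. At level $r = 1$ this gives
\begin{equation*}
n\Bigl(\frac{k_n}{2\eee\log n}\Bigr)^{k_n}\LLL_n^{\circ}(k_n) = \frac{W_\infty(\beta_n(k_n)) - \E[W_\infty(\beta_n(k_n))]}{\sqrt{2\pi k_n}} + o\bigl((\log n)^{-3/2}\bigr).
\end{equation*}
Since $\beta_n(k_n) = \log(1 + c_n/(2\log n)) = c_n/(2\log n) + O((c_n/\log n)^2) \to 0$ under $c_n = o(\log n)$, Taylor expansion of $W_\infty$ at the origin yields $W_\infty(\beta_n(k_n)) - \E[W_\infty(\beta_n(k_n))] = (\chi_1(0) - \E[\chi_1(0)])\,c_n/(2\log n) + O((c_n/\log n)^2)$. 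Rewriting $(k_n/(2\eee\log n))^{k_n} = \eee^{-c_n}(k_n/(2\log n))^{k_n}/n^2$ and multiplying by $(\log n)^{3/2}/c_n$ produces the first limit of (b); the error terms combine into $O(1/c_n) + O(c_n/\log n) = o(1)$ because $|c_n|\to\infty$ and $c_n = o(\log n)$. The second assertion follows from the first because $\eee^{-c_n}(k_n/(2\log n))^{k_n} = 1 + o(1)$ whenever $c_n = o(\sqrt{\log n})$.

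The main obstacle is bookkeeping in part (c): matching the constants $\tfrac{1}{2}$ and $\tfrac{1}{8}$ in the limit depends on the explicit value of $G_2(0;0)$, which is only implicit in Theorem~\ref{theo:BST_CLT_expansion} and which I extract by combining it with Theorem~\ref{theo:BST_LD_expansion}. A secondary subtlety appears in part (b): Theorem~\ref{theo:BST_LD_expansion} is stated uniformly over a fixed compact $L \subset (2\eee^{\beta_-}, 2\eee^{\beta_+})$ while $k_n/\log n \to 2$, but this is resolved by fixing any closed neighborhood of $2$ inside that interval, which contains $k_n/\log n$ eventually.
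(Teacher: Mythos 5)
Your proposal is correct and follows essentially the same route as the paper, which obtains Theorem~\ref{theo:BST_occupation_numbers} as the BST specialization of Theorem~\ref{theo:L_n_lim_distr_all_together}: one subtracts the mean Edgeworth expansion from the random one, at order $r=1$ with $\beta=0$ for part (a), at order $r=2$ with $\beta=0$ for part (c), and at the tilted point $\beta_n(k_n)$ (equivalently, via the large-deviation form of the expansion) for part (b), followed by a Taylor expansion of $W_\infty$ at the origin. The only slip is cosmetic: in part (b) the remainder after keeping only the $F_0$-difference is $O\bigl((\log n)^{-3/2}\bigr)$ rather than $o\bigl((\log n)^{-3/2}\bigr)$, because the centered $F_2$-difference is generically of that exact order; your final $O(1/c_n)$ bookkeeping already absorbs it using $|c_n|\to\infty$, so the conclusion is unaffected.
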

\begin{remark}
More specifically, if in case (c) we have $k_n = \lfloor2\log n\rfloor + a$ for $a\in\Z$, then the set of subsequential limits of the sequence $\left(\frac 1n (\log n)^{3/2} \LLL_n^{\circ}(k_n)\right)_{n\in\N}$ equals, with probability $1$, the closed interval
$$
\left\{
\frac {\chi_1(0) - \E [\chi_1(0)]} {4\sqrt{\pi}} \left(a +y \right)
-
\frac {W_\infty''(0) - \E[W_\infty''(0)]} {8\sqrt{\pi}}
\colon -\frac 12\leq y\leq \frac 12
\right\}.
$$
A subsequential limit of this form is attained along any subsequence $(n_j)_{n\in\N}$ with $\{2\log n_j\} \to \frac 12 - y$ as $j\to\infty$.
\end{remark}

\begin{remark}
Theorem~\ref{theo:BST_occupation_numbers} and Equations~\eqref{eq:BST_near_beta_log_n1}, \eqref{eq:BST_near_beta_log_n2} are special cases of more general Theorems~\ref{theo:L_n_lim_distr_beta_neq_0}, \ref{theo:L_n_lim_distr_all_together} which deal with one-split branching random walks.
As will be explained in Section~\ref{subsec:random_trees_one_split}, analogous results can be obtained for RRTs and PORTs simply by inserting suitable parameters into the theorems listed.
\end{remark}

In cases (a) and (b), distributional convergence (and, in fact, convergence of all moments) was proved by \citet{fuchs_hwang_neininger}. Our approach (which is very different from the method of moments and the contraction method  used in~\cite{fuchs_hwang_neininger}) yields a.s.\ convergence.
In case (c), \citet{fuchs_hwang_neininger} showed that $\LLL_n(k_n)$, centered by its expectation and normalized by its standard deviation, has no non-degenerate limit law. Our result identifies all possible weak (and, in fact, even a.s.)\ subsequential limits of the appropriately normalized $\LLL_n(k_n)$.
One may also ask for multivariate limit laws for the occupation numbers. For example, in case (c) it is natural to investigate the joint limit distribution of the random vector $(\bL_n(\lfloor 2\log n \rfloor+a))_{a=-K,\ldots,K}$ where $K\in\N_0$ is fixed. Since our results are a.s.,\ they automatically yield such multivariate limit theorems, whereas the moment method and the contraction method seem less convenient to treat such multivariate problems.
Finally, let us mention that there is one more case in which $W_\infty(\beta)$ is a.s.\ constant, namely $\beta=-\log 2$; see Section~\ref{subsec:profile_BST_log_n} for a detailed analysis of this case.

\section{The general Edgeworth expansion} \label{sec:general_expansion}
\subsection{Assumptions on the profiles}
Consider a sequence $\bL_1,\bL_2,\ldots$ such that each
$
\bL_n= (\bL_n(k))_{k\in\Z}
$
is a real-valued stochastic process defined on the integer lattice $\Z$. We assume that all $\bL_1,\bL_2,\ldots$  are defined on a common probability space $(\Omega,\cF,\P)$. We shall consider the random function
$$
\bL_n:\Z\to\R,\;\; k\mapsto \bL_n(k)
$$
as a ``\textit{random profile}''. As has already been mentioned, in our applications to random trees, $\bL_n(k)$ is the number of nodes of depth $k$ in a random tree  at time $n$.
%If $\bL_n(k)\geq 0$ (which is, in general, not required) it is convenient to regard the profile as a random measure on $\Z$ given by
%$$
%\vartheta_n(A) := \sum_{k\in A} \bL_n(k), \quad A\subset \Z.
%$$
Our aim is to prove that under appropriate assumptions, $\bL_n$ satisfies an Edgeworth-type asymptotic expansion with probability $1$. Let us state these assumptions. %We need the following set of assumptions.

\vspace*{2mm}
\noindent
\textbf{Assumption A1:} There is an open, non-empty interval $(\beta_-,\beta_+)\subset \R$ containing $0$ such that for every $n\in\N$ and every $\beta \in (\beta_-,\beta_+)$,
\begin{equation}
\sum_{k\in\Z} |\bL_n(k)| \eee^{\beta k} < \infty \quad \text{a.s.}
\end{equation}

%\vspace*{2mm}
%We always agree that $\cD$ is taken to be the maximal interval with the above property.
The interval $(\beta_-,\beta_+)$ need not be bounded. For example, Assumption~A1 is satisfied  on whole $\R$ if for every $n\in\N$ the profile support $\{k\in\Z\colon \bL_n(k)\neq 0\}$ is a finite set with probability $1$.

\vspace*{2mm}
The next assumption essentially states that the Laplace transform of the profile given by
$$
\beta \mapsto \sum_{k\in\Z} \bL_n(k) \eee^{\beta k}
$$
converges, after an appropriate normalization, to a random analytic function on some domain $\fD$ in the complex plane which contains the interval $(\beta_-,\beta_+)$.
To state this assumption we need the following ingredients:
\begin{itemize}
\item a sequence $(w_n)_{n\in\N}\subset \R$ such that $\lim_{n\to\infty} w_n = + \infty$;
\item an open, connected set $\fD\subset \{\beta\in\C\colon \beta_- < \Re \beta <\beta_+\}$ such that $\fD \cap \R = (\beta_-,\beta_+)$;
\item a (deterministic) analytic function $\varphi: \fD\to \C$ such that, for real $\beta\in (\beta_-,\beta_+)$, we have $\varphi(\beta)\in\R$ and $\varphi''(\beta)>0$.
\end{itemize}
It follows from Assumption~A1 that, with probability $1$,  the normalized Laplace transform
\begin{equation}\label{eq:biggins_def}
W_n(\beta) := \eee^{-\varphi(\beta) w_n} \sum_{k\in\Z} \bL_n(k)\eee^{\beta k}, \quad \beta \in \fD,
\end{equation}
is a random analytic function on $\fD$.

\vspace*{2mm}
\noindent
\textbf{Assumption A2:}
With probability $1$, the sequence of random analytic functions $(W_n)_{n\in\N}$ converges locally uniformly on $\fD$, as $n\to\infty$, to some random analytic function $W_\infty$ such that $\P[W(\beta)\neq 0\text{ for all }\beta\in(\beta_{-},\beta_{+})]=1$.
%which has no zeros on the interval $\beta \in (\beta_-, \beta_+)$.
\vspace*{2mm}
\noindent

Moreover, we require that the speed of convergence is superpolynomial in $w_n$.

\vspace*{2mm}
\noindent
\textbf{Assumption A3:}
For every compact set $K\subset \fD$ and $r\in\N$ we can find an a.s.\ finite random variable $C_{K,r}$ such that for all $n\in\N$,
\begin{equation}\label{eq:Psi_n_to_W_infty_speed}
\sup_{\beta\in K} |W_n(\beta) - W_\infty(\beta)| < C_{K, r} w_n^{-r}.
\end{equation}

%\vspace*{2mm}
%Note that by Assumption A the series $\sum_{k\in\Z} \bL_n(k) \eee^{\beta k}$ converges absolutely and uniformly in $\beta\in \bar \bD_{\eps_0}$, with probability $1$. Hence, the sum of the series is a function analytic on $\bD_{\eps_0}$ and continuous on $\bar \bD_{\eps_0}$, with probability $1$.

The last assumption is of technical nature. In the classical Edgeworth expansion for sums of i.i.d.\ integer-valued  variables, it corresponds to the assumption that $\Z$ is the minimal lattice on which the distribution is concentrated.

\vspace*{2mm}
\noindent
\textbf{Assumption A4:} For every compact set $K\subset (\beta_-,\beta_+)$, every $a>0$ and $r\in\N_0$, we have
\begin{equation} \label{assump_4}
\sup_{\beta\in K} \left[\eee^{-\varphi(\beta) w_n} \int_a^{\pi} \left|\sum_{k\in\Z} \bL_n(k) \eee^{k(\beta + iu)} \right| \dd u\right] = o(w_n^{-r}) \quad \text{a.s.\ as } n\to\infty.
\end{equation}

\subsection{Statement of the general Edgeworth expansion}
Consider a sequence of profiles $\bL_1,\bL_2,\ldots$ satisfying Assumptions A1--A4. We are going to state an Edgeworth expansion for $\bL_n$ as $n\to\infty$. In fact, we shall obtain an expansion of the ``tilted'' profile $k\mapsto \eee^{\beta k - \varphi(\beta) w_n} \bL_n(k)$ which is uniform as long as $\beta$ stays in a certain range.

We shall see that the following parameters $\mu(\beta)$ and $\sigma(\beta)$ play the role of the ``drift'' and the ``standard deviation'' of the tilted profile:
\begin{equation}\label{eq:mu_sigma_def}
\mu(\beta)=\varphi'(\beta),
\;\;\;
\sigma^2(\beta)=\varphi''(\beta).
\end{equation}
Introduce the normalized coordinate
\begin{equation}\label{eq:x_n_k_def}
x_n(k) = x_n(k;\beta) = \frac{k-\mu(\beta) w_n}{ \sigma(\beta) \sqrt{w_n}}, \quad k\in\Z.
\end{equation}
Define the ``deterministic cumulants'' $\kappa_j(\beta)$ and the ``random cumulants'' $\chi_{j}(\beta)$ by
\begin{equation}\label{eq:chi_infty_def}
\kappa_j(\beta) = \varphi^{(j)}(\beta),
\quad
\chi_{j}(\beta) = (\log W_\infty)^{(j)} (\beta).
\end{equation}
The general Edgeworth expansion for the profile $\bL_n$ reads as follows.
\begin{theorem}\label{theo:edgeworth_general}
Let $\bL_1,\bL_2,\ldots$ be a sequence of random profiles satisfying Assumptions A1--A4. Fix $r\in\N_0$ and a compact set $K\subset (\beta_-,\beta_+)$. Then,
\begin{equation}\label{eq:edgeworth_general_exp}
w_n^{\frac{r+1}{2}} \sup_{k\in\Z} \sup_{\beta\in K} \left|\eee^{\beta k - \varphi(\beta) w_n} \bL_n(k) - \frac{W_\infty(\beta) \eee^{-\frac 12 x_n^2(k)}}{\sigma(\beta) \sqrt{2\pi w_n}}  \sum_{j=0}^r \frac{G_j(x_n(k);\beta)}{w_n^{j/2}}   \right| \toas 0.
\end{equation}
Here, $G_j(x)=G_j(x;\beta)$, $j\in\N_0$, is a polynomial of degree at most $3j$ given by
\begin{equation}\label{eq:G_def}
G_j(x) = \frac {(-1)^j} {j!} \, \eee^{\frac 12 x^2} B_j(D_1,\ldots,D_j) \eee^{-\frac 12 x^2},
\end{equation}
where $B_j$ is the $j$-th Bell polynomial (defined in Remark~\ref{rem:bell_poly_def} below) and $D_1,D_2,\ldots$ are linear differential operators (with random coefficients) given by
\begin{equation}\label{eq:D_def}
D_j
=
\frac{\varphi^{(j+2)}(\beta)}{(j+1)(j+2)} \left(\frac{1}{\sigma(\beta)}\frac{\dd}{\dd x}\right)^{j+2}
+ \chi_j(\beta) \left(\frac{1}{\sigma(\beta)} \frac{\dd}{\dd x}\right)^{j}.
\end{equation}
%with $D=\frac{\dd}{\dd z}$, and .
\end{theorem}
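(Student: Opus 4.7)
The plan is to combine a Fourier-inversion representation of $\bL_n(k)$ with the almost-sure convergence $W_n \to W_\infty$ from Assumptions~A2--A3. By~A1 the sum $\hat{\bL}_n(\beta+iu)=\sum_{k\in\Z}\bL_n(k)e^{(\beta+iu)k}=W_n(\beta+iu)e^{\varphi(\beta+iu)w_n}$ converges absolutely, and Fourier inversion gives, a.s.,
\begin{equation*}
e^{\beta k-\varphi(\beta)w_n}\bL_n(k) \;=\; \frac{1}{2\pi}\int_{-\pi}^{\pi} W_n(\beta+iu)\,e^{w_n(\varphi(\beta+iu)-\varphi(\beta))-iuk}\,\dd u
\end{equation*}
for every $\beta\in(\beta_-,\beta_+)$, $k\in\Z$ and $n\in\N$. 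I would split this integral into a \emph{central} range $|u|\le w_n^{-1/2+\delta}$, an \emph{intermediate} range $w_n^{-1/2+\delta}\le|u|\le a$ with a fixed small $a>0$, and a \emph{tail} $a\le|u|\le\pi$, where $\delta>0$ is sufficiently small.

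The tail is $o(w_n^{-r})$ a.s.\ uniformly in $\beta\in K$ by a direct application of Assumption~A4. For the intermediate range I would use $\Re(\varphi(\beta+iu)-\varphi(\beta))\le -cu^2$ on a neighbourhood of $u=0$ (uniformly in $\beta\in K$ by continuity, since $\sigma^2(\beta)=\varphi''(\beta)>0$ on the compact set $K$), producing an $\exp(-cw_n^{2\delta})$ factor and hence a superpolynomially small contribution; the local uniform boundedness of $|W_n|$ on a complex neighbourhood of $K$, which follows from~A2--A3, bounds the integrand.

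In the central piece I would substitute $u=v/(\sigma(\beta)\sqrt{w_n})$, so that Taylor expansion of $\varphi$ at $\beta$ yields
\begin{equation*}
w_n\bigl(\varphi(\beta+iu)-\varphi(\beta)\bigr) - iuk \;=\; -\tfrac12 v^2 - iv\,x_n(k) + \sum_{j\ge 1} w_n^{-j/2}\,\frac{(iv)^{j+2}\varphi^{(j+2)}(\beta)}{(j+2)!\,\sigma(\beta)^{j+2}}.
\end{equation*}
Assumption~A3 lets me replace $W_n$ by $W_\infty$ at the cost of an a.s.\ $o(w_n^{-r})$ error, after which Taylor-expanding $\log W_\infty(\beta+iu)$ at $u=0$ introduces a second formal series $\sum_{j\ge 1}w_n^{-j/2}\,(iv)^j\chi_j(\beta)/(j!\,\sigma(\beta)^j)$. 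Grouping both contributions as $\sum_{j\ge 1}\tilde a_j(v;\beta)\,w_n^{-j/2}/j!$ with coefficients dictated by~\eqref{eq:D_def}, the exponential formula $\exp\!\bigl(\sum_{j\ge 1}a_jt^j/j!\bigr)=\sum_{n\ge 0}B_n(a_1,\ldots,a_n)t^n/n!$ produces, after truncation at order $w_n^{-r/2}$, a polynomial of degree $\le 3r$ in $v$ whose coefficients are $B_j(\tilde a_1,\ldots,\tilde a_j)/j!$. Finally, the identity
\begin{equation*}
\frac{1}{2\pi}\int_{\R} v^m e^{-v^2/2-ivx}\,\dd v \;=\; (-1)^m\frac{\dd^m}{\dd x^m}\frac{e^{-x^2/2}}{\sqrt{2\pi}}
\end{equation*}
recasts each factor $(iv)^m$ as the action of $(\sigma(\beta)^{-1}\dd/\dd x)^m$ on $e^{-x^2/2}/\sqrt{2\pi}$; the operators $D_j$ of~\eqref{eq:D_def} and the Bell-polynomial expression~\eqref{eq:G_def} for $G_j$ arise in exactly this way.

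The main obstacle is making all estimates \emph{uniform} in $k\in\Z$ and $\beta\in K$ at the required almost-sure speed $o(w_n^{-(r+1)/2})$. When $|x_n(k)|$ lies in a bounded range, the Gaussian integrals and all truncation errors are routine; however, for $|x_n(k)|$ large both sides of~\eqref{eq:edgeworth_general_exp} are individually small, and a matching super-polynomial a priori bound on $e^{\beta k-\varphi(\beta)w_n}\bL_n(k)$ is needed to absorb the polynomial factors $G_j(x_n(k);\beta)$. A convenient way to obtain such a bound is to apply the Fourier-inversion identity with $\beta$ replaced by $\beta\pm\eta$ for a small $\eta>0$ (still inside $(\beta_-,\beta_+)$) and use the resulting exponential tilt combined with the a.s.\ local uniform boundedness of $W_n$ from~A2--A3; this furnishes Gaussian-type decay in $x_n(k)$. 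Patching the small-$|x_n(k)|$ and large-$|x_n(k)|$ regimes and verifying the a.s.\ uniformity in $\beta\in K$ of every error term is where the bulk of the technical work lies.
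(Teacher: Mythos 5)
Your proposal follows essentially the same route as the paper: Fourier inversion of the tilted profile, a three-zone split of the frequency integral (central zone handled by Taylor-expanding $\varphi$ and $\log W_\infty$ and resumming via Bell polynomials, intermediate zone by the bound $\Re(\varphi(\beta+iu)-\varphi(\beta))\leq -cu^2$ together with local uniform boundedness of $W_n$, tail by Assumption~A4), and the Gaussian--Hermite identity converting powers of $iv$ into the operators $D_j$. All of this matches the paper's argument step for step.

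The one point where you go astray is the ``main obstacle'' you identify at the end. No separate treatment of large $|x_n(k)|$, and no auxiliary tilt $\beta\mapsto\beta\pm\eta$, is needed: since $|\eee^{-is\,x_n(k)}|=1$, the difference between $\eee^{\beta k-\varphi(\beta)w_n}\bL_n(k)$ and the inverse transform of the truncated approximation $V_{r,n}$ is bounded by $\frac{1}{2\pi\sigma\sqrt{w_n}}\int|\psi_n(s)-V_{r,n}(s)|\,\dd s$ plus the tail contributions, a quantity that does not depend on $k$ at all. The Gaussian decay in $x_n(k)$ on the approximating side is produced automatically when one integrates $V_{r,n}(s)\eee^{-isx}$ (each term is $\eee^{-s^2/2}$ times a polynomial in $s$, whose inverse transform is $\eee^{-x^2/2}$ times a Hermite polynomial), so both regimes are covered by the single $L^1$ estimate in the Fourier variable. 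The genuinely technical part is instead the uniform (in $\beta\in K$ and in $s$ with $1+|s|<w_n^{1/8}$) control of the remainder of the Bell-polynomial series, which requires Cauchy-type bounds $|\varphi^{(k+2)}(\beta)/(k+2)!|$, $|\chi_k(\beta)/k!|\leq M\eps_0^{-k}$ on a complex neighbourhood of $K$ and a geometric-series summation; you gloss over this, but it is routine once set up.
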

\begin{remark}\label{rem:bell_poly_def}
The (complete) \textit{Bell polynomials} $B_j(z_1,\ldots,z_j)$ are defined by the formal identity
\begin{equation}\label{eq:bell_poly_def1}
\exp\left\{ \sum_{j=1}^{\infty} \frac{x^j}{j!} z_j \right\}
=
\sum_{j=0}^{\infty} \frac{x^j}{j!} B_j(z_1,\ldots,z_j).
\end{equation}
It follows that $B_0=1$ and for $j\in\N$,
\begin{equation}\label{eq:bell_poly_def}
B_j(z_1,\ldots,z_j) = \sum{}^{'}\frac{j!}{i_1!\ldots i_j!} \left(\frac{z_1}{1!}\right)^{i_1}\ldots  \left(\frac{z_j}{j!}\right)^{i_j},
\end{equation}
where the sum $\sum{}^{'}$ is taken over all $i_1,\ldots,i_j\in\N_0$ satisfying $1i_1+2i_2+\ldots +j i_j =j$.
We shall need the first three Bell polynomials which are given by
\begin{equation}\label{eq:Bell_poly_first}
B_0=1,\quad
B_1(z_1)=z_1, \quad
B_2(z_1,z_2) = z_1^2 + z_2.
%\quad
%B_3(z_1,z_2,z_3) = z_1^3+3z_1z_2+z_3.
\end{equation}
\end{remark}
\begin{remark}
It follows from~\eqref{eq:G_def}, \eqref{eq:D_def}, \eqref{eq:Bell_poly_first} that $G_0,G_1,G_2$ are given by
\begin{align}
G_0(x)
&=
1,\label{eq:G0}\\
G_1(x)
&=
\frac{\chi_1(\beta)}{\sigma(\beta)} x + \frac{\kappa_3(\beta)}{6\sigma^3(\beta)} \Herm_3(x),\label{eq:G1}\\
G_2(x)
&=
\frac{\chi_1^2(\beta) + \chi_2(\beta)}{2\sigma^{2}(\beta)}\Herm_2(x)
+
\left(\frac{\kappa_4(\beta)}{24\sigma^4(\beta)} + \frac{\kappa_3(\beta)\chi_1(\beta)}{6\sigma^4(\beta)}\right) \Herm_4(x)\label{eq:G2}\\
&+
\frac{\kappa_3^2(\beta)}{72 \sigma^6(\beta)}\Herm_6(x), \notag
\end{align}
where  $\Herm_n(x)$ denotes the $n$-th ``probabilist'' \textit{Hermite polynomial}:
$$
\Herm_n(x)= \eee^{\frac 12 x^2} \left(-\frac{\dd}{\dd x}\right)^n \eee^{-\frac 12 x^2}.
$$
%where $D=$ is the differentiation operator.
The first few Hermite polynomials relevant to us are
\begin{align}
&\Herm_1(x)= x,\quad
\Herm_2(x)= x^2-1, \quad
\Herm_3(x) = x^3-3x,\label{eq:Herm1}\\
&\Herm_4(x)= x^4-6x^2+3, \quad
\Herm_6(x)= x^6 - 15 x^4 + 45 x^2 - 15. \label{eq:Herm2}
\end{align}
\end{remark}

\begin{remark}\label{rem:G_odd_even}
We have $G_j(-x)= (-1)^j G_j(x)$ for all $j\in\N_0$. In particular, $G_j(0) = 0$ for odd $j$.
Indeed, by~\eqref{eq:D_def}, $D_k$ is a linear combination of the differential operators $(\dd/\dd x)^k$ and $(\dd/\dd x)^{k+2}$.  It follows from~\eqref{eq:bell_poly_def} that $B_j(D_1,\ldots,D_j)$ is a linear combination of the differential operators of the form
$$
\left(\frac{\dd}{\dd x}\right)^{m_1 i_1} \ldots \left(\frac{\dd}{\dd x}\right)^{m_j i_j} = \left(\frac{\dd}{\dd x}\right)^{m_1 i_1 + \ldots + m_j i_j},
$$
where each $m_k$ is either $k$ or $k+2$, so that $m_1 i_1 + \ldots + m_j i_j$ has the same parity as $j$ because of the relation $1i_1+2i_2+\ldots+ ji_j=j$. Hence, by~\eqref{eq:G_def}, $G_j(x)$ is a linear combination of Hermite polynomials $\Herm_k(x)$, where $k$ has the same parity as $j$. The statement follows from the relation $\Herm_k(-x) = (-1)^k \Herm (x)$.
\end{remark}

\begin{remark}
In Section~\ref{subsec:alternative_expression} we will show that $F_j(x;\beta) := W_\infty(\beta) G_j(x;\beta)$ is a polynomial in $x$ (which is evident) whose coefficients are \textit{linear combinations} (rather than rational functions) of $1, W_\infty(\beta), \ldots, W_\infty^{(j)}(\beta)$  (which is not evident). For example,
$$
W_\infty(\beta)\chi_1(\beta) = W'_\infty(\beta),
\quad
W_\infty(\beta)(\chi_1^2(\beta) + \chi_2(\beta)) = W''_\infty(\beta),
$$
thus proving the above claim for $G_2(x;\beta)$; see~\eqref{eq:G2}.
\end{remark}

Taking $r=0$ and $\beta=0$ in Theorem~\ref{theo:edgeworth_general} we obtain the following local limit theorem for the profile $\bL_n$.
\begin{theorem}\label{ref:LLT_general}
Let $\bL_1,\bL_2,\ldots$ be a sequence of random profiles satisfying Assumptions A1--A4. Then,
$$
\sqrt{w_n} \sup_{k\in\Z}
\left|\eee^{-\varphi(0) w_n} \bL_n(k) - \frac{W_\infty(0)}{\sigma(0) \sqrt{2\pi w_n}} \exp\left\{-\frac 12\left(\frac{k - \mu(0) w_n}{\sigma(0) \sqrt{w_n}}\right)^2 \right\}\right| \toas 0.
$$
%where $x_n(k) = \frac{k - \mu(0) w_n}{\sigma(0) \sqrt{w_n}}$, $k\in\Z$.
\end{theorem}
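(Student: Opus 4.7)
The plan is to deduce this local limit theorem as an immediate specialization of the general Edgeworth expansion, Theorem~\ref{theo:edgeworth_general}, by taking $r = 0$, restricting $\beta$ to the single point $\beta = 0$, and inserting the explicit formulas for the leading polynomial $G_0$ and the normalized coordinate $x_n(k)$.

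First I would verify that the application of Theorem~\ref{theo:edgeworth_general} is legitimate: by Assumption~A1 we have $0 \in (\beta_-,\beta_+)$, so the singleton $K := \{0\}$ is a compact subset of $(\beta_-,\beta_+)$ and the hypotheses of Theorem~\ref{theo:edgeworth_general} are satisfied for this choice. With this $K$, the supremum over $\beta \in K$ in \eqref{eq:edgeworth_general_exp} is trivial and reduces to evaluation at $\beta = 0$.

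Next I would plug in $r = 0$ and $\beta = 0$. When $r = 0$, the sum in \eqref{eq:edgeworth_general_exp} consists only of the $j = 0$ term $G_0(x_n(k);0)$, which by \eqref{eq:G0} equals $1$ identically. The prefactor $\eee^{\beta k - \varphi(\beta)w_n}$ collapses to $\eee^{-\varphi(0)w_n}$, and by \eqref{eq:x_n_k_def} and \eqref{eq:mu_sigma_def} the normalized coordinate specializes to
$$
x_n(k;0) = \frac{k - \mu(0)\, w_n}{\sigma(0)\sqrt{w_n}}.
$$
Substituting these into \eqref{eq:edgeworth_general_exp}, together with the overall prefactor $w_n^{(r+1)/2} = \sqrt{w_n}$, yields verbatim the display in the statement of Theorem~\ref{ref:LLT_general}.

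Since this deduction is purely a matter of specialization, there is no genuine obstacle at this stage; the real work is contained in the proof of Theorem~\ref{theo:edgeworth_general} itself, which is carried out in Section~\ref{sec:proof_general_edgeworth}. In effect, Theorem~\ref{ref:LLT_general} has no content beyond its parent expansion, and is stated only to highlight the most transparent consequence: an almost sure local central limit theorem for the profile, with Gaussian density parameters $\mu(0) = \varphi'(0)$ and $\sigma^2(0) = \varphi''(0)$ dictated by the deterministic exponent $\varphi$, and with multiplicative random correction $W_\infty(0)$ recording the mod-$\phi$ limit.
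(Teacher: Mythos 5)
Your proposal is correct and coincides with the paper's own derivation: Theorem~\ref{ref:LLT_general} is obtained there precisely by specializing Theorem~\ref{theo:edgeworth_general} to $r=0$ and $\beta=0$ (with $K=\{0\}$ compact in $(\beta_-,\beta_+)$ and $G_0\equiv 1$). No further comment is needed.
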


Theorem~\ref{theo:edgeworth_general} contains one free parameter $\beta$ which can be chosen as a function of $k$ and $n$. With $\beta=0$ we obtain an asymptotic expansion complementing Theorem~\ref{ref:LLT_general}. On the other hand, it is natural to choose $\beta=\beta_{n}(k)$ as the solution to
\begin{align} \label{def:beta_n_k}
\varphi'(\beta_{n}(k)) = \frac{k}{w_n},\quad \frac{k}{w_n}\in \varphi'((\beta_{-},\beta_{+})),
\end{align}
where the strict monotonicity of $\varphi'$ has to be recalled. Then, $x_{n}(k) = 0$ by definition \eqref{eq:x_n_k_def}, and we obtain the following result.
\begin{theorem}\label{theo:LD_general}
Let $\bL_1,\bL_2,\ldots$ be a sequence of random profiles satisfying Assumptions A1--A4. Then, for all $r\in\N_0$ and any compact set $K\subset (\beta_-,\beta_+)$,
\begin{equation}\label{eq:general_LD_expansion}
w_n^{r+1} \sup_{k\in \Z \cap w_n \varphi'(K)}\left|\frac{\eee^{k\beta_n(k)}}{\eee^{\varphi(\beta_n(k))w_n}}   \LLL_n(k)  - \frac{W_\infty(\beta_n(k))}{ \sigma(\beta_n(k)) \sqrt{2\pi w_n}} \sum_{j=0}^{r} \frac{G_{2j}(0; \beta_n(k))}{w_n^{j}}\right| \toas  0.
\end{equation}
%for every compact set $K\subset (\beta_-,\beta_+)$.
\end{theorem}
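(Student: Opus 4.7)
The plan is to obtain Theorem~\ref{theo:LD_general} as a direct specialization of the general Edgeworth expansion Theorem~\ref{theo:edgeworth_general}, by making the ``saddle-point'' choice $\beta = \beta_n(k)$ which kills the Gaussian factor and all the odd-indexed polynomial terms. The key observation is that, since $\varphi'' > 0$ on $(\beta_-,\beta_+)$, the map $\varphi'\colon (\beta_-,\beta_+)\to\varphi'((\beta_-,\beta_+))$ is a $C^\infty$-diffeomorphism, so the implicit definition \eqref{def:beta_n_k} makes sense. For $K\subset(\beta_-,\beta_+)$ compact, the image $\varphi'(K)$ is compact in $\varphi'((\beta_-,\beta_+))$, and $k/w_n\in\varphi'(K)$ is equivalent to $\beta_n(k)\in K$. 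In particular, as $k$ ranges over $\Z\cap w_n\varphi'(K)$, the point $\beta_n(k)$ stays in the fixed compact set $K\subset(\beta_-,\beta_+)$.

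First, I would apply Theorem~\ref{theo:edgeworth_general} with the same compact $K$ but with the expansion order $r$ replaced by $r' := 2r+1$, to get
$$
w_n^{r+1}\sup_{k\in\Z}\sup_{\beta\in K}\left|\eee^{\beta k-\varphi(\beta)w_n}\bL_n(k)-\frac{W_\infty(\beta)\eee^{-\frac12 x_n^2(k;\beta)}}{\sigma(\beta)\sqrt{2\pi w_n}}\sum_{j=0}^{2r+1}\frac{G_j(x_n(k;\beta);\beta)}{w_n^{j/2}}\right|\toas 0,
$$
since $(r'+1)/2 = r+1$. Because the double supremum is pointwise in $(k,\beta)$, I can specialize the inner supremum to the measurable choice $\beta = \beta_n(k)$ for each $k$; the inequality under the supremum is then preserved after taking $\sup$ over $k\in\Z\cap w_n\varphi'(K)$, since this subset is contained in $\Z$.

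With $\beta=\beta_n(k)$ one has $x_n(k;\beta_n(k)) = 0$ by \eqref{eq:x_n_k_def} and \eqref{def:beta_n_k}, so $\eee^{-x_n^2/2} = 1$. By Remark~\ref{rem:G_odd_even}, $G_j(0;\beta) = 0$ for every odd $j$, so only the even-indexed contributions survive:
$$
\sum_{j=0}^{2r+1}\frac{G_j(0;\beta_n(k))}{w_n^{j/2}}=\sum_{j=0}^{r}\frac{G_{2j}(0;\beta_n(k))}{w_n^{j}}.
$$
Substituting and factoring $\eee^{k\beta_n(k) - \varphi(\beta_n(k))w_n}$ out of the absolute value (as in Theorem~\ref{theo:LD_general}) yields exactly \eqref{eq:general_LD_expansion}.

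There is essentially no hard step: Theorem~\ref{theo:LD_general} is a corollary, with the main content being the bookkeeping on the expansion order ($r'=2r+1$ in place of $r$) together with the parity cancellation $G_{2j+1}(0;\beta)=0$. The only point that requires slight care is ensuring that the ``uniformity in $\beta\in K$'' obtained from Theorem~\ref{theo:edgeworth_general} is strong enough to allow $\beta$ to depend on $k$; this is automatic because the double supremum in \eqref{eq:edgeworth_general_exp} bounds the expression at every pair $(k,\beta)\in\Z\times K$ simultaneously, and the map $k\mapsto\beta_n(k)$ takes values in $K$ whenever $k\in\Z\cap w_n\varphi'(K)$.
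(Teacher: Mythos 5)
Your proposal is correct and matches the paper's (implicit) derivation: the paper obtains Theorem~\ref{theo:LD_general} precisely by specializing Theorem~\ref{theo:edgeworth_general} to $\beta=\beta_n(k)$, using $x_n(k)=0$ and the vanishing of $G_j(0;\beta)$ for odd $j$ from Remark~\ref{rem:G_odd_even}. Your explicit bookkeeping ($r'=2r+1$ to match the rate $w_n^{r+1}$) and the observation that the double supremum in \eqref{eq:edgeworth_general_exp} permits the $k$-dependent choice of $\beta$ are exactly the points the paper leaves to the reader.
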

\begin{remark}
Note that only half-integer powers of $w_n$ are present in the sum in~\eqref{eq:general_LD_expansion} because $G_{2j-1}(0)=0$ for $j\in\N$; see Remark~\ref{rem:G_odd_even}. In particular, with $r=0$ we obtain a precise large deviations asymptotics
$$
w_n \sup_{k\in \Z \cap w_n \varphi'(K)}\left|\eee^{k\beta_n(k) - \varphi(\beta_n(k))w_n} \LLL_n(k)  - \frac{W_\infty(\beta_n(k))}{ \sigma(\beta_n(k)) \sqrt{2\pi w_n}}\right| \toas  0.
$$
\end{remark}

\begin{remark}[On mod-$\phi$-convergence] \label{rem:modphi}
Let $\phi$ be a non-degenerate infinitely divisible distribution with cumulant generating function $\eta(\beta) = \log \int_\R \eee^{\beta x} \phi(\dd x)$.
\citet{feray_meliot_nikeghbali} called a sequence of real random variables $(X_n)_{n\in\N}$  \textit{mod-$\phi$ convergent} with speed $w_n\to+\infty$ if
\begin{equation}\label{eq:mod_phi_def}
\lim_{n\to\infty} \frac{\E [\eee^{\beta X_n}]}{ \eee^{\eta(\beta)w_n}} = \Psi_{\infty}(\beta)
\end{equation}
locally uniformly on some strip $\{\beta\in\C\colon \delta_- < \Re \beta <\delta_+\}$,  where $\Psi_\infty(\beta)$ is an analytic function  which does not vanish on $(\delta_-,\delta_+)$. Variations of this definition can be found in~\cite{delbaen_kowalski_nikeghbali,jacod_kowalski_nikeghbali_mod_Gauss,kowalski_nikeghbali_mod_Poi,kowalski_nikeghbali_zeta,meliot_nikeghbali_statmech}.  Assuming that~\eqref{eq:mod_phi_def} holds with speed $O(w_n^{-r})$, for all $r\in\N$, they obtained Edgeworth expansions for both lattice and non-lattice $X_n$.  In particular, Theorem~3.4 of~\citet{feray_meliot_nikeghbali} is closely related to expansion~\eqref{eq:general_LD_expansion}. In our setting, the distribution of $X_n$, namely the function $k\mapsto \P[X_n=k]$, is replaced by the profile $k\mapsto \LLL_n(k)$ (which may be random). More importantly, the analogue of~\eqref{eq:mod_phi_def} given in Assumptions A2 and A3 holds in some open neighborhood $\fD$ of $(\beta_-,\beta_+)$, but it fails to hold in the strip $\{\beta\in\C\colon \beta_- < \Re \beta <\beta_+\}$ in our applications to random trees. The function $W_\infty$ replacing $\Psi_\infty$ may be random in our setting. Also note that the expansion in Theorem~\ref{theo:edgeworth_general} is uniform in the ``tuning'' parameter $\beta$ and its terms are given explicitly using Bell and Hermite polynomials, which is convenient in applications.
\end{remark}

\subsection{Mode and width}
Using the Edgeworth expansion stated in Theorem~\ref{theo:edgeworth_general} we can obtain limit theorems for the \textit{width}
$M_n$ and the \textit{mode} $u_n$ of the profile $k\mapsto \bL_n(k)$.  These are defined by
\begin{equation}\label{eq:width_mode_def}
M_n = \max_{k\in\Z} \bL_n(k),
\quad
u_n =\argmax_{k\in\Z} \bL_n(k).
\end{equation}
\begin{theorem}\label{theo:mode}
Consider a sequence of random profiles $\bL_1,\bL_2,\ldots$ satisfying Assumptions A1--A4. There is an a.s.\ finite random variable $K$ such that for $n>K$, the mode $u_n$ is equal to $\lfloor u_n^*\rfloor$ or $\lceil u_n^*\rceil$, where
\begin{equation}\label{eq:u_n_star}
u_n^* = \varphi'(0) w_n + \chi_1(0) - \frac{\kappa_3(0)}{2\sigma^2(0)}.
\end{equation}
%and $\lfloor\cdot\rfloor$, $\lceil\cdot\rceil$ denote the floor and the ceiling functions, respectively.
\end{theorem}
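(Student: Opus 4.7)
The plan is to apply Theorem~\ref{theo:edgeworth_general} at $\beta=0$ with $r=2$ and to analyze the discrete log-differences of the resulting approximation. Setting
$$
p_n(k):=\frac{W_\infty(0)\,e^{\varphi(0)w_n-x_n^2(k)/2}}{\sigma(0)\sqrt{2\pi w_n}}\Bigl[1+\frac{G_1(x_n(k);0)}{\sqrt{w_n}}+\frac{G_2(x_n(k);0)}{w_n}\Bigr],
$$
the Edgeworth expansion gives $\bL_n(k)=p_n(k)+o(e^{\varphi(0)w_n}/w_n^{3/2})$ uniformly in $k\in\Z$ almost surely. Since $p_n(k)$ has a Gaussian factor in $x_n(k)$ while the peak value is of order $e^{\varphi(0)w_n}/\sqrt{w_n}$, for sufficiently large $A>0$ one has $\bL_n(k)<\bL_n(\lfloor u_n^*\rfloor)$ a.s.\ for all large $n$ and all $k$ with $|x_n(k)|>A\sqrt{\log w_n}$. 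Thus the mode is localized to the window $W_n:=\{k\in\Z:|x_n(k)|\leq A\sqrt{\log w_n}\}$.

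The key step is the computation of the log-difference $\log p_n(k+1)-\log p_n(k)$ for $k\in W_n$. Using
$$
x_n(k+1)^2-x_n(k)^2=\frac{2(k-\varphi'(0)w_n)+1}{\sigma^2(0)\,w_n},
$$
the expansion $\log(1+G_1/\sqrt{w_n}+G_2/w_n)=G_1/\sqrt{w_n}+O(1/w_n)$, and the explicit form $G_1(x;0)=(\chi_1(0)/\sigma(0))\,x+(\kappa_3(0)/(6\sigma^3(0)))(x^3-3x)$, a careful expansion yields
$$
\log p_n(k+1)-\log p_n(k)=-\frac{k-u_n^*+\tfrac12}{\sigma^2(0)\,w_n}+o\!\left(\frac{|k-u_n^*|+1}{w_n}\right),
$$
uniformly over $k\in W_n$. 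The algebraic miracle here is that the constant contribution $G_1'(0;0)/(\sigma(0)w_n)=(\chi_1(0)/\sigma^2(0)-\kappa_3(0)/(2\sigma^4(0)))/w_n$ from the $G_1$-term combines with the linear part $-(k-\varphi'(0)w_n)/(\sigma^2(0)w_n)$ of the Gaussian second-difference to produce precisely the shift $u_n^*=\varphi'(0)w_n+\chi_1(0)-\kappa_3(0)/(2\sigma^2(0))$, while the additional $\tfrac12$ comes from the constant part $1/(2\sigma^2(0)w_n)$ of that same second-difference.

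The conclusion then follows by monotonicity. For any $k\in W_n$ with $k\leq\lfloor u_n^*\rfloor-1$, one has $k-u_n^*+\tfrac12\leq-\tfrac12$, bounded away from zero, so the log-difference above is strictly positive for all large $n$; iterating yields $\bL_n(k)<\bL_n(\lfloor u_n^*\rfloor)$. Symmetrically, for any $k\in W_n$ with $k\geq\lceil u_n^*\rceil$, $k-u_n^*+\tfrac12\geq\tfrac12$, so the log-difference is strictly negative and iteration gives $\bL_n(j)<\bL_n(\lceil u_n^*\rceil)$ for all $j>\lceil u_n^*\rceil$. Combined with the localization, this yields an a.s.\ finite random $K$ such that for $n>K$ the mode equals $\lfloor u_n^*\rfloor$ or $\lceil u_n^*\rceil$. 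The main obstacle is the algebraic verification that the contributions from $\chi_1(0)$, $\kappa_3(0)$ and $\sigma(0)$ combine exactly into $u_n^*+\tfrac12$; a subtler point is that the Edgeworth error must be controlled uniformly over $W_n$, handled differently for bounded $|k-u_n^*|$ (where the error is $O(w_n^{-3/2})$) and for large $|k-u_n^*|$ (where the error is $O(|k-u_n^*|^2/w_n^2)$, still dominated by the leading $|k-u_n^*|/w_n$).
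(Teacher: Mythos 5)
Your proposal is correct and rests on the same engine as the paper's proof: the $r=2$ Edgeworth expansion at $\beta=0$, and the observation that the discrete difference of the approximant changes sign at $u_n^*+\tfrac12$, with the shift $\chi_1(0)-\kappa_3(0)/(2\sigma^2(0))$ coming from $G_1'(0;0)$ and the $\tfrac12$ from the second difference of the Gaussian. Your algebra checks out against the paper's display \eqref{eq:L_n_k_plus_1_L_n_k}, which is exactly your log-difference identity multiplied by $p_n(k)$ and restricted to $|k-\varphi'(0)w_n|<w_n^{1/4-\eps}$.

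Where you genuinely diverge is the treatment of the region away from the peak. The paper confines the difference computation to the narrow range $|a|<w_n^{1/4-\eps}$ and then excludes the complement by a three-case comparison of $\bL_n(k)$ against the peak value (Steps 3--5, using the local CLT for $|a|>\sqrt{w_n}$ and the $r=1$ and $r=2$ expansions in two intermediate ranges). You instead do a one-shot Gaussian localization to $|x_n(k)|\le A\sqrt{\log w_n}$ and then run the monotonicity argument over that entire (much wider) window. This is a cleaner organization and avoids the paper's case analysis; the price is that your log-difference estimate must hold uniformly up to $|a|\asymp\sqrt{w_n\log w_n}$, which you correctly note requires the error term to be measured relative to $(|k-u_n^*|+1)/w_n$ rather than $1/w_n$. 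One point you should make fully explicit: your iteration gives monotonicity of $p_n$, not of $\bL_n$, and the additive Edgeworth error $o(\eee^{\varphi(0)w_n}w_n^{-3/2})$ is of the \emph{same order} as a single step $p_n(\lfloor u_n^*\rfloor)-p_n(\lfloor u_n^*\rfloor-1)\asymp W_\infty(0)\,\eee^{\varphi(0)w_n}w_n^{-3/2}$. The comparison $\bL_n(k)<\bL_n(\lfloor u_n^*\rfloor)$ therefore only goes through because $W_\infty(0)>0$ almost surely, so the one-step gap has an a.s.\ positive constant that beats the $o(\cdot)$; you allude to controlling the error "differently for bounded $|k-u_n^*|$" but this quantitative point deserves a sentence. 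With that supplied, the argument is complete.
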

\begin{remark}
The uniqueness of the $\argmax$ is a rather subtle question and is not discussed here (see, e.g., \cite{erdoes_on_hammersley} where uniqueness of the mode is proved for Stirling numbers of the first kind). In the case when the $\argmax$ is non-unique Theorem~\ref{theo:mode} has to be understood as follows: for $n>K$ there are at most two maximizers of $\LLL_n(k)$ and they belong to the set $\{\lfloor u_n^*\rfloor, \lceil u_n^*\rceil\}$.
\end{remark}

The next result on the width $M_n=\bL_n(u_n)$ is not surprising in view of the local limit Theorem~\ref{ref:LLT_general}.
\begin{theorem}\label{theo:width}
Consider a sequence of random profiles $\bL_1,\bL_2,\ldots$ satisfying Assumptions A1--A4. Then the width $M_n$ satisfies
\begin{equation}
\sigma(0) \sqrt{2\pi w_n} \eee^{-\varphi(0) w_n}  M_n   \toas W_\infty(0).
\end{equation}
\end{theorem}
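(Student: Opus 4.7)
The plan is to derive Theorem~\ref{theo:width} as an almost immediate consequence of the local limit theorem (Theorem~\ref{ref:LLT_general}) established above. Multiplying the conclusion of Theorem~\ref{ref:LLT_general} through by $\sigma(0)\sqrt{2\pi w_n}$ (which absorbs the leading $\sqrt{w_n}$ together with the factor $\sigma(0)\sqrt{2\pi}$ in the denominator of the Gaussian term), I obtain the equivalent a.s.\ assertion
\begin{equation*}
\sup_{k\in\Z}\left| \sigma(0)\sqrt{2\pi w_n}\,\eee^{-\varphi(0) w_n}\bL_n(k) - W_\infty(0)\,\eee^{-\frac12 x_n^2(k;0)}\right| \toas 0,
\end{equation*}
where $x_n(k;0) = (k-\mu(0)w_n)/(\sigma(0)\sqrt{w_n})$ is the normalized coordinate from~\eqref{eq:x_n_k_def}.

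Next I would take the supremum over $k\in\Z$ on both sides, using the elementary bound $|\sup_k a_k - \sup_k b_k| \le \sup_k|a_k-b_k|$. In all examples of interest (indeed whenever $\bL_n \ge 0$) one has $W_\infty(0) \ge 0$ a.s., so pulling the non-negative constant $W_\infty(0)$ outside the supremum gives
\begin{equation*}
\sigma(0)\sqrt{2\pi w_n}\,\eee^{-\varphi(0)w_n}\, M_n \;=\; W_\infty(0)\sup_{k\in\Z}\eee^{-\frac12 x_n^2(k;0)} + o(1) \quad\text{a.s.}
\end{equation*}
The remaining supremum is handled directly: the integer nearest to $\mu(0)w_n$ differs from it by at most $1/2$, so that $\min_{k\in\Z}x_n^2(k;0) \le 1/(4\sigma^2(0)w_n) \to 0$, whence $\sup_{k\in\Z}\eee^{-\frac12 x_n^2(k;0)} \to 1$. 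Combining the two displays yields the theorem.

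I do not anticipate any substantive obstacle: the local limit theorem does all the analytic work, and what remains is just the trivial observation that the maximum over $\Z$ of a Gaussian profile with standard deviation $\sigma(0)\sqrt{w_n}\to\infty$ approaches the peak value of the continuous Gaussian. As an alternative route, one could combine Theorem~\ref{theo:mode}, which localizes $u_n$ to one of two explicit integers within $O(1)$ of $\mu(0)w_n$, with the local limit theorem evaluated at $k=u_n$; this would reach the same conclusion but is not needed for the stated convergence.
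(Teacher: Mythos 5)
Your proof is correct, but it takes a genuinely different route from the paper's. The paper proves Theorem~\ref{theo:width} as a byproduct of the mode analysis: it runs the Edgeworth expansion with $r=2$ at $\beta=0$, shows that the maximizer must lie within $O(1)$ of $\varphi'(0)w_n$ (which requires the multi-step exclusion of the regions $|k-\varphi'(0)w_n|\geq w_n^{1/4-\eps}$ in Steps 3--5 of Section~\ref{subsec:proof_mode_width}), and then reads off $\sigma(0)\sqrt{2\pi w_n}\,\eee^{-\varphi(0)w_n}\bL_n(u_n)=W_\infty(0)+O(1/w_n)$ by evaluating the expansion at the mode. You instead use only the $r=0$ local limit theorem together with the elementary inequality $|\sup_k a_k-\sup_k b_k|\leq\sup_k|a_k-b_k|$ and the observation that $\sup_{k\in\Z}\eee^{-\frac12 x_n^2(k;0)}\to 1$ since the lattice spacing is $o(\sigma(0)\sqrt{w_n})$. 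This is simpler and logically independent of Theorem~\ref{theo:mode}; the one hypothesis you need beyond A1--A4 is $W_\infty(0)\geq 0$ (to pull the constant out of the supremum), which the paper's own proof also uses implicitly in Step 3 and which holds in all applications where $\bL_n\geq 0$. What the paper's heavier route buys is the localization of the $\argmax$ and the $O(1/w_n)$ precision at the mode, which are indispensable for the finer second-order result on $\tilde M_n$ in Theorem~\ref{theo:width_limit_law}; your sup-swap argument yields the first-order statement of Theorem~\ref{theo:width} but cannot be pushed to that next order.
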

In our applications to random trees, the limiting random variable $W_\infty(0)$ is a.s.\ constant. It is therefore natural to ask whether it is possible to obtain a more refined result with a non-degenerate limit.
\begin{theorem}\label{theo:width_limit_law}
Consider a sequence of random profiles $\bL_1,\bL_2,\ldots$ satisfying Assumptions A1--A4. Let
$$
\tilde M_n
:=
2\sigma^2(0) w_n \left( 1 - \frac{\sqrt{2\pi w_n}\, \sigma(0)\, M_n}{W_\infty(0) \eee^{\varphi(0)w_n}} \right).
%As $n\to\infty$, the width $M_n$ satisfies
$$
If $\theta_n := \min_{k\in\Z} |u_n^*-k|$ denotes the distance between $u_n^*$ and the nearest integer, then
$$
\tilde M_n - \theta_n^2 \toas \chi_2(0) + \frac{\kappa_3^2(0)}{6\sigma^4(0)} - \frac{\kappa_4(0)}{4\sigma^2(0)}.
%+ \frac{W_\infty'^2(0) - W_\infty''(0)W_\infty(0)}{W_\infty^2(0)}
%+o(1)
% + o\left(\frac 1{w_n} \right),
$$

\end{theorem}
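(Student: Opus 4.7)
The plan is to combine Theorem~\ref{theo:mode} with the Edgeworth expansion of Theorem~\ref{theo:edgeworth_general} applied at $\beta=0$ and $r=2$. By Theorem~\ref{theo:mode} there is an a.s.\ finite random $K$ such that for $n>K$ the maximiser $u_n$ lies in $\{\lfloor u_n^*\rfloor,\lceil u_n^*\rceil\}$, so that $\delta_n:=u_n-u_n^*$ satisfies $|\delta_n|\le 1$. Writing $B:=\chi_1(0)-\kappa_3(0)/(2\sigma^2(0))$, the definition~\eqref{eq:u_n_star} of $u_n^*$ gives $x_n(u_n;0)=(B+\delta_n)/(\sigma(0)\sqrt{w_n})=O(w_n^{-1/2})$ almost surely.

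Specialising the bound~\eqref{eq:edgeworth_general_exp} to $k=u_n$, dividing through by $W_\infty(0)\eee^{\varphi(0)w_n}/(\sigma(0)\sqrt{2\pi w_n})$ (the factor $W_\infty(0)$ is a.s.\ non-zero by Assumption~A2), and Taylor-expanding around $x_n(u_n;0)=0$, I obtain
\[
\frac{\sqrt{2\pi w_n}\,\sigma(0)\,M_n}{W_\infty(0)\,\eee^{\varphi(0)w_n}}=1-\frac{(B+\delta_n)^2}{2\sigma^2(0)w_n}+\frac{B(B+\delta_n)}{\sigma^2(0)w_n}+\frac{G_2(0;0)}{w_n}+o(w_n^{-1})\quad\text{a.s.,}
\]
where $x_n^2=O(w_n^{-1})$ is used in the exponential, and the identity $G_1'(0;0)=B/\sigma(0)$ read off from~\eqref{eq:G1} is used for the $G_1$-term. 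The two linear-in-$\delta_n$ contributions $\mp B\delta_n/(\sigma^2(0)w_n)$ cancel, so that multiplying by $2\sigma^2(0)w_n$ gives
\[
\tilde M_n=\delta_n^2-B^2-2\sigma^2(0)G_2(0;0)+o(1)\quad\text{a.s.}
\]
Because this expression depends on $\delta_n$ only through $\delta_n^2$ to leading order and $M_n$ is by construction the maximum of $\bL_n$, the two candidates in $\{-\{u_n^*\},1-\{u_n^*\}\}$ are ordered exactly by $\delta_n^2$ for $n$ large; hence $\delta_n^2=\theta_n^2$ eventually, and $\tilde M_n-\theta_n^2\toas -B^2-2\sigma^2(0)G_2(0;0)$.

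It remains to check that the right-hand side equals $\chi_2(0)+\kappa_3^2(0)/(6\sigma^4(0))-\kappa_4(0)/(4\sigma^2(0))$. Inserting~\eqref{eq:G2} together with $\Herm_2(0)=-1$, $\Herm_4(0)=3$, $\Herm_6(0)=-15$, and expanding $B^2=\chi_1^2(0)-\chi_1(0)\kappa_3(0)/\sigma^2(0)+\kappa_3^2(0)/(4\sigma^4(0))$, the $\chi_1^2(0)$ and $\chi_1(0)\kappa_3(0)$ contributions cancel and the coefficient of $\kappa_3^2(0)/\sigma^4(0)$ reduces to $5/12-1/4=1/6$, yielding exactly the asserted constant. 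The main obstacle is this bookkeeping of second-order terms together with checking that the remainder $o(w_n^{-3/2})$ in~\eqref{eq:edgeworth_general_exp} produces only an $o(1)$ error after multiplication by $2\sigma^2(0)w_n$, which follows from the uniformity in $k$ of that remainder and is what allows the maximiser-comparison argument giving $\delta_n^2=\theta_n^2$.
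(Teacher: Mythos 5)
Your proposal is correct and follows essentially the same route as the paper: establish Theorem~\ref{theo:mode} first, insert $k=u_n=u_n^*+\delta_n$ into the $r=2$, $\beta=0$ Edgeworth expansion, observe that the linear terms in $\delta_n$ cancel so that only $\delta_n^2$ survives, use maximality of $M_n$ to pass from $\delta_n$ to $\theta_n$, and match $-B^2-2\sigma^2(0)G_2(0;0)$ with the stated constant. The only nitpick is that ``$\delta_n^2=\theta_n^2$ eventually'' is slightly too strong when $u_n^*$ approaches a half-integer (the $o(w_n^{-1})$ remainder can then decide the maximiser); what the comparison argument actually yields, and all that is needed, is $\delta_n^2-\theta_n^2\toas 0$.
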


We conclude this section with several generalizations of our main results, all of which are consequences of the proof of Theorem \ref{theo:edgeworth_general}.
\begin{remark} Let $(\bL_t)_{t\geq 0}$ be a continuous-time profile satisfying the obvious continuous-time formulations of Assumptions A1--A4 for some 
real-valued function $(w_t)_{t\geq 0}$ with $w_t \to +\infty$ as $t \to \infty$. Then, all results in this section apply analogously to the profile $(\bL_t)_{t\geq 0}$.
\end{remark}
\begin{remark} All results remain valid if the sequence $w_n, n \in \N$, is random and $w_n \to +\infty$ almost surely.
\end{remark}
\begin{remark} Under Assumptions A1 and A4, if there exists a sequence of random analytic functions $\tilde W_n, n \in \N,$ on $\mathscr D$ such that
the convergence \eqref{eq:Psi_n_to_W_infty_speed} holds with $\tilde W_n$ instead of $W_\infty$, then Theorems \ref{theo:edgeworth_general}, \ref{ref:LLT_general} and \ref{theo:LD_general} hold with $W_\infty$ replaced
by $\tilde W_n$.
\end{remark}
\begin{remark} Let Assumptions A1 and A2 be fulfilled and assume that the
convergence \eqref{eq:Psi_n_to_W_infty_speed}  in Assumption A3 holds for some
real $r_1 \geq 1/2$ (rather than for all $r \in \N_0$), and  the convergence \eqref{assump_4} for some real
$r_2 > 1/2$.  Then, for any $r \leq 2 r_1-1$, $r \in \N_0$ and $r \leq \alpha < \min \{1+r, 2 r_2-1\}$,
we have
 $$w_n^{\frac{\alpha+1}{2}} \sup_{k\in\Z} \sup_{\beta\in K} \left|\eee^{\beta k - \varphi(\beta) w_n} \bL_n(k) - \frac{W_\infty(\beta) \eee^{-\frac 12 x_n^2(k)}}{\sigma(\beta) \sqrt{2\pi w_n}}  \sum_{j=0}^r \frac{G_j(x_n(k);\beta)}{w_n^{j/2}}   \right| \toas 0.$$
\end{remark}
\begin{remark} Similarly to the previous remark, take Assumptions A1 and A2 for granted and assume that \eqref{assump_4} holds for some real $r > 1/2$. Further, in the notation of Assumption A3, instead of \eqref{eq:Psi_n_to_W_infty_speed}, impose that
$$\sup_{\beta\in K} |W_n(\beta) - W_\infty(\beta)| \toas 0.$$
Then, $$\sup_{k\in\Z} \sup_{\beta\in K} \left|\eee^{\beta k - \varphi(\beta) w_n} \bL_n(k) - \frac{W_\infty(\beta) \eee^{-\frac 12 x_n^2(k)}}{\sigma(\beta) \sqrt{2\pi w_n}}     \right| \toas 0.$$
\end{remark}

\subsection{Example: The classical Edgeworth expansion}\label{subsec:edgeworth_iid}
In this and the subsequent section we consider two examples of deterministic profiles.
Let $Z_1,Z_2,\ldots$ be i.i.d.\ integer-valued random variables with mean $\mu:=\E Z_1$, variance $\sigma^2:=\Var Z_1\neq 0$, and cumulant generating function
\begin{equation}\label{eq:varphi_classical_edgeworth}
\varphi(\beta) := \log \E \eee^{\beta Z_1}
\end{equation}
which is finite on some interval $(\beta_-,\beta_+)$ containing zero.
%Consider the partial sums $S_n=Z_1+\ldots+Z_n$ and consider
Consider a sequence of deterministic profiles in which $\bL_n$ is defined as the probability mass function of the sum $Z_1+\ldots+Z_n$, that is
$$
\bL_n(k) = \P[Z_1+\ldots+Z_n = k], \quad k\in\Z.
$$
Then, Assumptions A1--A3 are satisfied with $\varphi$ as in~\eqref{eq:varphi_classical_edgeworth}, $w_n=n$ and $W_\infty(\beta) = W_n(\beta) = 1$. Hence, the cumulants $\chi_{k}$ vanish. Assumption A4 is also satisfied if we additionally assume that the minimal step of the distribution of $Z_1$ is $1$. In other words, there is no non-trivial sublattice $a\Z +b$, with $a\in \{2,3,\ldots\}$ and $b\in\Z$, such that $\P[Z_1\in a\Z+b]=1$.

Applying Theorem~\ref{theo:edgeworth_general} with $\beta=0$ we obtain the classical
Chebyshev--Edgeworth--Cram\'er asymptotic expansion  for sums of i.i.d.\ lattice random variables, see Theorem~13 in \citet[Ch.~VII, p.~205]{petrov_book}:
\begin{equation}\label{eq:asympt_expansion_random_walk}
\lim_{n\to\infty} n^{\frac {r+1}2} \sup_{k\in\N} \left|\P[Z_1+\ldots+Z_n=k] - \frac {\eee^{-\frac 12 x^2_n(k)}} {\sigma\sqrt {2\pi n}}
\sum_{j=0}^{r}
\frac {q_j(x_n(k))} {n^{j/2}}  \right|=0,
\end{equation}
where $q_j$ is a polynomial of degree at most $3j$ whose coefficients can be expressed through the cumulants $\kappa_2,\ldots,\kappa_{j+2}$. To obtain $q_j$, remove in $G_j$ (see~\eqref{eq:G_def} for its definition) all terms involving the $\chi_k$'s.
The first three terms in the expansion are given by
\begin{align}
q_0(x) = 1,
\quad
q_1(x) = \frac{\kappa_3}{6\sigma^3} \Herm_3(x),
\quad
q_2(x) = \frac{\kappa_4}{24 \sigma^4} \Herm_4(x) + \frac{\kappa_3^2}{72 \sigma^6} \Herm_6(x).
\end{align}
Applying Theorem~\ref{theo:edgeworth_general} with arbitrary $\beta$ one can obtain asymptotic expansions for large deviation probabilities; see~\cite{gruebel_kabluchko_BRW}. Note, however, that the moment condition which we imposed on $Z_1$ can be relaxed; see Theorem~13 in \citet[Ch.~VII, p.~205]{petrov_book}.

\subsection{Example: Stirling numbers of the first kind}
The (unsigned) \textit{Stirling numbers} of the first kind are defined by the formula
\begin{equation}\label{eq:stirling_def}
\theta^{(n)} := \theta(\theta+1)\ldots (\theta+n-1)  = \sum_{k=1}^n \stirling{n}{k} \theta^k.
\end{equation}
The following  sequence of deterministic profiles given by the probability mass function of the \textit{Ewens distribution} with parameter $\theta>0$
\begin{equation}\label{eq:stirling_profile}
\bL_n(k)= \frac {\theta^k} {\theta^{(n)}}\stirling{n}{k}\ind_{\{k\in \{1,\ldots,n\}\}} %\quad k=1,\ldots,n,
\end{equation}
can be shown to satisfy Assumptions A1--A4.
Applications to Stirling numbers and the Ewens distribution will be studied in a separate paper~\cite{kabluchko_marynych_sulzbach_on_stirling}.

\section{Edgeworth expansions for random trees}\label{sec:edgeworth_for_random_trees}

\subsection{One-split branching random walk}\label{subsec_one_split_BRW_def}
%In this section we will state asymptotic expansions of the profiles of certain random trees. The profiles we will be interested in can be obtained as special cases of the following general model which will be called the \textit{one-split branching random walk}.

Consider a system of particles on $\Z$ which evolves in discrete time as  follows. At time $0$, we have a single particle located at $0$. In each step \textit{one} of the particles is chosen uniformly at random. This particle is replaced by a random cluster of particles whose displacements w.r.t.\ the original particle are described by  a point process $\zeta = \sum_{i=1}^N \delta_{Z_i}$ (where $N$, the number of particles, is  a.s.\ finite) on $\Z$. In other words, if the original particle is located at $x$, its descendants are located at $x+Z_1,\ldots, x+Z_N$. All random mechanisms involved in this definition are independent.

\begin{remark} The difference between this model and the usual discrete-time, many splits BRW (for which the Edgeworth expansion was obtained in~\cite{gruebel_kabluchko_BRW}) is that in the one-split BRW, only \textit{one} particle (chosen uniformly at random) is allowed to split, whereas in the many-split BRW \textit{all} particles split at the same time.
 We shall see that there are many differences between these models.
\end{remark}

Denote by $S_n$ the number of particles after $n$ splitting events, and let their positions be $x_{1,n},\ldots, x_{S_n,n}$. Let us denote by $\bL_n(k)$ the number of particles at site $k\in\Z$ after $n$ splitting events:
\begin{equation}\label{eq:L_T_k_def_trees}
\bL_n(k) =  \#\{1\leq j\leq S_n \colon x_{j,n} = k\}.
\end{equation}
We are interested in the function $k\mapsto \bL_n(k)$ which is called  the \textit{profile} of the one-split BRW.

We are going to state our assumptions on the one-split BRW. Denote by $\nu_k$ the expected number of particles at site $k\in\Z$ in the cluster process $\zeta$:
\begin{equation}\label{eq:nu_k_def}
\nu_k = \E \zeta(\{k\})=\E \left[\sum_{i=1}^{N}\ind_{\{Z_i=k\}}\right], \quad k\in\Z.
\end{equation}

The first assumption states that non-zero jumps are possible with positive probability and  thus excludes the case in which all particles stay at $0$. The second assumption requires the one-split BRW to be supercritical and excludes the possibility that it can become extinct.

\vspace*{2mm}
\noindent
\textbf{Assumption B1:}
We have $\nu_k>0$ for at least one $k\in \Z \bsl\{0\}$. %, i.e.
%$$
%\P[\zeta(\Z \backslash \{0\}) \geq  1] >0.
%$$
%$$
%\E \left(\sum_{i=1}^{N}\ind_{\{Z_i\neq 0\}}\right)>0.
%$$

\vspace*{2mm}
\noindent
\textbf{Assumption B2:} The cluster point process $\zeta$ is a.s.\ non-empty, and the probability that it has at least $2$ particles is positive. In other words, $N\geq 1$ a.s.\ and $\P[N=1]\neq 1$.

%\vspace*{2mm}

\begin{remark}
It is possible to replace this assumption by a weaker one requiring that $\E N > 1$ (supercriticality), in which case all results would hold a.s.\ on the survival event.
\end{remark}

Denote by $m(\beta)$ the moment generating function of the intensity of the cluster point process $\zeta$ minus $1$:
\begin{equation}\label{eq:m_def}
m(\beta) = \sum_{k\in \Z} \eee^{\beta k} \nu_k - 1 = \E\left[\sum_{i=1}^{N}\eee^{\beta Z_i}\right]-1.
\end{equation}
The expected number of particles at time $n$ is $\E S_n = 1 + m(0) n$, where, by Assumption B2,
\begin{equation}\label{eq:m_0_def}
m(0) = \sum_{k\in\Z} \nu_k - 1 = \E N - 1 > 0.
\end{equation}

%Note that $m(0)=m(0)$.

\noindent
\textbf{Assumption B3:} The function $m$ is finite on some non-empty open interval $\fI$ containing $0$.

\vspace*{2mm}

It follows that the function $m$ is well-defined for $\beta \in \{z \in \C : \Re z \in \fI\}$ and strictly convex and infinitely differentiable on $\fI$. We shall need the function
$$
\varphi(\beta) = \frac{m(\beta)}{m(0)}, \quad \Re \beta \in \fI.
$$
%Define also
%$$
%\mu(\beta) = \varphi'(\beta),
%\quad
%sigma^2(\beta) = \varphi'(\beta),
%quad
%\kappa
%$$
Denote by $(\beta_-,\beta_+)\subset \fI$ the open interval on which $\varphi'(\beta)\beta < \varphi(\beta)$:
\begin{align}
\beta_-&=\inf\{\beta\in\fI \colon \varphi'(\beta) \beta < \varphi(\beta)\},\\
\beta_+&=\sup\{\beta\in\fI \colon \varphi'(\beta) \beta < \varphi(\beta)\}.
\end{align}
The interval $(\beta_-,\beta_+)$ is non-empty because it contains $0$.
The endpoints of the intervals $\fI$ and $(\beta_-,\beta_+)$ are allowed to be infinite.

The (normalized) moment-generating function of the one-split BRW profile is defined, for  $\Re \beta \in \fI$, by
\begin{equation}\label{eq:jabbour_one_split}
W_n(\beta) = \frac 1 {n^{\varphi(\beta)}}   \sum_{i=1}^{S_n} \eee^{\beta x_{i,n}}.
\end{equation}
%Note that $W_n^{*}(\beta)$ is \textit{not} a martingale.

The following aperiodicity  condition plays an important role in the verification of Assumption~A4. Here, and subsequently, we denote by
$\nu=\sum_{k\in \Z} \nu_k \delta_k$ the intensity measure of the point process $\zeta$.

\vspace*{2mm}
\noindent
\textbf{Assumption B4:} $\nu$ is not concentrated on any proper additive subgroup of $\Z$. In other words, $\nu(\Z \backslash a\Z)\neq 0$ for all $a \in \{2, 3, \ldots\}$.

\vspace*{2mm}

Assumption B4 can be imposed without loss of generality: if $\nu(a^* \Z) = 1$ for some $a^* \geq 2$ (chosen to be maximal with this property), then we can rescale the jumps by $a^*$ and work equivalently with the one-split BRW governed by the intensity measure $\nu^*$, where $\nu^*(\{k\}) = \nu(\{k / a^*\})$.
Note that this contrasts the situation in the many-split BRW~\cite{gruebel_kabluchko_BRW} and in Section~\ref{subsec:edgeworth_iid},  where it was necessary to exclude measures $\nu$ concentrated on lattices of the form $a\Z+b$.  %In contrast to this, the measure $\nu = \delta_1+\delta_3$ is not excluded in the one-split case.}
%Here $a^*$ is chosen maximal with $\nu(a^* \Z) = 1$.

\vspace*{2mm} Finally, we also need the following moment condition which supplements Assumption~B3.

\vspace*{2mm}
\noindent
\textbf{Assumption B5:}
For any $\beta \in (\beta_-, \beta_+)$ there is $\gamma= \gamma(\beta)>1$ such that
%For any compact subset $K \subset (\beta_-, \beta_+)$, there exists $\gamma = \gamma(K) > 1$ such that
$$
\E\left[ \left(\sum_{i=1}^{N}\eee^{\beta Z_i} \right)^\gamma\right] < \infty.
$$
\begin{remark}
This is easily shown to be equivalent to the following assumption: For every compact set $K\subset (\beta_-,\beta_+)$ there is $\gamma= \gamma(K)>1$ such that the above expectation is bounded uniformly in $\beta\in K$.
\end{remark}
%$$
%\sup_{\beta\in K} \E\left[ \left(\sum_{i=1}^{N}\eee^{\beta Z_i} \right)^\gamma\right] < \infty.
%$$

The next theorem states that the sequence of the one-split BRW profiles satisfies Assumptions A2 and A3 with $w_n=\log n$.

\begin{theorem}\label{theo:W_n_converges_one_split_BRW}
Under Assumptions B1--B3 and B5, there is an open neighborhood $\fD$ of the interval $(\beta_-,\beta_+)$ in the complex plane such that, with probability $1$, $W_n$ converges to some random analytic function $W_\infty$ locally uniformly on $\fD$. Moreover, for every compact set $K\subset \fD$ and $r\in\N$ we can find an a.s.\ finite random variable $C_{K,r}$ such that for all $n\in\N$,
\begin{equation}\label{eq:Psi_n_to_W_infty_speed_one_split}
\sup_{\beta\in K} |W_n(\beta) - W_\infty(\beta)| < C_{K, r} (\log n)^{-r}.
\end{equation}
With probability $1$, the function $W_\infty$ has no zeros on the interval $(\beta_-,\beta_+)$. %Furthermore, for $\beta\in (\beta_-,\beta_+)$, the sequence $(W_n(\beta))_{n\in\N_0}$ is bounded in $L_\gamma$, for some $\gamma=\gamma(\beta)>1$.
\end{theorem}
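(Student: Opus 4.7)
The plan is to embed the one-split BRW into a continuous-time Yule-type BRW. Equip each particle with an independent $\mathrm{Exp}(1)$ clock; when a clock rings, the particle carrying it is replaced by a cluster distributed as $\zeta$, independently of everything else. Write $\hat S_t$ for the number of particles at time $t\ge 0$ and $\hat x_{1,t},\ldots,\hat x_{\hat S_t,t}$ for their positions. By exchangeability of competing exponentials, the discrete-time one-split BRW agrees in law with this continuous-time process sampled at its successive jump times $0=T_0<T_1<T_2<\cdots$. In continuous time the natural object is
\begin{equation*}
\hat W_t(\beta) := \eee^{-t\, m(\beta)}\sum_{i=1}^{\hat S_t}\eee^{\beta\, \hat x_{i,t}},\qquad \Re\beta\in\fI,
\end{equation*}
which, by a direct generator computation, is a mean-one $\cF_t$-martingale for each fixed $\beta$ and analytic in $\beta$. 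Since
\begin{equation*}
W_n(\beta) \;=\; \exp\!\Bigl(m(\beta)\bigl(T_n-\tfrac{\log n}{m(0)}\bigr)\Bigr)\,\hat W_{T_n}(\beta),
\end{equation*}
the problem reduces to understanding $\hat W_t$ and the random time change $T_n$.

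First I would obtain continuous-time convergence with a geometric rate in $t$. For $\beta\in(\beta_-,\beta_+)$ the inequality $\varphi'(\beta)\beta<\varphi(\beta)$ is the Biggins-type spectral condition; combined with Assumption~B5, a Burkholder--Davis--Gundy estimate applied to the compensated jump-martingale $\hat W_t(\beta)$ yields $\sup_t\E|\hat W_t(\beta)|^\gamma<\infty$ for some $\gamma=\gamma(\beta)>1$ and, more importantly,
\begin{equation*}
\bigl\|\hat W_{t+s}(\beta)-\hat W_t(\beta)\bigr\|_\gamma \;\le\; C(\beta)\,\eee^{-\alpha(\beta)\,t},\qquad s,t\ge 0,
\end{equation*}
with $\alpha(\beta)>0$ controlled by the gap $\varphi(\beta)-\varphi'(\beta)\beta$. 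To pass from single $\beta$ to a complex strip, I choose $\fD\supset(\beta_-,\beta_+)$ narrow enough that the spectral condition persists uniformly on $\fD$, and invoke the classical Biggins analyticity trick: Cauchy's integral formula on small circles expresses $\hat W_t(\beta)$ and its $\beta$-derivatives as contour averages of real-$\beta$ quantities, upgrading pointwise $L^\gamma$-estimates to local uniform $L^\gamma$-control. A Borel--Cantelli argument along a geometrically spaced sequence of times then delivers a.s.\ local uniform convergence on every compact $K\subset\fD$ to some analytic limit $\hat W_\infty$, at the same geometric rate.

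Next I would transfer these bounds to discrete time. Applied with $\beta=0$, step one shows that $\hat S_t\eee^{-m(0)t}=\hat W_t(0)$ converges a.s.\ to a limit $\hat S_\infty:=\hat W_\infty(0)$, and $\E\hat S_\infty=1$ by $L^\gamma$-convergence, so $\hat S_\infty>0$ a.s.\ by the branching tail zero--one law. Inverting $\hat S_{T_n}\asymp n$ then gives
\begin{equation*}
T_n \;=\; \frac{\log n-\log\hat S_\infty}{m(0)} + O(n^{-\delta})\qquad\text{a.s.}
\end{equation*}
for some $\delta>0$. Plugging this into the identity for $W_n(\beta)$ and combining with the uniform continuous-time estimates produces $W_n(\beta)\to W_\infty(\beta):=\hat S_\infty^{-\varphi(\beta)}\hat W_\infty(\beta)$ uniformly on compacts $K\subset\fD$, with a.s.\ error bounded by $n^{-\delta'}$ for some $\delta'>0$; since $n^{-\delta'}=o((\log n)^{-r})$ for every $r\in\N$, this is~\eqref{eq:Psi_n_to_W_infty_speed_one_split}.

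For the nonvanishing, $\E\hat W_\infty(\beta)=1$ for each fixed $\beta\in(\beta_-,\beta_+)$, so $\hat W_\infty(\beta)>0$ a.s.\ for every such $\beta$ by the standard tail zero--one argument; intersecting over a countable dense $D\subset(\beta_-,\beta_+)$ and combining with continuity of $\hat W_\infty$ and the smoothing recursion $\hat W_\infty(\beta)\eqdistr \eee^{-\tau m(\beta)}\sum_{i=1}^{N}\eee^{\beta Z_i}\hat W_\infty^{(i)}(\beta)$ (with $\tau\sim\mathrm{Exp}(1)$ independent of the cluster and of the i.i.d.\ copies $\hat W_\infty^{(i)}$) rules out real zeros, after which $W_\infty=\hat S_\infty^{-\varphi(\cdot)}\hat W_\infty$ inherits positivity on $(\beta_-,\beta_+)$ a.s. The main obstacle I anticipate is step one: each one-split jump perturbs $\hat W_t$ only infinitesimally, so the comfortable $L^2$-bookkeeping that works for many-split BRWs leaves no margin here, and securing a \emph{uniform} geometric $L^\gamma$-rate on a complex neighborhood, rather than merely pointwise a.s.\ convergence at real $\beta$, requires careful calibration of the strip $\fD$, of the exponent $\gamma$, and of the constants in the BDG inequality against the moment Assumption~B5.
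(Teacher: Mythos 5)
Your route is essentially the paper's: the same continuous-time embedding, the same Biggins martingale $\mathcal W_t(\beta)=\eee^{-m(\beta)t}\sum_{i}\eee^{\beta z_{i,t}}$, the same factorization $W_n(\beta)=\eee^{m(\beta)\tau_n-\varphi(\beta)\log n}\,\mathcal W_{\tau_n}(\beta)$, a geometric-rate uniform convergence of $\mathcal W_t$ on a complex neighborhood obtained from $L^\gamma$-bounds plus Cauchy's formula, Doob and Borel--Cantelli, and a split-time asymptotic that converts the exponential rate in $t$ into a polynomial (hence superlogarithmic) rate in $n$. Two points need attention.

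First, the non-vanishing argument as you describe it has a genuine gap. Knowing $\P[\hat W_\infty(\beta)=0]=0$ for each fixed $\beta$ and intersecting over a countable dense set $D\subset(\beta_-,\beta_+)$ only shows that $\hat W_\infty$ is non-zero on $D$; an analytic function can be non-zero on a dense set and still have (isolated) zeros, and continuity does not close this. The recursion you mention is the right tool, but it must be exploited differently: decompose at the first \emph{non-trivial} split time (the first time the population reaches size at least $2$ --- under Assumption~B2 the very first split may produce only one child, in which case the recursion involves a single copy and yields nothing). A zero of $\hat W_\infty$ at some random $\beta$ then forces two \emph{independent} copies $\hat W^{(1)}_\infty,\hat W^{(2)}_\infty$ to vanish at the same point. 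Since $\hat W^{(1)}_\infty$ is a.s.\ a non-trivial analytic function, its real zeros form an a.s.\ discrete, hence countable, random set $\{X_1,X_2,\dots\}$ measurable w.r.t.\ $\hat W^{(1)}_\infty$ alone, and by independence and Fubini $\P[\hat W^{(2)}_\infty(X_i)=0]=\int\P[\hat W^{(2)}_\infty(x)=0]\,\P[X_i\in\dd x]=0$ for each $i$. Summing over $i$ kills the event of a common zero. This independence-plus-discreteness step is what is missing from your sketch.

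Second, a harmless but real arithmetic slip: inverting the time change you write $\hat S_{T_n}\asymp n$, but in fact $S_n\sim m(0)\,n$, so
\begin{equation*}
m(0)\,T_n=\log n+\log m(0)-\log\hat W_\infty(0)+o(n^{-\varepsilon}),
\end{equation*}
and the limit is $W_\infty(\beta)=m(0)^{\varphi(\beta)}\bigl(\hat W_\infty(0)\bigr)^{-\varphi(\beta)}\hat W_\infty(\beta)$; your formula drops the factor $m(0)^{\varphi(\beta)}$ (note $W_\infty(0)=m(0)$, not $1$). This does not affect the truth of the theorem, only the identification of the limit.
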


The proof of the theorem will be given in Section~\ref{subsec:embedding} and uses an embedding into a continuous-time BRW  in conjunction with results of \citet{biggins_uniform} (see also~\cite{uchiyama}). The explicit form of the neighborhood $\fD$ plays no role in the sequel. However, let us stress that we cannot take $\fD$ to be the strip $\{\beta\in \C\colon \beta_- < \Re \beta < \beta_+\}$.  In the case of the BSTs, the exact shape of $\fD$ can be found in~\cite{chauvin_etal}: it is a bounded set.
For this reason, the  asymptotic expansion obtained by \citet{feray_meliot_nikeghbali} does not apply directly.

\begin{remark}\label{rem:W_infty_0}
Note that $\varphi(0)=1$ and by the law of large numbers,
$$
W_\infty(0) = \lim_{n\to\infty} W_n(0) = \lim_{n\to\infty} \frac {S_n} n =  m(0) \;\;\; \text{a.s.}
$$
\end{remark}

\subsection{Random trees and one-split BRWs}\label{subsec:random_trees_one_split}
We can identify profiles of random trees and profiles of the one-split BRW as follows: Particles correspond to (external or internal) nodes, and positions of particles correspond to the depths of the nodes.
In the following,  we describe the cluster point process, and give explicit formulas for the quantities $m(0)$, $\varphi(\beta)$, $\mu(0)=\varphi'(0)$, $\sigma^2(0)=\varphi''(0)$ and $\kappa_j(0)= \varphi^{(j)}(0)$, $j\in\N$, which will be relevant in our limit theorems.
%The law of the point process of offspring $\zeta$ is given as follows.

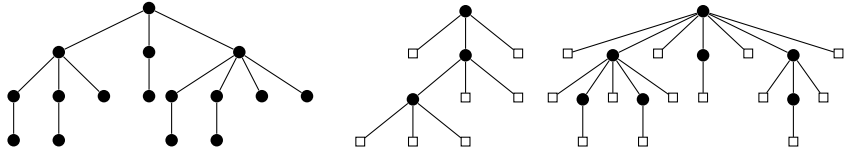
\begin{figure}[htb]
  \centering
  \begin{minipage}[t]{.33\linewidth}
    \centering
    \begin{tikzpicture}[level distance=0.5cm,
level 1/.style={sibling distance=1.2cm},
level 2/.style={sibling distance=0.6cm},
level 3/.style={sibling distance=0.6cm},
level 4/.style={sibling distance= .6cm},
level 5/.style={sibling distance= .5cm},
level 6/.style={sibling distance= 1cm},
int/.style = {circle, scale = 0.5, draw=none, fill=black, anchor = north, growth parent anchor=south},
ext/.style = {rectangle, scale = 0.5, anchor = north, growth parent anchor=south}]
\tikzstyle{every node}=[circle,draw]

\node (Root) [int] {}
child { node [int] {}
 child { node [int] {}
  child { node [int] {}
  }
 }
 child { node [int] {}
  child { node [int] {}
  }
 }
 child { node [int] {}
 }
}
child { node [int]{}
 child { node [int]{}
 }
}
child { node [int]{}
 child { node [int] {}
  child { node [int] {}
  }
 }
 child { node [int] {}
  child { node [int] {}
  }
 }
 child { node [int] {}
 }
 child { node [int] {}
 }
}
;
\end{tikzpicture}
   % \subcaption{Supportline und Supportlinebedingung}
  \end{minipage} \hspace{-0.7cm} \begin{minipage}[t]{.33\linewidth}
    \centering
   \begin{tikzpicture}[level distance=0.5cm,
level 1/.style={sibling distance=0.7cm},
level 2/.style={sibling distance=0.7cm},
level 3/.style={sibling distance=0.7cm},
int/.style = {circle, scale = 0.5, draw=none, fill=black, anchor = north, growth parent anchor=south},
ext/.style = {rectangle, scale = 0.5, anchor = north, growth parent anchor=south}]
\tikzstyle{every node}=[circle,draw]

\node (Root) [int] {}
child { node [ext] {}
}
child { node [int] {}
 child { node [int] {}
  child { node [ext] {}
  }
  child { node [ext] {}
  }
  child { node [ext] {}
  }
 }
 child { node [ext] {}
 }
 child { node [ext] {}
 }
}
child { node [ext] {}
}
;
\end{tikzpicture}
 %   \subcaption{Supportline und Supportlinebedingung}
  \end{minipage} \hspace{-1cm} \begin{minipage}[t]{.33\linewidth}
    \centering
    \begin{tikzpicture}[level distance=0.5cm,
level 1/.style={sibling distance=0.6cm},
level 2/.style={sibling distance=0.4cm},
level 3/.style={sibling distance=0.4cm},
level 4/.style={sibling distance=0.4cm},
level 5/.style={sibling distance=0.8cm},
int/.style = {circle, scale = 0.5, draw=none, fill=black, anchor = north, growth parent anchor=south},
ext/.style = {rectangle, scale = 0.5, anchor = north, growth parent anchor=south}]
\tikzstyle{every node}=[circle,draw]
\node (Root) [int] {}
child { node [ext] {}
}
child { node [int] {}
 child { node [ext] {}
 }
 child { node [int] {}
  child { node [ext] {}
  }
 }
 child { node [ext] {}
 }
 child { node [int] {}
  child { node [ext] {}
  }
 }
 child { node [ext] {}
 }
}
child { node [ext] {}
}
child { node [int] {}
 child { node [ext] {}
 }
}
child { node [ext] {}
}
child { node [int] {}
 child { node [ext] {}
 }
 child { node [int] {}
  child { node [ext] {}
  }
 }
 child { node [ext] {}
 }
}
child { node [ext] {}
}
;
\end{tikzpicture}
   % \subcaption{Supportline und Supportlinebedingung}
  \end{minipage}

  \caption{{\small Sample realizations of random trees. Left: RRT. Middle: $D$-ary recursive tree with $D=3$. Right: PORT.}
\label{bild:trees_sample}}
\end{figure}

%\begin{figure}[t]
%%\includegraphics[width=0.99\textwidth]{trees.pdf}
%\includegraphics[width=0.3\textwidth]{RRT_sample.pdf}
%\includegraphics[width=0.3\textwidth]{D_ary_sample.pdf}
%\includegraphics[width=0.3\textwidth]{PORT_sample.pdf}
%\caption
%{\small Sample realizations of random trees. Left: RRT. Middle: $D$-ary recursive tree with $D=3$.  Right: PORT.}
%\label{bild:trees_sample}
%\end{figure}

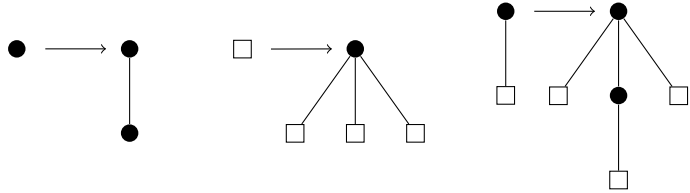
\begin{figure}
\centering
\begin{tikzpicture}[level distance=1cm,
level 1/.style={sibling distance=0.8cm},
level 2/.style={sibling distance=0.7cm},
int/.style = {circle, scale = 0.7, draw=none, fill=black, anchor = north, growth parent anchor=south},
ext/.style = {rectangle, scale = 1.0, draw, anchor = north, growth parent anchor=south}]
\draw (4,0) node [int] (A) {} ;
 \node(Root) at (5.5,0) [int] {}
child { node [int] {}
} ;
\draw [shorten >=1cm,shorten <=1cm,>-] (A) -- (Root);
\draw (7,0) node [ext] (B) {} ;
 \node(Root2) at (8.5,0) [int] {}
child { node [ext] {}
}
child { node [ext] {}
}
child { node [ext] {}
};
\draw [shorten >=1cm,shorten <=1cm,>-] (B) -- (Root2);

\node at (10.5,0.5)  [int] (C) {} ;
\node at (10.5,-0.6162)  [ext] (e) {};

 \node(Root3) at (12,0.5) [int] {}
child { node [ext] {}
}
child { node [int] {}
 child { node [ext] {}
 }
}
child { node [ext] {}
};
\draw [shorten >=1cm,shorten <=1cm,>-] (C) -- (Root3);
\draw [-] (C) -- (e);

\end{tikzpicture}
\caption
{\small Construction rules for random trees. Left: RRT. Middle: $D$-ary recursive tree with $D=3$. Right: PORT.}
\label{bild:trees_rules}
\end{figure}

%\begin{figure}[t]
%\includegraphics[width=0.3\textwidth]{rules_RRT.pdf}
%%\hspace*{0.1\textwidth}
%\includegraphics[width=0.3\textwidth]{rules_D_ary.pdf}
%%\hspace*{0.1\textwidth}
%\includegraphics[width=0.3\textwidth]{rules_PORT.pdf}
% \caption
%{\small Construction rules for random trees. Left: RRT. Middle: $D$-ary recursive tree with $D=3$. Right: PORT.}
%\label{bild:trees_rules}
%\end{figure}

\vspace*{2mm}
\noindent
\textrm{(i)} External profiles of BSTs defined in Section~\ref{subsec:BST_special_case} correspond to the one-split BRW with the deterministic displacement point process $\zeta=2\delta_1$ because at any step of the construction an external node at depth $k$ is replaced by two new external nodes at depth $k+1$; see Figure~\ref{bild:BST} (right).  We have
$$
\varphi(\beta)=2\eee^{\beta}-1,\;\;
m(0)=1,\;\;
%\mu(0)=2,\;\;
%\sigma^2(0)=2,\;\;
%\kappa_j(0)=2.
\mu(0)=\sigma^2(0)=\kappa_j(0)=2,
\;\; j\in\N.
$$
The constants $\beta_-\approx -1.678$ and $\beta_+\approx 0.768$ are the solutions of $2\eee^{\beta} (1-\beta) = 1$.

\vspace*{2mm}
\noindent
\textrm{(ii)} Random recursive trees (RRTs), see Figure~\ref{bild:trees_sample} (left), can be defined as follows. At time $n=0$ start with one node (denoted by $\bullet$) at level $0$. At any step, pick one of the existing nodes (say, $x$) uniformly at random and connect it to a new node one level deeper than $x$; see Figure~\ref{bild:trees_rules} (left).   Let $\LLL_n(k)$ be the number of nodes at depth $k$ in a RRT with $n+1$ nodes. RRTs correspond to the one-split BRW with the deterministic displacement point process $\zeta=\delta_0+\delta_1$.
In particular,
$$
\varphi(\beta) = \eee^{\beta}, \;\;
m(0) = 1,\;\;
\mu(0) = \sigma^2(0) = \kappa_j(0) = 1,
\;\; j\in\N.
$$
We have $\beta_-=-\infty$ and $\beta_+=1$.
For results on the profile of RRTs, we refer to~\cite{devroye_hwang,drmota_hwang,fuchs_hwang_neininger,schopp},  see also~\cite[Section~6.3]{drmota_book} for a detailed discussion of the three main methods applied in this context: the martingale method, the method of moments and the contraction method.

\vspace*{2mm}
\noindent
\textrm{(iii)} $D$-ary recursive trees\footnote{Not to be confused with $m$-ary \emph{search} trees, which is a different model; see~\cite[Section~1.4.2]{drmota_book}.} with $D\in\{2,3,\ldots\}$ {are a special case of so-called increasing trees introduced by Bergeron, Flajolet and Salvy \cite{Bergeron1992}}, see also~\cite[Sections~1.3.3 and~6.5]{drmota_book} and~\cite{schopp} for results on the profile. The model reduces to BSTs for $D=2$; see Figure~\ref{bild:trees_sample} (middle). These trees can be constructed in a similar manner as BSTs with the only difference that at each step $D$ new external nodes are attached; see Figure~\ref{bild:trees_rules} (middle).  The external profile of $D$-ary recursive trees correspond to the one-split BRW with the displacement point process $\zeta=D\delta_1$. We have
$$
\varphi(\beta)= \frac{D\eee^{\beta}-1}{D-1},\;\;
m(0)=D-1,\;\;
\mu(0) = \sigma^2(0) = \kappa_j(0) = \frac D{D-1},
\;\; j\in\N.
$$
The constants $\beta_- < 0$ and $\beta_+ > 0$ are the  solutions of $D \eee^{\beta}(1-\beta) = 1$.

\vspace*{2mm}

\vspace*{2mm}
\noindent
\textrm{(iv)} Plane-oriented recursive trees (PORTs), see Figure~\ref{bild:trees_sample} (right) for a sample realization and~\cite[Section~1.3.2]{drmota_book} for a discussion of this model, are constructed in the following way. At time $0$ start with an internal node $\bullet$ at level $0$ connected to an external node $\Box$ at level $1$:
%\begin{center}
%\includegraphics[width=0.1\textwidth]{tree_initial.pdf}
%\end{center}

\begin{center}
\begin{tikzpicture}[level distance=0.8cm,
level 1/.style={sibling distance=0.7cm},
int/.style = {circle, scale = 0.7, draw=none, fill=black, anchor = north, growth parent anchor=south},
ext/.style = {rectangle, scale = 0.7, draw, anchor = north, growth parent anchor=south}]
 \node(Root) [int] {}
 child {node [ext] {}};
\end{tikzpicture}
\end{center}
At each step choose one external node uniformly at random, declare it internal and add $3$ new external nodes as shown on Figure~\ref{bild:trees_rules} (right). After $n$ steps we obtain a tree with $2n+1$ external nodes. As opposed to BSTs, RRTs and $D$-ary recursive trees, the external profiles of PORTs follow the dynamics of a one-split BRW initiated with one particle (external node) at position \emph{one} (short: initiated at one) at time zero. The displacement point process is $\zeta=2\delta_0+\delta_{1}$.
We obtain
$$
\varphi(\beta)=\frac 12 (\eee^{\beta}+1),\;\;
m(0)=2,\;\;
\mu(0) =\sigma^2(0) = \kappa_j(0) = \frac 12,
\;\; j\in\N.
$$
We have $\beta_-=-\infty$, whereas $\beta_+\approx 1.278$ is the solution of $\eee^{\beta}(\beta-1) = 1$. The profile of PORTs was studied in~\cite{hwang_PORT,katona,schopp,sulzbach}.

\vspace*{2mm}
\noindent
\textrm{(v)} $p$-oriented trees (which reduce to PORTs if $p=2$) correspond to the one-split BRW initiated at one with $\zeta= p \delta_0 + \delta_{1}$, where $p\in \{2,3,\ldots\}$. They also fall under the general model introduced in~\cite{Bergeron1992}. We have
$$
\varphi(\beta)= \frac 1  p (\eee^\beta + p -1),\;\;
m(0)= p ,\;\;
\mu(0) = \sigma^2(0) = \kappa_j(0) = \frac 1  p,
\;\; j\in\N.
$$
We have $\beta_-=-\infty$, whereas $\beta_+$ is the solution of $\eee^{\beta}(\beta-1) = p-1$.
{For further information on $p$-oriented trees, we refer to Sections 1.3.3 and 6.5 in Drmota's monograph~\cite{drmota_book}.} %for more information on $p$-oriented trees which have enjoyed a rather limited discussion in the literature.

\begin{remark}
Writing $(\LLL_n(k))_{k \in\N}$ for the external profile of PORTs (or $p$-oriented trees), and $(\LLL_n^*(k))_{k \in\N_0}$, for the profile of the corresponding standard one-split BRW initiated at zero, we can identify
$\LLL_n(k) = \LLL_{n}^*(k-1)$, for $n\in\N_0, k \in \N$. Denoting by $W_\infty^*(\beta)$ the almost sure limit in Theorem~\ref{theo:W_n_converges_one_split_BRW} for the profile $(\LLL_n^*(k))_{k \in \N_0}$, the limiting process $W_\infty(\beta)$ for the profile $(\LLL_n(k))_{k \in \N}$ is equal to $\eee^\beta  W_\infty^*(\beta)$. In particular, for the random cumulants, we have
$\chi_1(\beta) = 1+ \chi_1^*(\beta)$ and $\chi_k(\beta) = \chi_k^*(\beta)$ for all $k \geq 2$.
\end{remark}

\begin{remark}
In all examples listed above the displacement point process $\zeta$ is concentrated on $\{0,1\}$ and therefore we have $\varphi(\beta)= 1 + \varphi'(0)(\eee^{\beta}-1)$ (since $\varphi(0)=1$ by definition) and hence, almost surely
$$
n^{\varphi(\beta)} =  n \cdot \eee^{(\varphi(\beta)-1) \log n} = n\cdot \eee^{\varphi'(0) (\eee^{\beta} - 1) \log n}\sim \frac {S_n} {m(0)}
\eee^{\varphi'(0) (\eee^{\beta} - 1) \log n}, \; n\to\infty.
$$
Thus, Theorem~\ref{theo:W_n_converges_one_split_BRW} states that the sequence $(S_n^{-1} \sum_{k\in\N_0} \LLL_n(k) \delta_k)_{n\in\N}$ of random probability measures on $\Z$ converges in the mod-Poisson sense with probability $1$; see~\cite{kowalski_nikeghbali_mod_Poi} and Remark \ref{rem:modphi}.
\end{remark}

\vspace*{2mm}
\noindent
\textrm{(vi)}
So far we considered ``horizontally projected profiles''. The \textit{vertically projected external profile} of a binary search tree can be defined as follows.
%; see~\cite{bousquet_melou_chapuy}.
At time $0$, assign to the root of the BST the abscissa $0$. During the construction of the BST, if some external node with abscissa $i$ is chosen, then its two descendants are assigned abscissas $i-1$ and $i+1$.
The abscissa of a node describes its so-called \emph{left-right imbalance} since it measures the difference between the number of times the path from the root to the node turns right rather than left.
Denote by $\LLL_n(k)$ the number of external nodes with abscissa $k\in\Z$ in a BST with $n+1$ external nodes. This profile corresponds to the one-split BRW with $\zeta = \delta_{-1} + \delta_{+1}$ and we have
$$
\varphi(\beta)= \eee^\beta + \eee^{-\beta} -1,\;
m(0)= 1 ,\;
\mu(0) = \kappa_{2j-1}(0) = 0, \sigma^2(0) = \kappa_{2j}(0) = 2,
\; j\in\N.
$$
The constants $\beta_+\approx 0.9071$ and $\beta_-=-\beta_+$ are the solutions of the equation $(\eee^{\beta} - \eee^{-\beta}) \beta = \eee^{\beta} + \eee^{-\beta}-1$. The left-right imbalance of nodes and the corresponding path length were studied by Kuba and Panholzer~\cite{kubapanhol}, the profile by Schopp \cite{schopp}.

\subsection{Jabbour martingale}
In all models listed in the previous section, the number of descendants of any particle in the one-split BRW is deterministic. Recall that this number equals  $m(0) + 1$, so that for BSTs, RRTs and PORTs we have $m(0) = 1, 1, 2$, respectively. In this case, it turns out that the Laplace transform of the particle positions divided by its expectation is a martingale. In the case of BSTs, this martingale has been introduced by \citet{jabbour}; see also \citet{chauvin_drmota_jabbour}. The next theorem generalizes Jabbour's martingale to general one-split BRWs with a deterministic number of descendants.

\begin{theorem}
Consider a one-split branching random walk in which the number of descendants of every particle is deterministic and equals $m(0)+1\in\N$.  Assume that the function $m(\beta)$ defined by~\eqref{eq:m_def} is finite on some interval $\fI$ containing $0$.   Then, for all  $\beta\in \{z \in \C \colon \Re z \in \fI\}$, the sequence $(J_n(\beta))_{n\in\N_0}$ defined by
$$
J_n(\beta) := \frac 1 {\alpha_n(\beta)} \sum_{i=1}^{1 + m(0) n} \eee^{\beta x_{i,n}},
\quad
\alpha_n (\beta) = \prod_{k=0}^{n-1} \left(1 + \frac{m(\beta)}{1 + m(0) k} \right),
$$
is a martingale w.r.t.\ the filtration $(\cF_n)_{n\in\N_0}$, where $\cF_n$ is a $\sigma$-algebra generated by the first $n$ generations of the one-split BRW.  Also, $\E J_n(\beta) = 1$.
\end{theorem}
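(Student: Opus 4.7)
The plan is a direct induction verifying the martingale identity by conditioning on the configuration at time $n$ and on the choice of the particle that splits. Write $S_n(\beta) := \sum_{i=1}^{1+m(0)n} \eee^{\beta x_{i,n}}$, so that $J_n(\beta) = S_n(\beta)/\alpha_n(\beta)$. Throughout, $\beta$ is complex with $\Re\beta \in \fI$, so that $m(\Re\beta)$ is finite.

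First, I would check integrability. Since $|\eee^{\beta x}| = \eee^{(\Re\beta) x}$, we have $|S_n(\beta)| \leq S_n(\Re\beta)$ pointwise, and a short induction (performed along with the martingale computation below, but carried out for the \emph{real} parameter $\Re\beta$) yields $\E[S_n(\Re\beta)] = \alpha_n(\Re\beta) < \infty$. Hence $\E|J_n(\beta)| \leq \alpha_n(\Re\beta)/|\alpha_n(\beta)| < \infty$ for every $n$, and $J_n(\beta)$ is $\cF_n$-measurable by construction.

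Next I would compute $\E[S_{n+1}(\beta)\mid \cF_n]$. Let $I$ be the uniformly chosen index among $\{1,\dots,1+m(0)n\}$ and let $\zeta = \sum_{j=1}^{N}\delta_{Z_j}$ be the displacement process of the new cluster, both independent of $\cF_n$. Conditionally on $\cF_n$ and on $I$, the particle at position $x_{I,n}$ is removed and is replaced by particles at positions $x_{I,n}+Z_1,\dots,x_{I,n}+Z_N$, so
\begin{equation*}
S_{n+1}(\beta) = S_n(\beta) - \eee^{\beta x_{I,n}} + \eee^{\beta x_{I,n}} \sum_{j=1}^{N} \eee^{\beta Z_j}.
\end{equation*}
Taking conditional expectation first with respect to $\zeta$ (using $\E\bigl[\sum_{j=1}^N \eee^{\beta Z_j}\bigr] = 1+m(\beta)$ from \eqref{eq:m_def}) and then averaging over the uniform choice of $I$ gives
\begin{equation*}
\E[S_{n+1}(\beta)\mid \cF_n] = S_n(\beta) + \frac{m(\beta)}{1+m(0)n}\, S_n(\beta) = S_n(\beta)\left(1+\frac{m(\beta)}{1+m(0)n}\right).
\end{equation*}

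The product on the right is exactly the ratio $\alpha_{n+1}(\beta)/\alpha_n(\beta)$ by the definition of $\alpha_n(\beta)$. Dividing by $\alpha_{n+1}(\beta)$ yields $\E[J_{n+1}(\beta)\mid \cF_n] = J_n(\beta)$, which is the martingale property. Finally, $J_0(\beta) = \eee^{\beta \cdot 0}/\alpha_0(\beta) = 1$ (empty product), so $\E J_n(\beta) = 1$ for every $n$ by iteration.

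The computation is essentially elementary; the only places where care is needed are (i) checking integrability when $\beta$ is complex, which is handled by dominating with $\Re\beta$, and (ii) correctly accounting for both sources of randomness in the split — the uniform choice of $I$ and the independent realization of the cluster $\zeta$ — when taking the conditional expectation. Neither is a real obstacle.
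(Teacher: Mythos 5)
Your proposal is correct and follows essentially the same route as the paper's proof: condition on $\cF_n$, average over the uniformly chosen splitting particle and the independent cluster, and recognize the factor $1+\frac{m(\beta)}{1+m(0)n}=\alpha_{n+1}(\beta)/\alpha_n(\beta)$. Your explicit integrability check via domination by the real parameter $\Re\beta$ is a small (and welcome) addition that the paper leaves implicit.
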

%\begin{remark}
%\end{remark}
\begin{proof}
Note that the number of particles at time $n$ is $S_n = 1+ m(0) n$. Let $Z_n(\beta) =  \sum_{i=1}^{1 + m(0) n} \eee^{\beta x_{i,n}}$, where $x_{1,n},\ldots, x_{S_n,n}$ are the positions of the particles at time $n$.  Denoting by $\zeta_n=\sum_{j=1}^{m(0)+1}\delta_{Z_{j,n}}$ the point process of descendants used to pass from generation $n$ to generation $n+1$, we have
\begin{align*}
\E[Z_{n+1}(\beta)|\cF_n]
&=
\frac 1 {1+m(0) n} \sum_{i=1}^{1 + m(0) n} \E \left[Z_n(\beta) - \eee^{\beta x_{i,n}} + \sum_{j=1}^{m(0) +1} \eee^{\beta (x_{i,n} + Z_{j,n})} \Bigg|\cF_n \right]\\
&=
Z_n(\beta) + \left[\sum_{i=1}^{1 + m(0) n} \eee^{\beta x_{i,n}} \left(\E \sum_{j=1}^{m(0) +1} \eee^{\beta Z_{j,n}} - 1\right)\right]\frac{1}{1+m(0)n}\\
&=
Z_n(\beta) \left(1 + \frac{m(\beta)}{1+m(0) n} \right),
\end{align*}
where we used that $m(\beta) = \E \sum_{j=1}^{m(0) +1} \eee^{\beta Z_{j,n}} - 1$; see~\eqref{eq:m_def}. It follows that $J_n(\beta)$ is a martingale. Since $J_0(\beta) = 1$, we have $\E J_n(\beta) = 1$ for all $n\in\N_0$.
\end{proof}

For the function $W_n$ introduced in~\eqref{eq:jabbour_one_split} we obtain (in the case of deterministic number of descendants)
\begin{align} \label{mean_w_exact}
\E W_n(\beta) = \frac{\alpha_n(\beta)}{n^{\varphi(\beta)}} =
n^{-\frac{m(\beta)}{m(0)}} \frac{\left(\frac{m(\beta) + 1}{m(0)}\right)^{(n)}}{\left(\frac{1}{m(0)}\right)^{(n)}},
\quad  \Re \beta\in \fI,
\end{align}
where $z^{(n)}:=z(z+1)\ldots (z+n-1)$ is the rising factorial. For $\Re \beta\in \fI$, using the formula $z^{(n)} \sim  n^z \Gamma(n)/\Gamma(z)$ as $n\to\infty$, we obtain
\begin{align} \label{mean_wlimit_exact} \lim_{n\to\infty} \E W_n(\beta)  = \frac{\Gamma\left(\frac{1}{m(0)}\right)}{\Gamma\left(\frac{m(\beta) + 1}{m(0)}\right)}. \end{align}
Further, for  $\beta\in (\beta_-,\beta_+)$ we shall show that $\E W_\infty(\beta) = \lim_{n\to\infty} \E W_n(\beta)$; see Section~\ref{subsec:proof_edgeworth_mean}, below.
% we obtain
%\begin{align} \label{mean_wlimit_exact}
%\E W_\infty(\beta) = \lim_{n\to\infty} \E W_n(\beta)  = \frac{\Gamma\left(\frac{1}{m(0)}\right)}{\Gamma\left(\frac{m(\beta) + 1}{m(0)}\right)},
%\quad \beta\in(\beta_-,\beta_+).
%\end{align} }
%\vspace*{2mm}
%\noindent
%\textrm{(1)}
%BST: $m(\beta)=2\eee^{\beta}-1$, $m(0)=1$. The constants $\beta_-\approx -1.678$ and $\beta_+\approx 0.768$ are the solutions of $2\eee^{\beta} %(1-\beta) = 1$.

%\vspace*{2mm}
%\noindent
%\textrm{(2)}
%RRT: $m(\beta)=\eee^{\beta}$ and $m(0)=1$. We have $\beta_-=-\infty$ and $\beta_+=1$.

%\vspace*{2mm}
%\noindent
%\textrm{(3)}
%PORT: $m(\beta)=\eee^{\beta}+1$ and $m(0)=2$. We have $\beta_-=-\infty$, whereas $\beta_+\approx 1.278$ is the solution of $\eee^{\beta}(\beta-1) = %1$.

\subsection{Cumulants of the profile}\label{subsec:cumulants_one_split}
Recall that $x_{1,n}, \ldots, x_{S_n,n}$ denote the positions of the particles in a one-split BRW after $n$ splits. For $\beta\in \fI$ consider
$$
\chi_{k,n}(\beta) = \left(\frac{\dd}{\dd \beta}\right)^k \log \sum_{i=1}^{S_n} \eee^{\beta x_{i,n}}.
$$
It is easy to see that $\chi_{k,n}(\beta)$ is the $k$-th cumulant of the Gibbs probability measure assigning to each point $x_{i,n}$ the weight proportional to $\eee^{\beta x_{i,n}}$.
\begin{remark}
The most interesting case is $\beta =0$. Then, $\chi_{k,n} := \chi_{k,n}(0)$ is the $k$-th cumulant of the empirical measure assigning to each particle $x_{i,n}$ the same weight $1/S_n$. For example,
\begin{align*}
\chi_{1,n} = \frac 1{S_n} \sum_{i=1}^{S_n} x_{i,n},\;\;
\chi_{2,n} = \frac 1{S_n} \sum_{i=1}^{S_n} (x_{i,n}-\chi_{1,n})^2,\;\; %- \left(\frac 1n \sum_{i=1}^{S_n} x_{i,n}\right)^2.
\chi_{3,n} = \frac 1{S_n} \sum_{i=1}^{S_n} (x_{i,n}-\chi_{1,n})^3
\end{align*}
are the empirical mean, the empirical variance and the empirical central third moment of the particle positions in the one-split BRW.
In the context of random trees, $S_n \chi_{1,n}$ is the external path length of the tree.  Specifically, in the BST case,  $(n+1)\chi_{1,n} - 2n$ can be interpreted as the number of key comparisons used by the Quicksort algorithm to sort $n$ randomly ordered items.
\end{remark}
\begin{theorem}\label{theo:chi_one_split_BRW}
Consider a one-split BRW satisfying Assumptions B1--B3 and B5. Uniformly on any compact set $K\subset  (\beta_-,\beta_+)$ we have
\begin{equation}\label{eq:theo:chi_one_split_BRW}
\left(\frac{\dd}{\dd \beta}\right)^k \log  W_n (\beta)  =  \chi_{k,n} (\beta) - \varphi^{(k)}(\beta) \log n \toas \chi_{k}(\beta),
\end{equation}
where the limiting random variable $\chi_k(\beta)$ is given by
$$
\chi_k(\beta) = \left(\frac{\dd}{\dd \beta}\right)^k \log W_\infty(\beta).
$$
\end{theorem}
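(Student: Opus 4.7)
The strategy is to deduce the statement from Theorem~\ref{theo:W_n_converges_one_split_BRW} via standard complex-analytic machinery, namely the fact that local uniform convergence of analytic functions passes to all derivatives. The identity part of~\eqref{eq:theo:chi_one_split_BRW} is an immediate consequence of the definition of $W_n$: writing $Z_n(\beta) := \sum_{i=1}^{S_n} \eee^{\beta x_{i,n}}$, we have $\log W_n(\beta) = \log Z_n(\beta) - \varphi(\beta) \log n$, and since $\chi_{k,n}(\beta)$ is by definition the $k$-th derivative of $\log Z_n$, differentiating $k$ times yields
\[
\left(\frac{\dd}{\dd \beta}\right)^k \log W_n(\beta) = \chi_{k,n}(\beta) - \varphi^{(k)}(\beta) \log n.
\]

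For the convergence part, fix a compact set $K \subset (\beta_-,\beta_+)$. By Theorem~\ref{theo:W_n_converges_one_split_BRW}, with probability $1$ the functions $W_n$ converge to $W_\infty$ locally uniformly on the complex domain $\fD \supset (\beta_-,\beta_+)$ and $W_\infty$ has no zeros on $(\beta_-,\beta_+)$. Since $W_\infty$ is analytic, by continuity and compactness there exists (a random) open set $U \subset \fD$ containing $K$ on whose closure $W_\infty$ is bounded away from $0$. Uniform convergence $W_n \to W_\infty$ on $\overline{U}$ then ensures that for all sufficiently large $n$ (depending on the realization), $W_n$ is also bounded away from $0$ on $U$. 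Consequently one may select branches so that $\log W_n \to \log W_\infty$ uniformly on a slightly smaller neighborhood $U' \subset U$ of $K$.

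Once this is established, Cauchy's integral formula applied to the holomorphic functions $\log W_n - \log W_\infty$ on $U'$ gives uniform convergence of all derivatives on $K$:
\[
\left(\frac{\dd}{\dd \beta}\right)^k \log W_n(\beta) \toas \left(\frac{\dd}{\dd \beta}\right)^k \log W_\infty(\beta) = \chi_k(\beta),
\]
uniformly in $\beta \in K$. Combined with the identity from the first paragraph, this proves~\eqref{eq:theo:chi_one_split_BRW}. The only mildly subtle point — and the place one must be careful — is that the neighborhood $U$ on which $W_\infty$ stays away from zero is random; however, since we need the conclusion only for each fixed compact $K$ and only in the almost sure sense, this causes no difficulty. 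Note that we do not need the quantitative rate~\eqref{eq:Psi_n_to_W_infty_speed_one_split} from Theorem~\ref{theo:W_n_converges_one_split_BRW} here; plain local uniform convergence suffices.
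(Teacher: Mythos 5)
Your proposal is correct and follows essentially the same route as the paper: identify the equality from the definition of $W_n$, pass to a random open neighborhood $\fD'$ of $K$ on whose closure $W_\infty$ is non-vanishing, take logarithms there, and use Cauchy's formula to transfer locally uniform convergence to all derivatives. The paper's proof likewise uses only plain locally uniform convergence from Theorem~\ref{theo:W_n_converges_one_split_BRW}, not the quantitative rate.
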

%\begin{remark}
%\end{remark}
\begin{proof}
The equality in~\eqref{eq:theo:chi_one_split_BRW} follows from the definition of $W_n$; see~\eqref{eq:jabbour_one_split}. We prove the convergence.
Let $\fD$ be an open neighborhood of $(\beta_-,\beta_+)$ as in Theorem~\ref{theo:W_n_converges_one_split_BRW}. In the probability space on which the
one-split BRW is defined, consider some outcome $\omega$ and let $\fD':=\fD'(\omega) \subseteq \fD$ be an open subset with $K \subseteq \fD'$ such
that $W_\infty$ is almost surely non-zero on the closure of $\fD'$, and the analytic functions $W_n(\cdot;\omega)$ converge, as $n\to\infty$, to $W_\infty(\cdot;\omega)$
in $\cH(\fD')$, the set of analytic functions on $\fD'$ with the topology of locally uniform convergence. The set of such outcomes has probability $1$. By Theorem~\ref{theo:W_n_converges_one_split_BRW},
$$
\log W_n (\beta) \toas \log W_\infty(\beta) \quad \text{in } \cH(\fD').
$$
%Here, for $z \in \mathbb C \setminus \{0\}$, we write $\log z$ for the principal value of the logarithm of $z$.
Observe that the logarithm can be defined continuously since $W_\infty$ and $W_n$ do not vanish for sufficiently large $n$.
By the Cauchy formula, taking the $k$-th derivative is a continuous map from $\cH(\fD')$ to $\cH(\fD')$.  Consequently,
$$
\left(\frac{\dd}{\dd \beta}\right)^k \log  W_n (\beta) \toas \left(\frac{\dd}{\dd \beta}\right)^k \log W_\infty(\beta) \quad \text{in } \cH(\fD'),
$$
and hence uniformly in $\beta\in K$. This concludes the proof.
\end{proof}

\subsection{Edgeworth expansion for one-split BRW}
We are going to state an Edgeworth expansion for the profile of the one-split BRW. We recall the parameters $\mu(\beta)$ and $\sigma(\beta)$ from~\eqref{eq:mu_sigma_def},
\begin{equation}\label{eq:mu_sigma_def_one_split}
\mu(\beta) = \varphi'(\beta),
\;\;\;
\sigma^2(\beta) = \varphi''(\beta),
\end{equation}
as well as the deterministic cumulants $\kappa_j(\beta) = \varphi^{(j)}(\beta)$, $j\in\N$. As in~\eqref{eq:x_n_k_def} (with $w_n=\log n$), we introduce the normalized coordinate
\begin{equation}\label{eq:x_n_k_def_one_split}
x_n(k) = x_n(k;\beta) = \frac{k-\mu(\beta) \log n}{ \sigma(\beta) \sqrt{\log n}}, \quad k\in\Z.
\end{equation}

\begin{theorem}\label{theo:edgeworth_one_split_BRW}
Let $(\bL_n(k))_{k\in\Z}$ be the profile at time $n$ of a one-split branching random walk satisfying Assumptions B1--B5. Fix $r\in\N_0$ and a compact set $K\subset (\beta_-,\beta_+)$. Then,
\begin{equation}
(\log n)^{\frac{r+1}{2}}
\sup_{k\in\Z} \sup_{\beta\in K}
\left|\frac{\eee^{\beta k}\bL_n(k)}{n^{\varphi(\beta)}} - \frac{W_\infty(\beta) \eee^{-\frac 12 x_n^2(k)}}{\sigma(\beta) \sqrt{2\pi \log n}}  \sum_{j=0}^r \frac{G_j(x_n(k);\beta)}{(\log n)^{j/2}} \right| \toas 0.
\end{equation}
Here, $G_j(x)=G_j(x;\beta)$ is a polynomial of degree at most $3j$ given by
\begin{equation}\label{eq:G_def_one_split}
G_j(x) = \frac {(-1)^j} {j!} \, \eee^{\frac 12 x^2} B_j(D_1,\ldots,D_j) \eee^{-\frac 12 x^2},
\end{equation}
where $B_j$ is the $j$-th Bell polynomial (see Remark~\ref{rem:bell_poly_def}) and $D_1,D_2,\ldots$ are differential operators (with random coefficients) given by
\begin{equation}\label{eq:D_j_one_split}
D_j
=
\frac{\varphi^{(j+2)}(\beta)}{(j+1)(j+2)} \left(\frac{1}{\sigma(\beta)} \frac{\dd}{\dd x} \right)^{j+2} + \chi_j(\beta) \left(\frac{1}{\sigma(\beta)}\frac{\dd}{\dd x}\right)^{j}
\end{equation}
with $\chi_j(\beta)$ as in Theorem~\ref{theo:chi_one_split_BRW}.
\end{theorem}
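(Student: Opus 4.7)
The plan is to derive Theorem \ref{theo:edgeworth_one_split_BRW} as a direct specialization of the general Edgeworth expansion Theorem \ref{theo:edgeworth_general}, with the identification $w_n=\log n$, $\varphi(\beta)=m(\beta)/m(0)$, the domain $\fD$ supplied by Theorem \ref{theo:W_n_converges_one_split_BRW}, and $W_n$ given by \eqref{eq:jabbour_one_split}. Since the formulas for $G_j$ and $D_j$ in \eqref{eq:G_def_one_split}--\eqref{eq:D_j_one_split} coincide verbatim with those in \eqref{eq:G_def}--\eqref{eq:D_def}, and the normalized coordinate \eqref{eq:x_n_k_def_one_split} matches \eqref{eq:x_n_k_def}, the entire task reduces to verifying Assumptions A1--A4 for the one-split BRW profile under Assumptions B1--B5.

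Assumption A1 is immediate: at time $n$ there are only $S_n<\infty$ particles almost surely, so $\sum_{k\in\Z}|\bL_n(k)|\eee^{\beta k}=\sum_{i=1}^{S_n}\eee^{\beta x_{i,n}}$ is a finite random sum for every $\beta\in\fI$, and in particular on $(\beta_-,\beta_+)\subset\fI$ where $m$ is finite by B3. Strict convexity of $m$ on $\fI$ (guaranteed by B1 together with B3) yields $\varphi''(\beta)=m''(\beta)/m(0)>0$, so $\varphi$ has the properties required in Section~\ref{sec:general_expansion}. Assumptions A2 and A3 are nothing but the conclusions of Theorem \ref{theo:W_n_converges_one_split_BRW}: locally uniform a.s.\ convergence $W_n\to W_\infty$ on $\fD$, non-vanishing of $W_\infty$ on $(\beta_-,\beta_+)$, and the superpolynomial speed bound \eqref{eq:Psi_n_to_W_infty_speed_one_split}, which is exactly \eqref{eq:Psi_n_to_W_infty_speed} with $w_n=\log n$.

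The main obstacle is Assumption A4, the ``local aperiodicity'' condition. Writing the Fourier integrand at frequency $u\in[a,\pi]$ as $\sum_{i=1}^{S_n}\eee^{(\beta+iu)x_{i,n}}=n^{\varphi(\beta+iu)}W_n(\beta+iu)$, the goal is to show
\[
\sup_{\beta\in K}\,\eee^{-\varphi(\beta)\log n}\int_a^{\pi}\bigl|n^{\varphi(\beta+iu)}W_n(\beta+iu)\bigr|\,\dd u=o((\log n)^{-r})\text{ a.s.}
\]
The deterministic factor is $n^{\Re\varphi(\beta+iu)-\varphi(\beta)}$, and Assumption B4, forcing $\nu$ off every proper subgroup of $\Z$, yields the strict inequality $\bigl|\sum_k\nu_k\eee^{(\beta+iu)k}\bigr|<\sum_k\nu_k\eee^{\beta k}$, hence $\Re\varphi(\beta+iu)<\varphi(\beta)$ pointwise. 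By continuity and compactness of $K\times[a,\pi]$ this gap is uniform: $\Re\varphi(\beta+iu)-\varphi(\beta)\le-\delta<0$, so the deterministic factor is bounded by $n^{-\delta}$, which beats any $(\log n)^{-r}$. What remains is to show almost sure boundedness of $|W_n(\beta+iu)|$ uniformly in $(\beta,u)\in K\times[a,\pi]$; this is achieved, analogously to the proof of Theorem \ref{theo:W_n_converges_one_split_BRW}, by embedding the one-split BRW into a continuous-time Yule-type BRW and invoking Biggins' uniform convergence theorem on a suitable complex strip, the relevant martingales being $L^\gamma$-bounded thanks to Assumption B5. Once A4 is in place, Theorem \ref{theo:edgeworth_general} applies directly and delivers the claimed expansion.
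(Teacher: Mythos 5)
Your overall strategy is exactly the paper's: specialize Theorem~\ref{theo:edgeworth_general} with $w_n=\log n$ and verify Assumptions A1--A4, with A1 trivial and A2--A3 supplied by Theorem~\ref{theo:W_n_converges_one_split_BRW}. The gap is in your verification of A4. You factor the integrand as $n^{\Re\varphi(\beta+iu)-\varphi(\beta)}\cdot|W_n(\beta+iu)|$, get a uniform decay $n^{-\delta}$ from aperiodicity (correct), and then claim that $|W_n(\beta+iu)|$ is a.s.\ bounded uniformly over $K\times[a,\pi]$ ``by invoking Biggins' uniform convergence theorem on a suitable complex strip.'' This last claim is false in general. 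Biggins' theorem gives convergence (hence boundedness) of the normalized Laplace transform only on the domain $\fD=\bigcup_\gamma(\Omega^1_\gamma\cap\Omega^2_\gamma)$, where membership in $\Omega^2_\gamma$ requires $m^*(\gamma\Re\beta)<|m^*(\beta)|^\gamma$; for $\beta+iu$ with $u$ bounded away from $0$, the modulus $|m^*(\beta+iu)|$ is strictly smaller than $m^*(\beta)$ — this is the very same cancellation you exploit for the decay — and the condition typically fails. The paper explicitly warns that $\fD$ cannot be taken to be the full strip and is in fact a \emph{bounded} set for BSTs. Concretely, for the BST at $\beta=0$, $u=\pi$ one has $\kappa(i\pi)=m^*(0)/|m^*(i\pi)|^{2}=\eee^{7}>1$, so $\E|\mathcal W_n(i\pi)|^2$ grows geometrically and $|W_n(i\pi)|$ grows polynomially in $n$; Assumption A4 still holds there, but only because the polynomial growth is strictly slower than your $n^{-\delta}$ decay, not because the random factor is bounded.

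The two effects must therefore be balanced in a single estimate rather than treated separately, and this is what the paper's Proposition~\ref{prop_cond_4_cont} does: writing the integrand as $\psi(\beta)^t|\mathcal W_t(\beta)|$ with $\psi(\beta)=|m^*(\beta)|/m^*(\Re\beta)$, it bounds $\E\sup_{t\in[n,n+1]}|\mathcal W_t(\beta)|^\gamma$ by $C\sum_{j\le n}\kappa(\beta)^j$ (Doob plus Biggins' Lemma~2, using B5), and then shows the combined rate
\begin{equation*}
\psi^{\gamma}(\beta_0)\,(\kappa(\beta_0)\vee 1)=\frac{m^{*}(\gamma\Re\beta_0)\vee|m^{*}(\beta_0)|^{\gamma}}{(m^{*}(\Re\beta_0))^{\gamma}}<1,
\end{equation*}
where the first alternative is $<1$ by B5/B3 ($\Re\beta_0\in\Omega^1_\gamma\cap\Omega^2_\gamma$) and the second by B1 and B4. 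A Borel--Cantelli argument then yields geometric decay of $\psi(\beta)^t|\mathcal W_t(\beta)|$ in continuous time, and Lemma~\ref{lem:split_time} (the precise asymptotics $\tau_n=\log n/m(0)+O(1)$) converts this into the polynomial decay $n^{-\varepsilon}$ needed for A4 in discrete time — a step you also skip. To repair your proof you would need to replace the claimed boundedness of $|W_n(\beta+iu)|$ by a quantitative growth bound of the form $|W_n(\beta+iu)|=o(n^{\delta-\varepsilon})$ a.s., which is essentially Proposition~\ref{prop_cond_4_cont}.
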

\begin{remark}
The expressions for the first three terms in the expansion have the same form as in~\eqref{eq:G0}, \eqref{eq:G1}, \eqref{eq:G2}.
%with $\chi_i$ replaced by $\chi_\infty^{(i)}$.
\end{remark}

\begin{remark}
An Edgeworth expansion for the profile of a \textit{many-split} BRW was obtained in~\cite{gruebel_kabluchko_BRW}. Theorem~\ref{theo:edgeworth_general} from the present paper can be applied to the many-split BRW, but both the representation of the terms of the expansion and the proof given in~\cite{gruebel_kabluchko_BRW} differ from ours. In Section~\ref{subsec:alternative_expression} we provide an alternative representation for the terms in Theorem~\ref{theo:edgeworth_general} which allows to derive the many-split BRW expansion of~\cite{gruebel_kabluchko_BRW}. There are many differences between the one-split and many-split BRW cases. For example, in the former case the expansions are in negative powers of $\sqrt{\log n}$, whereas in the latter case negative powers of $\sqrt n$ appear.
\end{remark}

Taking $\beta=0$ and $r=0$ in Theorem~\ref{theo:edgeworth_one_split_BRW}, and recalling that $W_\infty(0)=m(0)$ and $\varphi(0)=1$, we obtain the following local limit theorem for the one-split BRW.
\begin{theorem}
Let $(\bL_n(k))_{k\in\Z}$ be the profile at time $n$ of a one-split branching random walk satisfying Assumptions B1--B5. Then,
\begin{equation}
\sqrt {\log n}\,
\sup_{k\in\Z}
\left|\frac{\bL_n(k)}{n} - \frac{m(0)}{\sigma(0) \sqrt{2\pi \log n}} \exp\left\{-\frac 12\left(\frac{k-\mu(0) \log n}{ \sigma(0) \sqrt{\log n}} \right)^2\right\}\right| \toas 0.
\end{equation}
\end{theorem}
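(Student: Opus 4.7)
The plan is to deduce the statement as the special case of Theorem~\ref{theo:edgeworth_one_split_BRW} obtained by setting $\beta = 0$ and $r = 0$. Since $0 \in (\beta_-,\beta_+)$, we may take the compact set $K = \{0\}$, and all hypotheses B1--B5 are exactly the hypotheses of Theorem~\ref{theo:edgeworth_one_split_BRW}, so nothing additional needs to be verified.

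First I would simplify each factor appearing in the general expansion at $\beta = 0$. By definition $\varphi(\beta) = m(\beta)/m(0)$, hence $\varphi(0) = 1$ and $n^{\varphi(0)} = n$, while $\eee^{\beta k} = 1$. Next, Remark~\ref{rem:W_infty_0} gives $W_\infty(0) = m(0)$ almost surely. For the polynomial factor, formula \eqref{eq:G_def_one_split} together with $B_0 = 1$ (see \eqref{eq:Bell_poly_first}) yields $G_0(x;0) \equiv 1$, so the inner sum $\sum_{j=0}^{0} G_j(x_n(k);0)(\log n)^{-j/2}$ collapses to $1$. The normalized coordinate \eqref{eq:x_n_k_def_one_split} at $\beta = 0$ is exactly
\begin{equation*}
x_n(k;0) = \frac{k - \mu(0)\log n}{\sigma(0)\sqrt{\log n}}.
\end{equation*}

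With these substitutions, the expression inside the absolute value in Theorem~\ref{theo:edgeworth_one_split_BRW} reduces to
\begin{equation*}
\frac{\bL_n(k)}{n} - \frac{m(0)}{\sigma(0)\sqrt{2\pi \log n}} \exp\!\left\{-\frac{1}{2}\left(\frac{k - \mu(0)\log n}{\sigma(0)\sqrt{\log n}}\right)^{\!2}\right\},
\end{equation*}
and the prefactor $(\log n)^{(r+1)/2}$ equals $\sqrt{\log n}$ for $r = 0$. Taking the supremum over $k \in \Z$ (the supremum over $\beta \in K = \{0\}$ is trivial) and invoking Theorem~\ref{theo:edgeworth_one_split_BRW} gives the claimed almost sure convergence. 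Since the argument is a direct substitution, there is no genuine obstacle; all the work is already contained in Theorem~\ref{theo:edgeworth_one_split_BRW}, and the only points requiring a moment's attention are the identification $W_\infty(0) = m(0)$ via Remark~\ref{rem:W_infty_0} and the identity $G_0 \equiv 1$ from the definition of the Bell polynomials.
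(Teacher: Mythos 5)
Your proposal is correct and matches the paper's own derivation exactly: the paper obtains this local limit theorem by setting $\beta=0$ and $r=0$ in Theorem~\ref{theo:edgeworth_one_split_BRW} and invoking $\varphi(0)=1$ and $W_\infty(0)=m(0)$ from Remark~\ref{rem:W_infty_0}, just as you do. Nothing further is needed.
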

%More interesting is the interpretation of the next term in the expansion.
More terms can be obtained by taking $\beta=0$ and arbitrary $r\in\N_0$. Another possibility is to take $\beta=\beta_{n}(k)$ as in~\eqref{def:beta_n_k}, that is $\varphi'(\beta_{n}(k))= k/\log n$. Then, $x_{n}(k) = 0$ and we obtain the following expansion containing only half-integer powers of $\log n$ (c.f.\ Theorem~\ref{theo:LD_general}):
\begin{theorem}\label{theo:LD_one_split}
Let $(\bL_n(k))_{k\in \Z}$ be the profile at time $n$ of a one-split branching random walk satisfying Assumptions B1--B5. Then, for all $r\in\N_0$ and any compact set $K\subset (\beta_-,\beta_+)$,
\begin{equation*}%\label{eq:LD_expansion_one_split}
(\log n)^{r+1} \sup_{k\in \Z \cap (\log n) \varphi'(K)}
\left|\frac{\eee^{k\beta_n(k)}}{n^{\varphi(\beta_n(k))}}   \LLL_n(k)  - \frac{1}{\sqrt{2\pi \log n}} \sum_{j=0}^{r} \frac{F_{2j}(0; \beta_n(k))}{\sigma(\beta_n(k))(\log n)^{j}}\right| \toas  0,
\end{equation*}
where $F_{2j}(0; \beta) := W_\infty(\beta) G_{2j}(0;\beta)$ is a linear combination of $1,W_\infty(\beta),\ldots, W_\infty^{(2j)}(\beta)$ (see Section~\ref{subsec:alternative_expression} for the proof of the latter claim).
%for every compact set $K\subset (\beta_-,\beta_+)$.
\end{theorem}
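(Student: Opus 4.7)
The plan is to deduce Theorem~\ref{theo:LD_one_split} as a direct specialization of Theorem~\ref{theo:edgeworth_one_split_BRW}, by choosing the free tilting parameter $\beta$ pointwise as $\beta = \beta_n(k)$ and exploiting the vanishing of the odd terms in the expansion at $x=0$. Given $r \in \N_0$ and a compact set $K \subset (\beta_-,\beta_+)$, I would first invoke Theorem~\ref{theo:edgeworth_one_split_BRW} with expansion order $2r+1$ on the same compact $K$; this yields the prefactor $(\log n)^{(2r+2)/2} = (\log n)^{r+1}$ in front of the supremum, matching the normalization claimed in Theorem~\ref{theo:LD_one_split}.

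Next, I would restrict attention to $k \in \Z \cap (\log n)\varphi'(K)$ and, for each such $k$, dominate the $\sup_{\beta \in K}$ in Theorem~\ref{theo:edgeworth_one_split_BRW} by the evaluation at the particular choice $\beta = \beta_n(k)$. This requires checking that $\beta_n(k) \in K$ uniformly in $n$ and in the admissible $k$. Since $\varphi$ is strictly convex on $\fI$, the derivative $\varphi'$ is a strictly increasing homeomorphism from $(\beta_-,\beta_+)$ onto $\varphi'((\beta_-,\beta_+))$; the inverse is continuous, hence $\beta_n(k) = (\varphi')^{-1}(k/\log n)$ lies in $K$ whenever $k/\log n \in \varphi'(K)$. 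By the defining relation \eqref{def:beta_n_k}, we have $x_n(k;\beta_n(k)) = 0$ for every such $k$, so the Gaussian factor $\eee^{-x_n^2(k)/2}$ collapses to $1$.

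At this point, the expansion in Theorem~\ref{theo:edgeworth_one_split_BRW} reduces, after the substitution $\beta = \beta_n(k)$, to
\[
\frac{W_\infty(\beta_n(k))}{\sigma(\beta_n(k))\sqrt{2\pi \log n}} \sum_{j=0}^{2r+1} \frac{G_j(0;\beta_n(k))}{(\log n)^{j/2}}.
\]
By Remark~\ref{rem:G_odd_even}, $G_j(0;\beta) = 0$ for all odd $j$, so only the terms with $j = 2i$, $i=0,\ldots,r$, survive. Combining $W_\infty(\beta_n(k))$ with $G_{2i}(0;\beta_n(k))$ into $F_{2i}(0;\beta_n(k)) := W_\infty(\beta_n(k)) G_{2i}(0;\beta_n(k))$ recovers exactly the expression in the statement, and only integer powers of $\log n$ remain. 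The assertion that $F_{2i}(0;\beta)$ is a \emph{linear} combination of $1,W_\infty(\beta),\ldots,W_\infty^{(2i)}(\beta)$, as opposed to a rational expression in these quantities (which is what would be obvious from the form \eqref{eq:D_j_one_split} of the operators $D_j$), is postponed to Section~\ref{subsec:alternative_expression} and used here as a black box.

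The proof is thus essentially a specialization argument, and the only subtle point is the uniformity of the substitution: I would emphasize that the supremum over $k \in \Z \cap (\log n)\varphi'(K)$ is absorbed by the supremum over $\beta \in K$ precisely because $\beta_n(\cdot)$ maps the admissible set of $k$'s into the fixed compact set $K$, independently of $n$. No new estimates beyond those already contained in Theorem~\ref{theo:edgeworth_one_split_BRW} are needed.
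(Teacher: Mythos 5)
Your proposal is correct and follows essentially the same route as the paper, which obtains Theorem~\ref{theo:LD_one_split} precisely by substituting $\beta=\beta_n(k)$ into Theorem~\ref{theo:edgeworth_one_split_BRW} (cf.\ the derivation of Theorem~\ref{theo:LD_general} from Theorem~\ref{theo:edgeworth_general}), using $x_n(k;\beta_n(k))=0$ and the vanishing of $G_j(0;\beta)$ for odd $j$ from Remark~\ref{rem:G_odd_even}. Your added details --- taking expansion order $2r+1$ to get the prefactor $(\log n)^{r+1}$, and noting that $\beta_n(k)\in K$ uniformly for $k\in\Z\cap(\log n)\varphi'(K)$ so the double supremum dominates the pointwise substitution --- are exactly the right bookkeeping.
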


Our results easily imply an expansion similar to Theorem~\ref{theo:edgeworth_one_split_BRW} for the \emph{mean} of the profile when the number of particles in the first generation is almost surely constant.

\begin{theorem}\label{theo:edgeworth_one_split_BRW_mean}
Let $(\bL_n(k))_{k\in\Z}$ be the profile at time $n$ of a one-split branching random walk with the deterministic number of descendants and satisfying Assumptions B1--B5. Fix $r\in\N_0$ and a compact set $K\subset (\beta_-,\beta_+)$. Then,
\begin{equation*}
(\log n)^{\frac{r+1}{2}}
\sup_{k\in\Z} \sup_{\beta\in K}
\left|\frac{\eee^{\beta k}\E[\bL_n(k)]}{n^{\varphi(\beta)}} -  \frac{\E [W_\infty(\beta)]\, \eee^{-\frac 12 x_n^2(k)}}{\sigma(\beta) \sqrt{2\pi \log n}}  \sum_{j=0}^r \frac{\tilde G_j(x_n(k);\beta)}{(\log n)^{j/2}} \right| \ton 0.
\end{equation*}
Here, $\tilde G_j(x;\beta)$ is defined by the same formulas~\eqref{eq:G_def_one_split}, \eqref{eq:D_j_one_split} as $G_j(x;\beta)$, but with $\chi_j(\beta)$ replaced by its deterministic analogue
$$
\tilde \chi_j(\beta) = \left(\frac{\dd}{\dd \beta}\right)^j \log \E W_\infty(\beta)
=
- \left(\frac{\dd}{\dd \beta}\right)^j \log \Gamma\left(\frac{m(\beta) + 1}{m(0)}\right).
$$
%Here, the cumulants $\chi_j(\beta)$ in the definition of the polynomials $G_j(x)$ are deterministic, $\chi_j(\beta) = (\log \E W_\infty(\beta))^{(j)}(\beta)$. $\E W_\infty(\beta)$ is given explicitly in \eqref{mean_wlimit_exact}.
\end{theorem}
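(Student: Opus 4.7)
The plan is to invoke the general Edgeworth expansion, Theorem~\ref{theo:edgeworth_general}, applied to the \emph{deterministic} sequence of profiles $k\mapsto \E[\bL_n(k)]$, with speed $w_n = \log n$, function $\varphi$ as before, and the role of the random analytic function $W_\infty$ played by the deterministic limit $\tilde W_\infty(\beta) := \E W_\infty(\beta) = \Gamma(1/m(0))/\Gamma((m(\beta)+1)/m(0))$ from \eqref{mean_wlimit_exact}. Once the hypotheses A1--A4 are verified for the deterministic profile, Theorem~\ref{theo:edgeworth_general} delivers the claimed expansion; the identity $\tilde \chi_j(\beta) = -(d/d\beta)^j \log \Gamma((m(\beta)+1)/m(0))$ then follows by taking $j$-th derivatives of $\log \tilde W_\infty(\beta) = \log \Gamma(1/m(0)) - \log \Gamma((m(\beta)+1)/m(0))$, the first term being a $\beta$-independent constant.

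The key input is the exact formula \eqref{mean_w_exact}, which under the deterministic-descendants assumption gives $\tilde W_n(\beta) := \E W_n(\beta) = n^{-\varphi(\beta)} \alpha_n(\beta)$ in closed form. Assumption A1 is immediate. Assumption A2, in its deterministic version, amounts to locally uniform convergence $\tilde W_n \to \tilde W_\infty$ on some complex neighbourhood $\fD$ of $(\beta_-,\beta_+)$; this follows from the classical asymptotics $\Gamma(z+n)/\Gamma(n) = n^z(1+O(1/n))$ for $z$ in compact subsets of $\C$ avoiding the poles of $\Gamma$, applied to the two rising factorials in \eqref{mean_w_exact}. The same asymptotic expansion yields the rate $|\tilde W_n(\beta) - \tilde W_\infty(\beta)| = O(1/n)$, uniformly on compact subsets of $\fD$, which is much stronger than the $o((\log n)^{-r})$ decay required by Assumption A3.

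The delicate part is Assumption A4 for the mean profile. Using formula~\eqref{mean_w_exact} extended to the strip $\{\Re \beta \in \fI\}$, one has $\sum_k \E\bL_n(k) \eee^{k(\beta+iu)} = n^{\varphi(\beta+iu)} \tilde W_n(\beta+iu)$, so the integrand in A4 equals
$$n^{\Re\varphi(\beta+iu) - \varphi(\beta)}\, |\tilde W_n(\beta+iu)|.$$
A complex Stirling estimate applied to the two rising factorials in \eqref{mean_w_exact} shows that $|\tilde W_n(\beta+iu)|$ remains bounded uniformly for $u\in[-\pi,\pi]$ and $\beta$ in compacts, with the zeros of $\tilde W_n$ on the contour (which occur when $(m(\beta+iu)+1)/m(0)$ is a non-positive integer) only sharpening the bound. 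By Assumption~B4, the aperiodicity of $\nu$ forces $\Re m(\beta+iu) < m(\beta)$ strictly for $u \in [a,\pi]$, and by compactness this gap is bounded away from zero uniformly in $\beta \in K$. Consequently $n^{\Re\varphi(\beta+iu)-\varphi(\beta)}$ decays like a fixed negative power of $n$, which is $o((\log n)^{-r})$ for every $r$, and A4 follows.

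Once A1--A4 are in hand, Theorem~\ref{theo:edgeworth_general} applied to $\E \bL_n$ yields the announced expansion with polynomials $\tilde G_j(x;\beta)$ defined by the formulas \eqref{eq:G_def_one_split}--\eqref{eq:D_j_one_split} but with $W_\infty, \chi_j$ replaced by $\tilde W_\infty, \tilde\chi_j$; since the entire setup is deterministic, the almost sure convergence $\toas$ in Theorem~\ref{theo:edgeworth_general} specialises to ordinary convergence $\ton$, matching the statement. The main obstacle is the uniform control of $|\tilde W_n(\beta+iu)|$ along the vertical segments $u \in [a,\pi]$ required in A4, which rests on a careful Stirling-type estimate for ratios of Gamma functions on shifted vertical lines; the rest of the argument is a direct transcription of the reasoning that already gave the random-variable version, Theorem~\ref{theo:edgeworth_one_split_BRW}.
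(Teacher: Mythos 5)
Your proposal follows essentially the same route as the paper: apply Theorem~\ref{theo:edgeworth_general} to the deterministic profile $k\mapsto\E[\bL_n(k)]$, whose normalized Laplace transform is $\E W_n(\beta)$ given in closed form by~\eqref{mean_w_exact}, verify A2--A3 by Stirling asymptotics for the ratio of rising factorials, and verify A4 by combining the product formula for $\alpha_n(\beta+iu)$ with the strict inequality $\Re m(\beta+iu)<m(\beta)$ for $u\in[a,\pi]$ forced by Assumption~B4. All of that is correct and is exactly what the paper does.

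There is, however, one step you pass over too quickly. Equation~\eqref{mean_wlimit_exact} only identifies $\lim_{n\to\infty}\E W_n(\beta)$ with the Gamma ratio; it does not by itself give $\E W_\infty(\beta)=\lim_{n\to\infty}\E W_n(\beta)$, and the theorem as stated uses $\E[W_\infty(\beta)]$ as the prefactor and defines $\tilde\chi_j$ through $\log\E W_\infty(\beta)$. Since $W_n(\beta)\to W_\infty(\beta)$ only almost surely, the interchange of limit and expectation requires uniform integrability of $(W_n(\beta))_{n\in\N}$. The paper establishes this via a separate proposition showing that $(W_n(\beta))_n$ is bounded in $L_\gamma$ for some $\gamma=\gamma(\beta)>1$: one uses the factorization $\mathcal W_{\tau_n}(\beta)=H_n(\beta)W_n(\beta)$, the independence of $H_n(\beta)$ and $W_n(\beta)$, the $L_\gamma$-boundedness of the continuous-time Biggins martingale from~\cite{biggins_uniform} together with optional stopping, and the fact that $\E H_n(\beta)$ stays bounded away from zero (which itself uses that $\E W_n(\beta)$ converges to a non-zero limit). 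Without this, your expansion is still valid with $\tilde W_\infty:=\lim_n\E W_n$ in place of $\E W_\infty$, but the identification with $\E[W_\infty(\beta)]$ claimed in the statement is not justified. Everything else in your argument is sound.
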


Again, it is natural to choose $\beta$  as in~\eqref{def:beta_n_k}. Then, $x_n(k) = 0$ and one obtains an expansion containing half-integer powers of $\log n$ only.

\subsection{Width and mode of the one-split BRW}
Recall the definitions of the \textit{width} $M_n$ and the \textit{mode} $u_n$ of a one-split BRW at time $n$ in \eqref{eq:width_mode_def}.
%\begin{equation}\label{eq:width_mode_def_one_split}
%M_n = \max_{k\in\Z} \bL_n(k),
%\quad
%u_n =\argmax_{k\in\Z} \bL_n(k).
%\end{equation}
In the setting of random trees, the mode is the level having the largest number of nodes, while the width is the maximal number of nodes at a level. From Theorems~\ref{theo:mode}, \ref{theo:width} and \ref{theo:width_limit_law} we obtain the following results for the one-split BRW.
\begin{theorem}\label{theo:mode_one_split}
Consider a one-split BRW satisfying Assumptions B1--B5. There is an a.s.\ finite random variable $K$ such that for $n>K$, the mode $u_n$ is equal to one of the numbers $\lfloor u_n^*\rfloor$ or $\lceil u_n^*\rceil$, where
\begin{equation}\label{eq:u_n_star_one_split}
u_n^* = \varphi'(0) \log n + \chi_1(0) - \frac{\kappa_3(0)}{2\sigma^2(0)}.
\end{equation}
\end{theorem}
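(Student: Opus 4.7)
The plan is to deduce Theorem~\ref{theo:mode_one_split} from the general Theorem~\ref{theo:mode} by checking that the one-split BRW profile $(\bL_n(k))_{k\in\Z}$ fits the framework of Section~\ref{sec:general_expansion} with $w_n=\log n$, and with $\varphi$, $W_\infty$ as in Section~\ref{subsec_one_split_BRW_def} and Theorem~\ref{theo:W_n_converges_one_split_BRW}. Under these identifications, formula~\eqref{eq:u_n_star} for $u_n^*$ reduces verbatim to~\eqref{eq:u_n_star_one_split}, so the entire task is to verify Assumptions A1--A4.

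Assumption A1 is immediate since $\bL_n$ has at most $S_n=1+m(0)n$ nonzero values, so its Laplace transform exists on all of $\R$. Assumptions A2 (locally uniform convergence on $\fD$ and non-vanishing of $W_\infty$ on $(\beta_-,\beta_+)$) and A3 (superpolynomial speed in $\log n$) are exactly the content of Theorem~\ref{theo:W_n_converges_one_split_BRW}.

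The substantive step is A4. Writing the inner sum in~\eqref{assump_4} as $n^{\varphi(\beta+iu)}W_n(\beta+iu)$, the integrand has modulus $n^{\Re\varphi(\beta+iu)-\varphi(\beta)}|W_n(\beta+iu)|$. My approach would combine two ingredients: (a) a deterministic strict inequality
\[
\delta(K,a):=\inf_{\beta\in K,\,u\in[a,\pi]}\bigl(\varphi(\beta)-\Re\varphi(\beta+iu)\bigr)>0,
\]
so that the prefactor decays polynomially as $n^{-\delta(K,a)}$; and (b) an a.s.\ bound $\sup_{\beta\in K,\,u\in[a,\pi]}|W_n(\beta+iu)|=O(n^{\delta(K,a)/2})$. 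The strict inequality in (a) follows from Assumption~B4: equality $\Re m(\beta+iu)=m(\beta)$ for some $u\in(0,\pi]$ would force the intensity measure $\nu$ to concentrate on $\{k\in\Z:uk\in 2\pi\Z\}$, which is either $\{0\}$ (ruled out by~B1) or a proper sublattice $q\Z$ with $q\geq 2$ (ruled out by~B4); a compactness argument in $(\beta,u)$ then promotes this pointwise strict inequality to the uniform lower bound $\delta(K,a)>0$. Combined with (b), the left-hand side of~\eqref{assump_4} is $O(n^{-\delta(K,a)/2})$, far stronger than the required $o((\log n)^{-r})$.

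The main obstacle is the a.s.\ bound (b), since Theorem~\ref{theo:W_n_converges_one_split_BRW} only furnishes control of $W_n$ on the open neighborhood $\fD$ of $(\beta_-,\beta_+)$, which need not contain the segment $K+i[a,\pi]$. To obtain it I would pass to the continuous-time embedding used in the proof of Theorem~\ref{theo:W_n_converges_one_split_BRW}, apply the $L^\gamma$ estimates of~\citet{biggins_uniform} (with $\gamma>1$ available from Assumption~B5) along $K+i[a,\pi]$, and then promote moment control to an almost sure polynomial envelope via a routine Borel--Cantelli argument over a dyadic subsequence combined with a continuity modulus in $(\beta,u)$. With A1--A4 in hand, Theorem~\ref{theo:mode} delivers Theorem~\ref{theo:mode_one_split} immediately.
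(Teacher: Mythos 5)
Your strategy is exactly the paper's: Theorem~\ref{theo:mode_one_split} is obtained by verifying Assumptions A1--A4 for the one-split BRW with $w_n=\log n$ (A1 trivially, A2--A3 via Theorem~\ref{theo:W_n_converges_one_split_BRW}, A4 via the continuous-time embedding, Biggins' $L^\gamma$ estimates, Doob/Cauchy/Borel--Cantelli, and the split-time asymptotics of Lemma~\ref{lem:split_time}) and then invoking Theorem~\ref{theo:mode}; your aperiodicity argument for the spectral gap is the same computation that closes the proof of Proposition~\ref{prop_cond_4_cont}.

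One quantitative claim in your sketch is not justified as stated: in step (b) you assert the a.s.\ envelope $\sup|W_n(\beta+iu)|=O(n^{\delta(K,a)/2})$ with $\delta(K,a)$ the gap from step (a). The growth exponent of $|W_n|$ off the real axis is governed by $\kappa(\beta+iu)=m^*(\gamma\Re\beta)/|m^*(\beta+iu)|^\gamma$ and has no a priori relation to $\delta(K,a)/2$; for $\gamma$ close to $1$ the $L^\gamma$ growth rate $(m(\gamma\beta)/\gamma-\Re m(\beta+iu))^+/m(0)$ can exceed $\delta/2$. What saves the argument is that the \emph{product} of the prefactor and the growth bound always decays polynomially, because $\psi^\gamma(\beta)(\kappa(\beta)\vee 1)=\bigl(m^*(\gamma\Re\beta)\vee|m^*(\beta)|^\gamma\bigr)/(m^*(\Re\beta))^\gamma<1$ — the first alternative by the real-axis condition $\beta\in\Omega_\gamma^2$, the second by aperiodicity. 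So you should couple the two exponents as in Proposition~\ref{prop_cond_4_cont} rather than fixing the envelope at $\delta/2$; with that correction the verification of A4, and hence the theorem, goes through.
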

%\begin{remark}
%From the proof of Theorem~\ref{theo:mode} it will become clear that, for every $\eps>0$, we can find an a.s.\ finite random variable $K(\eps)$ such that for all $n>K(\eps)$ %satisfying $\min_{k\in\Z} |u_n^*-k-\frac 12|>\eps$, the mode $u_n$ equals $u_n^*$ rounded to the closest integer. Let now $\varphi'(0)\neq 0$ (which is true in the BSTs, RRTs, %$D$-ary recursive tree, PORTs and $p$-oriented tree examples). Since for every fixed $L>0$ we have  $\log (n+L)-\log n\to 0$ as $n\to\infty$, it follows that there are arbitrary long %intervals of consecutive $n$'s for which $u_n = \lfloor u_n^*\rfloor$ and, similarly, arbitrary long intervals for which $u_n = \lceil u_n^*\rceil$.
%\end{remark}

\begin{remark}
In fact, one can provide more information on which of the two values, $\lfloor u_n^*\rfloor$ or $\lceil u_n^*\rceil$, is the mode. Let $\nint(u_n^*) = \argmin_{k\in \Z}|u_n^{\ast}-k|$ be the integer closest to $u_n^*$ with convention that $\nint(u_n^*)=\lfloor u_n^{\ast}\rfloor$ if $u_n^{\ast}$ is a half-integer. The proof of Theorem~\ref{theo:mode}, see formula~\eqref{eq:L_n_k_plus_1_L_n_k} below, shows that, for every $\eps>0$, we can find an a.s.\ finite random variable $K(\eps)$ such that for all $n>K(\eps)$ satisfying $\min_{k\in\Z} |u_n^*-k-\frac 12|>\eps$, the mode $u_n$ is unique and equals $\nint(u_n^*)$.

\vspace*{1mm}
\noindent
\textit{Case 1:} $\varphi'(0)=0$ (meaning that the one-split BRW has no drift, which applies to Example (vi) of Section~\ref{subsec:random_trees_one_split}). If the random variable $\chi_1(0)$ has no atoms, then $\chi_1(0) - \frac 12 \kappa_3(0)/\sigma^2(0)$ is not a half-integer with probability $1$. It follows that there is an a.s.\ finite random variable $K_1$ such that
$$
u_n = \nint\left(\chi_1(0) - \frac{\kappa_3(0)}{2\sigma^2(0)}\right)
$$
for all $n>K_1$. Absolute continuity of $\chi_1(0)$ in Example (vi) follows from the fixed point equation derived in~\cite{kubapanhol}.

\vspace*{1mm}
\noindent
\textit{Case 2:} $\varphi'(0)\neq 0$ (which is true in examples (i)--(v) of Section~\ref{subsec:random_trees_one_split}). The arithmetic properties of the sequence $(\{\varphi^{\prime}(0)\log n\})_{n\in\N}$, with $\{\cdot\}$ denoting the fractional part, allow us to deduce an additional information compared to the general result given by Theorem~\ref{theo:mode}. Here, we say that a set $A \subset \N$ has \emph{asymptotic density} $\alpha \in [0,1]$ if
\begin{equation}\label{def:asymp_freq}
\lim_{n\to\infty} \frac {\# (A \cap \{1, \ldots, n\})}{n} = \alpha.
\end{equation}
\end{remark}

\begin{proposition}\label{cor:one_split_mode}
Consider a one-split BRW satisfying Assumptions B1--B5 with $\varphi'(0) \neq 0$.  Then, almost surely,

\vspace*{2mm}
\noindent
\emph{(i)} there are arbitrary long intervals of consecutive $n$'s for which $u_n$ is unique and $u_n = \lfloor u_n^*\rfloor$; and, similarly, arbitrary long intervals for which $u_n$ is unique and $u_n = \lceil u_n^*\rceil$;

\vspace*{2mm}
\noindent
\emph{(ii)} the asymptotic density of the set $A = \{n\in\N: u_n \ \text{is unique and} \ u_n = \nint(u_n^*)\}$ equals one.
\end{proposition}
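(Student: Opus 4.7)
The plan is to fix an outcome in the almost sure event on which Theorem~\ref{theo:mode_one_split} and the remark following it both apply, so that $C := \chi_1(0) - \kappa_3(0)/(2\sigma^2(0))$ is a finite real number and $u_n^* = \alpha \log n + C$ with $\alpha := \varphi'(0) \neq 0$. From that point on the argument is deterministic. The key tool is the remark after Theorem~\ref{theo:mode_one_split}: for every $\eps > 0$ there exists a finite $K(\eps)$ such that, for $n > K(\eps)$ with $\min_{k\in\Z} |u_n^* - k - 1/2| > \eps$, the mode $u_n$ is unique and equals $\nint(u_n^*)$.

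First I would compute the asymptotic distribution of the fractional parts $\{u_n^*\}$. Assuming $\alpha > 0$ (the opposite sign is symmetric), parameterizing the preimage of $[k+a, k+b]$ as $n \in [\eee^{(k+a-C)/\alpha}, \eee^{(k+b-C)/\alpha}]$ and summing the resulting geometric series in $k$ up to $k \approx \alpha \log N + C$ yields
\begin{equation*}
\lim_{N\to\infty} \frac{\#\{1 \leq n \leq N : \{u_n^*\} \in [a,b]\}}{N} = \frac{\eee^{b/\alpha} - \eee^{a/\alpha}}{\eee^{1/\alpha} - 1}
\end{equation*}
for any $0 \leq a < b \leq 1$; in particular, $\{u_n^*\}$ has a continuous, strictly positive asymptotic density on $[0,1]$. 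The set $B_\eps := \{n : \{u_n^*\} \in [1/2 - \eps, 1/2 + \eps]\}$ therefore has density $O(\eps)$, and combining this with the tool above shows that the complement of $A$ has density zero. This settles (ii).

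For (i), the crucial observation is $u_{n+1}^* - u_n^* = \alpha \log(1 + 1/n) = O(1/n)$, so for fixed $L$ the total increment $u_{n+L}^* - u_n^*$ eventually falls below any prescribed $\eta > 0$. Given $L \in \N$, I would pick $\eta \in (0, 1/4)$, choose $n_0 > K(\eta/2)$ with $u_{n_0+L}^* - u_{n_0}^* < \eta/2$, and invoke the positivity of the asymptotic density to locate arbitrarily large $n \geq n_0$ with $\{u_n^*\} \in [\eta, 1/2 - \eta]$. By monotonicity of $u_n^*$ and the variation bound, the entire trajectory $\{u_{n+j}^*\}$, $0 \leq j \leq L$, then lies in $[\eta, 1/2 - \eta/2]$ without crossing an integer, so the remark forces each $u_{n+j}$ to be the unique mode and to equal $\nint(u_{n+j}^*) = \lfloor u_{n+j}^* \rfloor$. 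The analogous argument applied to $\{u_n^*\} \in [1/2 + \eta, 1 - \eta]$ yields arbitrarily long runs where $u_{n+j} = \lceil u_{n+j}^* \rceil$.

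The main technical point is the asymptotic density computation for $\{\alpha \log n\}$: note that Weyl equidistribution fails here since $x f'(x) = \alpha$ does not tend to infinity, so the sequence is \emph{not} uniform on $[0,1]$; nevertheless, the elementary geometric series above provides both continuity at $1/2$ and strict positivity on every subinterval, which are precisely the properties needed for both parts.
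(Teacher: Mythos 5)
Your overall strategy coincides with the paper's: part (i) rests on $u_{n+L}^*-u_n^*=\varphi'(0)\log(1+L/n)\to 0$ plus the remark following Theorem~\ref{theo:mode_one_split}, and part (ii) rests on the same geometric-series count of integers $n$ with $\log n$ lying $\eps$-close to a shifted lattice $\alpha\Z+\beta$. The one genuine error is your claim that
$\lim_{N\to\infty} N^{-1}\#\{n\leq N:\{u_n^*\}\in[a,b]\}$ exists and equals $(\eee^{b/\alpha}-\eee^{a/\alpha})/(\eee^{1/\alpha}-1)$: this limit does \emph{not} exist, because the geometric sum is dominated by its last (truncated) block, whose contribution relative to $N$ oscillates with the fractional part of $\alpha\log N$. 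Concretely, for $u_n^*=\log n$ and $[a,b]=[0,1/2]$ the ratio tends to $(\eee^{1/2}-1)/(\eee-1)\approx 0.38$ along $N=\lfloor\eee^m\rfloor$ but to $(\eee-\eee^{1/2})/(\eee-1)\approx 0.62$ along $N=\lfloor\eee^{m+1/2}\rfloor$; so there is no limiting distribution of $\{u_n^*\}$, only distinct $\limsup$ and $\liminf$. Fortunately your argument never uses the existence of the limit: for (ii) you only need that the \emph{upper} density of $B_\eps=\{n:\{u_n^*\}\in[1/2-\eps,1/2+\eps]\}$ is $O(\eps)$, which the same counting bound (each block $[\,\eee^{(k+1/2-\eps-C)/\alpha},\eee^{(k+1/2+\eps-C)/\alpha}\,]$ contributes at most a fixed proportion $O(\eps)$ of $N$ after summing the geometric series) does deliver — this is exactly the paper's $\limsup_{\eps\to0}\limsup_{n\to\infty}$ formulation; and for (i) you only need that $\{n:\{u_n^*\}\in[\eta,1/2-\eta]\}$ is unbounded, which is immediate since it contains the integers of whole intervals $[\,\eee^{(k+\eta-C)/\alpha},\eee^{(k+1/2-\eta-C)/\alpha}\,]$ for every large $k$, with no density statement required. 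Replace ``asymptotic density'' by ``upper density'' (and ``positivity of the density'' by ``unboundedness of the set'') and the proof is correct.
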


The next two results on the width $M_n$ are special cases of Theorems~\ref{theo:width} and~\ref{theo:width_limit_law}.
\begin{theorem}\label{theo:width_one_split}
Consider a one-split BRW satisfying Assumptions B1--B5.  Then, the width $M_n$ satisfies
\begin{equation}
\frac{\sqrt{2\pi \log n} \, \sigma(0)\, M_n}{m(0)\, n}   \toas 1.
\end{equation}
\end{theorem}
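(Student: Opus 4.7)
The strategy is to derive Theorem~\ref{theo:width_one_split} as a direct consequence of the general result Theorem~\ref{theo:width}, applied to the one-split BRW profile $(\bL_n(k))_{k\in\Z}$ with parameters $w_n = \log n$ and $\varphi(\beta) = m(\beta)/m(0)$. The task therefore reduces to verifying Assumptions A1--A4 for this sequence of random profiles and then reading off the conclusion.

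Verifying A1 is immediate, since after $n$ splits there are only $S_n = 1 + m(0)n$ particles almost surely, so $\bL_n$ has finite support and $\sum_{k\in\Z} |\bL_n(k)|\eee^{\beta k} < \infty$ for all real $\beta$. Assumptions A2 and A3, with the domain $\fD$ and limit $W_\infty$ as specified, are exactly the content of Theorem~\ref{theo:W_n_converges_one_split_BRW}; one only has to observe that the definition~\eqref{eq:jabbour_one_split} of $W_n$ coincides with~\eqref{eq:biggins_def} upon noting $\eee^{-\varphi(\beta) w_n} = n^{-\varphi(\beta)}$.

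The main work, and the step I expect to be the real obstacle, is verifying Assumption A4, namely that
$$
\sup_{\beta\in K} \left[n^{-\varphi(\beta)} \int_a^{\pi} \Bigl|\sum_{k\in\Z} \bL_n(k) \eee^{k(\beta + iu)}\Bigr| \, \dd u \right] = o\bigl((\log n)^{-r}\bigr) \quad \text{a.s.}
$$
This requires showing that on the strip $\Re\beta \in K$, away from the real axis (i.e.\ $|u| \geq a > 0$), the modulus $|W_n(\beta+iu)|$ decays super-polynomially in $\log n$, uniformly in $u\in[a,\pi]$. Aperiodicity (Assumption B4) guarantees that $\Re\varphi(\beta+iu) < \varphi(\beta)$ strictly on the compact set $K\times[a,\pi]$, so the issue is to upgrade this pointwise strict inequality into a uniform, super-polynomial decay estimate. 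My plan would be to use the continuous-time BRW embedding already invoked in the proof of Theorem~\ref{theo:W_n_converges_one_split_BRW}, together with a uniform $L^\gamma$-bound on $W_n(\beta+iu)$ following from Assumption B5 applied on a slightly widened strip, to obtain the required $o((\log n)^{-r})$ rate; an alternative is to directly analyze the stochastic recursion for the Laplace transform in the imaginary direction, using strict convexity of $\Re\varphi$ to produce a factor $n^{\Re\varphi(\beta+iu)-\varphi(\beta)}$ which is polynomially small.

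Once A1--A4 are established, applying Theorem~\ref{theo:width} yields
$$
\sigma(0)\sqrt{2\pi \log n}\, \eee^{-\varphi(0)\log n}\, M_n \toas W_\infty(0).
$$
Substituting $\varphi(0) = m(0)/m(0) = 1$ and $W_\infty(0) = m(0)$ from Remark~\ref{rem:W_infty_0} gives
$$
\sigma(0)\sqrt{2\pi \log n}\, \frac{M_n}{n} \toas m(0),
$$
which, upon dividing by $m(0)$, is exactly the claimed statement.
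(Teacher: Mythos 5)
Your proposal is correct and follows exactly the paper's route: Theorem~\ref{theo:width_one_split} is obtained as a special case of Theorem~\ref{theo:width} by verifying Assumptions A1--A4 with $w_n=\log n$ (A1 from finite support, A2--A3 from Theorem~\ref{theo:W_n_converges_one_split_BRW}, A4 via the continuous-time embedding in Proposition~\ref{prop_cond_4_cont} combined with Lemma~\ref{lem:split_time}) and then substituting $\varphi(0)=1$ and $W_\infty(0)=m(0)$. Your sketch of the A4 verification -- uniform $L^\gamma$ bounds from Assumption B5 on a widened strip plus the strict inequality $|m^*(\beta+iu)|<m^*(\beta)$ from B1 and B4 -- is precisely the argument carried out in Section~\ref{subsec:proof_biggins}.
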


\begin{theorem}\label{theo:width_limit_law_one_split_BRW}
Consider a one-split BRW satisfying Assumptions B1--B5. Let
$$
\tilde M_n
:=
2\sigma^2(0) \log n \left( 1 - \frac{\sqrt{2\pi \log n}\, \sigma(0)\, M_n}{m(0)\,n} \right).
$$
If $\theta_n := \min_{k\in\Z} |u_n^*-k|$ denotes the distance between $u_n^*$ and the nearest integer, then
$$
\tilde M_n - \theta_n^2 \toas \chi_2(0) + \frac{\kappa_3^2(0)}{6\sigma^4(0)} - \frac{\kappa_4(0)}{4\sigma^2(0)}.
%+ \frac{W_\infty'^2(0) - W_\infty''(0)W_\infty(0)}{W_\infty^2(0)}
%+o(1)
% + o\left(\frac 1{w_n} \right),
$$
\end{theorem}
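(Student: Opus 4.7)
The plan is to obtain Theorem~\ref{theo:width_limit_law_one_split_BRW} as a direct specialization of the general Theorem~\ref{theo:width_limit_law} to the profile $(\bL_n(k))_{k\in\Z}$ of the one-split branching random walk, with the choice $w_n = \log n$ and the same function $\varphi$ as in Section~\ref{subsec_one_split_BRW_def}. The first task is therefore to check that the one-split BRW profile fulfills Assumptions A1--A4. A1 is immediate since the number of particles $S_n$ is a.s.\ finite. A2 and A3 are precisely the content of Theorem~\ref{theo:W_n_converges_one_split_BRW}. A4 is the aperiodicity/minimal-lattice condition; it is ensured by Assumption B4 and would be verified (presumably once and for all inside the proof of Theorem~\ref{theo:edgeworth_one_split_BRW}) by a Fourier-type estimate showing that $|W_n(\beta + iu)|$ decays superpolynomially in $\log n$ when $u$ stays away from $0$ on the torus.

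Once A1--A4 are in place, it remains to read off the parameters. By definition $\varphi(0)=m(0)/m(0)=1$, so $\eee^{\varphi(0)w_n}=n$. By Remark~\ref{rem:W_infty_0}, $W_\infty(0)=\lim_{n\to\infty} S_n/n = m(0)$ almost surely. Substituting these into the definition of $\tilde M_n$ from Theorem~\ref{theo:width_limit_law} gives exactly
$$
\tilde M_n = 2\sigma^2(0)\log n\left(1 - \frac{\sqrt{2\pi\log n}\,\sigma(0)\,M_n}{m(0)\,n}\right),
$$
and the centering $u_n^*$ from~\eqref{eq:u_n_star} reduces to the expression~\eqref{eq:u_n_star_one_split} already used in Theorem~\ref{theo:mode_one_split}. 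Consequently $\theta_n = \min_{k\in\Z}|u_n^* - k|$ in the present statement coincides with its counterpart in Theorem~\ref{theo:width_limit_law}, and the conclusion
$$
\tilde M_n - \theta_n^2 \toas \chi_2(0) + \frac{\kappa_3^2(0)}{6\sigma^4(0)} - \frac{\kappa_4(0)}{4\sigma^2(0)}
$$
follows at once.

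Since the general theorem does all the real work, I do not expect a serious obstacle here: the only non-routine piece is the verification of A4, which however is already needed for Theorems~\ref{theo:edgeworth_one_split_BRW} and~\ref{theo:mode_one_split} and can be invoked from there. The remaining reasoning is pure parameter matching.
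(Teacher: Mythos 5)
Your proposal is correct and follows exactly the route the paper takes: Theorem~\ref{theo:width_limit_law_one_split_BRW} is obtained by specializing the general Theorem~\ref{theo:width_limit_law} with $w_n=\log n$, after checking A1 (finite support), A2--A3 (Theorem~\ref{theo:W_n_converges_one_split_BRW}), and A4 (which the paper establishes via Proposition~\ref{prop_cond_4_cont} and the continuous-time embedding, yielding~\eqref{cond_4_discrete}), and then substituting $\varphi(0)=1$ and $W_\infty(0)=m(0)$. The parameter matching is exactly as you describe.
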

\begin{remark}
Again, the details depend on whether $\varphi'(0)$ vanishes or not.

\vspace*{1mm}
\noindent
\textit{Case 1:} Suppose that $\varphi'(0)=0$. %and the variable $\chi_1(0)$ has no atoms.
Then, $\theta_n$ does not depend on $n$  and we obtain
$$
\tilde M_n \toas \chi_2(0) + \frac{\kappa_3^2(0)}{6\sigma^4(0)} - \frac{\kappa_4(0)}{4\sigma^2(0)} + \min_{k\in\Z}\left|\chi_1(0)- \frac{\kappa_3(0)}{2\sigma^2(0)}-k\right|^2.
$$

\vspace*{1mm}
\noindent
\textit{Case 2:} $\varphi'(0)\neq 0$. The sequence $(\{\varphi^{\prime}(0)\log n\})_{n\in\N}$ is dense in $[0,1]$. This implies that the set of subsequential limits of the sequence $\theta_n$ is equal to $[0,1/2]$. It follows that
\begin{align*}
\liminf_{n\to\infty} \tilde M_n &= \chi_2(0) + \frac{\kappa_3^2(0)}{6\sigma^4(0)} - \frac{\kappa_4(0)}{4\sigma^2(0)} \quad\text{a.s.},\\
\limsup_{n\to\infty} \tilde M_n &= \chi_2(0) + \frac{\kappa_3^2(0)}{6\sigma^4(0)} - \frac{\kappa_4(0)}{4\sigma^2(0)} + \frac 14
\quad\text{a.s.},
\end{align*}
and every point between the $\liminf$ and $\limsup$ is a.s.\ a subsequential limit of $\tilde M_n$. Thus, we have infinitely many different a.s.\ (and hence, weak) subsequential limits of $\tilde M_n$. If $\chi_2(0)$ is non-degenerate, it follows from the convergence of types lemma that the random variable $M_n$ cannot be normalized by an affine transformation to converge (in the weak sense) to a non-degenerate limit law.  This agrees with the observation of \citet[Theorem 2]{fuchs_hwang_neininger}.
\end{remark}

\subsection{Occupation numbers in the one-split BRW}\label{subsec:occupation_one_split_BRW}
Consider a one-split BRW with profiles $\bL_1,\bL_2,\ldots$. In this section we shall state limit theorems on the ``occupation numbers'' $\bL_n(k_n)$, where $k_n$ is a (deterministic) integer sequence  with some regular type of behavior. These limit theorems can be applied to random trees (including BSTs, RRTs and PORTs; see Section~\ref{subsec:random_trees_one_split}) and improve on the results of~\citet{fuchs_hwang_neininger}. In these applications $\bL_n(k_n)$ is interpreted as the number of nodes at depth $k_n$ in a random tree. To prove these theorems, we shall use a suitable number of terms in the Edgeworth expansion of $\bL_n$ stated in Theorem~\ref{theo:edgeworth_one_split_BRW}. Our aim is to find a non-degenerate limit distribution for $\bL_n(k_n)$, but it turns out that our results hold even in the sense of a.s.\ convergence.
As in~\eqref{def:beta_n_k}, define $\beta_n=\beta_n(k)$ to be the solution of
\begin{align} \label{def:beta_n_k_repeat}
\varphi'(\beta_{n}(k)) = \frac{k}{\log n},\quad \frac{k}{\log n}\in \varphi'((\beta_{-},\beta_{+})).
\end{align}
%Recall that $\sigma^2(\beta) = \varphi''(\beta)$.

\begin{theorem}\label{theo:L_n_lim_distr_beta_neq_0}
Consider a one-split BRW satisfying Assumptions B1--B5.
Let $k_n$ be an integer  sequence such that, for some $\beta\in (\beta_-,\beta_+)$, we have $k_n = \varphi'(\beta) \log n + o(\log n)$. Then, with $\beta_n = \beta_n(k_n)$ as in~\eqref{def:beta_n_k_repeat}, we have
\begin{equation}\label{eq:L_n_lim_distr_beta_neq_0_a}
\frac{\sqrt{\log n}}{n^{\varphi(\beta_n) - \beta_n \varphi'(\beta_n)}} \, \bL_n(k_n)
\toas
\frac{W_\infty(\beta)}{\sqrt{2\pi}\sigma(\beta)}.
\end{equation}
If, for some $\alpha\in\R$,
\begin{equation}\label{eq:k_n_cond1}
k_n = \varphi'(\beta) \log n + \alpha \sigma(\beta) \sqrt {\log n} + o(\sqrt {\log n}), \quad n\to\infty,
\end{equation}
then
\begin{equation}\label{eq:L_n_lim_distr_beta_neq_0}
\frac{\sqrt{\log n}}{n^{\varphi(\beta)}} \, \eee^{\beta k_n} \bL_n(k_n)
\toas
\frac{\eee^{-\frac {1}{2}\alpha^2 }}{\sqrt{2\pi}\sigma(\beta)}  W_\infty(\beta).
\end{equation}
\end{theorem}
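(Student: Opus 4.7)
The plan is to obtain both convergences as direct corollaries of the Edgeworth expansions already established, choosing in each case the form of the expansion best adapted to the scaling regime of $k_n$.

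For part (a), under the mild assumption $k_n = \varphi'(\beta)\log n + o(\log n)$, I would invoke the large-deviation expansion of Theorem~\ref{theo:LD_one_split} with $r=0$. Fix a compact set $K\subset(\beta_-,\beta_+)$ whose interior contains $\beta$; since $\varphi'$ is a homeomorphism onto its image (as $\varphi''>0$), the assumption guarantees that $k_n/\log n\in\varphi'(K)$ for all sufficiently large $n$, so $\beta_n(k_n)$ is well-defined and converges to $\beta$. Theorem~\ref{theo:LD_one_split} then gives
\begin{equation*}
\log n \left|\frac{e^{k_n\beta_n(k_n)}}{n^{\varphi(\beta_n(k_n))}} \bL_n(k_n) - \frac{W_\infty(\beta_n(k_n))}{\sigma(\beta_n(k_n))\sqrt{2\pi\log n}}\right| \toas 0.
\end{equation*}
Using the defining identity $k_n\beta_n(k_n)=\beta_n(k_n)\varphi'(\beta_n(k_n))\log n$, so that $e^{k_n\beta_n(k_n)}=n^{\beta_n(k_n)\varphi'(\beta_n(k_n))}$, rearranging and taking the limit (with the almost sure continuity of the analytic function $W_\infty$ and continuity of $\sigma$ at $\beta$) yields~\eqref{eq:L_n_lim_distr_beta_neq_0_a}.

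For part (b), under the finer scaling~\eqref{eq:k_n_cond1}, I would instead use the full Edgeworth expansion of Theorem~\ref{theo:edgeworth_one_split_BRW} with $r=0$ and $K=\{\beta\}$, which gives
\begin{equation*}
\sqrt{\log n}\left|\frac{e^{\beta k_n}\bL_n(k_n)}{n^{\varphi(\beta)}} - \frac{W_\infty(\beta)\,e^{-x_n^2(k_n;\beta)/2}}{\sigma(\beta)\sqrt{2\pi\log n}}\right|\toas 0.
\end{equation*}
By the definition~\eqref{eq:x_n_k_def_one_split} and the assumption~\eqref{eq:k_n_cond1}, the normalized coordinate satisfies $x_n(k_n;\beta)=\alpha+o(1)$, so $e^{-x_n^2(k_n;\beta)/2}\to e^{-\alpha^2/2}$; multiplying through by $\sqrt{\log n}$ produces~\eqref{eq:L_n_lim_distr_beta_neq_0}.

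There is no genuine obstacle once the Edgeworth expansions are in hand; both assertions are essentially bookkeeping plus a continuity argument. The only points requiring a little care are (i) the choice of the compact $K$ in part (a), which must be made to guarantee that $k_n$ eventually lies in the admissible window $(\log n)\varphi'(K)$ and that $\beta_n(k_n)$ stays in a compact subset of $(\beta_-,\beta_+)$, and (ii) the replacement of $W_\infty(\beta_n(k_n))$ by $W_\infty(\beta)$ in part (a), which uses the almost sure analyticity (hence continuity) of $W_\infty$ on the neighborhood $\fD$ of $(\beta_-,\beta_+)$ provided by Theorem~\ref{theo:W_n_converges_one_split_BRW}.
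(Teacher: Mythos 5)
Your proposal is correct and follows essentially the same route as the paper: the authors also apply the $r=0$ Edgeworth expansion of Theorem~\ref{theo:edgeworth_one_split_BRW} uniformly over a compact $K$, substituting $\beta'=\beta_n(k_n)$ (which is exactly the content of Theorem~\ref{theo:LD_one_split} that you invoke) for \eqref{eq:L_n_lim_distr_beta_neq_0_a} and $\beta'=\beta$ with $x_n(k_n;\beta)\to\alpha$ for \eqref{eq:L_n_lim_distr_beta_neq_0}, concluding by continuity of $W_\infty$ and $\sigma$. The points of care you flag (eventual membership of $k_n/\log n$ in $\varphi'(K)$, and replacing $W_\infty(\beta_n)$ by $W_\infty(\beta)$) are precisely the ones the paper handles.
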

\begin{proof}[Proof of Theorem~\ref{theo:L_n_lim_distr_beta_neq_0}]
From Theorem~\ref{theo:edgeworth_one_split_BRW} with $r=0$ and $K \subseteq (\beta_-, \beta_+)$ compact, we have
\begin{equation}\label{eq:tech1}
\sqrt{\log n}\, \sup_{\beta' \in K} \sup_{k\in\Z}\left|\frac{\eee^{\beta' k}\bL_n(k)}{n^{\varphi(\beta')}} - \frac{W_\infty(\beta') \eee^{-\frac 12 \left(\frac{k-\varphi'(\beta')\log n}{\sigma(\beta')\sqrt{\log n}}\right)^2}}{\sigma(\beta') \sqrt{2\pi \log n}} \right| \toas 0.
\end{equation}
From here, \eqref{eq:L_n_lim_distr_beta_neq_0_a} follows readily upon taking $k=k_n$, $\beta'=\beta_n(k_n)$ as in~\eqref{def:beta_n_k_repeat} (which converges to $\beta$, as $n\to\infty$), recalling the continuity of $W_\infty$ and $\sigma$ and noting that the term in the exponent vanishes.
Formula~\eqref{eq:L_n_lim_distr_beta_neq_0} follows from~\eqref{eq:k_n_cond1} upon choosing $\beta' = \beta$ in~\eqref{eq:tech1} and using the observation
$$
x_n(k_n) = \frac{k_n - \varphi'(\beta)\log n}{\sigma(\beta) \sqrt{\log n}} \ton \alpha.
$$
The proof is complete.
\end{proof}

\begin{remark}
If, in addition to the conditions stated in the theorem, we assume that the BRW has a deterministic number of descendants, then the convergences~\eqref{eq:L_n_lim_distr_beta_neq_0_a} and~\eqref{eq:L_n_lim_distr_beta_neq_0} also hold in $L_1$ sense.
\end{remark}

Theorem~\ref{theo:L_n_lim_distr_beta_neq_0} is applicable in the case $\beta=0$, however, $W_\infty(0)=m(0)$ is a.s.\ constant (see Remark~\ref{rem:W_infty_0}) meaning that the limits in~\eqref{eq:L_n_lim_distr_beta_neq_0_a} and~\eqref{eq:L_n_lim_distr_beta_neq_0} are degenerate. It is therefore natural to ask whether non-degenerate limits can be obtained by choosing a more refined normalization of $\bL_n(k_n)$. Denote by $\bL_n^\circ(k)$ the profile centered by its expectation:
$$
\bL_n^\circ(k) = \bL_n(k) - \E [\bL_n(k)], \quad k\in\Z.
$$
In the following we assume that the integer sequence $k_n$ can be represented in the form
$$
k_n=\varphi'(0) \log n + c_n,
$$
where $c_n$ is a sequence on which we impose various growth conditions.
%{\blue consider the following four (partially overlapping) cases:} %distinguish between the following  four cases:
%\begin{enumerate}
%\item[(1)] $c_n\sim \text{const} \cdot \sqrt {\log n}$ as $n\to\infty$;
%\item[(2)] $|c_n| \to \infty$ but $c_n=o(\sqrt{\log n})$;
%\item[(3)] $|c_n| \to \infty$ and $c_n=o(\log n)$;
%\item[(4)] $c_n$ is bounded.
%\end{enumerate}
%{\blue Of course, some of the cases overlap, but we keep all of them since
%{\blue Of course, case (2) contains (1), but we keep both cases since (1) is considerably easier to study.}
While Theorem~\ref{theo:L_n_lim_distr_beta_neq_0} can be derived from the first term in the Edgeworth expansion  (meaning that $r=0$), the following more refined theorem requires more terms (meaning that $r=1$ or $r=2$).
%(namely, we shall take $r=1$ to study cases (1), (2) and $r =2$ for  {\blue cases (3) and (4)}).

 %Theorems~\ref{theo:L_n_lim_distr_2} and~\ref{theo:xia_chen_general1},  and $r=2$ for Theorem~\ref{theo:xia_chen_general}).
%\marginpar{I would remove this comment. We dont have any application for random $K$. It seems a bit arbitrary to add this discussion here since we dont really discuss conditions anywhere else. -H}

\begin{theorem}\label{theo:L_n_lim_distr_all_together}
Consider a one-split BRW with deterministic number of descendants and satisfying Assumptions B1--B5. Let $(k_n)_{n\in\N}$ be an integer sequence. % represented as $k_n=\varphi'(0) \log n + c_n$.

\vspace*{2mm}
\noindent
\emph{(a)} If $k_n = \varphi'(0) \log n + \alpha \sigma(0) \sqrt {\log n} + o(\sqrt {\log n})$ for some $\alpha\in\R$, then
\begin{equation}\label{eq:L_n_lim_distr_2}
\frac{\log n}{n} \, \bL_n^\circ(k_n)
\toas
\frac{ m(0) \alpha \eee^{-\frac {1}{2}\alpha^2 }}{\sqrt{2\pi}\sigma^2(0)}  \left(\chi_1(0) - \E \chi_1(0)\right).
\end{equation}

\vspace*{2mm}
\noindent
\emph{(b)}
If $k_n = \varphi'(0) \log n + c_n$ with $\lim_{n\to\infty} |c_n| = \infty$ and $c_n=o(\log n)$, then, with $\beta_n$ as in~\eqref{def:beta_n_k_repeat},
\begin{equation}\label{eq:L_n_lim_distr_2_a}
\frac{(\log n)^{\frac 3 2}}{c_n n^{\varphi(\beta_n) - \beta_n \varphi'(\beta_n)}} \, \bL_n^\circ(k_n)
\toas
\frac{ m(0) \left(\chi_1(0) - \E \chi_1(0)\right) }{\sqrt{2\pi}\sigma^3(0)}.
\end{equation}
In particular, if $\lim_{n\to\infty} |c_n| = \infty$ and $c_n=o(\sqrt {\log n})$, then
\begin{equation}\label{eq:xia_chen_general1}
\frac{(\log n)^{\frac 32}}{n c_n} \bL_n^{\circ}(k_n) \toas
\frac {m(0)(\chi_1(0) - \E \chi_1(0))}{\sqrt{2\pi} \sigma^3(0)}.
\end{equation}

\vspace*{2mm}
\noindent
\emph{(c)}
If $k_n = \varphi'(0) \log n + c_n$  where $c_n$ is bounded, then
\begin{equation}\label{eq:xia_chen_general}
\frac {(\log n)^{\frac 32}}{n} \bL_n^{\circ}(k_n) - R^\circ(c_n) \toas 0,
\end{equation}
where $R^{\circ}(c)= R(c) - \E R(c)$ and $R(c)$ is a random variable given by
\begin{equation}
R(c) := \frac {m(0)} {\sqrt{2\pi}\sigma^3(0)} \left( \chi_1(0) \left( c + \frac{\kappa_3(0)}{2\sigma^2(0)}\right) - \frac {\chi_1^2(0) + \chi_2(0)}2 \right), \quad c\in\R.
\end{equation}
\end{theorem}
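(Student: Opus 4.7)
The plan is to apply the Edgeworth expansion of Theorem~\ref{theo:edgeworth_one_split_BRW} to $\bL_n$ together with the mean-profile version Theorem~\ref{theo:edgeworth_one_split_BRW_mean} to $\E[\bL_n]$, and subtract. Since the deterministic-descendant assumption forces $W_\infty(0)=m(0)$ a.s.\ (Remark~\ref{rem:W_infty_0}), the two expansions share identical $j=0$ terms and these cancel in $\bL_n^\circ(k_n)$; the surviving contributions depend only on the differences $G_j(\cdot;0)-\tilde G_j(\cdot;0)$, which are linear in $\chi_j(0)-\tilde\chi_j(0)$. Two algebraic identities feed in: from $\chi_1(0)=W'_\infty(0)/m(0)$ and $\chi_1^2(0)+\chi_2(0)=W''_\infty(0)/m(0)$, taking expectations yields $\tilde\chi_1(0)=\E\chi_1(0)$ and $\tilde\chi_1^2(0)+\tilde\chi_2(0)=\E[\chi_1^2(0)+\chi_2(0)]$.

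For (a), apply Theorem~\ref{theo:edgeworth_one_split_BRW} with $\beta=0$ and $r=1$ (and its mean analogue) at $k=k_n$; then $x_n(k_n)\to\alpha$, and the subtraction yields
\[
\frac{\log n}{n}\bL_n^\circ(k_n) = \frac{m(0)e^{-x_n^2/2}}{\sigma(0)\sqrt{2\pi}}\,(G_1-\tilde G_1)(x_n;0) + o(1).
\]
Only the coefficient of $x$ in $G_1(x;0)$ is random, so $(G_1-\tilde G_1)(x;0) = (\chi_1(0)-\E\chi_1(0))\,x/\sigma(0)$, and passing to the limit gives (a). For (b) the natural tilt is $\beta_n=\beta_n(k_n)$, which forces $x_n(k_n)=0$ and satisfies $\beta_n\sim c_n/(\sigma^2(0)\log n)$. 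Theorem~\ref{theo:LD_one_split} with $r=0$, combined with its mean-profile version, reduces $\bL_n^\circ(k_n)$ to the prefactor $n^{\varphi(\beta_n)-\beta_n\varphi'(\beta_n)}/(\sigma(\beta_n)\sqrt{2\pi\log n})$ times $W_\infty(\beta_n)-\E W_\infty(\beta_n)$. Since $W_\infty(0)=m(0)$ is deterministic, a Taylor expansion around $0$ identifies the leading contribution of the latter difference as $\beta_n\, m(0)(\chi_1(0)-\E\chi_1(0))$; this yields the first assertion of (b), and the second sub-assertion follows from $\varphi(\beta_n)-\beta_n\varphi'(\beta_n)=1-\beta_n^2\sigma^2(0)/2+O(\beta_n^3)$, which gives $n^{\varphi(\beta_n)-\beta_n\varphi'(\beta_n)}\sim n$ when $c_n=o(\sqrt{\log n})$.

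Case (c) is the technical heart of the argument and the main obstacle: here $c_n$ is bounded, $x_n=c_n/(\sigma(0)\sqrt{\log n})=O(1/\sqrt{\log n})$, and one needs two terms of the expansion ($r=2$, $\beta=0$) to recover the correct centering. After subtraction,
\[
\frac{(\log n)^{3/2}}{n}\bL_n^\circ(k_n) = \frac{m(0)\,e^{-x_n^2/2}}{\sigma(0)\sqrt{2\pi}}\Big[\sqrt{\log n}\,(G_1-\tilde G_1)(x_n;0) + (G_2-\tilde G_2)(x_n;0)\Big] + o(1).
\]
The first bracket equals $c_n(\chi_1(0)-\E\chi_1(0))/\sigma^2(0)$, using $\sqrt{\log n}\,x_n=c_n/\sigma(0)$ and the fact that the $\Herm_3$-piece of $G_1$ is deterministic. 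In $G_2-\tilde G_2$ only the term with $\chi_1^2(0)+\chi_2(0)$ (multiplying $\Herm_2$) and the $\kappa_3(0)\chi_1(0)$-term (part of the $\Herm_4$ coefficient) survive the subtraction; evaluating at $x_n\to 0$ with $\Herm_2(0)=-1$, $\Herm_4(0)=3$ gives $-((\chi_1^2(0)+\chi_2(0))-\E[\chi_1^2(0)+\chi_2(0)])/(2\sigma^2(0))+\kappa_3(0)(\chi_1(0)-\E\chi_1(0))/(2\sigma^4(0))$. Collecting both brackets and multiplying by $m(0)/(\sigma(0)\sqrt{2\pi})$ reproduces $R^\circ(c_n)$. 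The main bookkeeping task is to check that every discarded cross-term --- the $O(1/\log n)$ correction from $e^{-x_n^2/2}$, the cubic-in-$x_n$ part of $G_1$, and the subleading pieces of $G_2$ at non-zero $x_n$ --- contributes only $o(1)$ after multiplication by $(\log n)^{3/2}/n$, which is routine given that $c_n$ is bounded.
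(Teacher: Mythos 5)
Your treatment of parts (a) and (c) and of \eqref{eq:xia_chen_general1} coincides with the paper's: $\beta=0$ with $r=1$ for (a) and $r=2$ for (c), subtraction of the mean expansion using $\tilde\chi_1(0)=\E\chi_1(0)$ and $\tilde\chi_1^2(0)+\tilde\chi_2(0)=\E[\chi_1^2(0)+\chi_2(0)]$, and the evaluation $\Herm_2(0)=-1$, $\Herm_4(0)=3$; your bookkeeping for $R^\circ(c_n)$ checks out.

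There is, however, a genuine gap in your derivation of \eqref{eq:L_n_lim_distr_2_a}. You invoke Theorem~\ref{theo:LD_one_split} with $r=0$, whose remainder is only $o\bigl((\log n)^{-1}\bigr)$ for the quantity $\eee^{\beta_n k_n} n^{-\varphi(\beta_n)}\bL_n(k_n)$. After multiplying by the normalization $(\log n)^{3/2}/c_n$, this remainder becomes $o\bigl(\sqrt{\log n}/c_n\bigr)$, which does \emph{not} tend to zero unless $c_n$ grows at least like $\sqrt{\log n}$ — but the hypothesis of part (b) only requires $|c_n|\to\infty$ and $c_n=o(\log n)$, so $c_n$ could be, say, $\log\log n$. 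To make the remainder negligible you need precision $o\bigl((\log n)^{-3/2}\bigr)$ in the tilted expansion, i.e.\ the full Edgeworth expansion with $r=2$ (equivalently $r=1$ in the large-deviation form of Theorem~\ref{theo:LD_one_split}); one must then also check that the order-$(\log n)^{-1}$ terms involving $W_\infty''(\beta_n)$, $\kappa_3$, $\kappa_4$ cancel or are killed by the factor $1/c_n$ after subtracting the mean expansion. This is exactly what the paper does, and it explicitly remarks that even $r=1$ in the full expansion is insufficient here. The rest of your argument for (b) — $\beta_n\sim c_n/(\sigma^2(0)\log n)$, the Taylor expansion $W_\infty(\beta_n)-\E W_\infty(\beta_n)=\beta_n\,m(0)(\chi_1(0)-\E\chi_1(0))+o(\beta_n)$, and the passage to \eqref{eq:xia_chen_general1} via $n^{\varphi(\beta_n)-\beta_n\varphi'(\beta_n)}\sim n$ — is sound once the remainder issue is repaired.
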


\begin{proof}[Proof of Theorem~\ref{theo:L_n_lim_distr_all_together}]
Taking $\beta=0$ and $r=1$ in Theorem~\ref{theo:edgeworth_one_split_BRW} and using formula~\eqref{eq:G1}, we obtain
$$
\frac{\bL_n(k_n)}{n}
=
\frac{m(0) \eee^{-\frac 12 x_n^2(k_n)}}{\sigma(0) \sqrt{2\pi \log n}} \left(1 + \frac {\frac{\chi_1(0)}{\sigma(0)}x_n(k_n) + \frac{\kappa_3(0)}{6\sigma^3(0)}\Herm_3(x_n(k_n))}{\sqrt {\log n}}\right)
+o\left(\frac 1 {\log n}\right)
$$
almost surely. By Theorem~\ref{theo:edgeworth_one_split_BRW_mean}, we have an analogous expansion for the expectation of $\LLL_n(k_n)$. Subtracting both expansions, we obtain
%\marginpar{We use the expansion for the expectation here? I formulated it only in the case when K is constant. It will be correct in the general set-up but that would require to state and prove Biggins' results also in $L_1$.}
\begin{equation}\label{eq:tech2}
\frac{\bL_n^\circ(k_n) }{n}
=
\frac{m(0) \eee^{-\frac 12 x_n^2(k_n)}}{\sigma(0) \sqrt{2\pi \log n}}
\cdot
\frac{\chi_1(0) - \E \chi_1(0)}{\sigma(0)\sqrt {\log n}} x_n(k_n) + o\left(\frac 1 {\log n}\right) \;\;\text{a.s.},
\end{equation}
where
$$
x_n(k_n) = \frac{k_n - \varphi'(0)\log n}{\sigma(0) \sqrt{\log n}} = \frac{c_n} {\sigma(0)\sqrt{\log n}}.
$$
To prove~\eqref{eq:L_n_lim_distr_2}, it is enough to notice that $\lim_{n\to\infty} x_n(k_n) = \alpha$. Inserting this into~\eqref{eq:tech2}, we obtain~\eqref{eq:L_n_lim_distr_2}.

For the remaining results, we need to apply the Edgeworth expansion with $r = 2$. First, choosing $\beta = 0$ in Theorems~\ref{theo:edgeworth_one_split_BRW}, \ref{theo:edgeworth_one_split_BRW_mean},  using~\eqref{eq:G1}, \eqref{eq:G2}, and subtracting the expansions for $\LLL_n(k_n)$ and $\E [\LLL_n(k_n)]$, we obtain
\begin{multline*}
\frac 1n \bL_n^{\circ}(k_n)
=
\frac{m(0)\eee^{-\frac 12 x_n^2(k_n)}}{\sigma(0) \sqrt{2\pi \log n}}
\Biggl(
\frac {\chi_{1}(0) -\E \chi_{1}(0)} {\sigma(0)\sqrt {\log n}} x_n(k_n)
\\+
\kappa_3(0)\frac{\chi_{1}(0) - \E \chi_{1}(0)}{6 \sigma^4(0)\log n} \Herm_4(x_n(k_n))
\\+
\frac{\chi_{1}^2(0) + \chi_{2}(0) -\E[\chi_{1}^{2}(0) + \chi_{2}(0)]}{2 \sigma^2(0)\log n} \Herm_2(x_n(k_n))
\Biggr)
+ o\left(\frac 1 {(\log n)^{\frac 32}}\right)\;\; \text{a.s.}
\end{multline*}
Multiplying both sides of the last display by $(\log n)^{3/2} / c_n$ yields~\eqref{eq:xia_chen_general1} because $\lim_{n\to\infty} x_n(k_n) = 0$ and $\Herm_2(x) = -1 +o(1), \Herm_4(x) = 3 + o(1)$ as $x \to 0$. For the proof of~\eqref{eq:xia_chen_general} use the same expansion as above and note that $x_n(k_n)=O(1/\sqrt {\log n})$.

It remains to show that~\eqref{eq:L_n_lim_distr_2_a} holds. Here, we use the Edgeworth expansion with $r = 2$ and $\beta_n$ as in~\eqref{def:beta_n_k_repeat}. First, note that, by a simple Taylor expansion, we have
\begin{align} \label{asy:bet}
\beta_n = \frac{c_n(1 + o(1)) }{\sigma^2(0) \log n} .
\end{align}
Next, by Theorem~\ref{theo:edgeworth_one_split_BRW}  and~\eqref{eq:G0}, \eqref{eq:G2},
\begin{multline*}
\frac {\eee^{\beta_n k_n}}{n^{\varphi(\beta_n)}} \bL_n(k_n)
=
\frac{1}{\sigma(\beta_n) \sqrt{2\pi \log n}}
\Biggl(
W_\infty(\beta_n)
+
\frac{3}{\log n} \left( \frac{\kappa_4(\beta_n)}{24 \sigma^4(\beta_n)} + \frac{\kappa_3(\beta_n) W_\infty'(\beta_n)}{6 \sigma^4(\beta_n)} \right) \\
- \frac{W_\infty''(\beta_n)}{2 \sigma^2(\beta_n)\log n}
-   \frac{15 \kappa_3^2(\beta_n)}{72 \sigma^6(\beta_n) \log n}
\Biggr) + o\left(\frac 1 {(\log n)^{\frac 32}}\right) \;\;\text{a.s.},
\end{multline*}
and similarly, by Theorem~\ref{theo:edgeworth_one_split_BRW_mean} with $\beta=\beta_n$,
\begin{multline*}
\frac {\eee^{\beta_n k_n}}{n^{\varphi(\beta_n)}} \E [\bL_n(k_n)]
=
\frac{1}{\sigma(\beta_n) \sqrt{2\pi \log n}}
\Biggl(
\E W_\infty(\beta_n)
+
\frac{3}{\log n} \left( \frac{\kappa_4(\beta_n)}{24 \sigma^4(\beta_n)} + \frac{\kappa_3(\beta_n) \E W_\infty'(\beta_n)}{6 \sigma^4(\beta_n)} \right) \\
- \frac{\E W_\infty''(\beta_n)}{2 \sigma^2(\beta_n)\log n}
-  \frac{15 \kappa_3^2(\beta_n)}{72 \sigma^6(\beta_n) \log n}
\Biggr) + o\left(\frac 1 {(\log n)^{\frac 32}}\right).
\end{multline*}
Since $|c_n| \to \infty$, taking the difference of both expansions yields
\begin{align} \label{conv5}
\frac{(\log n)^{3/2}}{c_n} \frac {\eee^{\beta_n k_n}}{n^{\varphi(\beta_n)}} \bL^\circ_n(k_n) = \frac{(W_\infty(\beta_n) - \E{W_\infty(\beta_n)}) \log n}{c_n \sigma(\beta_n) \sqrt{2\pi}} + o(1) \;\;\text{a.s.}
\end{align}
%Analogously, Theorem~\ref{theo:xia_chen_general1} follows from $\lim_{n\to\infty} x_n(k_n) = 0$ upon inserting this into~\eqref{eq:tech2}.
Since $\beta_n \to 0$, we have, almost surely, $W_\infty(\beta_n) = m(0) + W_\infty'(0) \beta_n + o(\beta_n)$. Since $\E W_\infty(\beta)$ is analytic in a neighbourhood of $\beta = 0$, the analogous expansion holds for the mean. The assertion now follows
from~\eqref{conv5} together with~\eqref{asy:bet}. (Note that, even though the higher order terms in the Edgeworth expansion appearing in the proof are asymptotically irrelevant, we cannot obtain the result using the expansion for $r = 1$).
\end{proof}

%\begin{proof}[Proof of Theorem~\ref{theo:xia_chen_general}]
%We have
%$$
%x_n=x_n(k_n)=\frac{c_n}{\sigma(0) \sqrt{\log n}}.
%$$
%Taking $\beta=0$ and $r=2$ in Theorems~\ref{theo:edgeworth_one_split_BRW} and~\ref{theo:edgeworth_one_split_BRW_mean}, and using~\eqref{eq:G0}--\eqref{eq:G2}, we obtain asymptotic expansions for $\LLL_n(k_n)$ and its expectation. Subtracting these expansions, we see that all terms not involving $\chi_1(0)$ and $\chi_2(0)$ cancel, thus obtaining
%\begin{multline*}
%\frac 1n \bL_n^{\circ}(k_n)
%=
%\frac{m(0)\eee^{-\frac 12 x_n^2}}{\sigma(0) \sqrt{2\pi \log n}}
%\Biggl(
%\frac {\chi_{1}(0) -\E \chi_{1}(0)} {\sigma(0)\sqrt {\log n}} x_n
%+
%\kappa_3(0)\frac{\chi_{1}(0) - \E \chi_{1}(0)}{6 \sigma^4(0)\log n} \Herm_4(x_n)
%\\+
%\frac{\chi_{1}^2(0) + \chi_{2}(0) -\E[\chi_{1}^{2}(0) + \chi_{2}(0)]}{2 \sigma^2(0)\log n} \Herm_2(x_n)
%\Biggr) + o\left(\frac 1 {(\log n)^{\frac 32}}\right).
%\end{multline*}
%Using that $\eee^{-\frac 12 x_n^2} =1+o(1)$, $\Herm_4 (x_n) = 3+ o(1)$, $\Herm_2(x_n)= -1+o(1)$ as $n\to\infty$ and doing straightforward transformations, we arrive at the required statement.
%\end{proof}

\begin{remark}
In the setting of Theorem~\ref{theo:L_n_lim_distr_all_together}, part~(c), the $\limsup$ and the $\liminf$ of the sequence $\frac 1n (\log n)^{3/2} \bL_n^{\circ}(k_n)$
%\begin{equation}\label{eq:xia_chen_seq}
%\end{equation}
are a.s.\ finite (but not necessarily equal to each other). Whether or not this sequence has an a.s.\ limit depends on the value of $\varphi'(0)$.

\vspace*{1mm}
\noindent
\textit{Case 1:} $\varphi'(0)=0$ (which applies to Example (vi) of Section~\ref{subsec:random_trees_one_split}). It is natural to take $k_n = a \in \Z$.  Then, $c_n=a$ and we obtain
 %\marginpar{I would remove the case 1. This can only happen when $v(-\N) > 0$ which is not very natural. I think it is better to discuss the second case and stick to the case $\nu(\N) = 1$.}
$$
\frac {(\log n)^{\frac 32}}{n} \bL_n^{\circ}(k_n) \toas  R^\circ(a).
$$

\vspace*{1mm}
\noindent
\textit{Case 2:} $\varphi'(0)\neq 0$ (which applies to Examples (i)--(v) of Section~\ref{subsec:random_trees_one_split}).
It is natural to take $k_n=\lfloor\varphi'(0)\log n\rfloor + a$, where $a\in\Z$, which means that $c_n = a - \{\varphi'(0) \log n\}$. The set of accumulation points of the sequence $c_n$ is the interval $[a-1,a]$.  Hence, we can parametrize the set of all a.s.\  subsequential limits of $\frac 1n (\log n)^{3/2} \bL_n^{\circ}(k_n)$ as follows:
\begin{equation}\label{eq:limiting_distr2}
%\left\{ W_\infty'(\beta) \left( a- z  + \frac{\kappa_3(\beta)}{2\sigma^2(\beta)}\right) - \frac 12 W_\infty''(\beta)\colon z\in [0,1]\right\}.
\{R^\circ(a-z) \colon z\in [0,1] \}.
\end{equation}
\end{remark}

\begin{remark}
Also we point out that in Theorem~\ref{theo:L_n_lim_distr_all_together} the assumption that a BRW has deterministic number of descendants is used only to derive the Edgeworth expansion for the centering $\E[\bL_n(k)]$ by using Theorem~\ref{theo:edgeworth_one_split_BRW_mean}. If such an expansion holds {\it a priori}, the results of the above theorems remain valid without this constraint.
\end{remark}

\subsection{Profile of binary search trees around level \texorpdfstring{$\log n$}{log n}}\label{subsec:profile_BST_log_n}
%Applying Theorem~\eqref{theo:L_n_lim_distr_beta_neq_0} to the special case of the BSTs, we obtain that if $(k_n)_{n\in\N}$ is an integer sequence %satisfying $k_n = 2\eee^\beta \log n + \alpha \sqrt {2\eee^\beta \log n} + o(\sqrt{\log n})$ for some $\beta\in (\beta_-,\beta_+)$ and  $\alpha\in\R$, %then

Applying the results of Section~\ref{subsec:occupation_one_split_BRW} to the special case of BSTs we obtain Equations~\eqref{eq:BST_near_beta_log_n1}, \eqref{eq:BST_near_beta_log_n2} and  Theorem~\ref{theo:BST_occupation_numbers} stated in the introduction. In Equations~\eqref{eq:BST_near_beta_log_n1}, \eqref{eq:BST_near_beta_log_n2} (which deal with levels near $2\eee^\beta \log n$, $\beta\in\R$), the limit random variable is a multiple of $W_\infty(\beta)$. For $\beta=0$, the limit $W_\infty(0)=1$ is degenerate, and
we collected more precise results describing the behavior of the profile around level $\varphi'(0) \log n = 2\log n$ in Theorem~\ref{theo:BST_occupation_numbers}.
%Recall from Theorem~\ref{theo:BST_occupation_numbers}, that for the profile $\LLL_n(k)$ of random BSTs and an integer sequence $(k_n)_{n\in\N}$ satisfying $k_n = 2\eee^\beta \log n + \alpha \sqrt {2\eee^\beta \log n} + o(\sqrt{\log n})$ for some $\beta\in (\beta_-,\beta_+)$ and  $\alpha\in\R$, we have
%$$
%\frac{\sqrt {2\eee^\beta \log n}}{n^{2\eee^\beta-1}} \eee^{\beta k_n} \LLL_n(k_n) \toas \frac {W_\infty(\beta)} {\sqrt {2\pi}} \eee^{-\frac 12 %\alpha^2}.
%$$

However, there is one more value of $\beta$ for which $W_\infty(\beta)$ is degenerate, namely $\beta=-\log 2 \approx -0.693$.  By construction of the BSTs, we have $W_n(-\log 2) = 1 = W_\infty(-\log 2)$ for all $n\in\N$. The value $\beta=-\log 2$ corresponds to the behavior of the BST profile around level $\varphi'(-\log 2) \log n = \log n$. We conclude this section with a discussion of this case.
%By construction, the process $W_n(-\log 2)$ is deterministic and $W_\infty(-\log 2) = 1$ almost surely.
Similarly to Theorem~\ref{theo:L_n_lim_distr_all_together}, \citet[Theorem 6]{fuchs_hwang_neininger} showed that the scaling behaviour of $\LLL_n(k_n)$ with $k_n = \log n + c_n$ depends drastically on whether $|c_n| \to \infty$ or $c_n = O(1)$.
The next theorem is proved along the same lines as Theorem~\ref{theo:L_n_lim_distr_all_together}.
%{\blue (As for $\beta = 0$ studied in the previous section, the two cases (a) and (b) in the theorem are contained in case (c) and stated since the scaling factors simplify considerably.)}

\begin{theorem}\label{theo:BST_occupation_numbers_logn_case}
Let $(\LLL_n(k))_{k\in\Z}$ be the profile of a random binary search tree with $n+1$ external nodes.  Let $(k_n)_{n\in\N}$ be an integer sequence.

\vspace*{2mm}
\noindent
\emph{(a)}  If $k_n = \log n + \alpha \sqrt{\log n} + o(\sqrt{\log n})$ with $\alpha\in\R$,
then
$$
\frac{\log n}{2^{k_n}} \LLL_n^\circ(k_n) \toas \frac{\chi_1(-\log 2) - \E [\chi_1(-\log 2)]}{\sqrt{2\pi}}\alpha \eee^{-\frac 1 2 \alpha^2}.
$$

%\vspace*{2mm}
%\noindent
%\emph{(b)}
%If $k_n = \log n + c_n$, where $c_n$ is such that $\lim_{n\to\infty} |c_n| = \infty$ but $c_n = o(\sqrt{\log n})$, then
%$$
%\frac{(\log n)^{3/2}}{c_n  2^{k_n}} \LLL_n^\circ(k_n) \toas \frac{\chi_1(-\log 2) - \E [\chi_1(-\log 2)]}{\sqrt{2\pi}}.
%$$

\vspace*{2mm}
\noindent
\emph{(b)}
If $k_n = \log n + c_n$, with $\lim_{n\to\infty} |c_n| = \infty$ and $c_n = o(\log n)$,  then, with $\beta_n$ as in~\eqref{def:beta_n_k_repeat},
$$
\frac{(\log n)^{3/2}}{c_n  n^{2 \eee^{\beta_n} (1-\beta_n) -1 }} \LLL_n^\circ(k_n) \toas \frac{\chi_1(-\log 2) - \E [\chi_1(-\log 2)]}{\sqrt{2\pi}}.
$$
In particular, if  $\lim_{n\to\infty} |c_n| = \infty$ but $c_n = o(\sqrt{\log n})$, then
$$
\frac{(\log n)^{3/2}}{c_n  2^{k_n}} \LLL_n^\circ(k_n) \toas \frac{\chi_1(-\log 2) - \E [\chi_1(-\log 2)]}{\sqrt{2\pi}}.
$$

\noindent
\emph{(c)}
If $k_n = \log n + c_n$, where $c_n$ is bounded, then
\begin{equation*}
\frac {(\log n)^{\frac 32}}{2^{k_n}} \bL_n^{\circ}(k_n) - R_*^\circ(c_n) \toas 0,
\end{equation*}
where $R_*^{\circ}(c)= R_*(c) - \E R_*(c)$ and $R_*(c)$ is a random variable given by\
\begin{equation*}
R_*(c) := \frac {1} {\sqrt{2\pi}} \left( \chi_1(-\log 2) \left( c + \frac{1}{2}\right) - \frac {\chi_1^2(-\log 2) + \chi_2(-\log 2)}2 \right),
\quad c\in\R.
\end{equation*}
\end{theorem}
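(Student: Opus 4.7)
\textbf{Proof proposal for Theorem~\ref{theo:BST_occupation_numbers_logn_case}.}
The strategy is to run the argument used in Theorem~\ref{theo:L_n_lim_distr_all_together} verbatim, but with the tuning parameter $\beta=0$ replaced by $\beta=-\log 2$. The key structural fact that makes this possible is that $W_n(-\log 2)=1$ for every $n\in\N$ by the construction of the BST (equivalently, $(2^{-k}\LLL_n(k))_{k\in\Z}$ already has total mass $1$, so the Jabbour martingale is constant at $\beta=-\log 2$); in particular $W_\infty(-\log 2)=1$ a.s., so the leading order of the Edgeworth expansion at this point is deterministic and is killed by centering. For BSTs one computes $\varphi(-\log 2)=0$, $\varphi'(-\log 2)=1$, and $\kappa_j(-\log 2)=\varphi^{(j)}(-\log 2)=1$ for every $j\geq 1$; in particular $\sigma(-\log 2)=1$, so that $n^{\varphi(-\log 2)}=1$ and the tilted profile $e^{\beta k}\LLL_n(k)/n^{\varphi(\beta)}$ at $\beta=-\log 2$ coincides with $2^{-k}\LLL_n(k)$. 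With the above parameter values the normalized coordinate from~\eqref{eq:x_n_k_def_one_split} is $x_n(k)=(k-\log n)/\sqrt{\log n}$.

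For part~(a) I would apply Theorem~\ref{theo:edgeworth_one_split_BRW} with $r=1$ and $\beta=-\log 2$, together with the corresponding expansion for $\E[\LLL_n(k_n)]$ from Theorem~\ref{theo:edgeworth_one_split_BRW_mean} (the BST has a deterministic number of descendants, so the latter applies). Subtracting the two, the $W_\infty(-\log 2)=1$ leading term cancels, and from the explicit form of $G_1$ in~\eqref{eq:G1} the surviving term is
$$\frac{\LLL_n^{\circ}(k_n)}{2^{k_n}}=\frac{e^{-\frac12 x_n^2(k_n)}}{\sqrt{2\pi\log n}}\cdot\frac{\chi_1(-\log 2)-\E[\chi_1(-\log 2)]}{\sqrt{\log n}}\,x_n(k_n)+o\!\left(\tfrac{1}{\log n}\right)\quad\text{a.s.}$$
Plugging in $x_n(k_n)\to\alpha$ gives (a).

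For part~(b) I would proceed as in Theorem~\ref{theo:L_n_lim_distr_all_together}(b): apply Theorem~\ref{theo:edgeworth_one_split_BRW} with $r=2$ at the saddle point $\beta=\beta_n(k_n)$ defined by $\varphi'(\beta_n(k_n))=k_n/\log n$, so that $x_n(k_n)=0$. Because $\beta_n\to -\log 2$ (since $k_n/\log n\to 1$), we may Taylor-expand $W_\infty(\beta_n)=W_\infty(-\log 2)+W'_\infty(-\log 2)(\beta_n+\log 2)+o(\beta_n+\log 2)=1+\chi_1(-\log 2)(\beta_n+\log 2)+o(\beta_n+\log 2)$ a.s., and similarly for $\E[W_\infty(\beta_n)]$. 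Subtracting the two expansions and using $\beta_n+\log 2=c_n/\log n+O((c_n/\log n)^2)$, which comes from inverting $\varphi'$ at $1$, yields the claimed asymptotics. Specializing further to $c_n=o(\sqrt{\log n})$ and observing that then $n^{2e^{\beta_n}(1-\beta_n)-1}\sim 2^{k_n}$ gives the last display in~(b).

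For part~(c) I would again take $r=2$ and $\beta=-\log 2$, and expand both the expressions for $\LLL_n(k_n)/2^{k_n}$ and $\E[\LLL_n(k_n)]/2^{k_n}$ up to order $(\log n)^{-3/2}$, using $G_1,G_2$ from~\eqref{eq:G1}--\eqref{eq:G2} with the numerical values $\sigma=\kappa_3=\kappa_4=1$. Subtracting and noting that $x_n(k_n)=c_n/\sqrt{\log n}$ is of order $(\log n)^{-1/2}$ so that $\Herm_2(x_n(k_n))\to-1$ and $\Herm_4(x_n(k_n))\to 3$ while $e^{-x_n^2(k_n)/2}\to 1$, one collects the two surviving contributions: from $G_1$ one gets $\chi_1(-\log 2)(c_n+\tfrac12)/\sqrt{2\pi}$ after combining with the $\Herm_4$ term (which supplies the shift $\kappa_3(0)/(2\sigma^2(0))=\tfrac12$), and from $G_2$ one gets $-(\chi_1^2(-\log 2)+\chi_2(-\log 2))/(2\sqrt{2\pi})$; subtracting the respective expectations yields $R_*^\circ(c_n)$.

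The main obstacle is the bookkeeping in part~(c): one has to verify that the expansions for $\LLL_n$ and for $\E[\LLL_n]$ agree to order $(\log n)^{-3/2}$ in all deterministic terms, so that only the $\chi_1,\chi_1^2,\chi_2$ pieces survive centering, and to double-check the identification of the $\tfrac12$ shift, which is the analogue of $\kappa_3(0)/(2\sigma^2(0))$ appearing in Theorem~\ref{theo:L_n_lim_distr_all_together}(c) and which here equals $\kappa_3(-\log 2)/(2\sigma^2(-\log 2))=\tfrac12$. Everything else is a mechanical transcription of the proof of Theorem~\ref{theo:L_n_lim_distr_all_together} with the parameter values listed above.
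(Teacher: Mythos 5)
Your proposal is correct and is exactly the paper's own route: the authors state only that Theorem~\ref{theo:BST_occupation_numbers_logn_case} "is proved along the same lines as Theorem~\ref{theo:L_n_lim_distr_all_together}", i.e.\ by rerunning that argument with $\beta=-\log 2$ in place of $\beta=0$, using $W_n(-\log 2)=W_\infty(-\log 2)=1$ and the parameter values $\varphi(-\log 2)=0$, $\sigma(-\log 2)=\kappa_j(-\log 2)=1$, which is precisely what you do (including the correct identification of the $\tfrac12$ shift as $\kappa_3(-\log 2)/(2\sigma^2(-\log 2))$ coming from the $\Herm_4$ term of $G_2$).
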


\begin{remark}
The random variable $\chi_1(-\log 2)$ is not almost surely constant: in the space of distributions with zero mean and finite variance, $\chi_1(-\log 2)$ is uniquely characterized by the stochastic fixed-point equation
\begin{align} \label {fix:chi2} \chi_1(-\log 2) \stackrel{d}{=} \frac 12 \chi^{(1)}_1(-\log 2) + \frac 12 \chi^{(2)}_1(-\log 2) + 1+ \frac 12 \left(\log U + \log (1-U)\right), \end{align}
where $\chi^{(1)}_1(-\log 2),  \chi^{(2)}_1(-\log 2)$ are distributional copies of $\chi_1(-\log 2)$, $U$ is uniformly distributed on $[0,1]$, and all three variables are independent.
This follows from the arguments on page 35 in~\cite{fuchs_hwang_neininger}, see also display (35) in~\cite{chauvin_etal} for a less explicit variant of~\eqref{fix:chi2}.
\end{remark}

Let us finally mention that the random variable $W_\infty(\beta)$ is non-degenerate for all $\beta\in (\beta_-,\beta_+)$ except $\beta = 0$ and $\beta = -\log 2$. Indeed, we have the stochastic fixed point equation (see, e.g.,\ \eqref{eq:functional_eq_for_W_infty} below)
$$
\eee^{-\beta} W_\infty(\beta) \eqdistr  U^{2\eee^{\beta} - 1} W_{1,\infty}(\beta) + (1-U)^{2\eee^{\beta} - 1} W_{2,\infty}(\beta),
$$
where $W_{1,\infty}(\beta), W_{2,\infty}(\beta)$ are distributional copies of $W_\infty(\beta)$, $U$ is uniformly distributed on $[0,1]$, and all three variables are independent. A constant random variable $W_\infty(\beta)=c>0$ satisfies this equation if and only if  $2\eee^{\beta}-1 \in \{0,1\}$.  This corresponds to $\beta \in \{0, -\log 2\}$.

\section{Proof of the general Edgeworth expansion}\label{sec:proof_general_edgeworth}
\subsection{Proof of Theorem~\ref{theo:edgeworth_general}}
The proof is based on studying the characteristic function of the profile.
For notational reasons, we shall use $\mu$ and $\sigma^2$ as shorthands for $\mu(\beta)$ and $\sigma(\beta)$.  Consider the following signed measure on $\R$: for $\beta \in (\beta_-, \beta_+)$,
\begin{equation}\label{eq:mu_n_def}
\mu_n := \mu_n(\beta) := \sum_{k\in\Z}  \eee^{\beta k - \varphi(\beta) w_n}\bL_n(k) \delta\left(\frac{k-\mu w_n}{\sigma \sqrt{w_n}}\right).
\end{equation}
Here, $\delta(z)$ is the Dirac delta-measure at $z\in\R$. The characteristic function of $\mu_n$ has the form
\begin{equation}\label{eq:psi_n_def}
\psi_n(s)
:= \psi_n(s, \beta) := \int_\R \eee^{i s t} \mu_n(\dd t)
=
\eee^{-\varphi(\beta) w_n - is \frac{\mu w_n}{\sigma \sqrt{w_n}}} \sum_{k\in\Z} \bL_n(k) \eee^{k\left(\beta  +   \frac{is}{\sigma \sqrt{w_n}}\right)}.
\end{equation}

Fix some $\beta_0 \in (\beta_-,\beta_+)$ and \emph{random} $\varepsilon_0 > 0$ such that
$\bD_{3\varepsilon_0}(\beta_0) \subset \fD$ and $W_\infty$ is non-zero on $\bD_{3\varepsilon_0}(\beta_0)$. Here, $\bD_r(\beta_0) =\{z\in \C\colon |z-\beta_0|<r\}$ denotes an open disk with radius $r$ centered at $\beta_0$.
For any $\beta \in \fI_0 := (\beta_0 - \varepsilon_0, \beta_0 + \varepsilon_0)$, we have $\bD_{2\varepsilon_0}(\beta) \subset \fD$. In the following, all estimates are going to be uniform in $\beta\in \fI_0$. Since any compact set $K\subset (\beta_-,\beta_+)$ can be covered by finitely many such intervals $\fI_0$, the uniformity in $\beta\in K$ follows.
After recalling the definition of $W_n$, see~\eqref{eq:biggins_def}, we obtain that,  for all $ \beta \in  \fI_0$,  as long as the variable $s\in \R$ satisfies
$$
\left|\frac{s}{\sigma \sqrt{w_n}}\right|<\eps_0,
$$
the function $\psi_n$ is well-defined and can be written in the form
\begin{equation}\label{eq:psi_n_def1}
\psi_n(s) = \eee^{-\varphi(\beta) w_n - is \frac{\mu w_n}{\sigma \sqrt{w_n}} + w_n \varphi\left(\beta + \frac{is}{\sigma \sqrt{w_n}}\right)} W_n\left(\beta + \frac{is}{\sigma \sqrt{w_n}}\right).
\end{equation}

Our aim is to derive an asymptotic expansion of $\psi_n(s)$ in powers of $w_n^{-1/2}$. Consider a modification of $\psi_n(s)$ in which $W_n$ is replaced by $W_\infty$ and $w_n^{-1/2}$ is replaced by a new variable $u$.  For any fixed $s\in\R$ and $ \beta \in  \fI_0$, the function
\begin{equation}\label{eq:psi_tilde_def1}
\tilde \psi(s; u)
=
\exp\left\{
- \frac{\varphi(\beta)}{u^{2}} - is \frac{\mu}{\sigma u} + \frac 1{u^{2}} \varphi\left(\beta + \frac{isu}{\sigma}\right)
+ \log W_\infty \left(\beta + \frac{is u}{\sigma}\right)
\right\}
\end{equation}
is well-defined and analytic in $u$ in the disk $|u|<\sigma \eps_0/|s|$. Note that $\log W_\infty$ is defined as an analytic function because $W_\infty$ does not vanish on $\bD_{3\varepsilon_0}(\beta_0)$. Thus, as long as $\left|\frac{su}{\sigma}\right|<\eps_0$,
$$
\log\tilde \psi(s;u) = \sum_{k=0}^{\infty} \frac{a_k(s)}{k!} u^k,
$$
where
\begin{equation}\label{eq:a_k_s}
a_k(s) = a_k(s, \beta) :=  \frac{\varphi^{(k+2)}(\beta)}{(k+2)(k+1)}\left(\frac{is}{\sigma}\right)^{k+2} +\chi_k(\beta)\left(\frac{is}{\sigma}\right)^k.
\end{equation}
Recall from the definition of Bell polynomials, see~\eqref{eq:bell_poly_def1}, that there is a formal identity
$$
\exp\left\{ \sum_{k=1}^{\infty} \frac{a_k}{k!} x^k\right\}
=
\sum_{k=0}^{\infty} \frac{B_k(a_1,\ldots,a_k)}{k!} x^k.
$$
It follows that the following holds (not only formally!) for $|u| < \eps_0 \sigma / |s|$:
\begin{equation}\label{eq:psi_tilde_def2}
\tilde \psi (s; u) = W_\infty(\beta) \eee^{-\frac{s^2}{2}} \sum_{k=0}^{\infty} \frac{B_k(a_1(s),\ldots, a_k(s))}{k!} u^k.
\end{equation}
To see that~\eqref{eq:psi_tilde_def2} holds not only formally, note that $\tilde \psi (s; u)$, being an analytic function of $u$ in the disk $|u| < \eps_0 \sigma / |s|$, has a convergent Taylor expansion. But in order to compute the coefficients of this expansion, we can use formal series.
We shall need a uniform estimate for the remainder term in~\eqref{eq:psi_tilde_def2}.
\begin{lemma}
Recall that $a_k(s)$ is given by~\eqref{eq:a_k_s}. There exists an a.s.\ finite random variable $M >0$ such that, for all $\beta \in   \fI_0$,
$$
\left|\frac{a_k(s)}{k!}\right| \leq M^{k} (|s|+1)^{k+2}
$$
for all  $s\in\R$ and $k\in\N$.
\end{lemma}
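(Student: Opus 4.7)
The plan is to bound each of the two summands in the definition \eqref{eq:a_k_s} of $a_k(s)$ separately by Cauchy's estimates for the derivatives of an analytic function, using the built-in domain information from the setup that precedes the lemma. Recall that $\eps_0$ was chosen so that $\bD_{3\eps_0}(\beta_0) \subset \fD$ and $W_\infty$ is non-zero on $\bD_{3\eps_0}(\beta_0)$. In particular $\log W_\infty$ is a well-defined analytic function on $\bD_{3\eps_0}(\beta_0)$, and for every $\beta \in \fI_0 = (\beta_0 - \eps_0, \beta_0 + \eps_0)$ the closed disk $\overline{\bD_{\eps_0}(\beta)}$ is contained in $\overline{\bD_{2\eps_0}(\beta_0)} \subset \bD_{3\eps_0}(\beta_0)$.

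First, I would apply Cauchy's integral formula for derivatives on the circle of radius $\eps_0$ around $\beta$. Setting
$$C_\varphi := \sup_{z \in \overline{\bD_{2\eps_0}(\beta_0)}} |\varphi(z)|, \qquad C_W := \sup_{z \in \overline{\bD_{2\eps_0}(\beta_0)}} |\log W_\infty(z)|,$$
the constant $C_\varphi$ is finite and deterministic (since $\varphi$ is analytic on $\fD$), while $C_W$ is an a.s.\ finite random variable (since $\log W_\infty$ is a.s.\ analytic on the open disk $\bD_{3\eps_0}(\beta_0)$, hence continuous on its compact subset $\overline{\bD_{2\eps_0}(\beta_0)}$). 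Cauchy's estimates give, uniformly in $\beta \in \fI_0$,
$$\frac{|\varphi^{(k+2)}(\beta)|}{(k+2)!} \leq \frac{C_\varphi}{\eps_0^{k+2}}, \qquad \frac{|\chi_k(\beta)|}{k!} \leq \frac{C_W}{\eps_0^k}.$$

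Second, I need a lower bound on $\sigma(\beta) = \sqrt{\varphi''(\beta)}$ uniform in $\beta \in \fI_0$. After shrinking $\eps_0$ if necessary so that $\overline{\fI_0} \subset (\beta_-, \beta_+)$, the continuous positive function $\beta \mapsto \varphi''(\beta)$ attains its minimum $\sigma_{\min}^2 > 0$ on the compact set $\overline{\fI_0}$.

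Combining these estimates with the definition~\eqref{eq:a_k_s}, and using $1/(k+1)(k+2) \leq 1$, I obtain
$$\left|\frac{a_k(s)}{k!}\right| \leq \frac{C_\varphi}{(\eps_0\sigma_{\min})^{k+2}} |s|^{k+2} + \frac{C_W}{(\eps_0\sigma_{\min})^k} |s|^k \leq C_0 \cdot M_0^{k+2} (|s|+1)^{k+2},$$
where $M_0 = 1/(\eps_0 \sigma_{\min})$ and $C_0 = C_\varphi + C_W$, upon bounding $|s|^k, |s|^{k+2} \leq (|s|+1)^{k+2}$. Choosing $M := M_0 \cdot \max\{1, C_0 M_0^2\}$ (an a.s.\ finite random variable) then yields $|a_k(s)/k!| \leq M^k (|s|+1)^{k+2}$ for every $k \in \N$, $s \in \R$, and $\beta \in \fI_0$, as required. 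Since no step is genuinely hard, I do not anticipate a real obstacle; the only subtle point is verifying that $C_W$ is a.s.\ finite, which is immediate from the analyticity and non-vanishing of $W_\infty$ on a slightly larger disk.
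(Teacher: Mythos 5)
Your proof is correct and follows essentially the same route as the paper's: Cauchy estimates on circles of radius $\eps_0$ (contained in $\overline{\bD}_{2\eps_0}(\beta_0)$) for the Taylor coefficients of $\varphi$ and $\log W_\infty$, a uniform positive lower bound on $\sigma$, and absorption of all constants into $M^k$. The only cosmetic differences are that the paper bounds $1/(\eps_0\sigma(\gamma))$ by a supremum over a disk rather than over $\overline{\fI_0}$, and your proposed shrinking of $\eps_0$ is unnecessary since $\overline{\fI_0}\subset\fD\cap\R=(\beta_-,\beta_+)$ already.
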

\begin{proof}
Since the functions $\varphi$ and $\log W_\infty$ are analytic on the disk $\bD_{2\eps_0}(\beta_0)$, the Cauchy formula implies that, for $\beta \in  \fI_0$,
$$
\left|\frac{\varphi^{(k+2)}(\beta)}{(k+2)!} \right| \leq \sup_{\gamma \in \bD_{\eps_0}(\beta_0)} | \varphi(\gamma)| \varepsilon_0^{-k-2},
\;\;\;
\left|\frac{\chi_k(\beta)}{k!} \right| \leq \sup_{\gamma \in \bD_{\eps_0}(\beta_0)} | \log W_\infty(\gamma)| \varepsilon_0^{-k},
$$
for all $k\in\N$. With $M' = \max \{1, \sup_{\gamma \in \bD_{\eps_0}(\beta_0)} | \varphi(\gamma)| , \sup_{\gamma \in \bD_{\eps_0}(\beta_0)} | \log W_\infty(\gamma)| \},$
and $C = \max(1,\sup_{\gamma \in \bD_{\eps_0}(\beta_0)} (\varepsilon_0 \sigma(\gamma))^{-1}),$
it follows from~\eqref{eq:a_k_s} that
$$
\left|\frac{a_k(s)}{k!}\right|
\leq
\left|\frac s {\sigma}\right|^{k+2} \left|\frac{\varphi^{(k+2)}(\beta)}{(k+2)!} \right|
+ \left|\frac s {\sigma}\right|^{k} \left|\frac{\chi_k(\beta)}{k!} \right|
\leq
M'( |s|^{k+2} C^{k+2} + |s|^k C^k)
%\leq MC^k (|s|+1)^{k+2}.
$$
which yields the desired estimate choosing $M = M' C^3.$
\end{proof}

\begin{lemma}\label{lem:B_k_estimate}
There is an a.s.\ finite random variable $M_1 > 0$ such that, for all $\beta \in  \fI_0$,
$$
\frac 1 {k!} \left|B_k(a_1(s),\ldots, a_k(s)) \right| \leq M_1^k (|s|+1)^{3k}
$$
for all $k\in\N$ and $s\in\R$.
\end{lemma}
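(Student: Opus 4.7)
The plan is to derive the bound directly from the explicit formula for Bell polynomials and the pointwise estimate on $|a_k(s)/k!|$ established in the preceding lemma. Using the expansion
\[
\frac{1}{k!}B_k(a_1(s),\ldots,a_k(s)) = \sum{}^{'} \prod_{j=1}^{k} \frac{(a_j(s)/j!)^{i_j}}{i_j!},
\]
where the sum ranges over all $(i_1,\ldots,i_k)\in\N_0^k$ with $1 i_1+2i_2+\cdots+k i_k=k$, I would insert the bound $|a_j(s)/j!|\le M^j(|s|+1)^{j+2}$ from the previous lemma and obtain, after taking absolute values inside the sum,
\[
\frac{1}{k!}|B_k(a_1(s),\ldots,a_k(s))| \le \sum{}^{'} \frac{M^{\sum j i_j}\,(|s|+1)^{\sum (j+2)i_j}}{\prod_j i_j!}.
\]

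The first key step is the identity $\sum_j j i_j=k$, which converts the exponent of $M$ into simply $k$ and rewrites the exponent of $(|s|+1)$ as $k+2\sum_j i_j$. The second key step is the elementary inequality $\sum_j i_j\le \sum_j j i_j=k$, which upgrades $(|s|+1)^{k+2\sum i_j}$ to the uniform bound $(|s|+1)^{3k}$, independent of the particular composition. Pulling these factors out yields
\[
\frac{1}{k!}|B_k(a_1(s),\ldots,a_k(s))| \le M^{k}(|s|+1)^{3k}\,S_k,
\qquad S_k := \sum{}^{'}\prod_{j=1}^{k}\frac{1}{i_j!}.
\]

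It remains to bound $S_k$ by $C^k$ for some absolute constant $C$. The cleanest route is via the generating identity
\[
\sum_{k=0}^\infty S_k\,x^k = \prod_{j=1}^{\infty}\sum_{i=0}^{\infty}\frac{x^{ji}}{i!} = \exp\!\left(\sum_{j=1}^{\infty}x^j\right)=\exp\!\left(\frac{x}{1-x}\right),
\]
valid for $|x|<1$, which shows that the radius of convergence is $1$ and therefore $S_k\le 2^k$ for all $k\in\N$ (this is also easily verified by induction, or by noting $S_k\le p(k)\cdot 1\le 2^k$ where $p(k)$ is the number of partitions of $k$). This is essentially the only place where one must be careful; the bound $S_k\le C^k$ is soft but slightly non-obvious without a generating-function argument.

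Combining everything gives the desired estimate with $M_1:=2M$, which is a.s.\ finite because $M$ is (being a function of $\sup_{\gamma\in\bD_{\eps_0}(\beta_0)}|\varphi(\gamma)|$ and $\sup_{\gamma\in\bD_{\eps_0}(\beta_0)}|\log W_\infty(\gamma)|$, both a.s.\ finite under Assumption A2 and the choice of $\eps_0$). The estimate is uniform in $\beta\in\fI_0$ because all bounds depend on $\beta$ only through suprema over the fixed disk $\bD_{\eps_0}(\beta_0)$. No further probabilistic input is needed for this step.
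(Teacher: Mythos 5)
Your proof is correct and follows essentially the same route as the paper: expand the Bell polynomial via \eqref{eq:bell_poly_def}, insert the bound $|a_j(s)/j!|\le M^j(|s|+1)^{j+2}$, and use $\sum_j j i_j=k$ together with $\sum_j i_j\le k$ to collect the powers. The only (cosmetic) difference is how the residual combinatorial sum is controlled -- the paper simply uses $\sum{}^{'}\frac{1}{i_1!\cdots i_k!}\le \eee^{k}$ and takes $M_1=\eee M$, whereas you invoke the generating function $\exp(x/(1-x))$ (or the partition count); both give a bound of the form $C^k$ and are equally valid.
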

\begin{proof}
By definition of the Bell polynomial $B_k$, see~\eqref{eq:bell_poly_def},
\begin{align*}
\frac 1 {k!} \left|B_k(a_1(s),\ldots, a_k(s))\right|
&\leq
\sum{}^{'} \frac 1{j_1!\ldots j_k!} \left|\frac{a_1(s)}{1!}\right|^{j_1} \ldots \left|\frac{a_k(s)}{k!}\right|^{j_k}\\
&\leq
\sum{}^{'} \frac 1{j_1!\ldots j_k!} M^{1j_1+\ldots+kj_k} (|s|+1)^{\sum_{m=1}^k (m+2)j_m},
\end{align*}
where the sum $\sum{}^{'}$ is taken over all $j_1,\ldots,j_k \in\N_0$ satisfying $1j_1+2j_2+\ldots +k j_k =k$.
Using that $1j_1+\ldots+kj_k=k$ (and consequently $j_1+\ldots+j_k \leq k$) and the inequality $\sum \frac 1{j_1!\ldots j_k!}\leq \eee^k$, we obtain the required estimate choosing $M_1 = \eee M$.
\end{proof}

\begin{lemma}\label{lem:exp_psi_tilde_remainder}
Fix $r\in\N_0$. There exist a.s.\ finite random variables $U>0$ and $M_2 > 0$ such that for all $\beta \in  \fI_0, u\in (-U,U)$ and $s\in\R$ with $1+|s|<u^{-1/4}$, we have
$$
\left|\tilde \psi(s;u) - W_\infty(\beta)\eee^{-\frac 12 s^2} \sum_{k=0}^r \frac{B_k(a_1(s),\ldots, a_k(s))}{k!} u^k\right| \leq M_2 \eee^{-\frac 12 s^2} (1+|s|)^{3r+3} |u|^{r+1}.
$$
\end{lemma}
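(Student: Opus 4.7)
My plan is to estimate the tail of the convergent series representation of $\tilde\psi(s;u)$ obtained in \eqref{eq:psi_tilde_def2}, using the Bell polynomial bound from Lemma~\ref{lem:B_k_estimate}. The role of the hypothesis $1+|s| < u^{-1/4}$ is twofold: first, to guarantee we are inside the disk of convergence $|u| < \varepsilon_0\sigma/|s|$ of the series \eqref{eq:psi_tilde_def2}; second, to turn the $(1+|s|)^{3k}|u|^k$ factors into a convergent geometric series.

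First I would note that, since $\sigma(\beta)=\sqrt{\varphi''(\beta)}$ is continuous and strictly positive on the closure of $\fI_0\subset(\beta_-,\beta_+)$, we have $\sigma_{\min} := \inf_{\beta \in \overline{\fI_0}}\sigma(\beta) > 0$, and $W_\infty$ is bounded on $\overline{\bD_{\varepsilon_0}(\beta_0)}$, so $C_W := \sup_{\beta\in\fI_0}|W_\infty(\beta)|$ is an a.s.\ finite random variable. Choose the a.s.\ finite random variable $U>0$ small enough that $U^{1/4} < \varepsilon_0 \sigma_{\min}$ and $M_1 U^{1/4} \le 1/2$. Under the hypothesis $1+|s| < u^{-1/4}$, we have $|s||u| \leq u^{3/4} < \varepsilon_0 \sigma(\beta)$ for every $\beta\in\fI_0$, so we are inside the domain of analyticity and the identity \eqref{eq:psi_tilde_def2} applies.

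Next I would subtract the partial sum and estimate the tail via Lemma~\ref{lem:B_k_estimate}:
\begin{align*}
\Bigl|\tilde\psi(s;u)- W_\infty(\beta)\eee^{-\tfrac12 s^2}\sum_{k=0}^{r}\tfrac{B_k(a_1(s),\ldots,a_k(s))}{k!}u^k\Bigr|
&\leq C_W \eee^{-\tfrac12 s^2}\sum_{k=r+1}^{\infty} M_1^k (1+|s|)^{3k} |u|^k.
\end{align*}
Factoring out $M_1^{r+1}(1+|s|)^{3r+3}|u|^{r+1}$ from the tail and using $(1+|s|)^3|u| \leq u^{-3/4}\cdot |u| = |u|^{1/4}$, the remaining series is bounded by
\begin{equation*}
\sum_{j=0}^{\infty}\bigl(M_1(1+|s|)^3 |u|\bigr)^{j} \leq \sum_{j=0}^{\infty}(M_1 |u|^{1/4})^{j} \leq 2,
\end{equation*}
since $M_1 |u|^{1/4} \leq M_1 U^{1/4} \leq 1/2$. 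Combining, the remainder is bounded by $2 C_W M_1^{r+1}(1+|s|)^{3r+3}|u|^{r+1}\eee^{-s^2/2}$, which yields the claim with $M_2 := 2 C_W M_1^{r+1}$.

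The only real obstacle is bookkeeping: one must check that the randomness entering through $\varepsilon_0$, $M_1$, $C_W$, and $U$ stays uniform for all $\beta\in\fI_0$ (as opposed to pointwise in $\beta$); this is where it matters that $\fI_0$ has compact closure inside $\fD\cap \R$ and that $W_\infty$ is nonvanishing on the enlarged disk $\bD_{3\varepsilon_0}(\beta_0)$, so that all suprema appearing above are a.s.\ finite random variables independent of $\beta\in\fI_0$.
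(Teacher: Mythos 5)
Your proposal is correct and follows essentially the same route as the paper: bound the tail of the convergent series \eqref{eq:psi_tilde_def2} via Lemma~\ref{lem:B_k_estimate}, factor out $M_1^{r+1}(1+|s|)^{3r+3}|u|^{r+1}$, and dominate the remaining geometric series by $2$ using $1+|s|<u^{-1/4}$ and a choice of $U$ with $M_1U^{1/4}\le 1/2$ (the paper takes $U=(16M_1^4)^{-1}$ and $M_2 = 2M_1^{r+1}\sup_{\gamma\in\bD_{\eps_0}(\beta_0)}|W_\infty(\gamma)|$). Your additional check that the hypothesis places $u$ inside the disk of convergence, and that all random constants are uniform over $\beta\in\fI_0$, is exactly the uniformity the paper's constants are designed to provide.
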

\begin{proof}
Using formula~\eqref{eq:psi_tilde_def2} for $\tilde \psi(s;u)$ and then Lemma~\ref{lem:B_k_estimate} we obtain
\begin{align*}
\text{LHS}
&\leq
|W_\infty(\beta)| \eee^{-\frac 12 s^2} \sum_{k=r+1}^{\infty} \frac{|B_k(a_1(s),\ldots, a_k(s))|}{k!} |u|^k\\
&\leq
|W_\infty(\beta)| \eee^{-\frac 12 s^2} \sum_{k=r+1}^{\infty} M_1^k (|s|+1)^{3k}|u|^k\\
&\leq
\frac{M_2}{2} \eee^{-\frac 12 s^2} (|s|+1)^{3r+3}|u|^{r+1}\sum_{k=0}^{\infty} M^k_1 (|s|+1)^{3k} |u|^k,
\end{align*}
where $M_2 = 2 M_1^{r+1} \sup_{\gamma \in \bD_{\eps_0}(\beta_0)} |W_\infty(\gamma)|$. The sum on the right-hand side can be estimated using the assumptions  $1+|s|<u^{-1/4}$ and $|u|<U$ as follows:
$$
\sum_{k=0}^{\infty} M_1^k (|s|+1)^{3k} |u|^k
\leq
\sum_{k=0}^{\infty} M_1^k |u|^{-\frac 34 k} |u|^k
\leq
\sum_{k=0}^{\infty} M_1^k U^{k/4} = 2,
$$
where the last step holds if we choose $ U = (16 M_1^4)^{-1}$.
\end{proof}

We are now able to state the expansion for the characteristic function $\psi_n$ with an estimate for the remainder term. Let
\begin{equation}\label{eq:V_r_n_def}
V_{r,n}(s) = W_\infty(\beta)\eee^{-\frac{1}{2}s^2}\sum_{k=0}^r \frac{B_k(a_1(s),\ldots, a_k(s))}{k!} w_n^{-\frac k2}
\end{equation}
\begin{lemma}\label{lem:exp_psi_n_remainder}
There exist a.s.\ finite numbers $K>0$ and $M_3>0$ such that
$$
\left|\psi_n(s) - V_{r,n}(s) \right| \leq M_3 \eee^{-\frac 12 s^2} (|s|+1)^{3r+3} w_n^{-\frac{r+1}{2}}.
$$
for all $\beta \in  \fI_0, n>K$ and $s\in \R$ satisfying $1+|s|<w_n^{1/8}$.
\end{lemma}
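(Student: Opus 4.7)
The strategy is to split the difference $\psi_n(s) - V_{r,n}(s)$ into two pieces, one estimated by the already established Lemma~\ref{lem:exp_psi_tilde_remainder} and the other controlled via the superpolynomial convergence in Assumption~A3. Comparing the definitions \eqref{eq:psi_n_def1} and \eqref{eq:psi_tilde_def1}, one observes the factorization
$$
\psi_n(s) \;=\; \tilde\psi\!\left(s;\, w_n^{-1/2}\right) \cdot \frac{W_n\!\left(\beta + \tfrac{is}{\sigma\sqrt{w_n}}\right)}{W_\infty\!\left(\beta + \tfrac{is}{\sigma\sqrt{w_n}}\right)},
$$
valid as long as $|s|/(\sigma\sqrt{w_n}) < \eps_0$; this condition is automatic under our hypothesis $1+|s| < w_n^{1/8}$ for all sufficiently large $n$, since then $|s|/\sqrt{w_n} = O(w_n^{-3/8})$. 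Thus I would write
$$
\psi_n(s) - V_{r,n}(s) \;=\; \bigl[\psi_n(s) - \tilde\psi(s; w_n^{-1/2})\bigr] + \bigl[\tilde\psi(s; w_n^{-1/2}) - V_{r,n}(s)\bigr]
$$
and bound each term separately.

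The second bracket is exactly the expression estimated in Lemma~\ref{lem:exp_psi_tilde_remainder}. Taking $u = w_n^{-1/2}$ (which satisfies $|u|<U$ for $n$ larger than some a.s.\ finite random $K_1$) and noting that $1+|s| < w_n^{1/8} = u^{-1/4}$, that lemma immediately yields
$$
\bigl|\tilde\psi(s; w_n^{-1/2}) - V_{r,n}(s)\bigr| \;\leq\; M_2\, \eee^{-s^2/2} (1+|s|)^{3r+3} w_n^{-(r+1)/2}.
$$

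For the first bracket, I would use
$$
\psi_n(s) - \tilde\psi(s; w_n^{-1/2}) \;=\; \tilde\psi(s; w_n^{-1/2}) \cdot \frac{W_n(\gamma_n) - W_\infty(\gamma_n)}{W_\infty(\gamma_n)}, \qquad \gamma_n := \beta + \tfrac{is}{\sigma\sqrt{w_n}}.
$$
Since $\gamma_n$ lies in a fixed compact set $K \subset \bD_{2\eps_0}(\beta_0) \subset \fD$ for all large $n$ and for all $\beta\in \fI_0$, $s$ in the prescribed range, Assumption~A3 applied with some sufficiently large integer $r'$ (to be chosen below) gives $|W_n(\gamma_n) - W_\infty(\gamma_n)| \leq C_{K,r'} w_n^{-r'}$. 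By our choice of $\eps_0$, $W_\infty$ has no zeros on the closure of $\bD_{2\eps_0}(\beta_0)$, hence $|W_\infty(\gamma_n)|$ is bounded below by an a.s.\ positive random constant. Moreover, applying Lemma~\ref{lem:exp_psi_tilde_remainder} with $r=0$ together with $(1+|s|)^3 w_n^{-1/2} \leq w_n^{-1/8}$ shows that $|\tilde\psi(s; w_n^{-1/2})| \leq C\,\eee^{-s^2/2}$ for large $n$. Combining these gives a bound of the form $C'\,\eee^{-s^2/2}\,w_n^{-r'}$ for the first bracket.

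Choosing $r' \geq (r+1)/2$ (e.g.\ $r' = r+1$), we absorb this into the target bound $M_3 \eee^{-s^2/2}(1+|s|)^{3r+3} w_n^{-(r+1)/2}$ (using that $(1+|s|)^{3r+3} \geq 1$). Taking $K = \max(K_1, K_2)$ for $K_2$ large enough that all the ``for large $n$'' statements above hold, and $M_3 := M_2 + C'$, completes the proof. The main (minor) obstacle is the bookkeeping to ensure that all random constants $\eps_0$, $M$, $M_1$, $M_2$, $U$, $C_{K,r'}$, and the lower bound on $|W_\infty|$ can be combined into a single a.s.\ finite $M_3$ and $K$; this is routine because we are working on a fixed probability-one event on which $\eps_0>0$ and all the stated estimates hold.
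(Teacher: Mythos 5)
Your proposal is correct and follows essentially the same route as the paper: the same splitting into $[\psi_n(s)-\tilde\psi(s;w_n^{-1/2})]+[\tilde\psi(s;w_n^{-1/2})-V_{r,n}(s)]$, with the second bracket handled by Lemma~\ref{lem:exp_psi_tilde_remainder} at $u=w_n^{-1/2}$ and the first controlled via Assumption~A3. The only (harmless) deviation is that you factor the first bracket as $\tilde\psi\cdot(W_n(\gamma_n)-W_\infty(\gamma_n))/W_\infty(\gamma_n)$, which forces you to invoke a lower bound on $|W_\infty|$ near $\beta_0$ (available from the choice of $\eps_0$) and an upper bound on $|\tilde\psi|$, whereas the paper simply writes the difference as the common exponential prefactor times $W_\infty(\beta+z_n)-W_n(\beta+z_n)$ and bounds that prefactor by $\eee^{-s^2/2+O(w_n^{-1/8})}$ via the Taylor expansion of $\varphi$, never dividing by $W_\infty$.
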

\begin{proof}
 We have
\begin{equation}\label{eq:est1}
\text{LHS}
\leq
\left|\tilde \psi (s;w_n^{-\frac 12}) - V_{r,n}(s)\right| + \left|\psi_n(s) - \tilde \psi (s; w_n^{-\frac 12}) \right|.
\end{equation}
We estimate the terms on the right-hand side in two steps.

\step We start with the first term on the RHS in~\eqref{eq:est1}. By Lemma~\ref{lem:exp_psi_tilde_remainder} with $u = w_n^{-1/2}$, the estimate
\begin{equation}\label{eq:est2}
\left|\tilde \psi (s;w_n^{-\frac 12}) - V_{r,n}(s)\right| \leq M_2 \eee^{-\frac 12 s^2} (|s|+1)^{3k+3} w_n^{-\frac {r+1}2}
\end{equation}
holds provided that $w_n^{-1/2} < U$ and $1+|s|<w_n^{1/8}$. Since $\lim_{n\to\infty} w_n =+\infty$, we can choose a random variable $K$ such that $w_n^{-1/2} < U$ for $n>K$.

\step We estimate the second term on the RHS in~\eqref{eq:est1}.  Let $z_n=\frac{is}{\sigma \sqrt{w_n}}$ so that for sufficiently large $n$, we have $|z_n|<\eps_0$. With this notation, we have
$$
\left|\psi_n(s) - \tilde \psi (s; w_n^{-\frac 12}) \right|
=
\left|\eee^{ w_n \left(\varphi(\beta + z_n) - \varphi(\beta) - \varphi'(\beta)z_n\right)}\right| \left| W_\infty(\beta + z_n) - W_n(\beta + z_n)\right|.
$$
By Assumption~A3, see~\eqref{eq:Psi_n_to_W_infty_speed}, we have, for some a.s.\ finite number $M'$ depending on $\beta_0$ and $\varepsilon_0$ but not on $\beta$,
$$
\left|W_\infty(\beta + z_n) - W_n(\beta + z_n)\right| \leq \sup_{z\in \bD_{2\eps_0}(\beta_0)} \left| W_\infty(z) - W_n(z)\right| < M' w_n^{-\frac{r+1}{2}}.
$$
By the Taylor expansion of $\varphi$ at $\beta$, we obtain the following estimate in which the $O$-term is uniform as long as $|z_n|<\eps_0$ and $\beta \in \fI_0$:
\begin{align}
w_n  \left(\varphi(\beta + z_n) - \varphi(\beta) - \varphi'(\beta) z_n\right)
&= \left(\frac{\sigma^2} 2 z_n^2 + O(z_n^3)\right) w_n\notag \\
&=-\frac{s^2}{2} + O\left(\frac{s^3}{\sqrt{w_n}}\right) \leq -\frac {s^2}{2} + O(w_n^{-1/8})\label{eq:est22},
\end{align}
where in the last step we used the restriction $1+|s|<w_n^{1/8}$.
%we obtain the estimate
%$$
%w_n  \left(\varphi(z_n) - \varphi(0) - \varphi'(0)z_n\right) \leq -\frac {s^2}{2} + C.
%$$
Combining the above estimates we obtain
\begin{equation}\label{eq:est3}
\left|\psi_n(s) - \tilde \psi (s; w_n^{-\frac 12}) \right|
\leq
M'  w_n^{-\frac{r+1}{2}} \left( \eee^{-\frac 12 s^2 + O(w_n^{-1/8})}\right).
\end{equation}
Taking~\eqref{eq:est2} and~\eqref{eq:est3} together, we obtain the statement of the lemma.
\end{proof}

In order to obtain the Edgeworth expansion for $\bL_n(k)$ we shall apply Fourier inversion to the expansion for $\psi_n$ established above. Recall formula~\eqref{eq:psi_n_def} for the characteristic function $\psi_n$. It follows by Fourier inversion that
$$
\sigma \sqrt{w_n} \eee^{\beta k - \varphi(\beta) w_n} \bL_n(k) = \frac 1 {2\pi} \int_{-\pi\sigma \sqrt {w_n}}^{\pi\sigma \sqrt {w_n}} \psi_n(s) \eee^{- is x_n(k)} \dd s,
$$
where $x_n(k)$ was defined in~\eqref{eq:x_n_k_def}.
\begin{lemma}
Recall from~\eqref{eq:V_r_n_def} the definition of $V_{r,n}$. For every fixed $r\in\N_0$,
$$
w_n^{\frac r2} \sup_{k\in\Z} \sup_{\beta \in \fI_0} \left|\int_{-\pi\sigma \sqrt {w_n}}^{\pi\sigma \sqrt {w_n}} \psi_n(s) \eee^{-isx_n(k)}\dd s - \int_\R  V_{r,n}(s) \eee^{-is x_n(k)}\dd s \right| \toas 0.
$$
\end{lemma}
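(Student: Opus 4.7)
The plan is to split the integration range $[-\pi\sigma\sqrt{w_n},\pi\sigma\sqrt{w_n}]$ into three zones and to separately estimate the tail $|s|>s_n$ of the $V_{r,n}$-integral, where $s_n:=w_n^{1/8}-1$. Fix a small constant $\delta>0$ (to be chosen later, depending only on $\fI_0$). The three zones are the \emph{inner} zone $|s|\leq s_n$, the \emph{intermediate} zone $s_n<|s|\leq \delta\sqrt{w_n}$, and the \emph{outer} zone $\delta\sqrt{w_n}<|s|\leq \pi\sigma\sqrt{w_n}$. Since $|\eee^{-isx_n(k)}|=1$, all bounds produced on the two integrals are automatically uniform in $k\in\Z$.

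On the inner zone, Lemma~\ref{lem:exp_psi_n_remainder} gives the pointwise bound $|\psi_n(s)-V_{r,n}(s)|\leq M_3\eee^{-s^2/2}(1+|s|)^{3r+3}w_n^{-(r+1)/2}$ uniformly in $\beta\in\fI_0$; integrating yields a contribution of order $w_n^{-(r+1)/2}$, which vanishes like $w_n^{-1/2}$ after multiplication by $w_n^{r/2}$. For the tail of the $V_{r,n}$-integral, Lemma~\ref{lem:B_k_estimate} together with~\eqref{eq:V_r_n_def} gives $|V_{r,n}(s)|\leq C\eee^{-s^2/2}(1+|s|)^{3r}$ uniformly in $\beta\in\fI_0$ and $n\geq 1$, so the tail integral is controlled by a polynomial times a Gaussian tail, hence by $O(\eee^{-c w_n^{1/4}})$, which is negligible after multiplying by $w_n^{r/2}$.

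On the intermediate zone Lemma~\ref{lem:exp_psi_n_remainder} is no longer applicable, but a Gaussian-type upper bound on $|\psi_n|$ is still available from~\eqref{eq:psi_n_def1}. Writing $z_n:=is/(\sigma\sqrt{w_n})$, the identity $\Re[\varphi'(\beta)z_n]=0$ (valid because $\varphi'(\beta)\in\R$) gives
$$
\log|\psi_n(s)| = w_n\Re\bigl[\varphi(\beta+z_n)-\varphi(\beta)-\varphi'(\beta)z_n\bigr]+\log|W_n(\beta+z_n)|.
$$
A Taylor expansion of $\varphi$ at $\beta$ (valid on $\bD_{2\varepsilon_0}(\beta_0)\subset\fD$) yields $w_n\Re[\varphi(\beta+z_n)-\varphi(\beta)-\varphi'(\beta)z_n]=-s^2/2+O(|s|^3/\sqrt{w_n})$, where the $O$-term is uniform in $\beta\in\fI_0$. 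For $\delta$ chosen small enough, the cubic remainder is dominated by $s^2/4$ throughout the intermediate zone. Combined with the uniform boundedness of $|W_n|$ on $\bD_{2\varepsilon_0}(\beta_0)$ provided by Assumptions~A2 and~A3 for $n$ large enough, this produces $|\psi_n(s)|\leq C\eee^{-s^2/4}$, and hence the contribution of this zone is $O(\eee^{-c w_n^{1/4}})$, again negligible.

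Finally, on the outer zone the substitution $u=s/(\sigma\sqrt{w_n})$ converts the integral of $|\psi_n|$ into
$$
2\sigma\sqrt{w_n}\,\eee^{-\varphi(\beta)w_n}\int_{\delta/\sigma}^{\pi}\Bigl|\sum_{k\in\Z}\bL_n(k)\eee^{k(\beta+iu)}\Bigr|\dd u.
$$
Applying Assumption~A4 to the compact set $\overline{\fI_0}\subset(\beta_-,\beta_+)$ with the fixed value $a:=\delta/\sup_{\beta\in\overline{\fI_0}}\sigma(\beta)>0$ and with any integer $r'>(r+1)/2$, this quantity is $o(w_n^{1/2-r'})$ almost surely and uniformly in $\beta\in\fI_0$, which is negligible after multiplication by $w_n^{r/2}$. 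Summing the four contributions and taking suprema in $k$ and $\beta$ completes the proof. The main obstacle is the intermediate zone: it is covered neither by Lemma~\ref{lem:exp_psi_n_remainder} (whose expansion of $\psi_n$ requires $|s|<w_n^{1/8}$) nor by Assumption~A4 (which pertains only to $|s|\gtrsim\sqrt{w_n}$), and bridging the gap requires the Gaussian-type upper bound obtained from the Taylor expansion, in which the strict convexity $\varphi''>0$ plays the decisive role.
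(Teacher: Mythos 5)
Your proposal is correct and follows essentially the same route as the paper: the same three-zone decomposition (inner zone handled by Lemma~\ref{lem:exp_psi_n_remainder}, intermediate zone by the Gaussian-type bound from the Taylor expansion of $\varphi$ together with the uniform boundedness of $W_n$ near $\beta_0$, outer zone by the change of variables and Assumption~A4), plus the separate Gaussian-tail estimate for $V_{r,n}$. The only differences are cosmetic — the inner cutoff $w_n^{1/8}-1$ versus the paper's $w_n^{1/9}$, and your slightly more careful absorption of the cubic Taylor remainder into $-s^2/4$.
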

\begin{proof}
\step We show that
$$
w_n^{\frac r2} \sup_{\beta \in  \fI_0} \int_{-w_n^{1/9}}^{w_n^{1/9}} \left|\psi_n(s) - V_{r,n}(s)\right|\dd s \toas 0.
$$
Indeed, we know from Lemma~\ref{lem:exp_psi_n_remainder} that, for all $\beta \in  \fI_0$,
$$
\left|\psi_n(s) - V_{r,n}(s)\right| \leq M_3 \eee^{-\frac 12 s^2} (|s|+1)^{3r+3} w_n^{-\frac{r+1}{2}}
$$
for $n>K$, $1+|s|< w_n^{1/8}$.  Integrating this, we obtain the required estimate.

\step
We show that there is an $a>0$ such that
\begin{equation}\label{eq:cond_int_tn_19}
w_n^{\frac r2} \sup_{\beta \in  \fI_0} \int_{|w_n|^{1/9} < |s| < a\sqrt{w_n}} |\psi_n(s)| \dd s \toas 0.
\end{equation}
Let $z_n=\frac{is}{\sigma \sqrt{w_n}}$. We can choose $a>0$ so small that $|z_n|<\eps_0$ provided that $|s|<a \sqrt{w_n}$.
%Thus, from~\eqref{eq:est22} we infer
From the uniform convergence of $W_n$ to $W_\infty$ on $\bD_{2\eps_0}(\beta_0)$ and from the Taylor series for $\varphi$ we infer
$$
|\psi_n(s)|
=
\left|\eee^{ w_n \left(\varphi(\beta + z_n) - \varphi(\beta) - \varphi'(\beta)z_n\right)}\right| | W_n(\beta + z_n)|\leq M'\eee^{-\frac{1}{2}s^2},
$$
for some a.s.\ finite $M'>0$ depending on $\beta_0$ and $\varepsilon_0$ but not on $\beta$. It follows that
$$
\sup_{\beta \in  \fI_0} \int_{|w_n|^{1/9} < |s| < a\sqrt{w_n}} |\psi_n(s)| \dd s \leq M' \int_{|w_n|^{1/9} < |s| < a\sqrt{w_n}} \eee^{-\frac 12 s^2} \dd s
= o(w_n^{-\frac r2}) \quad \text{ a.s.}
$$
This completes the proof of~\eqref{eq:cond_int_tn_19}.

\step We prove that for every $a>0$,
\begin{equation}\label{eq:cond_int_asrt_sigmapisqrt}
w_n^{\frac r2}  \sup_{\beta \in   \fI_0} \int_{a\sqrt{w_n} < |s| < \sigma \pi \sqrt{w_n}} |\psi_n(s)| \dd s \toas 0.
\end{equation}
In this case, $z_n=\frac{is}{\sigma \sqrt{w_n}}$ need not satisfy $|z_n|\leq \eps_0$ so that $W_n$ need not converge (nor even be well-defined) and the estimate from Step~2 does not work. Instead, we shall use Assumption~A4. Using the definition of $\psi_n$, see~\eqref{eq:psi_n_def},
\begin{align*}
\int_{a\sqrt{w_n} < |s| < \sigma \pi \sqrt{w_n}} |\psi_n(s)| \dd s
&=
\eee^{-\varphi(\beta) w_n} \int_{a\sqrt{w_n} < |s| < \sigma \pi \sqrt{w_n}} \left|\sum_{k\in\Z} \bL_n(k) \eee^{k\left(\beta +  \frac{is}{\sigma \sqrt{w_n}}\right)}\right| \dd s\\
&=
\eee^{-\varphi(\beta) w_n}\sigma \sqrt{w_n}\int_{a/\sigma < |u| < \pi} \left|\sum_{k\in\Z} \bL_n(k) \eee^{k(\beta + iu)}\right| \dd u,
\end{align*}
so that~\eqref{eq:cond_int_asrt_sigmapisqrt} is implied by Assumption~A4 since $\sigma$ is bounded on $\fI_0$.

The same estimates as~\eqref{eq:cond_int_tn_19} and~\eqref{eq:cond_int_asrt_sigmapisqrt}, but with $V_{r,n}(s)$ instead of $\psi_n(s)$, hold since $V_{r,n}$ is a product of $\eee^{-s^2/2}$ and a polynomial in $s$. Combining pieces together we obtain the claim of the lemma.
\end{proof}

To complete the proof of Theorem~\ref{theo:edgeworth_general} it remains to show that
$$
\int_{\R}V_{r,n}(s)\eee^{-isz}\dd s = \sqrt{2\pi}W_{\infty}(\beta)\eee^{-\frac{1}{2}z^2}\sum_{k=0}^r\frac{G_k(z)}{w_n^{k/2}},\quad z\in\R,
$$
which, in turn, amounts to
\begin{equation}\label{eq:v_r_n_explicit}
\frac{1}{k!}\int_{\R}B_k(a_1(s),\ldots,a_k(s))\eee^{isx}\eee^{-\frac{1}{2}s^2}\dd s =\sqrt{2\pi}G_k(-x)\eee^{-\frac{1}{2}x^2},\quad x \in\R,
\end{equation}
for every $k\in\N_0$. To check~\eqref{eq:v_r_n_explicit} note that
$$
B_k(a_1(s),\ldots,a_k(s))\eee^{isx}=B_k(D_1,\ldots,D_k)(\eee^{isx}),\quad s\in\R,
$$
where the differential operators $D_1,\ldots,D_k$ are given by~\eqref{eq:D_def}. This yields
\begin{align*}
\int_{\R}B_k(a_1(s),\ldots,a_k(s))\eee^{isx}\eee^{-\frac{1}{2}s^2}\dd s
&=B_k(D_1,\ldots,D_k)\left(\int_{\R}\eee^{isx}\eee^{-\frac{1}{2}s^2}\dd s\right)\\
&=\sqrt{2\pi}B_k(D_1,\ldots,D_k)\eee^{-\frac{1}{2}x^2}\\
&=\sqrt{2\pi}(-1)^k k! \eee^{-\frac{1}{2}x^2}G_k(x).
\end{align*}
Formula~\eqref{eq:v_r_n_explicit} now follows from the observation $(-1)^kG_k(x)=G_k(-x)$, $k\in\N_0$,
see Remark~\ref{rem:G_odd_even}.
%see Remark 3.6 in \citet{gruebel_kabluchko_BRW}.
The proof of Theorem~\ref{theo:edgeworth_general} is complete.

\subsection{Alternative expression for \texorpdfstring{$G_j(x;\beta)$}{Gj(x;beta)}}
\label{subsec:alternative_expression}
In this section we show that upon multiplying by $W_\infty(\beta)$ the functions $x\mapsto G_j(x;\beta)$, $j\in\N_0$, become polynomials in $x$ whose coefficients are linear combinations of $1, W_\infty(\beta), \ldots, W_{\infty}^{(j)}(\beta)$.
Write $D=\frac 1{\sigma(\beta)}\frac{{\rm d}}{{\rm d} x}$ and recall from~\eqref{eq:D_def} that
$$
D_k = \frac{\varphi^{(k+2)}(\beta)}{(k+1)(k+2)} D^{k+2} + \chi_k(\beta) D^k.
$$
From now on we consider $D$ as a formal variable rather than a differential operator. Formula~\eqref{eq:G_def} shows that $W_\infty(\beta) G_j(x;\beta)$ can be obtained from the expression $(-1)^j/j!W_\infty(\beta) B_j(D_1,\ldots,D_j)$  by replacing each $D^k$-term by the Hermite polynomial $(-1)^k\Herm_k(x)$.
Therefore, it suffices to show that this expression is a polynomial in $D$ whose coefficients are linear combinations of   $1,W_\infty(\beta), \ldots, W_{\infty}^{(j)}(\beta)$.  By the definition of the Bell polynomials, see~\eqref{eq:bell_poly_def1}, we have
$$
\frac 1 {j!}B_j(D_1,\ldots,D_j) = [y^j] \exp\left\{\sum_{k=1}^{\infty} \frac{y^k}{k!} D_k\right\},
$$
where $[y^j]f(y)$ denotes the coefficient of $y^j$ in the formal power series $f(y)$.
It follows that, in the sense of formal power series,
$$
\frac {1} {j!}B_j(D_1,\ldots,D_j)= [y^j] \left(\exp\left\{\sum_{k=1}^{\infty} \frac{y^k \varphi^{(k+2)}(\beta)}{(k+2)!}  D^{k+2}\right\}
\exp\left\{ \sum_{k=1}^\infty \frac {(yD)^k} {k!} \chi_k(\beta)\right\}\right).
$$
Multiplying both sides by $W_{\infty}(\beta)=\eee^{\chi_0(\beta)}$ and observing that, by Taylor's expansion,
$$
W_{\infty}(\beta+yD)=\exp\left\{\sum_{k=0}^\infty \frac {(yD)^k} {k!} \chi_k(\beta)\right\},
$$
we obtain
\begin{align*}
&\frac {W_{\infty}(\beta)} {j!}B_j(D_1,\ldots,D_j) \\
&= [y^j]\left(\exp\left\{\sum_{k=1}^{\infty} \frac{y^k \varphi^{(k+2)}(\beta)}{(k+2)!} D^{k+2}\right\} W_\infty(\beta + yD)\right)\\
&= [y^j] \left(\exp\left\{\sum_{k=1}^{\infty} \frac{y^k\varphi^{(k+2)}(\beta)}{(k+2)!}  D^{k+2}\right\}\left(\sum_{k=0}^\infty \frac {(yD)^k} {k!} W_\infty^{(k)}(\beta) \right)\right).
\end{align*}
Clearly, the right-hand side is a polynomial in $D$ whose coefficients are linear combinations of  $1, W_\infty(\beta), \ldots, W_{\infty}^{(j)}(\beta)$.

\subsection{Proofs of Theorems~\ref{theo:mode}, \ref{theo:width}, \ref{theo:width_limit_law} and Proposition~\ref{cor:one_split_mode}}\label{subsec:proof_mode_width}
Our proof runs along the same lines as the proof of Theorem~2.17 in~\citet{gruebel_kabluchko_BRW}. In order  to keep this paper self-contained we present all details.

The aim is to search for $k\in\Z$ maximizing the profile $\bL_n(k)$. Write $k=k_n(a)=\varphi'(0)w_n + a$, where $a\in \Z - \varphi'(0) w_n$. We use Theorem~\ref{theo:edgeworth_general} with $\beta=0$ and $r=2$. Inserting this $k$ into~\eqref{eq:edgeworth_general_exp} we obtain
\begin{eqnarray}
&&\hspace{-1.4cm}\sigma(0)\sqrt{2\pi w_n} \eee^{-\varphi(0)w_n} \bL_n(k)=W_{\infty}(0)\eee^{-\frac{a^2}{2\sigma^2(0)w_n}}\times\notag\\
&&\left(1+\frac{1}{\sqrt{w_n}}G_1\left(\frac{a}{\sigma(0)\sqrt{w_n}}\right)+\frac{1}{w_n}G_2\left(\frac{a}{\sigma(0)\sqrt{w_n}}\right)+o\left(\frac{1}{w_n}\right)\right)\label{eq:tech_expansion_r2},
\end{eqnarray}
where the $o$-term is uniform in $a$. This and similar expansions below hold a.s.

\vspace*{2mm}
\noindent
\textsc{Step 1.} Let us assume that $|a| < w_n^{1/4-\eps}$ for some small $\eps>0$, say $\eps=1/100$.   Since $x_n(k)=a/(\sigma(0)\sqrt{w_n})$ and $|a| < w_n^{1/4-\eps}$ we have
$$
x_n(k)=o(1)\quad\text{and}\quad x^3_n(k)=o\left(\frac{1}{\sqrt{w_n}}\right).
$$
Hence, by~\eqref{eq:G1},
$$
G_1\left(\frac{a}{\sigma(0)\sqrt{w_n}}\right)=\frac{a}{\sqrt{w_n}}\left(\frac{\chi_1(0)}{\sigma^2(0)}-\frac{\kappa_3(0)}{2\sigma^4(0)}\right)+o\left(\frac{1}{\sqrt{w_n}}\right),
$$
and by~\eqref{eq:G2},
\begin{align*}
G_2\left(\frac{a}{\sigma(0)\sqrt{w_n}}\right)
&=
-\frac{\chi_1^2(0)+\chi_2(0)}{2\sigma^2(0)}
+3\left(\frac{\kappa_4(0)}{24\sigma^4(0)}
+\frac{\kappa_3(0)\chi_1(0)}{6\sigma^4(0)}\right)
-\frac{15\kappa_3^2(0)}{72\sigma^6(0)} +o(1)\\
&=:C+o(1).
\end{align*}
By a standard Taylor expansion, we get
\begin{equation*}
\eee^{-\frac{a^2}{2\sigma^2(0)w_n}}=1 - \frac {a^2}{2\sigma^2(0) w_n} + o\left(\frac{1}{w_n}\right).
\end{equation*}
Inserting these expansions into~\eqref{eq:tech_expansion_r2}, we arrive at
\begin{eqnarray}
&&\hspace{-1cm}\sigma(0)\sqrt{2\pi w_n}\eee^{-\varphi(0)w_n}\bL_n(k)\notag\\
&&=W_{\infty}(0)\left(1-\left(\frac{a^2}{2\sigma^2(0)}-\left(\frac{\chi_1(0)}{\sigma^2(0)}-\frac{\kappa_3(0)}{2\sigma^4(0)}\right)a-C\right)\frac{1}{w_n}\right)+o\left(\frac{1}{w_n}\right)\label{eq:tech_final_exp}.
\end{eqnarray}
Differentiation with respect to $a$ shows that the maximum is attained at
\begin{equation}\label{eq:a_*}
a_* = \chi_1(0)-\frac{\kappa_3(0)}{2\sigma^2(0)}.
\end{equation}
Using~\eqref{eq:tech_final_exp} and~\eqref{eq:a_*}, we obtain
\begin{equation}\label{eq:L_n_k_plus_1_L_n_k}
\sigma(0)\sqrt{2\pi w_n} \eee^{-\varphi(0)w_n}(\bL_n(k+1) - \bL_n(k)) = \frac {W_\infty(0)} {\sigma^2(0) w_n} \left(a_* - \frac {1}{2} -a \right) + o\left(\frac{1}{w_n}\right).
\end{equation}
Put $u_n^* = \varphi'(0)w_n + a_*$. From~\eqref{eq:L_n_k_plus_1_L_n_k} it is clear that,  for all sufficiently large $n$, the value $u_n$ maximizing $\bL_n(\cdot)$ in the region $|a|<w_n^{1/4-\eps}$ is either $\lfloor u_n^*\rfloor$ or $\lceil u_n^*\rceil$. %However, in general, $u_n$ is {\it not} necessarily equal to $u_n^*$ rounded to the nearest integer.

\vspace*{2mm}
\noindent
\textsc{Step 2.} Note that $a_*=O(1)$ a.s. If $\lfloor u_n^*\rfloor$ or $\lceil u_n^*\rceil$ is indeed the mode $u_n$ of $\bL_n(\cdot)$ (over the whole range $k\in\Z$), then, from~\eqref{eq:tech_final_exp}, we deduce
$$
\sigma(0)\sqrt{2\pi w_n}\eee^{-\varphi(0)w_n}\bL_n(u_n)=W_{\infty}(0)+O\left(\frac{1}{w_n}\right),
$$
and Theorem~\ref{theo:width} follows. To complete the proof of Theorem~\ref{theo:mode} it remains to show that (for $n$ large enough) the mode cannot lie in the region
$|a|\geq w_n^{1/4-\eps}$. To this end we shall show that for every $B>0$ there exists an a.s.\ finite $K\in\N$ such that
\begin{equation}\label{eq:mode_not_large}
W_\infty(0) - \sigma(0)\sqrt{2\pi w_n}\eee^{-\varphi(0)w_n} \bL_n(k) > \frac{B}{w_n}
\end{equation}
for all $n>K$ and $|a|\geq w_n^{1/4-\eps}$.

\vspace*{2mm}
\noindent
\textsc{Step 3.} Let $|a|>\sigma(0) \sqrt{w_n}$. By Theorem~\ref{ref:LLT_general},
$$
W_\infty(0)-\sigma(0)\sqrt{2\pi w_n}\eee^{-\varphi(0)w_n} \bL_n(k) = W_\infty(0) \left(1-\eee^{-\frac{a^2}{\sigma^2(0)w_n}}\right) + o(1).
$$
Since $W_{\infty}(0)>0$ a.s.,  the expression on the right-hand side is larger than $W_{\infty}(0)/10$ for all $n$ sufficiently large.  Hence, there exists an a.s.\ finite $K_1 \in \N$ such that~\eqref{eq:mode_not_large} holds for $|a|>\sigma(0) \sqrt{w_n}$ and $n>K_1$.

\vspace*{2mm}
\noindent
\textsc{Step 4.} Let  $\sigma(0) w_n^{3/8 + \eps} < |a| \leq \sigma(0) \sqrt {w_n}$ for some small $\eps>0$. By Theorem~\ref{theo:edgeworth_general} with $r=1$ and $\beta=0$, we have
\begin{eqnarray*}
&&\hspace{-1cm}\sigma(0)\sqrt{2\pi w_n} \eee^{-\varphi(0)w_n} \bL_n(k)\\
&&=W_{\infty}(0)\eee^{-\frac{a^2}{2\sigma^2(0)w_n}}\left(1+\frac{1}{\sqrt{w_n}}G_1\left(\frac{a}{\sigma(0)\sqrt{w_n}}\right)\right)+o\left(\frac{1}{\sqrt{w_n}}\right) \quad \text{a.s.}.
\end{eqnarray*}
Since $|a|\leq \sigma(0) \sqrt {w_n}$ we have $G_1\left(\frac{a}{\sigma(0)\sqrt{w_n}}\right)=O(1)$ a.s. Therefore,
\begin{align*}
W_{\infty}(0)-\sigma(0)\sqrt{2\pi w_n}\eee^{-\varphi(0)w_n}\bL_n(k) & =W_{\infty}(0)\left(1-\eee^{-\frac{a^2}{2\sigma^2(0)w_n}}\right)+O\left(\frac{1}{\sqrt{w_n}}\right) \\
& \geq W_{\infty}(0)\left(1-\eee^{-\frac 1 2 w_n^{-1/4 + 2 \eps}}\right)+O\left(\frac{1}{\sqrt{w_n}}\right). \end{align*}
From the elementary inequality \begin{align} \label{inq:ee} 1- \eee^{-y/2} > y/3, \quad y \in [0,1], \end{align} it now easily follows that there exist
%Using the inequalities $|a|>\sigma(0) w_n^{3/8 + \eps}$ and $W_{\infty}(0)>0$ a.s., we infer that there exists
an a.s.\ finite $K_2\in\N$ such that  ~\eqref{eq:mode_not_large} is satisfied for $\sigma(0) w_n^{3/8 + \eps} < |a| \leq \sigma(0) \sqrt {w_n}$ and $n>K_2$.

\vspace*{2mm}
\noindent
\textsc{Step 5.} Finally, let  $\sigma(0) w_n^{1/4-2\eps} < |a| \leq \sigma(0) w_n^{3/8 + \eps}$. Using~\eqref{eq:tech_expansion_r2} and noting that
$$
\frac{1}{\sqrt{w_n}} G_1\left(\frac{a}{\sigma(0)\sqrt{w_n}}\right) =O(w_n^{-\frac{5}{8} + \eps}),
\quad
\frac{1}{w_n} G_2\left(\frac{a}{\sigma(0)\sqrt{w_n}}\right) = O\left(\frac{1}{w_n}\right)\quad \text{a.s.},
$$
we obtain
\begin{align*}
W_\infty(0)-\sigma(0)\sqrt{2\pi w_n}\eee^{-\varphi(0)w_n}\bL_n(k) & = W_\infty(0) \left(1-\eee^{-\frac{a^2}{2\sigma^2(0)w_n}}\right) +  O\left(w_n^{-\frac{5}{8} + \eps}\right) \\
& \geq \frac 13 W_\infty(0)   w_n^{-1/2-4\eps} +  O\left(w_n^{-\frac{5}{8} + \eps}\right),
\end{align*}
where the second inequality follows from~\eqref{inq:ee} and holds for all $n$ sufficiently large.
Hence, upon choosing $\eps$ sufficiently small, there exists an a.s.\ finite $K_3 \in \N$ such that~\eqref{eq:mode_not_large} holds for all $n>K_3$ and $\sigma(0) w_n^{1/4-2\eps} < |a| \leq \sigma(0) w_n^{3/8 + \eps}$. The proof of Theorem~\ref{theo:mode} is complete.

\vspace*{2mm}
\noindent
\textsc{Step 6.} It remains to prove Theorem~\ref{theo:width_limit_law}. From what we have already proved, it follows that, for all $n$ sufficiently large, the mode $u_n$ can be written as
$$
u_n=u_n^*+\gamma_n=\varphi'(0)w_n+a_*+\gamma_n,\quad |\gamma_n|<1,
$$
where $\gamma_n$ equals either $\lfloor u_n^*\rfloor-u_n^*$ or $\lceil u_n^*\rceil-u_n^*$. Inserting $u_n$ into~\eqref{eq:tech_final_exp}, we obtain
\begin{eqnarray*}
&&\hspace{-2cm}\sigma(0)\sqrt{2\pi w_n}\eee^{-\varphi(0)w_n}\bL_n(u_n)\\
&=&W_{\infty}(0)\left(1-\left(\frac{(a_*+\gamma_n)^2}{2\sigma^2(0)}-\frac{a_*(a_*+\gamma_n)}{\sigma^2(0)}-C\right)\frac{1}{w_n}\right)+o\left(\frac{1}{w_n}\right)\\
&=&W_{\infty}(0)\left(1+\frac{a_*^2-\gamma_n^2}{2\sigma^2(0)w_n}+\frac{C}{w_n}\right)+o\left(\frac{1}{w_n}\right)\quad\text{a.s.}
\end{eqnarray*}
Since $u_n$ is the mode and hence maximizes the left-hand side, $\gamma_n$ in the above relation can be replaced by the $\theta_n:=\min\{u_n^*-\lfloor u_n^*\rfloor,\lceil u_n^*\rceil-u_n^*\}$. Upon rearranging the terms and recalling the notation $\tilde{M}_n$ from Theorem~\ref{theo:width_limit_law}, this becomes
$$
\tilde{M}_n-\theta_n^2=-a_*^2-2\sigma^2(0)C+o(1)=\chi_2(0)+\frac{\kappa_3^2(0)}{6\sigma^4(0)}-\frac{\kappa_4(0)}{4\sigma^2(0)}+o(1)\quad\text{a.s.},
$$
where the last equality follows from~\eqref{eq:a_*} and the definition of $C$. This concludes the proof of Theorem~\ref{theo:width_limit_law}.

\begin{proof}[Proof of Proposition~\ref{cor:one_split_mode}]
Both assertions follow from properties of the logarithm. The first claim (i) follows immediately from the fact that, for every fixed $L>0$, we have  $\log (n+L)-\log n\to 0$ as $n\to\infty$. To show (ii), it is sufficient to verify that,  almost surely,
$$\limsup_{\eps \to 0} \limsup_{n \to \infty} \frac{\#\{1 \leq k  \leq n: \text{dist}(u_k^*, \Z + 1/2) < \eps \} }{n} = 0.$$
Since $\varphi'(0) \neq 0$, using the explicit expression~\eqref{eq:u_n_star_one_split}, the claim follows if, for any $\alpha>0$ and $\beta \in \R$,
$$\limsup_{\eps \to 0} \limsup_{n \to \infty} \frac{\#\{1 \leq k  \leq n: \text{dist}(\log k, \alpha \Z + \beta) < \eps \}  }{n} = 0.$$
Let us show how to estimate the numerator of the last fraction. We have, assuming that $\varepsilon<\alpha/2$,
\begin{align*}
\#\{1 \leq k  \leq n: \text{dist}(\log k, & \alpha \Z + \beta) < \eps \}
%&= & \sum_{k=1}^{n}\ind_{\{\text{dist}(\log k, \alpha \Z + \beta) < \eps\}}\\
%=\sum_{k=1}^{n}\sum_{j\in\Z}\ind_{\{\text{dist}(\log k, \alpha j + \beta) < \eps\}}\\
=\sum_{k=1}^{n} \# \{ j \in \Z : \text{dist}(\log k, \alpha j + \beta) < \eps\}\\
%\#\{1 \leq k  \leq n: \text{dist}(\log k, \alpha \Z + \beta) < \eps \} |
%= \sum_{j\in\Z}\sum_{k=1}^{n}\ind_{\{\eee^{\alpha j+\beta-\eps} < k < \eee^{\alpha j+\beta+\eps} \}}
& = \sum_{j\in\Z}\#\{1 \leq k  \leq n: \eee^{\alpha j+\beta-\eps} < k < \eee^{\alpha j+\beta+\eps} \} \\
%\leq\sum_{j\in\Z}\sum_{k\in\Z}\ind_{\{\eee^{\alpha j+\beta-\eps}\vee 1 \leq k \leq \eee^{\alpha j+\beta+\eps}\wedge n \}}.
& \leq \sum_{j\in\Z} \# \{k \in \N : \eee^{\alpha j+\beta-\eps}\vee 1 \leq k \leq \eee^{\alpha j+\beta+\eps}\wedge n \}.
\end{align*}
The inner sum on the right-hand side is the number of integers in the interval $[\eee^{\alpha j+\beta-\eps}\vee 1,\,\eee^{\alpha j+\beta+\eps}\wedge n]$ (which is empty if either $\eee^{\alpha j+\beta-\eps}>n$ or $\eee^{\alpha j+\beta+\eps}<1$) and hence is bounded from above by
$\left(\eee^{\alpha j+\beta+\eps}\wedge n - \eee^{\alpha j+\beta-\eps}\vee 1+1\right)_{+}$. Therefore,
$$
\#\{1 \leq k  \leq n: \text{dist}(\log k, \alpha \Z + \beta) < \eps \} |\leq \sum_{j\in\Z}\left(\eee^{\alpha j+\beta+\eps}\wedge n - \eee^{\alpha j+\beta-\eps}\vee 1+1\right)_{+}.
$$
The relation
$$
\limsup_{\eps \to 0} \limsup_{n \to \infty}\frac{\sum_{j\in\Z}\left(\eee^{\alpha j+\beta+\eps}\wedge n - \eee^{\alpha j+\beta-\eps}\vee 1+1\right)_{+}}{n}=0
$$
can be checked by direct calculations. We omit further details; see~\cite[Proof of Theorem 1.4 (iii)] {kabluchko_marynych_sulzbach_on_stirling}.
\end{proof}

\section{Proofs for random trees}\label{sec:proof_random_trees}
\subsection{Embedding the one-split BRW into a continuous-time BRW}\label{subsec:embedding}
%\end{lemma}
%\begin{proof}
%By definition, $S_n$ is an increasing random walk with mean $m(0)$. The law of large numbers yields~\eqref{eq:LLN_S_n}.
%\end{proof}
%It is convenient to embed the one-split BRW into a continuous-time BRW.
Con\-tin\-uous-time embeddings of discrete-time Markov chains in the study of random discrete structures go back at least to Athreya and Karlin~\cite{atka} in the context of P\'olya urn models. In the framework of random trees, Pittel~\cite{pittel2} was the first to use a continuous-time embedding in the analysis of the height of BSTs. In the study of the profile of BSTs, the idea was introduced in a series of works by Chauvin and collaborators; see~\cite{chauvin_etal_yule,chauvin_etal_tiltings,chauvin_etal}. More recent works crucially relying on this technique are, among others,  \cite{gruebel_kabluchko,schopp,sulzbach} and~\cite{sulzbach_mart_tail_sums}. Start with a one-split BRW as described in Section~\ref{subsec_one_split_BRW_def}. Consider a \emph{continuous-time} BRW which starts with a single particle at the origin at time $\tau_0 :=  0$ and in which any particle splits, with intensity $1$, into a cluster of particles described by the same point process $\zeta$ as in the one-split BRW. The particles do not move between the splits. Denote the split times by $\tau_1<\tau_2<\ldots$ and write $N_t$ for the number of particles in the process at time $t \geq 0$. Note that $(N_t)_{t\geq 0}$ is a Galton--Watson process in continuous time. Further, we let $z_{1, t}, \ldots,  z_{N_t, t}$ be the positions of the particles and
$$
\mathcal L_{t}(k) =  \#\{1\leq j\leq N_t \colon z_{j,t} = k\}, \quad k\in\Z,
$$
be the corresponding profile at time $t\geq 0$.  We have the following correspondence:
$$
S_n = N_{\tau_n}, \quad\;\; x_{i, n} = z_{i, \tau_n}, \;1\leq i\leq S_n, \quad\;\; \bL_{n}(k) = \mathcal L_{\tau_n}(k),\; k\in\Z.
$$
For $\beta \in \C, \Re\beta \in \fI$ (see Assumption B3) consider the Biggins martingale:
$$
\mathcal W_t(\beta) = \eee^{-m(\beta)t} \sum_{k = 1}^{N_t} \eee^{\beta z_{k,t}},
\quad t\geq 0.
$$
Set $H_n(\beta):=\eee^{m(\beta)\tau_n-\varphi(\beta)\log n}$, and note that $W_n(\beta)$ (see formula~\eqref{eq:jabbour_one_split}) and $\mathcal{W}_{\tau_n}(\beta)$ are connected via the relation
\begin{align} \label{connection}
W_{n}(\beta) = \mathcal W_{\tau_n} (\beta) H_n(\beta).
\end{align}

%Recall that our aim is to show that $(W_{n}(\beta))_{n\in\N}$ satisfies Assumptions A1-A4 and hence the corresponding profiles posses an Edgeworth expansion.
Our aim is to show that $W_n$ converges, with speed $(\log n)^{-r}$, to a random analytic function $W_\infty$, thus verifying Assumptions A2 and A3 of Theorem~\ref{theo:edgeworth_general}.
Let us analyze the factors on the right-hand side of~\eqref{connection}. Let $m^*(\beta) = \eee^{m(\beta)}$. For $\gamma \in (1,2]$, define the open sets
\begin{align*}
\Omega_\gamma^1 &= \text{int} \left \{\beta \in \C : \Re \beta \in \fI, \E \left[ \left(\sum_{k \in \Z} \eee^{(\Re \beta) k} \mathcal L_1(k) \right)^\gamma \right] < \infty \right \}, \\
\Omega_\gamma^2 &=  \left \{\beta \in \C: \gamma \Re \beta \in \fI, \frac{m^*(\gamma \Re \beta )}{|m^*(\beta)|^\gamma} < 1 \right \},
\end{align*}
and let
$$
\fD = \bigcup_{\gamma \in (1,2]} (\Omega_\gamma^1  \cap \Omega_\gamma^2) \subset \C.
$$
Note that the set $\fD$ is open. %It is well-known that the process $(\mathcal{W}_t(\beta))_{t\geq 0}$ is a martingale, for every fixed $\Re \beta \in \fI$.
Biggins~\cite{biggins_uniform} proved that, with probability $1$, $\mathcal W_t$ converges locally uniformly on $\fD$, as $t\to\infty$.
%Further, we showed that $\fD \cap \R = (\beta_-, \beta_+)$.
The next proposition is a slight extension of this classical result adapted to our needs.

\begin{proposition} \label{prop:bigg} Under Assumptions B1--B3 and B5, there exists a random analytic function $\mathcal W_\infty$ on $\fD$ such that, for all compact sets
$K \subset \fD$, there exists $ 0 < r = r(K) < 1$
with
\begin{align} \label{conv_unif_cont}
r^{-t} \sup_{\beta \in K} \left| \mathcal W_t(\beta) - \mathcal W_\infty(\beta) \right| \toast 0.
\end{align}
It holds that $(\beta_-, \beta_+) \subset \fD$.
%and, with probability $1$, the function $\mathcal W_\infty$ has no zeros on the interval $(\beta_{-}, \beta_{+})$.
%for all $\beta\in (\beta_{-}, \beta_{+})$, it holds  $\mathcal W_\infty(\beta) \neq 0.$
Finally, for $\gamma \in (1,2]$ and $\beta \in \Omega_\gamma^1  \cap \Omega_\gamma^2$, we have
\begin{align} \label{conv:lp}
\lim_{t\to\infty}\E{\left| \mathcal W_t(\beta) - \mathcal W_\infty(\beta) \right|^\gamma} = 0.
\end{align}
\end{proposition}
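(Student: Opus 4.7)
The plan is to combine Biggins' uniform convergence theorem with a quantitative martingale moment estimate. The base assertion, namely locally uniform a.s.\ convergence $\mathcal W_t \to \mathcal W_\infty$ on $\fD$ (without rate) together with \eqref{conv:lp}, is essentially Biggins' theorem from~\cite{biggins_uniform}: the definition of $\fD$ as a union of $\Omega_\gamma^1 \cap \Omega_\gamma^2$ is tailor-made so that the associated Biggins martingale is $L^\gamma$-bounded and hence uniformly integrable at each $\beta \in \fD$, which, via Cauchy's integral formula for analytic martingales, yields locally uniform a.s.\ convergence on $\fD$.

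To verify $(\beta_-,\beta_+) \subset \fD$, fix $\beta$ in this interval. By definition $\varphi'(\beta)\beta < \varphi(\beta)$, i.e., $m'(\beta)\beta < m(\beta)$. The function $f(\gamma) := m(\gamma\beta) - \gamma m(\beta)$ satisfies $f(1)=0$ and $f'(1) = m'(\beta)\beta - m(\beta) < 0$, so $f(\gamma)<0$, i.e.\ $m^*(\gamma\beta) < (m^*(\beta))^\gamma$, for all real $\gamma>1$ sufficiently close to $1$. This places $\beta$ in $\Omega_\gamma^2$. Assumption B5 provides (possibly after shrinking $\gamma$) the finiteness of the $\gamma$-th moment of $\mathcal L_1$'s Laplace transform at $\beta$, placing $\beta$ in $\Omega_\gamma^1$. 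Hence $\beta \in \fD$.

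The main novelty is the geometric rate \eqref{conv_unif_cont}. Fix a compact $K \subset \fD$; by openness, enlarge it to a compact $K'$ still contained in $\Omega_\gamma^1 \cap \Omega_\gamma^2$ for some common $\gamma \in (1,2]$. For integer $n$, the branching property at time $n$ yields
$$ \mathcal W_{n+1}(\beta) - \mathcal W_n(\beta) = \eee^{-m(\beta)n} \sum_{k=1}^{N_n} \eee^{\beta z_{k,n}}\bigl(\mathcal W^{(k)}_1(\beta) - 1\bigr), $$
where, conditionally on $\mathcal F_n$, the $\mathcal W^{(k)}_1(\beta)$ are i.i.d.\ copies of $\mathcal W_1(\beta)$. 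The Topchii--Vatutin (von Bahr--Esseen) inequality for $\gamma \in (1,2]$ combined with the intensity identity $\E \sum_k \eee^{\gamma(\Re\beta) z_{k,n}} = \eee^{m(\gamma \Re\beta) n}$ gives
$$ \E\bigl|\mathcal W_{n+1}(\beta) - \mathcal W_n(\beta)\bigr|^\gamma \leq C \left(\frac{m^*(\gamma\Re\beta)}{|m^*(\beta)|^\gamma}\right)^n \leq C r_0^n $$
uniformly in $\beta \in K'$ for some $r_0 \in (0,1)$. Telescoping, $\E|\mathcal W_n(\beta)-\mathcal W_\infty(\beta)|^\gamma = O(r_0^n)$ uniformly on $K'$.

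To transfer this pointwise $L^\gamma$-bound to a uniform a.s.\ exponential decay, I would integrate Cauchy's formula over a contour $\Gamma \subset \mathrm{int}(K') \setminus K$ enclosing $K$: for any holomorphic $g$ on $K'$, $\sup_K |g| \leq C_{K,\Gamma} \int_\Gamma |g|$. Applied to $g = \mathcal W_n - \mathcal W_\infty$, Fubini and Jensen give $\E \sup_{\beta\in K}|\mathcal W_n(\beta) - \mathcal W_\infty(\beta)|^\gamma = O(r_0^n)$; a Chebyshev--Borel--Cantelli argument then yields $r^{-n}\sup_{\beta \in K}|\mathcal W_n - \mathcal W_\infty| \to 0$ a.s.\ for any $r \in (r_0^{1/(2\gamma)},1)$. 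Finally, to pass from integer times to continuous $t$, I would bound $\sup_{t\in[n,n+1]} \sup_{\beta\in K}|\mathcal W_t(\beta) - \mathcal W_n(\beta)|$ by the same branching decomposition applied at time $n$, again using $L^\gamma$ estimates plus Cauchy's formula. The main technical obstacle I anticipate is precisely this last step: controlling the within-interval fluctuation of the continuous-time martingale with the correct geometric rate and simultaneously uniformly over $K$, since the number of births in $[n,n+1]$ is itself random and large on survival.
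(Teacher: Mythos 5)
Your overall architecture coincides with the paper's: reduce to the discrete-time Biggins martingale $(\mathcal W_n(\beta))_{n\in\N_0}$ of the many-split BRW with reproduction law $\zeta^*=\sum_k\mathcal L_1(k)\delta_k$ and m.g.f.\ $m^*=\eee^{m}$, get the geometric $L^\gamma$ rate for increments from the branching decomposition (this is exactly Lemma~2(i) of Biggins, which the paper cites rather than re-derives via von Bahr--Esseen), move the supremum over $\beta$ inside via Cauchy's integral formula over a surrounding circle plus H\"older/Jensen, and finish with Chebyshev and Borel--Cantelli. Your verification that $(\beta_-,\beta_+)\subset\fD$ via $f(\gamma)=m(\gamma\beta)-\gamma m(\beta)$, $f(1)=0$, $f'(1)<0$ is correct and is actually more explicit than what the paper writes down; the only point glossed (by you and by the paper alike) is that Assumption~B5 concerns the cluster $\zeta$, whereas $\Omega^1_\gamma$ requires a $\gamma$-moment of the Laplace transform of the \emph{time-one profile} $\mathcal L_1$ of the continuous-time process, so a short argument passing from the cluster to the compound (Poisson-number-of-splits) first generation is implicitly needed.

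The one genuine gap is the step you yourself flag: the within-interval fluctuation $\sup_{t\in[n,n+1]}\sup_{\beta\in K}|\mathcal W_t(\beta)-\mathcal W_n(\beta)|$. Your worry about the random number of births in $[n,n+1]$ dissolves once you observe that, for each fixed $\beta$ with $\Re\beta\in\fI$, the process $(\mathcal W_t(\beta)-\mathcal W_n(\beta))_{t\geq n}$ is itself a (complex) martingale in $t$. The correct order of operations is: first apply Cauchy's formula and H\"older to bound $\sup_{\beta\in\overline{\bD}_\varepsilon(z_0)}|\mathcal W_t(\beta)-\mathcal W_n(\beta)|^\gamma$ by a constant times $\int_0^{2\pi}|\mathcal W_t(z_0+2\eps\eee^{i\phi})-\mathcal W_n(z_0+2\eps\eee^{i\phi})|^\gamma\,\dd\phi$, which is monotone-compatible with taking $\sup_{t\in[n,n+1]}$ inside the integral; then, pointwise in $\phi$, apply Doob's $L^\gamma$ maximal inequality to the scalar martingale to reduce to $\E|\mathcal W_{n+1}(\beta)-\mathcal W_n(\beta)|^\gamma\leq M\bigl(m^*(\gamma\Re\beta)/|m^*(\beta)|^\gamma\bigr)^n$, and conclude with Borel--Cantelli exactly as in your integer-time argument. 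With Doob's inequality named, your proof closes and matches the paper's.
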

\begin{proof}
Fix $\varepsilon>0$ small enough. By compactness we can assume that $K=\overline{\mathbb{D}}_{\varepsilon}(z_0)\subset \Omega_\gamma^1  \cap \Omega_\gamma^2$ for some $\gamma \in (1,2]$ and some $z_0\in\fD$, and, moreover, $\overline{\mathbb{D}}_{2\varepsilon}(z_0)\subset \Omega_\gamma^1  \cap \Omega_\gamma^2$.

Note that, for $\Re \beta \in \fI$, the process  $(\mathcal W_n(\beta))_{n \in \N_0}$ is the discrete-time Biggins martingale corresponding to a standard (many-split) discrete-time branching random walk whose point process $\zeta^*=\sum_{k\in\Z} \mathcal L_1(k) \delta_k$ has moment generating function $m^*(\beta)=\eee^{m(\beta)}$. Hence, following the proof of Theorem 2 in~\cite{biggins_uniform}, there exists a random analytic function $\mathcal W_\infty$ on $\fD$ and $r_1=r_1(z_0,\varepsilon)\in(0,1)$ such that
$$r_1^{-n} \sup_{\beta \in \overline{\mathbb{D}}_{\varepsilon}(z_0)} \left| \mathcal W_n(\beta) - \mathcal W_\infty(\beta) \right| \toas 0.$$
To prove~\eqref{conv_unif_cont} it remains to show that there exist $r_2\in(0,1)$ such that
\begin{equation}\label{eq:continuous_time_biggins_convergence}
r_2^{-n}  \sup_{t\in [n,n+1]}\sup_{\beta \in \overline{\mathbb{D}}_{\varepsilon}(z_0)}\left| \mathcal W_t(\beta) - \mathcal W_n(\beta) \right| \toas 0.
\end{equation}
By Cauchy's integral formula, c.f.~\cite[Lemma 3]{biggins_uniform}
$$
\sup_{\beta \in \overline{\mathbb{D}}_{\varepsilon}(z_0)}\left| \mathcal W_t(\beta) - \mathcal W_n(\beta) \right|\leq \frac{1}{\pi}\int_0^{2\pi}\left|\mathcal W_t(z_0+2\eps \eee^{i\phi}) - \mathcal W_n(z_0+2\eps \eee^{i\phi})\right|{\rm d}\phi
$$
and, using H\"{o}lder's inequality for integrals,
\begin{multline*}
\sup_{\beta \in \overline{\mathbb{D}}_{\varepsilon}(z_0)}\left| \mathcal W_t(\beta) - \mathcal W_n(\beta) \right|^{\gamma} \leq \frac{1}{\pi^{\gamma}}\left(\int_0^{2\pi}\left|\mathcal W_t(z_0+2\eps \eee^{i\phi}) - \mathcal W_n(z_0+2\eps \eee^{i\phi})\right|{\rm d}\phi\right)^{\gamma}\\
\leq \frac{2^{\gamma-1}}{\pi}\int_0^{2\pi}\left|\mathcal W_t(z_0+2\eps \eee^{i\phi}) - \mathcal W_n(z_0+2\eps \eee^{i\phi})\right|^{\gamma}{\rm d}\phi.
\end{multline*}
Therefore,
\begin{multline*}
\E \left[\sup_{t\in [n,n+1]}\sup_{\beta \in \overline{\mathbb{D}}_{\varepsilon}(z_0)}\left| \mathcal W_t(\beta) - \mathcal W_n(\beta) \right|^{\gamma}\right]\\
\leq
\frac{2^{\gamma-1}}{\pi}\int_0^{2\pi}\E\left[\sup_{t\in [n,n+1]}\left|\mathcal W_t(z_0+2\eps \eee^{i\phi}) - \mathcal W_n(z_0+2\eps \eee^{i\phi})\right|^{\gamma}\right]{\rm d}\phi.
\end{multline*}
Since $(\mathcal W_{t}(\beta) - \mathcal W_n(\beta))_{t \geq n}$ is a martingale, by Doob's inequality, there exists a universal constant $C > 0$ such that, for all $n \in \N$ and $\beta\in\fD$,
$$\E \left[ \sup_{t \in [n, n+1]}  \left| \mathcal W_{t}(\beta) - \mathcal W_n(\beta) \right|^\gamma \right] \leq C \E{ \left| \mathcal W_{n+1}(\beta) - \mathcal W_n(\beta)  \right| ^\gamma}.$$
By~\cite[Lemma 2(i)]{biggins_uniform}, there is  $M>0$ such that for  all $\beta\in\fD$,
$$
\E{ \left| \mathcal W_{n+1}(\beta) - \mathcal W_n(\beta)  \right| ^\gamma}\leq M \left(\frac{m^{\ast}(\gamma \Re \beta)}{|m^{\ast}(\beta)|^{\gamma}}\right)^n.
$$
Combining pieces together, we obtain
$$
\E \left[\sup_{t\in [n,n+1]}\sup_{\beta \in \overline{\mathbb{D}}_{\varepsilon}(z_0)}\left| \mathcal W_t(\beta) - \mathcal W_n(\beta) \right|^{\gamma}\right] \leq 2^{\gamma}MC r_3^n,
$$
where
$$
r_3:=\sup_{\beta\in\overline{\mathbb{D}}_{2\varepsilon}(z_0)}\frac{m^{\ast}(\gamma \Re \beta)}{|m^{\ast}(\beta)|^{\gamma}}<1.
$$
By the Borel--Cantelli lemma and the Markov inequality, \eqref{eq:continuous_time_biggins_convergence} holds with arbitrary $r_2\in (r_3,1)$.

Equation~\eqref{conv:lp} follows analogously upon noting that, by~\cite[Theorem 1]{biggins_uniform}, for $\gamma \in (1,2]$ and $\beta \in \Omega_\gamma^1  \cap \Omega_\gamma^2$, it holds
\begin{align*}
\lim_{n\to\infty} \E{\left| \mathcal W_n(\beta) - \mathcal W_\infty(\beta) \right|^\gamma} = 0.
\end{align*}
\end{proof}

\begin{proposition} \label{prop:bigg_no_zeros}
Under Assumptions B1--B3 and B5, almost surely, the function $\mathcal W_\infty$ has no zeros on the interval $(\beta_{-}, \beta_{+})$.
\end{proposition}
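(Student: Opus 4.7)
The approach will upgrade the pointwise statement ``$\mathcal{W}_\infty(\beta_0) > 0$ almost surely for each fixed $\beta_0 \in (\beta_-, \beta_+)$'' to the uniform statement claimed. The bridge is provided by analyticity of $\mathcal{W}_\infty$ (which forces its zero set to be countable) together with a Fubini/independence argument applied to the branching decomposition of the continuous-time BRW. For any $s > 0$, decomposing the BRW at time $s$ into the independent subtrees rooted at the $N_s$ particles alive at positions $z_{1,s}, \ldots, z_{N_s,s}$ and passing to the martingale limit yields
\begin{equation*}
\mathcal{W}_\infty(\beta) \;=\; \sum_{i=1}^{N_s} \eee^{\beta z_{i,s} - m(\beta) s}\, \tilde{\mathcal{W}}^{(i)}_\infty(\beta), \qquad \beta \in \fD,
\end{equation*}
where $\tilde{\mathcal{W}}^{(i)}_\infty$ are iid copies of $\mathcal{W}_\infty$ independent of the $\sigma$-algebra $\mathcal{F}_s$ generated by the process up to time $s$.

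For pointwise positivity, fix $\beta_0 \in (\beta_-, \beta_+) \subset \fD$. Since $\fD = \bigcup_\gamma (\Omega_\gamma^1 \cap \Omega_\gamma^2)$, there exists $\gamma \in (1,2]$ with $\beta_0 \in \Omega_\gamma^1 \cap \Omega_\gamma^2$, and then~\eqref{conv:lp} gives $L^\gamma$-convergence, in particular $\E \mathcal{W}_\infty(\beta_0) = 1$. Combined with $\mathcal{W}_\infty(\beta_0) \geq 0$ (as an a.s.\ pointwise limit of non-negative variables), this forces $q := \P(\mathcal{W}_\infty(\beta_0) = 0) < 1$. The branching decomposition together with non-negativity yields the Galton--Watson fixed-point equation $q = \E[q^N]$; under Assumption B2, strict convexity of $f(q) := \E[q^N]$ together with $f'(1) = \E N > 1$ forces the only solutions in $[0,1]$ to be $0$ and $1$, so $q = 0$.

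The upgrade step uses analyticity. By Proposition~\ref{prop:bigg}, $\mathcal{W}_\infty$ is a.s.\ analytic on $\fD$. Applying pointwise positivity to a countable dense set $D \subset (\beta_-, \beta_+)$ and extending by continuity gives $\mathcal{W}_\infty \geq 0$ on $(\beta_-, \beta_+)$ a.s., while $\mathcal{W}_\infty(0) > 0$ a.s.\ implies $\mathcal{W}_\infty \not\equiv 0$. Hence the zero set $Z := \{\beta \in (\beta_-, \beta_+) : \mathcal{W}_\infty(\beta) = 0\}$ is discrete in $\fD$, in particular at most countable a.s. The branching decomposition combined with non-negativity of every summand forces $Z \subseteq \bigcap_{i=1}^{N_s} Z^{(i)}$, where $Z^{(i)}$ is the zero set of $\tilde{\mathcal{W}}^{(i)}_\infty$ in $(\beta_-, \beta_+)$; these are iid copies of $Z$ and independent of $\mathcal{F}_s$. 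On $\{N_s \geq 2\}$, conditioning on $Z^{(1)}$ and using independence,
\begin{equation*}
\P(Z^{(1)} \cap Z^{(2)} \neq \emptyset) \;\leq\; \E\Bigl[\sum_{\beta \in Z^{(1)}} \P(\beta \in Z^{(2)})\Bigr] \;=\; 0,
\end{equation*}
since $Z^{(1)}$ is a.s.\ countable and each summand vanishes by pointwise positivity applied to $Z^{(2)}$. Supercriticality of $N_t$ gives $\P(N_s \geq 2) \to 1$ as $s \to \infty$, so $\P(Z \neq \emptyset) = 0$.

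The main obstacle is precisely this upgrade: the pointwise statement excludes each fixed $\beta_0$ from $Z$ only on a $\beta_0$-dependent probability-one event, so it does not a priori rule out a random exceptional point. Analyticity is essential because it reduces $Z$ to a countable set, which in turn permits the union-bound/Fubini argument on the intersection of two independent copies of $Z$; without countability the interchange of sum and probability would fail.
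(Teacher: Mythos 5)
Your proof is correct, and its core is the same as the paper's: reduce to the pointwise statement $\P[\mathcal W_\infty(\beta)=0]=0$ for fixed real $\beta$, use the branching decomposition plus non-negativity to force any common zero of $\mathcal W_\infty$ to be a simultaneous zero of (at least) two independent copies, and then kill that event by countability of the zero set of an analytic function together with a Fubini/union bound over an enumeration of the zeros of the first copy. The two points where you deviate are both fine and arguably make the argument more self-contained: (a) the paper decomposes at the first \emph{non-trivial} split time $\tau_1^*$, which gives $N_{\tau_1^*}\geq 2$ deterministically, whereas you decompose at a fixed time $s$ and let $s\to\infty$ so that $\P[N_s\geq 2]\to 1$; (b) the paper simply cites \cite{BigGre79} for the pointwise non-vanishing, whereas you rederive it from $\E\,\mathcal W_\infty(\beta_0)=1$ (via the $L^\gamma$-convergence in \eqref{conv:lp}) and the Galton--Watson fixed-point equation $q=\E[q^N]$, using Assumption B2 to exclude $q\in(0,1)$. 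One cosmetic remark: to make the bound $\P(Z^{(1)}\cap Z^{(2)}\neq\emptyset)\leq \E\bigl[\sum_{\beta\in Z^{(1)}}\P(\beta\in Z^{(2)})\bigr]$ fully rigorous you should, as the paper does, fix a measurable enumeration $X_1,X_2,\ldots$ of the (a.s.\ discrete) zero set of the first copy and condition on it; this is routine but worth stating.
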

\begin{proof}
%Let us finally show that with probability $1$, the function $\mathcal{W}_{\infty}$ has no zeros on the interval $(\beta_-,\beta_+)$.
For any fixed $\beta\in (\beta_{-},\beta_{+})$, it is known that
\begin{equation}\label{eq:W_infty_no_fixed_zeros}
\P[\mathcal W_\infty(\beta)=0]
%=\P[\mathcal W^{(2)}_\infty(\beta)=0]
=0
\end{equation}
since the extinction probability of our BRW equals zero by Assumption B2; see Theorem 1 in~\cite{BigGre79}.
We use a well-known fact that the limit process $(\mathcal W_\infty(\beta))_{\beta\in\fD}$, satisfies a stochastic fixed-point equation. Let $\tau_1^*$ be the time of the first \textit{non-trivial} split, that is the first time the number of particles in the BRW becomes at least $2$.  For $i\in\N$, denote by $(\mathcal W_t^{(i)})_{t\geq 0}$ the Biggins martingale corresponding to the  continuous-time BRW initiated by the $i$-th individual at time $\tau_1^*$. With probability $1$,
$$
\mathcal W_t(\beta) = \ind_{\{t \geq \tau_1^*\}} \sum_{i = 1}^{N_{\tau_1^*}} \eee^{-m(\beta) \tau_1^*} \eee^{\beta z_{i, \tau_1^*}} \mathcal W^{(i)}_{t - \tau_1^*}(\beta)
+ \ind_{\{t < \tau_1^*\}} \eee^{-m(\beta) t} \eee^{\beta z_{1,t}},
\quad \beta \in \fD.
$$
Thus, sending $t$ to $+\infty$, we obtain
\begin{equation}\label{eq:functional_eq_for_W_infty}
\mathcal W_\infty(\beta) = \sum_{i = 1}^{N_{\tau_1^*}} \eee^{-m(\beta) \tau_1^*} \eee^{\beta z_{i,\tau_1^*}} \mathcal W^{(i)}_\infty(\beta),
\quad \beta\in\fD.
\end{equation}
Here, the martingale limits $(\mathcal W^{(i)}_\infty(\beta))_{\beta \in \fD}$, $i \in\N$, are independent and have the same law as the process $(\mathcal W_\infty(\beta))_{\beta \in \fD}$. Also, these processes are independent of the number and the positions of the offspring of the initial particle, but not independent of $(\mathcal W_\infty(\beta))_{\beta \in \fD}$.

If the function $\mathcal W_\infty$ has a zero at some $\beta\in (\beta_-,\beta_+)$, then it follows from~\eqref{eq:functional_eq_for_W_infty} that the processes $\mathcal W_\infty^{(i)}$, $1\leq i \leq N_{\tau_1^*}$, have zeros at the same point $\beta$. Since $N_{\tau_1^*}\geq 2$, we infer
\begin{multline*}
\P[\mathcal W_\infty(\beta)=0\text{ for some }\beta\in (\beta_-,\beta_+)]\\
\leq \P[\mathcal W_\infty^{(1)}(\beta)=\mathcal W_\infty^{(2)}(\beta)=0\text{ for some } \beta\in (\beta_-,\beta_+)].
\end{multline*}
Since $(\mathcal W^{(1)}_\infty(\beta))_{\beta \in \fD}$ is a random analytic function on $\fD$ (which is not identically zero, with probability $1$; see~\eqref{eq:W_infty_no_fixed_zeros}), its zeros form a point process on $(\beta_-,\beta_+)$ and we can construct a sequence of random variables $X_1,X_2,\ldots$ (which depend only on $\mathcal W^{(1)}_\infty$) such that all zeros of $\mathcal W^{(1)}_\infty$ are contained in this list.
Since $(\mathcal W^{(1)}_\infty(\beta))_{\beta \in \fD}$ and $(\mathcal W^{(2)}_\infty(\beta))_{\beta \in \fD}$ are independent, we infer
\begin{align*}
\P[\mathcal W_\infty(\beta)=0\text{ for some }\beta\in (\beta_-,\beta_+)]
&\leq
\sum_{i=1}^\infty \P[\mathcal W_\infty^{(2)}(X_i)=0]\\
&=
\sum_{i=1}^\infty \int_{(\beta_{-},\beta_{+})} \P[\mathcal W^{(2)}_\infty(x)=0] \P[X_i\in {\rm d}x],
\end{align*}
which vanishes because $\P[\mathcal W^{(2)}_\infty(x)=0]=0$ for every $x\in (\beta_-,\beta_+)$ by~\eqref{eq:W_infty_no_fixed_zeros}.
\end{proof}

%Assume that $x_0\in(\beta_{-},\beta_{+})$ is a (random) root of $\mathcal W_{\infty}$. Using the fixed-point %equation~\eqref{eq:functional_eq_for_W_infty} we obtain
%$$
%1=\P[\mathcal W_\infty(x_0)=0]
%\leq
%\P[\mathcal W^{(1)}_\infty(x_0)=0]
%=
%\int_{(\beta_{-},\beta_{+})}\P[\mathcal W^{(1)}_\infty(x)=0] \P[x_0\in {\rm d}x],
%%\sum_{k=1}^{\infty}\P[\mathcal W^{(k)}_\infty(x_0)=0] \P[N_{\tau_1}=k].
%$$
%where we used the independence of $x_0$ and $\mathcal W^{(1)}_\infty$.
%%$$
%%\P[\mathcal W^{(k)}_\infty(x_0)=0]
%%=
%%\int_{(\beta_{-},\beta_{+})}\P[\mathcal W^{(k)}_\infty(x)=0] \P[x_0\in {\rm d}x]=0.
%%$$
%%The last equality follows from
% Hence, the integral on the right-hand side equals $0$ and we obtain a contradiction.
%by the assumption $N_{\tau_1} \geq 1$ almost surely.

From Proposition~\ref{prop:bigg} we can easily obtain the following a.s.\ asymptotics for the $n$-th split time $\tau_n$.
%In the next proposition we state an
\begin{lemma}\label{lem:split_time}
There exists a deterministic $\varepsilon > 0$ such that, as $n\to\infty$,
\begin{equation}\label{eq:tau_n_precise}
\tau_n = \frac{\log n}{m(0)} +\frac{\log m(0)-\log \mathcal{W}_{\infty}(0)}{m(0)}+o(n^{-\varepsilon}) \quad a.s.
\end{equation}
\end{lemma}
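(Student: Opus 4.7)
The strategy combines two rate estimates. The first is the exponential rate of convergence of the continuous-time Biggins martingale $\mathcal{W}_t(0) = \eee^{-m(0)t} N_t$, supplied by Proposition~\ref{prop:bigg}. The second is a polynomial-rate strong law of large numbers for $S_n = 1 + \sum_{i=1}^n (N_i - 1)$, where $N_i$ are i.i.d.\ copies of the cluster size $N$; this will follow from the Marcinkiewicz--Zygmund SLLN together with Assumption B5. Both bounds will translate into polynomial-in-$n$ error terms for $\tau_n$, the final $\varepsilon$ being the minimum of the two exponents.

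First, I would apply Proposition~\ref{prop:bigg} to the singleton $K = \{0\} \subset (\beta_-, \beta_+) \subset \fD$ to obtain a deterministic constant $r_0 \in (0,1)$ such that $|\mathcal{W}_t(0) - \mathcal{W}_\infty(0)| = o(r_0^t)$ a.s. Proposition~\ref{prop:bigg_no_zeros} guarantees $\mathcal{W}_\infty(0) > 0$ a.s.\ (as $0 \in (\beta_-,\beta_+)$). Rewriting the martingale bound in terms of $N_t = \eee^{m(0)t}\mathcal{W}_t(0)$, evaluating at $t = \tau_n$ (which tends to $+\infty$ a.s., since $\P[N \geq 1] = 1$ by Assumption B2), and using $N_{\tau_n} = S_n$ together with a first-order Taylor expansion of the logarithm near $\mathcal{W}_\infty(0) > 0$, I would arrive at
\begin{equation*}
\tau_n = \frac{\log S_n - \log \mathcal{W}_\infty(0)}{m(0)} + o(r_0^{\tau_n}) \quad \text{a.s.}
\end{equation*}

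Next, I would control $\log S_n$. Assumption B5 at $\beta = 0$ reads $\E[N^\gamma] < \infty$ for some $\gamma > 1$. Setting $p := \min\{\gamma, 3/2\} \in (1,2)$, the centered cluster contributions $N_i - 1 - m(0)$ are i.i.d.\ with mean zero and finite $p$-th moment, so the Marcinkiewicz--Zygmund SLLN yields $S_n = 1 + m(0)n + o(n^{1/p})$ a.s., whence $\log S_n = \log n + \log m(0) + o(n^{1/p-1})$ a.s.

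The two displays together already imply $\tau_n \sim (\log n)/m(0)$ a.s., and hence $r_0^{\tau_n} = O(n^{\log r_0 / m(0)})$ a.s., which is polynomial decay in $n$ since $r_0 \in (0,1)$. Substituting the expansion of $\log S_n$ into the formula for $\tau_n$ and choosing any $0 < \varepsilon < \min\{1 - 1/p,\, -\log r_0/m(0)\}$ completes the proof. The only mildly delicate step is upgrading the classical SLLN for $S_n/n$ to a polynomial rate; this is the sole role of Assumption B5 here, via Marcinkiewicz--Zygmund.
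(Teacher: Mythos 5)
Your proposal is correct and follows essentially the same two-step route as the paper: the exponential convergence rate of the continuous-time Biggins martingale at $\beta=0$ from Proposition~\ref{prop:bigg} (together with $\mathcal W_\infty(0)>0$) converts $\log \mathcal W_{\tau_n}(0)=\log S_n - m(0)\tau_n$ into the desired formula up to $o(r_0^{\tau_n})$, and a rate-of-convergence strong law for $S_n$ handles $\log S_n$. The one substantive difference is in the second step: the paper invokes the law of the iterated logarithm, asserting that the increments $N_i-1$ have finite variance ``by Assumption B3'', whereas you use the Marcinkiewicz--Zygmund SLLN with the $\gamma$-th moment ($\gamma>1$) supplied by Assumption B5 at $\beta=0$. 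Your variant is in fact the safer one, since finiteness of $m$ near $0$ controls only first moments of the cluster and does not by itself yield $\E[N^2]<\infty$; both versions deliver some deterministic $\varepsilon>0$, which is all the lemma claims. One small inaccuracy: you justify $\tau_n\to\infty$ by $\P[N\geq 1]=1$ (Assumption B2), but that only guarantees survival, i.e.\ that $\tau_n$ is defined for all $n$; the fact that the split times do not accumulate is a non-explosion statement, which follows from $\E N<\infty$ (Assumption B3), as the paper notes.
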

\begin{proof}
From Proposition~\ref{prop:bigg} with $\beta=0$ we obtain
$$
\mathcal W_{t}(0)\toast \mathcal{W}_{\infty}(0).
$$
The continuous-time Galton--Watson process $(N_t)_{t\geq 0}$ does not explode because the expected number of particles in the cluster $\zeta$ is finite by Assumption B3. This means that $\tau_n\to\infty$ a.s., as $n\to\infty$, and the last display implies
$$
\mathcal W_{\tau_n}(0)=\eee^{-m(0)\tau_n}S_n \toas \mathcal{W}_{\infty}(0).
$$
From Remark~\ref{rem:W_infty_0} we know that $S_n/n \to m(0)$ a.s.,\ which yields
\begin{equation}\label{eq:tau_n_first_order}
\tau_n=\frac{\log n}{m(0)}+O(1)\quad \text{a.s.}
\end{equation}
Using Proposition~\ref{prop:bigg} with $\beta=0$ and $t=\tau_n$ gives:
$$
\eee^{-m(0)\tau_n}S_n-\mathcal{W}_{\infty}(0)=o(r^{\tau_n})\quad\text{a.s.}
$$
From~\eqref{eq:tau_n_first_order} we deduce $r^{\tau_n}=o(n^{-\varepsilon_1})$ a.s., as $n\to\infty$, for every $\varepsilon_1 < |\log r|/m(0)$. The variance of $S_1$ is finite by Assumption B3. By the law of iterated logarithm, for every $\delta>0$, as $n\to\infty$,
$$
S_n = m(0)n + o(n^{1/2+\delta})\quad\text{a.s.}
$$
Combining the estimates, we see that~\eqref{eq:tau_n_precise} holds for $\varepsilon<(|\log r|/m(0)\wedge 1/2)$.
\end{proof}

Recall that $H_n(\beta)=\eee^{m(\beta)\tau_n-\varphi(\beta)\log n}$. Lemma~\ref{lem:split_time} immediately yields

\begin{lemma} \label{lem:h}
For $\beta \in \fD$, let $H_\infty(\beta) =  (\mathcal W_{\infty}(0))^{-\varphi(\beta)} m(0)^{\varphi(\beta)}$. For any compact set $K \subset \fD$, there exists
$\eps = \eps(K) > 0$ such that
$$
n^{\varepsilon} \sup_{\beta \in K} | H_n(\beta) - H_\infty(\beta) | \toas 0.
$$
\end{lemma}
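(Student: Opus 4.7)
The plan is a direct, essentially algebraic calculation: substitute the precise asymptotics of $\tau_n$ from Lemma~\ref{lem:split_time} into the definition of $H_n(\beta)$ and read off the rate.

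Writing $\tau_n = \frac{\log n}{m(0)} + \frac{\log m(0) - \log \mathcal W_\infty(0)}{m(0)} + R_n$ with $R_n = o(n^{-\varepsilon_0})$ a.s.\ for some deterministic $\varepsilon_0 > 0$, and using the identity $\varphi(\beta) = m(\beta)/m(0)$, the exponent $m(\beta)\tau_n - \varphi(\beta)\log n$ telescopes: the term $\varphi(\beta)\log n$ cancels, leaving
\[
m(\beta)\tau_n - \varphi(\beta)\log n \;=\; \varphi(\beta)\bigl(\log m(0) - \log \mathcal W_\infty(0)\bigr) + m(\beta) R_n.
\]
Exponentiating gives $H_n(\beta) = H_\infty(\beta) \exp(m(\beta) R_n)$ for every $\beta \in \fD$.

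From here I would apply the elementary bound $|\mathrm e^{z} - 1| \leq |z|\, \mathrm e^{|z|}$ to obtain
\[
|H_n(\beta) - H_\infty(\beta)| \;\leq\; |H_\infty(\beta)|\cdot |m(\beta)|\cdot |R_n|\cdot \mathrm e^{|m(\beta) R_n|}.
\]
On a compact $K \subset \fD$ the function $m$ is bounded (it is analytic on a neighbourhood of $K$). The random factor $H_\infty(\beta) = (\mathcal W_\infty(0))^{-\varphi(\beta)} m(0)^{\varphi(\beta)}$ is, almost surely, a continuous (hence bounded on $K$) function of $\beta$, because $\mathcal W_\infty(0) > 0$ a.s.\ by Proposition~\ref{prop:bigg_no_zeros} (which applies since $0 \in (\beta_-,\beta_+)$). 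Therefore
\[
\sup_{\beta\in K}|H_n(\beta) - H_\infty(\beta)| \;\leq\; C_K\, |R_n| \;=\; o(n^{-\varepsilon_0}) \quad \text{a.s.},
\]
for some a.s.\ finite random constant $C_K$. Choosing any $0 < \varepsilon < \varepsilon_0$ finishes the proof.

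I do not expect any real obstacle: everything is an immediate consequence of Lemma~\ref{lem:split_time} together with the almost sure positivity of $\mathcal W_\infty(0)$. The only point worth double-checking is that $H_\infty$ is genuinely well-defined and bounded on compact subsets of $\fD$, which is why the non-vanishing statement in Proposition~\ref{prop:bigg_no_zeros} is invoked.
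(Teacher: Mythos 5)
Your proof is correct and is exactly the route the paper intends: the paper states that Lemma~\ref{lem:h} "immediately" follows from Lemma~\ref{lem:split_time} and omits the details, which are precisely your computation (the cancellation of the $\varphi(\beta)\log n$ term via $\varphi=m/m(0)$, the bound $|\eee^{z}-1|\leq|z|\eee^{|z|}$, and boundedness of $m$ and $H_\infty$ on the compact $K$, using $\mathcal W_\infty(0)>0$ a.s.). No gaps.
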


%\begin{remark}
%Assume that the number of offspring of every particle is deterministic. Then, the convergence in Proposition~\ref{prop:bigg} also holds with respect to the mean (upon possibly increasing $r$). Since $H_n$ and $\mathcal W_{\tau_n}$ are not independent, the connection~\eqref{connection} is insufficient to transfer mean convergence to $W_n$. However, such a transfer is indeed possible by a more subtle application of martingale convergence theorems using the independence of $W_n$ and $H_n$. Since these results are of no importance in this work, we omit to formulate them.
%\end{remark}

\begin{proof}[Proof of Theorem~\ref{theo:W_n_converges_one_split_BRW}]
Recall from~\eqref{connection} that $W_{n}(\beta) = \mathcal W_{\tau_n} (\beta) H_n(\beta)$. Define
$$
W_\infty(\beta) = \mathcal W_\infty(\beta) H_\infty(\beta) = \mathcal W_\infty(\beta) (\mathcal W_{\infty}(0))^{-\varphi(\beta)} m(0)^{\varphi(\beta)}.
$$
By Proposition~\ref{prop:bigg}, Lemmas~\ref{lem:split_time} and~\ref{lem:h} and the triangle inequality, $W_n(\beta)$ converges to $W_\infty(\beta)$ locally uniformly on $\fD$, with probability $1$ and speed $(\log n)^{-r}$. Since $H_\infty(\beta)>0$ for real $\beta$ and,  by Proposition~\ref{prop:bigg_no_zeros}, the function $\mathcal W_\infty$ has no zeros on the interval $(\beta_-,\beta_+)$ (with probability $1$), the same is true for the function $W_\infty$.
\end{proof}

\subsection{Proof of Theorem~\ref{theo:edgeworth_one_split_BRW}}\label{subsec:proof_biggins}
%Theorem~\ref{theo:W_n_converges_one_split_BRW} follows from Proposition~\ref{prop:bigg}, \eqref{connection}, Lemma~\ref{lem:h} and the triangle inequality.
Consider a one-split BRW satisfying Assumptions B1--B5. We are going to verify Assumptions A1--A4 of Theorem~\ref{theo:edgeworth_general} for the sequence of its profiles $\LLL_1,\LLL_2,\ldots$ and $w_n=\log n$.
Assumption A1 is fulfilled because the number of particles in the one-split BRW is finite at any time and hence, the function $\LLL_n$ has a.s.\ finite support. Assumptions A2 and A3 were verified in  Theorem~\ref{theo:W_n_converges_one_split_BRW}.
%the absence of zeros of $W_\infty$was verified in Proposition~\ref{prop:bigg_no_zeros}.
%Further, with the help of Lemma~\ref{lem:split_time}, the continuous time result in Proposition~\ref{prop:bigg} verifies Assumptions A2 and A3.

The next proposition verifies an analogue of Assumption A4 for the continuous-time BRW. The result is essentially shown in~\cite{biggins_uniform} without rate of convergence and only in the non-lattice case. For the sake of completeness, we include the proof here.

\begin{proposition} \label{prop_cond_4_cont}
For any compact set $K \subset (\beta_-, \beta_+)$ and $0 < a < \pi$, under Assumptions B1--B5, there exists $0 < r = r(K,a) < 1$ such that
\begin{align} \label{cond_4_cont}
r^{-t} \sup_{\beta \in K} \sup_{a \leq \eta \leq \pi} \eee^{-m(\beta)t} \left | \sum_{k = 1}^{N_t} \eee^{(\beta + i \eta) z_{k,t}} \right | \toast 0.
\end{align}
\end{proposition}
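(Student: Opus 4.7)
The plan is to decompose the quantity of interest into an aperiodicity decay factor times a (possibly growing) complex martingale factor, and to control the latter via a Burkholder--Davis--Gundy estimate at a moment slightly above $1$. Introducing the complex Biggins martingale $\mathcal W_t(\zeta) = \eee^{-m(\zeta)t}\sum_{k=1}^{N_t}\eee^{\zeta z_{k,t}}$, which extends $(\mathcal W_t(\beta))_{t\geq 0}$ analytically, we write
$$
\eee^{-m(\beta)t}\left|\sum_{k=1}^{N_t}\eee^{(\beta+i\eta)z_{k,t}}\right|
= \eee^{t(\Re m(\beta+i\eta)-m(\beta))}\,|\mathcal W_t(\beta+i\eta)|.
$$
Since $m(\beta)-\Re m(\beta+i\eta)=\sum_{k\in\Z}\nu_k\eee^{\beta k}(1-\cos(\eta k))$, Assumption B4 forces strict positivity of this sum at every $(\beta,\eta)\in K\times[a,\pi]$: for any such $\eta$, the set $\{k\in\Z:\eta k\in 2\pi\Z\}$ is a proper subgroup of $\Z$ and hence cannot support the whole of $\nu$. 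By continuity and compactness, $\rho^*:=\sup_{\beta\in K,\,\eta\in[a,\pi]}(\Re m(\beta+i\eta)-m(\beta))$ is strictly negative, providing a clean exponential prefactor $\eee^{\rho^* t}$.

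For the martingale factor I would first choose $\gamma\in(1,2]$ close enough to $1$ so that $c_K:=\inf_{\beta\in K}(\gamma m(\beta)-m(\gamma\beta))>0$; this is possible because for $\beta\in(\beta_-,\beta_+)$ we have $\gamma m(\beta)-m(\gamma\beta)=(\gamma-1)(m(\beta)-\beta m'(\beta))+O((\gamma-1)^2)$, and the leading factor is precisely the quantity defining $(\beta_-,\beta_+)$. Next I would apply BDG to the pure-jump martingale $\mathcal W_t(\zeta)-1$, whose jumps at split times have size $\eee^{-m(\zeta)\tau+\zeta z_{k,\tau}}(U-1)$ with $U=\sum_j\eee^{\zeta Z_j}$, and use the subadditivity $\bigl(\sum|\Delta\mathcal W|^2\bigr)^{\gamma/2}\leq \sum|\Delta\mathcal W|^\gamma$ valid for $\gamma\leq 2$. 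Computing the compensator reduces the bound to $\E|U-1|^\gamma\int_0^t\eee^{(m(\gamma\beta)-\gamma\Re m(\zeta))s}\dd s$; Assumption B5 supplies $\E|U-1|^\gamma\leq C(K)$ uniformly in $\beta\in K$ and $\eta\in\R$. After multiplying by $\eee^{\gamma(\Re m(\zeta)-m(\beta))t}$, the exponents rearrange so that, uniformly in $\beta\in K$ and $\eta\in[a,\pi]$,
$$
\E\!\left[\eee^{-\gamma m(\beta)t}\left|\sum_{k=1}^{N_t}\eee^{(\beta+i\eta)z_{k,t}}\right|^\gamma\right]\leq C\eee^{-c t},\qquad t\geq 0,
$$
for some $c=c(K,a,\gamma)>0$.

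The final step is a standard chaining argument. For integer $n$ the function $\zeta\mapsto g_n(\zeta):=\eee^{-m(\Re\zeta)n}\sum_k\eee^{\zeta z_{k,n}}$ is entire, and on $K+i[a,\pi]$ its derivative is bounded in modulus by $\bigl(n|m'(\Re\zeta)|+\max_{k\leq N_n}|z_{k,n}|\bigr)\sup_{\beta\in K}\mathcal W_n(\beta)$, which grows only polynomially in $n$ a.s.\ (the maximum position grows linearly in $n$ by standard supercritical BRW estimates, and the supremum is bounded by Proposition~\ref{prop:bigg}). Covering $K\times[a,\pi]$ by $O(n^L)$ balls of radius $O(n^{-L})$, applying Markov's inequality and the $L^\gamma$-bound at each centre, taking a union bound, and invoking Borel--Cantelli yields $\sup|g_n(\zeta)|=O(r_0^n)$ a.s.\ for a suitable $r_0\in(0,1)$. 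Because positions are frozen between splits, $g_t(\zeta)=g_{\tau_n}(\zeta)\eee^{-m(\Re\zeta)(t-\tau_n)}$ on $[\tau_n,\tau_{n+1})$, and since $\tau_{n+1}-\tau_n\to 0$ a.s., the bound at split (and hence integer) times transfers to all real $t$ upon slightly enlarging $r_0$ to $r\in(r_0,1)$.

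The main obstacle is that the uniform $L^\gamma$-convergence machinery underlying Proposition~\ref{prop:bigg} requires the condition $m(\gamma\Re\zeta)<\gamma\Re m(\zeta)$, which typically \emph{fails} at $\zeta=\beta+i\eta$ with $\eta\in[a,\pi]$ precisely because $\Re m(\beta+i\eta)<m(\beta)$; in this regime $\mathcal W_t(\zeta)$ may diverge in $L^\gamma$. The key observation is that the growth of $\E|\mathcal W_t(\zeta)|^\gamma$ is cancelled exactly by the deterministic prefactor $\eee^{\gamma(\Re m(\zeta)-m(\beta))t}$, leaving the real-axis gap $\gamma m(\beta)-m(\gamma\beta)>0$ on $K$ as the effective decay rate, which is precisely what lets the direct BDG approach (rather than a convergence theorem) succeed.
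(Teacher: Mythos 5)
Your overall strategy coincides with the paper's: you factor the quantity as $\eee^{t(\Re m(\beta+i\eta)-m(\beta))}\,|\mathcal W_t(\beta+i\eta)|$, extract strict exponential decay of the prefactor on $K\times[a,\pi]$ from Assumptions B1 and B4, and compensate the possible $L^\gamma$-growth of the complex Biggins martingale by the real-axis gap $m(\gamma\beta)<\gamma m(\beta)$ for $\gamma$ close to $1$ (your Taylor expansion in $\gamma$ correctly identifies the defining inequality of $(\beta_-,\beta_+)$ as the leading coefficient, which is exactly how the set $\Omega_\gamma^2$ enters the paper's argument). Your BDG computation reproduces what the paper imports as Lemma 2 of Biggins, namely $\E|\mathcal W_n(\beta+i\eta)|^\gamma\lesssim\sum_{j\leq n}\kappa^j$ with $\kappa=m^*(\gamma\beta)/|m^*(\beta+i\eta)|^\gamma$, and your ``exponent rearrangement'' is the paper's verification that $\psi^\gamma\cdot(\kappa\vee 1)<1$, where $\psi=|m^*(\beta+i\eta)|/m^*(\beta)$. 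Up to this point the proposal is correct and essentially identical in substance.

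The step that does not close as written is the passage from a discrete skeleton to $\sup_{t}$. You run the covering/Markov/Borel--Cantelli argument at \emph{integer} times $n$, but the interpolation identity $g_t=g_{\tau_j}\,\eee^{-m(\beta)(t-\tau_j)}$ is valid only between consecutive \emph{split} times, and a bound at integer times does not control $\sup_{t\in[n,n+1]}$: the interval $[n,n+1]$ contains of order $\eee^{m(0)n}$ splits, at each of which $g$ jumps, and nothing in your argument bounds $\max\{|g_{\tau_j}(\beta+i\eta)|:\tau_j\in[n,n+1]\}$ in terms of $|g_n(\beta+i\eta)|$. Re-indexing the whole argument by split times does not repair this directly either, since $\tau_j\sim\log j/m(0)$, so $r^{\tau_j}$ decays only polynomially in $j$ and the union bound over your $O(j^L)$ covering balls need not be summable. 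The paper closes exactly this gap by applying Doob's $L^\gamma$ maximal inequality to the martingale $(\mathcal W_t(\beta+i\eta))_{t\in[n,n+1]}$ \emph{before} taking expectations, i.e.\ $\E\bigl[\sup_{t\in[n,n+1]}|\mathcal W_t|^\gamma\bigr]\leq C\,\E|\mathcal W_{n+1}|^\gamma$, and it handles the supremum over the complex argument by Cauchy's integral formula (averaging over a circle of radius $2\eps$, which lets the expectation pass inside) rather than by a covering with pathwise derivative estimates. With those two substitutions your argument goes through; as written, the final paragraph is the one genuinely missing piece.
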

\begin{proof}

Set $\psi(\beta) = |m^*(\beta)| / m^*(\Re\beta)$ and note that, for $\beta\in\C, \Re\beta \in \fI$,
$$
(m^*(\Re\beta))^{-t} \left | \sum_{k = 1}^{N_t} \eee^{\beta z_{k,t}} \right | = (\psi(\beta))^t |\mathcal W_t(\beta)|.
$$
Therefore, \eqref{cond_4_cont} is equivalent to
\begin{equation} \label{cond_4_cont1}
r^{-t} \sup_{\beta \in G} 	\psi^t(\beta) |\mathcal W_t(\beta)|\toast 0,
\end{equation}
where $G := \{ \beta \in \C: \Re\beta \in K, \; \Im\beta \in [a,\pi]\}$.  By compactness, it is enough to check that, for any $\beta_0 \in G$, there exists $\eps > 0$ such that
\begin{equation} \label{cond_4_cont2}
r^{-t} \sup_{\beta \in \overline{\mathbb{D}}_{\varepsilon}(\beta_0)} \psi^{t}(\beta) |\mathcal W_t(\beta)|\toast 0.
\end{equation}
%for every $\beta_0\in G$ and suitably chosen $\eps = \eps(\beta_0)>0$. }
By the Borel--Cantelli lemma, \eqref{cond_4_cont2} follows from summability of the sequence
\begin{align}\label{summ}
r^{-n} \E  \left[ \sup_{t \in [n,n+1]} \sup_{\beta \in \overline{\mathbb{D}}_{\varepsilon}(\beta_0)} \psi^n(\beta) |\mathcal W_t(\beta)|\right],\quad n\in\N.
\end{align}
By Cauchy's integral formula, c.f. \cite[Lemma 3]{biggins_uniform},
$$
\sup_{\beta \in \overline{\mathbb{D}}_{\varepsilon}(\beta_0)}|\mathcal W_t(\beta)|\leq \frac{1}{\pi}\int_0^{2\pi}|\mathcal{W}_t(\beta_0+2\eps\eee^{i\phi})|{\rm d}\phi,
$$
whence, for $\gamma>1$,
\begin{eqnarray*}
\E  \left[ \sup_{t \in [n,n+1]} \sup_{\beta \in \overline{\mathbb{D}}_{\varepsilon}(\beta_0)} |\mathcal W_t(\beta)|\right]
&\leq&\frac{1}{\pi}\int_0^{2\pi}\E\left[\sup_{t\in[n,n+1]}|\mathcal{W}_t(\beta_0+2\eps\eee^{i\phi})|\right]{\rm d}\phi\\
&\leq&\frac{1}{\pi}\int_0^{2\pi}\left(\E\left[\sup_{t\in[n,n+1]}|\mathcal{W}_t(\beta_0+2\eps\eee^{i\phi})|^{\gamma}\right]\right)^{1/\gamma}{\rm d}\phi\\
&\leq&\frac{\gamma}{\pi(\gamma-1)}\int_0^{2\pi}\left(\E|\mathcal{W}_{n+1}(\beta_0+2\eps\eee^{i\phi})|^{\gamma}\right)^{1/\gamma}{\rm d}\phi,
\end{eqnarray*}
having utilized Doob's inequality in the last passage. Choose $\eps >0$ small enough such that there exists $\gamma > 1$ with $\overline{\mathbb{D}}_{2\varepsilon}(\Re \beta_0) \subseteq \Omega_\gamma^1 \cap \Omega_\gamma^2$. We fix
this $\gamma$ in the remainder of the proof. Further, let
$$
\kappa(\beta):=\frac{m^{\ast}(\gamma\Re \beta)}{|m^{\ast}(\beta)|^{\gamma}}.
$$
From our choice of $\gamma$, it follows that $\E|\mathcal{W}_1(\beta)|^{\gamma}<\infty$ for all $\beta\in\overline{\mathbb{D}}_{2\varepsilon}(\beta_0)$. Using \cite[Lemma~2(ii)]{biggins_uniform}, we obtain
\begin{eqnarray*}
\E  \left[ \sup_{t \in [n,n+1]} \sup_{\beta \in \overline{\mathbb{D}}_{\varepsilon}(\beta_0)} |\mathcal W_t(\beta)|\right]&\leq& C\int_0^{2\pi}\left(\sum_{j=0}^{n}\kappa^j(\beta_0+2\eps\eee^{i\phi})\right)^{1/\gamma}{\rm d}\phi,\\
&\leq& C(n+1)^{1/\gamma}\int_0^{2\pi}\left(\kappa(\beta_0+2\eps\eee^{i\phi})\vee 1\right)^{n/\gamma}{\rm d}\phi
\end{eqnarray*}
for some $C>0$. Therefore,
\begin{eqnarray*}
&&\hspace{-2cm}\E  \left[ \sup_{t \in [n,n+1]} \sup_{\beta \in \overline{\mathbb{D}}_{\varepsilon}(\beta_0)} \psi^n(\beta)|\mathcal W_t(\beta)|\right]\\
&\leq& C(n+1)^{1/\gamma}\left(\sup_{\beta \in \overline{\mathbb{D}}_{\varepsilon}(\beta_0)} \psi^n(\beta)\right)\int_0^{2\pi}\left(\kappa(\beta_0+2\eps\eee^{i\phi})\vee 1\right)^{n/\gamma}{\rm d}\phi\\
&\leq& 2C\pi (n+1)^{1/\gamma}\left(\left(\sup_{\beta \in \overline{\mathbb{D}}_{2\varepsilon}(\beta_0)} \psi^\gamma(\beta)\right)\left(\sup_{\beta \in \overline{\mathbb{D}}_{2\varepsilon}(\beta_0)}\left(\kappa(\beta)\vee 1\right)\right)\right)^{n/\gamma}.
\end{eqnarray*}
Note that both $\beta\mapsto \psi^\gamma(\beta)$ and $\beta\mapsto \kappa(\beta)\vee 1$ are continuous in some neighborhood of $\beta_0$. Hence, for $\delta>0$, upon possibly decreasing $\varepsilon>0$, we obtain
$$
\sup_{\beta \in \overline{\mathbb{D}}_{2\varepsilon}(\beta_0)} \psi^\gamma(\beta)\leq (1+\delta)\psi^\gamma(\beta_0)\quad\quad\text{and}\quad\quad\sup_{\beta \in \overline{\mathbb{D}}_{2\varepsilon}(\beta_0)}(\kappa(\beta)\vee 1) \leq (1+\delta)(\kappa(\beta_0)\vee 1).
$$
\eqref{summ} now follows from these bounds by a suitable choice of $\delta$ upon verifying that
$$
\psi^{\gamma}(\beta_0)(\kappa(\beta_0)\vee 1)=\frac{m^{\ast}(\gamma\Re \beta_0)\vee |m^{\ast}(\beta_0)|^{\gamma}}{(m^{\ast}(\Re \beta_0))^{\gamma}}<1.
$$
First, on the one hand, since $\Re \beta_0\in \Omega_\gamma^1 \cap \Omega_\gamma^2$ we have
$$
\frac{m^{\ast}(\gamma\Re \beta_0)}{(m^{\ast}(\Re \beta_0))^{\gamma}}<1.
$$
On the other hand, since $a\leq \Im\beta_0\leq \pi$, we have
$$
\frac{|m^{\ast}(\beta_0)|}{m^{\ast}(\Re \beta_0)}=\exp\left\{\sum_{k\in\Z}\nu_k\eee^{k\Re \beta_0}(\cos(k\Im\beta_0)-1)\right\}<1,
$$
having utilized Assumptions B1 and B4. The proof of Proposition~\ref{prop_cond_4_cont} is complete.
\end{proof}

Now we can pass back to the one-split BRW. By combining Lemma~\ref{lem:split_time} and Proposition~\ref{prop_cond_4_cont}, one deduces that, for any compact set $K \subset (\beta_-, \beta_+)$ and $0 < a < \pi$,  there exists $\varepsilon = \varepsilon(K,a) > 0$ such that
\begin{align} \label{cond_4_discrete}
n^\varepsilon \sup_{\beta \in K} \sup_{a < \eta \leq \pi} n^{-\varphi(\beta)} \left | \sum_{k = 1}^{S_n} \eee^{(\beta + i \eta) x_{k,n}} \right | \toas 0.
\end{align}
Assumption A4 follows readily. Theorem~\ref{theo:edgeworth_one_split_BRW} now follows from Theorem~\ref{theo:edgeworth_general}.

\subsection{Proof of Theorem~\ref{theo:edgeworth_one_split_BRW_mean}}\label{subsec:proof_edgeworth_mean}
We apply Theorem~\ref{theo:edgeworth_general} to the (deterministic) profile function $\tilde {\mathbb L}_n(k) := \E [\mathbb L_n(k)]$. Obviously, the corresponding moment generating function $\tilde W_n(\beta)$ is simply $\E{W_n(\beta)}$. Its limit $\tilde W_\infty(\beta)$ was calculated in~\eqref{mean_wlimit_exact}: for any $\beta \in \C$ with $\Re \beta \in (\beta_-, \beta_+)$,
%$$\tilde W_\infty(\beta)=  \E{W_\infty(\beta)} = \frac{\Gamma\left(\frac{1}{m(0)}\right)}{\Gamma\left(\frac{m(\beta) + 1}{m(0)}\right)}.$$
\begin{equation}\label{eq:lim_E_W_n}
\tilde W_\infty(\beta) :=  \lim_{n\to\infty} \E W_n(\beta) =  \frac{\Gamma\left(\frac{1}{m(0)}\right)}{\Gamma\left(\frac{m(\beta) + 1}{m(0)}\right)}.
\end{equation}
Using the explicit formula~\eqref{mean_w_exact}, a direct application of Stirling's formula shows that Assumption A3 is satisfied.
Similarly, Assumption A4 is easily verified using~\eqref{mean_w_exact} and Assumption B4.
To conclude the proof, it remains to show that
$\tilde W_\infty(\beta) = \E{W_\infty(\beta)}$ for real $\beta \in (\beta_-, \beta_+)$ which is true if (and only if) the sequence $(W_n(\beta))_{n\in\N}$ is uniformly integrable.
\begin{proposition}
Consider a one-split BRW with deterministic number of descendants and satisfying Assumptions B1--B3 and B5. Then, for every $\beta\in (\beta_-,\beta_+)$, the sequence $(W_n(\beta))_{n\in\N_0}$ is bounded in $L_\gamma$, for some $\gamma=\gamma(\beta)>1$.
\end{proposition}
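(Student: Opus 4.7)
The plan is to reduce the claim to boundedness in $L_\gamma$ of the Jabbour martingale $J_n(\beta)$ and then obtain that via a Burkholder-type inequality combined with a Gronwall argument. Recall the deterministic factorization
$$W_n(\beta) = \frac{\alpha_n(\beta)}{n^{\varphi(\beta)}}\, J_n(\beta),$$
in which, by \eqref{mean_wlimit_exact}, the prefactor converges as $n\to\infty$ to the finite, positive number $\tilde W_\infty(\beta)$. Hence $\alpha_n(\beta)/n^{\varphi(\beta)}$ is bounded in $n$, and it suffices to prove that $\sup_{n\in\N_0}\E[J_n(\beta)^\gamma]<\infty$ for some $\gamma=\gamma(\beta)>1$.

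First I would choose $\gamma>1$ satisfying three requirements: (i) $\gamma \beta \in \fI$, (ii) $\E\big[\big(\sum_{j=1}^{N}\eee^{\beta Z_j}\big)^\gamma\big]<\infty$ (available by Assumption B5 for $\gamma$ close to $1$), and (iii) $\varphi(\gamma\beta)<\gamma\varphi(\beta)$. For (iii) note that the function $f(\gamma):=\gamma\varphi(\beta)-\varphi(\gamma\beta)$ satisfies $f(1)=0$ and $f'(1)=\varphi(\beta)-\beta\varphi'(\beta)>0$ for $\beta\in(\beta_-,\beta_+)$, so $f(\gamma)>0$ for $\gamma$ slightly larger than $1$. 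With this choice, the Burkholder/von Bahr--Esseen inequality for martingales gives
$$\E[J_n(\beta)^\gamma]\leq 2^{\gamma-1}\Big(1+C_\gamma\sum_{k=1}^n \E|J_k(\beta)-J_{k-1}(\beta)|^\gamma\Big)$$
for a universal constant $C_\gamma$.

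The main step is to bound the martingale differences. Writing $Y_n(\beta):=\sum_{i=1}^{S_n}\eee^{\beta x_{i,n}}=\alpha_n(\beta)J_n(\beta)$, an explicit computation from the split dynamics yields
$$J_k-J_{k-1}=\frac{1}{\alpha_k(\beta)}\Big[\eee^{\beta x_{I_k,k-1}}\Big(\sum_{j=1}^{m(0)+1}\eee^{\beta Z_{j,k}}-1\Big)-Y_{k-1}(\beta)\frac{m(\beta)}{1+m(0)(k-1)}\Big],$$
where $I_k$ is uniform on $\{1,\dots,S_{k-1}\}$ and independent of the $\{Z_{j,k}\}$. Using the inequality $|a+b|^\gamma\leq 2^{\gamma-1}(|a|^\gamma+|b|^\gamma)$, independence, and $\E[\eee^{\gamma\beta x_{I_k,k-1}}]=\alpha_{k-1}(\gamma\beta)/S_{k-1}$, together with the asymptotics $\alpha_n(\beta)\sim C\,n^{\varphi(\beta)}$, this leads to
$$\E|J_k-J_{k-1}|^\gamma\leq A_k+B_k\,\E[J_{k-1}(\beta)^\gamma],\qquad A_k=O\!\left(k^{\varphi(\gamma\beta)-1-\gamma\varphi(\beta)}\right),\quad B_k=O(k^{-\gamma}).$$
By condition (iii) the first series $\sum_k A_k$ converges; the difficulty is that the second term feeds $\E J_{k-1}^\gamma$ back into the bound on $\E J_n^\gamma$.

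The hard part, therefore, is closing this seemingly circular estimate. I would write $a_n:=\E[J_n(\beta)^\gamma]$ and combine the previous bounds into
$$a_n\leq C_1+C_2\sum_{k=1}^{n}k^{-\gamma}\,a_{k-1},$$
with constants $C_1,C_2$ depending only on $\beta,\gamma$. Since $\gamma>1$, $\sum_{k\geq 1}k^{-\gamma}<\infty$, so a discrete Gronwall inequality yields
$$a_n\leq C_1\prod_{k=1}^{n}\bigl(1+C_2 k^{-\gamma}\bigr)\leq C_1\exp\!\Big(C_2\sum_{k=1}^{\infty}k^{-\gamma}\Big)<\infty,$$
uniformly in $n$, which is the desired $L_\gamma$-boundedness and completes the proof.
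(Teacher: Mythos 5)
Your proof is correct, but it takes a genuinely different route from the paper. The paper exploits the continuous-time embedding: writing $\mathcal W_{\tau_n}(\beta) = H_n(\beta) W_n(\beta)$ with $H_n(\beta)$ (a function of the split times only) independent of $W_n(\beta)$, it gets $L_\gamma$-boundedness of $\mathcal W_{\tau_n}(\beta)$ from Biggins' $L_\gamma$-convergence of the continuous-time martingale via optional stopping, and then peels off the factor $H_n$ using independence and Jensen's inequality, $\E [\mathcal W_{\tau_n}^\gamma(\beta)] \geq (\E [H_n(\beta)])^\gamma \, \E [W_n^\gamma(\beta)]$, together with the fact that $\E [H_n(\beta)]$ stays bounded away from zero. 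You instead stay entirely in discrete time and control the Jabbour martingale directly: the von Bahr--Esseen bound on the increments is sound, your condition (iii) $\varphi(\gamma\beta)<\gamma\varphi(\beta)$ is exactly the discrete analogue of the paper's $\Omega^2_\gamma$ condition (equivalently $m(\gamma\beta)<\gamma m(\beta)$, available for $\gamma$ near $1$ precisely because $\beta\varphi'(\beta)<\varphi(\beta)$ on $(\beta_-,\beta_+)$), and the computation $\E[\eee^{\gamma\beta x_{I_k,k-1}}]=\alpha_{k-1}(\gamma\beta)/S_{k-1}$ correctly isolates the summable part $A_k=O(k^{\varphi(\gamma\beta)-1-\gamma\varphi(\beta)})$. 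The genuinely new ingredient relative to the many-split setting is the feedback term $B_k\,\E[J_{k-1}^\gamma]$ with $B_k=O(k^{-\gamma})$ coming from the compensator $Y_{k-1}m(\beta)/S_{k-1}$, and your discrete Gronwall closure is the right way to handle it, since $\sum_k k^{-\gamma}<\infty$ for $\gamma>1$. What each approach buys: the paper's argument is very short given the embedding machinery already in place, while yours is self-contained, avoids the continuous-time process and Biggins' theorem altogether, yields an explicit uniform bound on $\E[J_n^\gamma(\beta)]$, and makes transparent exactly which structural features (deterministic offspring number, so that $S_n$ and $\alpha_n$ are deterministic; the strict inequality defining $(\beta_-,\beta_+)$) are being used.
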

\begin{proof}
%It remains to verify that for every $\beta\in (\beta_-,\beta_+)$, the sequence $(W_n(\beta))_{n\in\N_0}$ is bounded in $L_\gamma$ for some $\gamma = \gamma(\beta) \in (1,2]$.
For $\beta = 0$ and $\gamma = 2$, the relevant argument is given in the proof of Proposition~6 in~\cite{sulzbach_mart_tail_sums}. Fix $\beta \in (\beta_-, \beta_+)$ and $\gamma \in (1,2]$ such that $\beta \in \Omega^1_\gamma \cap \Omega^2_\gamma$. Note that $W_n(\beta)$ and $H_n(\beta)$ are independent and $\mathcal W_{\tau_n}(\beta) = H_n(\beta) W_n(\beta)$. By the optional stopping theorem, $(\mathcal W_{\tau_n}(\beta))_{n\in\N}$ is a martingale with mean 1 and bounded in $L_\gamma$. By indepedence, $\E \mathcal W_{\tau_n}(\beta) = \E H_n(\beta) \E W_n(\beta)$. Since $\E{W_n(\beta)}$ converges to a non-zero limit, see~\eqref{eq:lim_E_W_n}, it follows that $\E{H_n(\beta)}$ is bounded away from zero. Thus, by independence and  Jensen's inequality,
$$
\E \mathcal W_{\tau_n}^\gamma (\beta) = \E H_n^\gamma(\beta) \cdot \E W_n^\gamma(\beta) \geq \E{W_n^\gamma(\beta)} (\E{H_n(\beta)})^\gamma.
$$
It follows that  $\sup_{n \geq 0} \E W_n^\gamma (\beta) < \infty$ which completes the proof.
\end{proof}

\subsection*{Acknowledgement} Zakhar Kabluchko is grateful to Rudolf Gr\"ubel for useful discussions. The work of Alexander Marynych was supported by a Humboldt Research Fellowship of the Alexander von Humboldt Foundation. The work of Henning Sulzbach was supported by a Feodor Lynen Research Fellowship of the Alexander von Humboldt Foundation.

\bibliographystyle{plainnat}
\bibliography{KabMarSulz2017_bib}

\end{document}